\theoremstyle{plain}
    \newtheorem{theorem}{Theorem}[section]
    \newtheorem*{theorem*}{Theorem}
    \newtheorem{lemma}[theorem]{Lemma}
    \newtheorem{proposition}[theorem]{Proposition}
    \newtheorem{corollary}[theorem]{Corollary}
\theoremstyle{definition}
    \newtheorem{definition}{Definition}[section]
    \newtheorem{remark}{Remark}[section]
\numberwithin{equation}{section}
\newcommand{\even}{{\mathord{\rm even}}}
\newcommand{\ess}{{\mathord{\rm ess}}}
\newcommand{\R}{\mathbb{R}}
\DeclareMathOperator{\im}{Im}
\DeclareMathOperator{\re}{Re}
\DeclareMathOperator{\supp}{supp}
\DeclareMathOperator{\sech}{sech}
\DeclareMathOperator{\spn}{span}
\begin{document}

\title[Even threshold solutions to NLS with $\delta$]{Threshold even solutions to the nonlinear Schr\"{o}dinger equation with delta potential at high frequencies}
\author[S. Gustafson]{Stephen Gustafson}
\address[S. Gustafson]{University of British Columbia, 1984 Mathematics Rd., Vancouver, Canada V6T1Z2.}
\email{gustaf@math.ubc.ca}
\author[T. Inui]{Takahisa Inui}
\address[T. Inui]{Department of Mathematics, Graduate School of Science, Osaka University, Toyonaka, Osaka, Japan 560-0043.
\newline 
University of British Columbia, 1984 Mathematics Rd., Vancouver, Canada V6T1Z2.}
\email{inui@math.sci.osaka-u.ac.jp 
}
\date{\today}
\date{\today}
\keywords{nonlinear Schr\"{o}dinger equation, Dirac delta potential, global dynamics, threshold, radial}
\subjclass[2020]{35Q55,37K40 etc.}
\maketitle

\begin{abstract}
We consider the nonlinear Schr\"{o}dinger equation with a repulsive Dirac delta potential in one dimensional Euclidean space. We classify the global dynamics of even solutions with the same action as the 
high-frequency ground state standing wave solutions. 
\end{abstract}

\tableofcontents


\section{Introduction}

\subsection{Main result}

We consider the following nonlinear Schr\"{o}dinger equation with Dirac delta potential:
\begin{align}
\label{NLS}
\tag{$\delta$NLS}
	\begin{cases}
	i \partial_{t} u + \partial_{x}^{2} u +\gamma \delta u +|u|^{p-1}u=0, & (t,x) \in I \times \mathbb{R},
	\\
	u(0,x)=u_{0}(x), & x \in \mathbb{R},
	\end{cases}
\end{align}
where $\gamma <0$, $\delta$ denotes the Dirac delta at the origin, $p>5$, and $I \ni 0$ is a time interval. 
Here the Schr\"{o}dinger operator $-\Delta_{\gamma}=-\partial_{x}^{2} -\gamma \delta $ is defined by 
\begin{align*}
	- \Delta_{\gamma}f &:=- \partial_{x}^{2}f \text{ for } f \in \mathcal{D}(-\Delta_{\gamma}),
	\\
	\mathcal{D}(- \Delta_{\gamma}) &:= \{ f \in H^{1}(\mathbb{R}) \cap H^{2}(\mathbb{R}\setminus\{0\}): f'(0+) - f'(0-)=- \gamma f(0)\}. 
\end{align*}
We assume here that $\gamma<0$, meaning that the potential is repulsive. In this case, $-\Delta_{\gamma}$ is a nonnegative self-adjoint operator on $L^{2}(\mathbb{R})$ (see \cite{AGHH88} for more details), which implies that the linear Schr\"{o}dinger propagator $e^{it\Delta_{\gamma}}$ is well-defined on $L^{2}(\mathbb{R})$ by Stone's theorem. The operator $-\Delta_{\gamma}$ may also be defined by its quadratic form:
\begin{align*}
	\langle -\Delta_{\gamma}f , g \rangle := \int_{\mathbb{R}} f(x)\overline{g(x)} dx - \gamma f(0)\overline{g(0)} 
\end{align*} 
for $f,g \in H^{1}(\mathbb{R})$. 

It is known that the equation \eqref{NLS} is locally well-posed in the energy space $H^{1}(\mathbb{R})$. Moreover, the energy and mass
\begin{align*}
	E_{\gamma}(f)&:= \frac{1}{2}\|f\|_{\dot{H}_{\gamma}^{1}}^{2} - \frac{1}{p+1}\|f\|_{L^{p+1}}^{p+1},
	\\
	M(f)&:= \|f\|_{L^{2}}^{2},
\end{align*}
are conserved (see \cite{GHW04,FOO08}), where 
\[
  \|f\|_{\dot{H}_{\gamma}^{1}}^{2}:= \|\partial_{x} f\|_{L^{2}}^{2} - \gamma |f(0)|^{2} =\langle -\Delta_{\gamma} f , f \rangle.
\]
 
We are interested in the global behavior of the solutions. For this, 
the ground state solution plays an important role, as we now explain.
For a frequency $\omega > 0$, define the action and virial functionals
on $H^1(\R)$:
\begin{align*}
	S_{\omega,\gamma}(f)&:= E_{\gamma}(f) + \frac{\omega}{2}M(f),
	\\
	K_{\gamma}(f)&:=\|\partial_{x} f\|_{L^{2}}^{2} +\frac{-\gamma}{2}|f(0)|^{2} - \frac{p-1}{2(p+1)}\|f\|_{L^{p+1}}^{p+1}.
\end{align*}
We consider the following two minimizing problems:
\begin{align*}
	n_{\omega,\gamma}&:=\inf\{ S_{\omega,\gamma}(f): f \in H^{1}(\mathbb{R})\setminus\{0\}, K_{\gamma}(f)=0\},
	\\
	r_{\omega,\gamma}&:=\inf\{ S_{\omega,\gamma}(f): f \in H_{\even}^{1}(\mathbb{R})\setminus\{0\}, K_{\gamma}(f)=0\}, 
\end{align*}
where $H_{\even}^{1}(\mathbb{R})$ denotes the even functions in $H^{1}(\mathbb{R})$. We have: 

\begin{proposition}[\cite{FuJe08,IkIn17}]
\label{prop1.1}
Let $\gamma<0$. The following hold.
\begin{enumerate}
\item For all $\omega>0$, $n_{\omega,\gamma}=n_{\omega,0}$ and $n_{\omega,\gamma}$ is not attained.
\item $n_{\omega,\gamma} < r_{\omega,\gamma}$ and 
\begin{align*}
	r_{\omega,\gamma}
	\begin{cases}
	= 2 n_{\omega,0} & \text{ if } 0< \omega \leq \frac{\gamma^{2}}{4},
	\\
	< 2 n_{\omega,0} & \text{ if } \omega > \frac{\gamma^{2}}{4}.
	\end{cases}
\end{align*}
\item If $0 < \omega \leq \gamma^{2}/4$, then $r_{\omega,\gamma}$ is not attained. 
While, if $\omega> \gamma^{2}/4$, then $r_{\omega,\gamma}$ is attained by 
\begin{align*}
	Q_{\omega,\gamma}(x):= \left[ \frac{(p+1)\omega}{2} \sech^{2}\left\{ \frac{(p-1)\sqrt{\omega}}{2}|x| + \tanh^{-1}\left( \frac{\gamma}{2\sqrt{\omega}} \right) \right\} \right]^{\frac{1}{p-1}},
\end{align*}
which is the unique (up to a complex phase) solution of 
\begin{align}
\label{eleq}
	-\partial_{x}^{2}Q -\gamma \delta Q + \omega Q =|Q|^{p-1}Q.
\end{align}
\end{enumerate}
\end{proposition}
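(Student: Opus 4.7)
My plan hinges on the two identities
\begin{align*}
K_{\gamma}(f) &= K_{0}(f) + \tfrac{-\gamma}{2}|f(0)|^{2}, \\
S_{\omega,\gamma}(f) &= S_{\omega,0}(f) + \tfrac{-\gamma}{2}|f(0)|^{2},
\end{align*}
both exploiting $-\gamma > 0$, together with the standard fact that $n_{\omega,0}$ coincides with the infimum of $S_{\omega,0}$ over $\{K_{0} \leq 0\}$ and is attained exclusively (up to phase) by translates $\phi_{\omega}(\cdot - y)$ of the standard soliton $\phi_{\omega}$ solving $-\phi'' + \omega \phi = |\phi|^{p-1}\phi$. For (1), any $f$ with $K_{\gamma}(f) = 0$ has $K_{0}(f) \leq 0$, hence $S_{\omega,\gamma}(f) \geq S_{\omega,0}(f) \geq n_{\omega,0}$, giving $n_{\omega,\gamma} \geq n_{\omega,0}$. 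The reverse inequality comes from translating $\phi_{\omega}$ and rescaling by $\lambda_{y} > 1$ to restore $K_{\gamma}(\lambda_{y}\phi_{\omega}(\cdot - y)) = 0$; the exponential decay of $\phi_{\omega}$ forces $\lambda_{y} \to 1$ and $S_{\omega,\gamma}(\lambda_{y}\phi_{\omega}(\cdot - y)) \to n_{\omega,0}$ as $|y| \to \infty$. Non-attainment is immediate: an extremizer would have to satisfy both $f(0) = 0$ and equal, up to phase, a translated $\phi_{\omega}$, impossible as $\phi_{\omega}$ is strictly positive.

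For (2) and (3), I partition even admissible $f$ into the cases $f(0) = 0$ and $f(0) \neq 0$. When $f(0) = 0$, both constraints and the action reduce to the $\gamma = 0$ problem; restricting to $[0, \infty)$ decouples into two half-line minimizations, and the approximate candidates $\phi_{\omega}(\cdot - y)$ with $y \to \infty$ show the corresponding subinfimum equals $2 n_{\omega,0}$, not attained. When $f(0) \neq 0$ and $\omega > \gamma^{2}/4$, the explicit $Q_{\omega,\gamma}$ provides the candidate: writing $Q_{\omega,\gamma}(x) = \phi_{\omega}(|x| + b)$ with $b = \tfrac{2}{(p-1)\sqrt{\omega}} \tanh^{-1}(\gamma / (2\sqrt{\omega})) < 0$, one checks $Q_{\omega,\gamma} \in H^{1}_{\even}$ solves \eqref{eleq}, satisfies $K_{\gamma}(Q_{\omega,\gamma}) = 0$ by Pohozaev, and yields
\[
S_{\omega,\gamma}(Q_{\omega,\gamma}) = \tfrac{p-1}{2(p+1)} \|Q_{\omega,\gamma}\|_{L^{p+1}}^{p+1} = \tfrac{p-1}{p+1} \int_{b}^{\infty} \phi_{\omega}(y)^{p+1}\,dy,
\]
which lies strictly between $n_{\omega,0}$ and $2 n_{\omega,0}$ precisely because $b \in (-\infty, 0)$. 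Uniqueness up to phase follows by reducing \eqref{eleq} on $(0, \infty)$ to the ODE $-u'' + \omega u = u^{p}$ subject to the evenness-induced jump $u'(0+) = -\tfrac{\gamma}{2} u(0)$, whose decaying solutions form a one-parameter family pinned by the boundary datum.

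When $\omega \leq \gamma^{2}/4$ the shift $b$ diverges and the family leaves $H^{1}$, so no Euler--Lagrange solution exists in $H^{1}_{\even}$; minimizing sequences with $f(0) \neq 0$ must therefore have $|f_{n}(0)| \to 0$, and a profile decomposition adapted to the even-parity constraint (nonzero profiles appearing either at the origin, which is excluded by the non-existence of \eqref{eleq} solutions, or in symmetric pairs at diverging centers, each contributing $2 n_{\omega,0}$) forces this subinfimum also to equal $2 n_{\omega,0}$. Hence $r_{\omega,\gamma} = 2 n_{\omega,0}$, not attained. In both regimes $r_{\omega,\gamma} > n_{\omega,0} = n_{\omega,\gamma}$. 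The main obstacle is precisely this profile-decomposition step: establishing a compactness dichotomy for minimizing sequences in $H^{1}_{\even}$ that accounts for the loss of translation invariance in the $\gamma = 0$ limit and cleanly excludes concentration at the origin in the low-frequency regime.
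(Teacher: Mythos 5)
You are reconstructing a result the paper itself does not prove: Proposition \ref{prop1.1} is imported from Fukuizumi--Jeanjean \cite{FuJe08} and Ikeda--Inui \cite{IkIn17}, so the comparison is with the variational arguments carried out there. A good portion of your sketch is sound: the identities $K_{\gamma}(f)=K_{0}(f)+\frac{-\gamma}{2}|f(0)|^{2}$ and $S_{\omega,\gamma}(f)=S_{\omega,0}(f)+\frac{-\gamma}{2}|f(0)|^{2}$, the proof of (1) via the enlarged constraint set $\{K_{0}\le 0\}$, translated solitons rescaled to restore $K_{\gamma}=0$, and the rigidity argument for non-attainment are all correct; so are the computation $S_{\omega,\gamma}(Q_{\omega,\gamma})=\frac{p-1}{p+1}\int_{b}^{\infty}\phi_{\omega}^{p+1}\in(n_{\omega,0},2n_{\omega,0})$ for $\omega>\gamma^{2}/4$ and the halving argument showing the subinfimum over even $f$ with $f(0)=0$ equals $2n_{\omega,0}$.

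The genuine gap is the compactness/attainment step, and it is not confined to the low-frequency regime where you flag it. For $\omega>\gamma^{2}/4$ your argument only gives $r_{\omega,\gamma}\le S_{\omega,\gamma}(Q_{\omega,\gamma})<2n_{\omega,0}$: you never show the even infimum is attained, hence neither that it equals $S_{\omega,\gamma}(Q_{\omega,\gamma})$ (part (3)) nor that $n_{\omega,\gamma}<r_{\omega,\gamma}$ (part (2)). Indeed, from $S_{\omega,\gamma}(f)\ge n_{\omega,0}+\frac{|\gamma|}{2}|f(0)|^{2}$ on the constraint set one only learns that near-minimizers have $f(0)\to 0$, and excluding that such \emph{even} sequences push the action down to $n_{\omega,0}$ is precisely the concentration-compactness statement you postpone; the ODE uniqueness for \eqref{eleq} classifies critical points but says nothing about existence of a minimizer, and the Lagrange-multiplier step (zero multiplier, as in the computation used for Lemma \ref{lem2.9}) also needs to be recorded. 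The standard way to close this, and the content of the cited proofs, is a profile-decomposition/splitting analysis for even minimizing sequences: evenness forces escaping mass to come in symmetric pairs costing at least $2n_{\omega,0}$, so for $\omega>\gamma^{2}/4$ the strict bound $r_{\omega,\gamma}<2n_{\omega,0}$ furnished by $Q_{\omega,\gamma}$ yields compactness and a minimizer, which your uniqueness analysis then identifies with $e^{i\theta}Q_{\omega,\gamma}$; for $0<\omega\le\gamma^{2}/4$ the same dichotomy, combined with non-existence of $H^{1}$ solutions of \eqref{eleq} in that range, gives $r_{\omega,\gamma}=2n_{\omega,0}$ and non-attainment. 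Until that step is supplied, parts (2) and (3) remain unproven in your write-up.
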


If $\omega>\gamma^{2}/4$, the function $e^{i\omega t}Q_{\omega,\gamma}$ is a non-scattering global solution to \eqref{NLS}, which is called the ground state (standing wave) solution. In the non-even setting, 
$n_{\omega,\gamma}=n_{\omega,0} = S_{\omega,0}(Q_{\omega,0})$.
Though $e^{i\omega t}Q_{\omega,0}$ is not a solution to \eqref{NLS}, it gives a threshold for a scattering-blowup dichotomy result. 
Indeed, we have the following global dynamics result below $n_{\omega,0}$: 

\begin{theorem}[Global dynamics below $Q_{\omega,0}$ (Ikeda--Inui \cite{IkIn17})]
Let $\omega>0$. Assume that $S_{\omega,\gamma}(u_{0}) < n_{\omega,\gamma}(=n_{\omega,0})$. Then the following hold for~\eqref{NLS}.
\begin{enumerate}
\item If $K_\gamma(u_{0})>0$, the solution is global and scatters in both time directions.
\item If $K_\gamma(u_{0})<0$, the solution blows up or grows up in both time directions. 
\end{enumerate}
\end{theorem}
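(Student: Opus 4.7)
The plan is to follow the Kenig--Merle concentration-compactness/rigidity roadmap, adapted to the delta potential. First I would establish that the two sets
\[
\mathcal{A}_{\omega,\gamma}^{\pm} := \{ f \in H^{1}(\R) : S_{\omega,\gamma}(f) < n_{\omega,\gamma},\ \pm K_{\gamma}(f) > 0\}
\]
are invariant under the \eqref{NLS} flow. By conservation of $S_{\omega,\gamma}$ and a continuity argument using the variational characterization of $n_{\omega,\gamma}$ (any nonzero $f$ with $K_{\gamma}(f)=0$ satisfies $S_{\omega,\gamma}(f)\geq n_{\omega,\gamma}$), $K_{\gamma}(u(t))$ cannot change sign. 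Moreover on $\mathcal{A}_{\omega,\gamma}^{+}$ one derives a coercive lower bound of the form $\|u\|_{\dot H^{1}_{\gamma}}^{2} \lesssim K_{\gamma}(u)$ under the condition $S_{\omega,\gamma}(u) < n_{\omega,\gamma}$, and hence an \emph{a priori} $H^{1}$-bound; the solution is therefore global in the positive case.

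For the blow-up/grow-up alternative (case $K_{\gamma}(u_0) < 0$), I would use a localized virial argument. With $V_{R}(t):=\int \phi(x/R)\, x^{2}|u(t,x)|^{2}\,dx$ for a suitable radial-like cutoff $\phi$ (or the genuine second moment when $x u_{0}\in L^{2}$), the second derivative $\partial_{t}^{2}V_{R}$ splits into a main term proportional to $K_{\gamma}(u(t))$ plus cutoff error terms; crucially the $\delta$-contribution to the virial identity has the favorable sign when $\gamma<0$. A variational ``gap'' estimate (the level set $\{S_{\omega,\gamma}<n_{\omega,\gamma}\}\cap\{K_{\gamma}<0\}$ keeps $K_{\gamma}$ away from $0$) yields $K_{\gamma}(u(t))\leq -c<0$, and this forces either $V_R(t)\to-\infty$ in finite time (blow-up) or along a sequence $t_n\to\pm\infty$ (grow-up).

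For the scattering case ($K_{\gamma}(u_0) > 0$), I would run concentration-compactness. Define
\[
\mathcal{E}^{+} := \sup\bigl\{ \eta : \text{every } u_{0}\in \mathcal{A}_{\omega,\gamma}^{+}\text{ with } S_{\omega,\gamma}(u_0) < \eta \text{ scatters in both time directions}\bigr\}.
\]
Small-data theory using Strichartz estimates for $e^{it\Delta_{\gamma}}$ (which in the repulsive case match the free estimates, since dispersive bounds for $e^{it\Delta_{\gamma}}$ hold globally) gives $\mathcal{E}^{+}>0$. Assuming for contradiction $\mathcal{E}^{+} < n_{\omega,\gamma}$, a linear profile decomposition adapted to $-\Delta_{\gamma}$ produces a critical element: a nonscattering $H^{1}$-solution of minimal action whose trajectory is precompact modulo the residual symmetries. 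The profile decomposition must distinguish profiles centered near $x=0$ (which see the potential) from profiles with centers $x_n\to\pm\infty$ (which in the limit satisfy the free NLS). Escaping profiles are ruled out because free-NLS solutions with action below $n_{\omega,0}=n_{\omega,\gamma}$ scatter by the known threshold theorem on the line. Hence the critical element has no translation parameter, and the same localized virial as in the blow-up step---now combined with $K_{\gamma}(u(t))\geq\delta>0$---produces the required contradiction.

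The main obstacle is the concentration-compactness step in the presence of the delta potential. Translation invariance is broken, so one must build a custom linear profile decomposition that isolates profiles escaping to spatial infinity and treats them via the free equation, while carefully controlling the nonlinear interaction between escaping and centered profiles through a long-time perturbation lemma for $e^{it\Delta_{\gamma}}$. Establishing the requisite Strichartz, local smoothing, and stability theory with the $\delta$-potential---and identifying the correct decoupling of profiles---is the technically delicate part; the variational and virial components are comparatively standard once the dispersive machinery is in place.
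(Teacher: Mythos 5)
This theorem is not proved in the present paper at all—it is quoted from Ikeda--Inui \cite{IkIn17}—and your Kenig--Merle outline (flow-invariant sets from the variational characterization of $n_{\omega,\gamma}$, coercivity giving global existence when $K_{\gamma}>0$, a localized virial argument yielding blow-up or grow-up when $K_{\gamma}<0$, and a profile decomposition adapted to $e^{it\Delta_{\gamma}}$ in which profiles escaping to spatial infinity are handled by the free-NLS threshold scattering result, followed by rigidity for the critical element) is essentially the argument of that reference. I see no substantive gap beyond the technical dispersive/perturbation work you already flag as the delicate part.
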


We also have a dichotomy result in the threshold case:

\begin{theorem}[Global dynamics on the threshold (Ardila--Inui \cite{ArIn21}, Inui \cite{Inu21pre})]
\label{thm1.3}
Let $\omega>0$. Assume that $S_{\omega,\gamma}(u_{0}) = n_{\omega,\gamma}(=n_{\omega,0})$. Then the following hold for~\eqref{NLS}.
\begin{enumerate}
\item If $K_\gamma(u_{0})>0$, the solution is global and scatters in both time directions.
\item If $K_\gamma(u_{0})<0$ and $\int_{\mathbb{R}} |xu_{0}(x)|^{2} dx<\infty$, the solution blows up in both time directions. 
\end{enumerate}
\end{theorem}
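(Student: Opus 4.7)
The plan is to follow the concentration--compactness and rigidity strategy of Kenig--Merle and Duyckaerts--Roudenko, adapted to the fact that the threshold $n_{\omega,\gamma}=n_{\omega,0}$ is not attained (Proposition \ref{prop1.1}(1)). As a common preliminary step, conservation of mass and energy gives $S_{\omega,\gamma}(u(t)) \equiv n_{\omega,\gamma}$ on the maximal lifespan, and continuity of $t \mapsto K_\gamma(u(t))$ forces the sign of $K_\gamma$ to be preserved: if $K_\gamma(u(t^*))=0$ at some $t^*$, then $u(t^*) \neq 0$ would be a minimizer of $n_{\omega,\gamma}$, a contradiction. Hence part (1) reduces to proving scattering under the invariance $K_\gamma(u(t)) > 0$, and part (2) works under $K_\gamma(u(t)) < 0$.

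For part (1), I would run the concentration--compactness/rigidity scheme. If scattering failed, a profile decomposition adapted to \eqref{NLS}---where individual profiles either stay localized near the origin, hence feel the $\delta$, or drift to spatial infinity, where the potential is effectively invisible---together with the perturbation theory for \eqref{NLS}, produces a critical element $u_c$: a global non-scattering threshold solution with $K_\gamma(u_c(t)) > 0$ and orbit precompact in $H^1$ modulo a translation parameter $x(t)$. The crucial new feature, compared with the attained-threshold case, is that $u_c$ cannot be a bound state; the decomposition analysis then forces $|x(t)| \to \infty$ and identifies the asymptotic profile as $e^{i\omega t}Q_{\omega,0}$. A separate rigidity argument (for example, using a quasi-conserved momentum identity or the fact that $e^{i\omega t}Q_{\omega,0}(\cdot-x_0)$ is not a solution of \eqref{NLS}) rules out such a ``soliton-at-infinity'' critical element.

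For part (2), the moment hypothesis $\int |xu_0(x)|^2\,dx < \infty$ makes the variance $V(t) := \int x^2 |u(t,x)|^2\,dx$ well-defined and $C^2$ on the lifespan. A direct virial computation, in which the $\delta$-potential contribution $-4\gamma |u(t,0)|^2$ exactly cancels the extra term coming from the quadratic weight $x^2$, yields
\[
V''(t) = 8 K_\gamma(u(t)),
\]
so $V$ is strictly concave. If $K_\gamma(u(t)) \leq -c < 0$ uniformly, $V$ becomes negative in finite time, producing blow-up. The subtle case is $K_\gamma(u(t_n)) \to 0^-$ along some sequence $t_n$ in the (putatively) global lifespan; rescaling $u(t_n)$ by the unique $\lambda_n < 1$ making $K_\gamma$ vanish gives a minimizing sequence for $n_{\omega,\gamma}$, which by non-attainment must concentrate at infinity: $u(t_n, \cdot - y_n) \to e^{i\theta_\infty} Q_{\omega,0}$ in $H^1$ with $|y_n| \to \infty$. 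This forces $V(t_n) \gtrsim y_n^2 \to \infty$, while concavity imposes at-most-linear growth and $V'(t_n) \to 0$ (since the limit profile $Q_{\omega,0}$ is real); combining these produces the required contradiction.

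The main technical obstacle in both parts is exactly the non-attainment of $n_{\omega,\gamma}$: the classical Kenig--Merle machinery is tailored to a compact extremizer (a ground state), whereas here near-minimizers inevitably escape to spatial infinity. Carefully tracking translation parameters in the profile decomposition, and excluding ``soliton-at-infinity'' behavior---both for the critical element in (1) and in the $K_\gamma \to 0$ alternative in (2)---is where the real work lies.
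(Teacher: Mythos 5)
You should first note that this paper does not prove Theorem \ref{thm1.3} at all: it is quoted as a known result of Ardila--Inui \cite{ArIn21} (scattering part) and Inui \cite{Inu21pre} (blow-up part), so the comparison is necessarily with those works rather than with anything in the present text. At the level of strategy your outline does match them: sign-preservation of $K_\gamma$ via non-attainment of $n_{\omega,\gamma}$, a concentration--compactness/rigidity scheme in which the only possible obstruction is a critical element whose translation parameter escapes to infinity and which resembles $e^{i\omega t}Q_{\omega,0}(\cdot-x(t))$, and a virial/concavity argument plus a ``minimizing sequence must concentrate at infinity'' analysis for the blow-up half.

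However, the two places you flag as ``the real work'' are precisely where your sketch has genuine gaps. In part (2), the step ``$V'(t_n)\to 0$ since the limit profile $Q_{\omega,0}$ is real'' does not follow: $V'(t_n)=4\im\int x\,\overline{u}\,\partial_x u\,dx$ carries the unbounded weight $x$, and $H^1$-closeness of $u(t_n)$ to $e^{i\theta_n}Q_{\omega,0}(\cdot-y_n)$ with $|y_n|\to\infty$ only gives $|V'(t_n)|\lesssim (|y_n|+\sqrt{V(t_n)})\,o(1)$, which need not vanish; moreover ``concavity gives at most linear growth'' is not in tension with $V(t_n)\gtrsim y_n^2\to\infty$ unless you also control how fast $y_n$ can move, so no contradiction is reached as written. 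The actual arguments (cf. \cite[Section 5]{DuRo10}, \cite{Inu21pre}, \cite{GuIn22pre}) go through showing that global existence forces $V'>0$, that $\int^\infty|K_\gamma(u(t))|\,dt$ converges with an exponential rate, and then a modulation analysis controlling the motion of the soliton center --- none of which appears in your proposal. Similarly, in part (1) the exclusion of the escaping ``soliton at infinity'' critical element is only asserted; the remark that $e^{i\omega t}Q_{\omega,0}(\cdot-x_0)$ is not an exact solution of \eqref{NLS} does not rule out a solution merely asymptotic to such profiles, and the delta potential breaks translation invariance, so momentum is \emph{not} conserved and the ``quasi-conserved momentum identity'' you invoke has to be constructed and estimated (this is the technical core of \cite{ArIn21}). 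Also, the rescaling ``$\lambda_n<1$ making $K_\gamma$ vanish'' is not as clean as for $\gamma=0$, since the natural scaling does not commute with the $\delta$-term; one should instead argue directly that $S_{\omega,\gamma}(u(t_n))=n_{\omega,\gamma}$ with $K_\gamma(u(t_n))\to 0$ yields a minimizing sequence. So the skeleton is the right one, but the decisive rigidity steps are missing rather than merely deferred.
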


\begin{remark}
\begin{enumerate}
\renewcommand{\theenumi}{\alph{enumi}}
\item We say a solution $u$ scatters in the positive time direction if there exists $u_{+} \in H^{1}(\mathbb{R})$ such that 
\begin{align*}
	\|u(t) - e^{it\Delta_{\gamma}} u_{+}\|_{H^{1}} \to 0 \text{ as } t \to \infty.
\end{align*}
By Mizutani \cite{Miz20}, if the above holds, then there exists $\widetilde{u}_{+}$ such that 
\begin{align*}
	\|u(t) - e^{it\partial_{x}^{2}} \widetilde{u}_{+}\|_{H^{1}} \to 0 \text{ as } t \to \infty.
\end{align*}
\item We say a solution $u$ grows up in positive time if it exists at least on $[0,\infty)$ and 
$\limsup_{t\to \infty}\|\partial_{x}u(t)\|_{L^{2}}=\infty$. 
On the threshold $S_{\omega,\gamma}(u_{0}) = n_{\omega,\gamma}$, it was not known if the blow-up or grow-up result holds without finite variance. The recent work \cite{GuIn22pre} shows that it does. 
\item The action condition $S_{\omega,\gamma}(u_{0}) \leq n_{\omega,\gamma}$ can be rewritten into the mass-energy condition $M(u_{0})^{(1-s_{c})/s_{c}}E_{\gamma}(u_{0}) \leq M(Q_{1,0})^{(1-s_{c})/s_{c}}E_{\gamma}(Q_{1,0})$ by using the scaling structure of $S_{\omega,0}(Q_{\omega,0})$, where $s_{c}:=1/2 - 2/(p-1)$. Moreover, the functional condition $K_{\gamma}(u_{0}) >0$ can be also written as $\|u_{0}\|_{L^{2}}^{1-s_{c}}\|u_{0}\|_{\dot{H}_{\gamma}^{1}}^{s_{c}} < \|Q_{1,0}\|_{L^{2}}^{1-s_{c}}\|Q_{1,0}\|_{\dot{H}^{1}}^{s_{c}}$. See e.g. \cite[Lemma 2.6]{ArIn21}. This condition is also equivalent to $\|u_{0}\|_{L^{2}}^{1-s_{c}}\|\partial_x u_{0}\|_{L^2}^{s_{c}} < \|Q_{1,0}\|_{L^{2}}^{1-s_{c}}\|Q_{1,0}\|_{\dot{H}^{1}}^{s_{c}}$. See \cite{HIIS22pre}. 

\item It is worth emphasizing that, in the threshold case $S_{\omega,\gamma}(u_{0}) = n_{\omega,\gamma}$, there is no non-scattering global solution like the ground state because of the repulsive potential (see \cite{DLR22,MMZ21,GuIn22} for NLS with other repulsive effects). 
\end{enumerate}
\end{remark}

For even solutions, we have the following global dynamics result below $r_{\omega,\gamma}$. 

\begin{theorem}[Global dynamics of even solutions below $r_{\omega,\gamma}$
(Ikeda--Inui \cite{IkIn17})]
Let $\omega>0$. Assume that $u_{0}$ is even and $S_{\omega,\gamma}(u_{0}) < r_{\omega,\gamma}$. Then the following hold for~\eqref{NLS}. 
\begin{enumerate}
\item If $K_\gamma(u_{0})>0$, the solution is global and scatters in both time directions.
\item If $K_\gamma(u_{0})<0$, the solution blows up or grows up in both time directions. 
\end{enumerate}
\end{theorem}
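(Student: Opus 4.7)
The plan is to argue in parallel with the sub-threshold dynamics for general (non-even) data below $n_{\omega,\gamma}$, replacing the non-even variational level $n_{\omega,\gamma}$ by the even level $r_{\omega,\gamma}$ throughout, and exploiting the fact that the even subspace $H^1_{\even}(\R)$ is invariant under the flow of \eqref{NLS}. First I would establish the two basic variational facts: (i) on $\{f \in H^1_{\even} : S_{\omega,\gamma}(f) < r_{\omega,\gamma}\}$, the sign of $K_\gamma$ is preserved, so the two subsets $\mathcal{A}_\pm := \{f\in H^1_{\even} : S_{\omega,\gamma}(f) < r_{\omega,\gamma},\ \pm K_\gamma(f) > 0\}$ are each flow-invariant by conservation of $S_{\omega,\gamma}$ and continuity of $K_\gamma$; (ii) a coercivity estimate of the form $K_\gamma(f) \geq c(\delta)\,\|f\|_{\dot H^1_\gamma}^2$ whenever $S_{\omega,\gamma}(f) \leq r_{\omega,\gamma} - \delta$ and $K_\gamma(f) > 0$, and the analogous lower bound $K_\gamma(f) \leq -\tilde c(\delta)$ on the negative side. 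Both follow from the variational characterization of $r_{\omega,\gamma}$ by the standard scaling argument (testing with $f_\lambda(x) = \lambda^{1/2}f(\lambda x)$).

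For part (1), the H$^1$ bound from (ii) gives global existence on $\mathcal{A}_+$, and one runs the Kenig--Merle concentration--compactness scheme in the even subspace. A linear profile decomposition for $e^{it\Delta_\gamma}$ acting on even data (together with Mizutani's scattering comparison between $e^{it\Delta_\gamma}$ and $e^{it\partial_x^2}$) reduces scattering to ruling out a minimal nonscattering even solution $u_c$ whose trajectory is precompact in $H^1_{\even}$. Because the potential is fixed at $x=0$ and we restrict to even data, only the trivial spatial translation survives, so compactness of the orbit is really compactness in $H^1$ itself. The rigidity step then uses a localized virial $V_R(t) := \int \phi_R(x)\,|u(t,x)|^2\,dx$ with $\phi_R(x) = R^2\phi(x/R)$ and $\phi$ even; the delta term contributes $-\gamma \phi(0) |u(t,0)|^2 = 0$ in the second derivative identity under a suitable choice of $\phi$ (or has a harmless sign because $\gamma < 0$), leaving $\partial_t^2 V_R \approx 8 K_\gamma(u_c(t))$ up to errors that vanish as $R\to\infty$ by compactness. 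Combined with the strict coercivity of $K_\gamma$ on $\mathcal{A}_+$, this yields the usual contradiction with $|\partial_t V_R(t)| \lesssim R$.

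For part (2), on $\mathcal{A}_-$ coercivity gives $K_\gamma(u(t)) \leq -\tilde c < 0$ as long as the solution exists. One applies the standard virial identity: for even data $u$ with $\int x^2|u|^2 < \infty$, $\tfrac{d^2}{dt^2}\!\int x^2 |u|^2 \,dx = 8 K_\gamma(u)$, where the delta contribution vanishes since $x^2\delta = 0$; this forces finite-time blow-up in the variance case. Without finite variance, one uses a truncated version $V_R(t) := \int \chi_R(x)\,|u(t,x)|^2\,dx$ with $\chi_R(x) \approx x^2$ for $|x|\le R$ and bounded for $|x|\ge R$, and runs the Ogawa--Tsutsumi type argument to conclude blow-up or grow-up in both time directions.

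The main obstacle is the rigidity step in part (1). The nontrivial content is a minimal-energy blow-up argument in the presence of a singular potential without translation invariance, and one has to check that the profile decomposition for $e^{it\Delta_\gamma}$ decouples the $S_{\omega,\gamma}$ and $K_\gamma$ functionals cleanly on even data so that the minimal element lies in $\mathcal{A}_+$ at the critical level and its $K_\gamma$-coercivity survives to the limit. Once this is set up, the virial argument, the even symmetry (which both preserves the delta's contribution and rules out spatial-translation degeneracy in the compactness), and the positivity $-\gamma > 0$ of the delta coefficient combine to close the proof in the same way as in the general sub-threshold theorem cited above.
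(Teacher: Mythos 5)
This theorem is quoted from Ikeda--Inui \cite{IkIn17} and is not proved in the present paper; your sketch follows essentially the same route as that source (flow-invariant sets below the even level $r_{\omega,\gamma}$ via its variational characterization, a Kenig--Merle concentration--compactness and rigidity argument carried out in $H^1_{\even}$ with a localized virial, and an Ogawa--Tsutsumi-type argument giving blow-up or grow-up on the negative side). One correction of detail: the delta potential does contribute to the virial identity --- it enters through the term $\frac{-\gamma}{2}|u(t,0)|^2$ inside $K_\gamma$ (compare the paper's identities $J''(t)=8K_\gamma(u(t))$ and $F_R=A_R+8K_\gamma$), so it does not ``vanish because $x^2\delta=0$''; since $\gamma<0$ this contribution is nonnegative and the argument closes exactly as you indicate, but the justification should go through the quadratic form rather than the pointwise product $x^2\delta$.
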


Since $r_{\omega,\gamma}$ is strictly larger than $n_{\omega,\gamma}$ by Proposition \ref{prop1.1}, the above theorem shows that the symmetry assumption raises the threshold. When $\omega>\gamma^{2}/4$, it is clear that the ground state solution is on the threshold, and is a non-scattering global solution. Thus, at least in the high-frequency case $\omega>\gamma^{2}/4$, we cannot expect a dichotomy result like Theorem \ref{thm1.3}. 

In the present paper, we are interested in the global dynamics of solutions on the threshold in the high-frequency and radial (i.e., even) setting. 
In that case, we expect the situation to be similar to that of the nonlinear Schr\"{o}dinger equation without potential \cite{DuRo10,CFR20}. 
Indeed, we get the following main results. 

\begin{theorem}[Existence of special solutions]
\label{thm1.5}
Let $\omega>\gamma^{2}/4$ be fixed. There exist two even solutions $U^{\pm}$ to \eqref{NLS} such that 
\begin{itemize}
\item $M(U^{\pm})=M(Q_{\omega,\gamma})$, $E_{\gamma}(U^{\pm})=E_{\gamma}(Q_{\omega,\gamma})$, $U^{\pm}$ exist at least on $[0,\infty)$ and there exists $c>0$ such that 
\begin{align*}
	\|U^{\pm}(t)-e^{i\omega t}Q_{\omega,\gamma}\|_{H^{1}} \lesssim e^{-ct} \text{ for } t\geq 0.
\end{align*}
\item $K_{\gamma}(U^{+}(0))<0$ and $U^{+}$ blows up in finite negative time. 
\item $K_{\gamma}(U^{-}(0))>0$ and $U^{-}$ scatters backward in time.  
\end{itemize}
\end{theorem}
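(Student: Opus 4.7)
I follow the Duyckaerts--Merle/Duyckaerts--Roudenko strategy, adapted to the even class with delta obstruction: linearize around $e^{i\omega t}Q_{\omega,\gamma}$, locate an unstable mode of the linearization, use it to build a one-parameter family of approximate solutions indexed by an amplitude $A\in\R$, and promote these to exact solutions by a fixed-point argument on $[T,\infty)$. The two sign choices $A>0$ and $A<0$ will yield $U^+$ and $U^-$ respectively.

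\emph{Step 1: Spectral analysis of the linearization.} Writing $u=e^{i\omega t}(Q+h)$ with $Q=Q_{\omega,\gamma}$ and $h=h_1+ih_2$, the equation becomes $\partial_t(h_1,h_2) = \mathcal{L}(h_1,h_2)+N(h)$ with
\begin{equation*}
\mathcal{L}=\begin{pmatrix} 0 & L_-\\ -L_+ & 0\end{pmatrix},\quad L_+=-\Delta_\gamma+\omega-pQ^{p-1},\quad L_-=-\Delta_\gamma+\omega-Q^{p-1}.
\end{equation*}
On $H^{1}_{\even}$, $L_-$ is nonnegative with one-dimensional kernel $\Span\{Q\}$, and $L_+$ has exactly one simple negative eigenvalue (the Morse index of $Q_{\omega,\gamma}$ in the even class). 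Because the delta breaks translation, $\partial_xQ$ is odd and does not enter, so no additional null direction appears. A standard matrix-operator argument (using $\mathcal{L}^2=\diag(-L_-L_+,-L_+L_-)$ and the indefiniteness of $L_+$) then yields a simple pair of real eigenvalues $\pm e_0$ of $\mathcal{L}$ with even eigenvectors $\mathcal{Y}_\pm$, the rest of $\spec\mathcal{L}$ being purely imaginary. The phase null direction $(0,Q)$ will be handled by a standard modulation $\theta(t)$.

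\emph{Step 2: Approximate and exact solutions.} For each $k\geq 1$ and $A\in\R$, iteratively build
\begin{equation*}
h_A^{(k)}(t)=\sum_{j=1}^{k}A^{j}e^{-je_0 t}\mathcal{Y}_j,
\end{equation*}
with $\mathcal{Y}_1=\mathcal{Y}_+$ and, for $j\geq 2$, $\mathcal{Y}_j$ obtained by solving $(\mathcal{L}+je_0)\mathcal{Y}_j=F_j$, where $F_j$ collects the nonlinear interactions of $\mathcal{Y}_1,\dots,\mathcal{Y}_{j-1}$; since $-je_0$ lies in the resolvent set of $\mathcal{L}$ for $j\geq 2$, each $\mathcal{Y}_j$ is a well-defined even function decaying exponentially in $x$, and the resulting residual in \eqref{NLS} is of size $e^{-(k+1)e_0 t}$ in $H^1$. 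Choosing $k$ large and $T\gg1$, set $u=e^{i(\omega t+\theta(t))}(Q+h_A^{(k)}+w)$ and solve for the pair $(w,\theta)$ in a weighted Strichartz space with weight $e^{(k+\frac{1}{2})e_0 t}$ on $[T,\infty)$. Contraction closes using: exponential decay of $e^{t\mathcal{L}}$ on the stable spectral subspace, Strichartz estimates for $e^{it\Delta_\gamma}$ on the center/continuous subspace (available via Mizutani \cite{Miz20}), and orthogonality conditions forcing $w(T)$ into the stable subspace. Evenness is preserved throughout. This produces, for each $A\ne 0$, an even global-forward solution $U_A$ with $\|U_A(t)-e^{i\omega t}Q\|_{H^1}\lesssim e^{-e_0 t}$. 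Set $U^{\pm}:=U_{\pm|A|}$.

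\emph{Step 3: Sign of $K_\gamma$ and backward behavior.} Expanding $K_\gamma(U^{\pm}(t))$ around $K_\gamma(Q)=0$, the leading order is $\pm A e^{-e_0 t}\,dK_\gamma(Q)\cdot\mathcal{Y}_+$, and a direct computation shows the pairing $dK_\gamma(Q)\cdot\mathcal{Y}_+$ is nonzero; thus for large $t$ the signs of $K_\gamma(U^{\pm}(t))$ are opposite, and by the labeling we can arrange $K_\gamma(U^+(t))<0$ and $K_\gamma(U^-(t))>0$. Since $S_{\omega,\gamma}(U^\pm)=r_{\omega,\gamma}$, the only even function with $K_\gamma=0$ at this action level is $Q_{\omega,\gamma}$ up to phase, so the sign of $K_\gamma(U^\pm)$ cannot flip along the (non-standing-wave) flow and is preserved backward in time. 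Running $U^+$ backward, $K_\gamma<0$ together with a localized virial/convexity argument (removing finite-variance by the method of \cite{GuIn22pre}) yields blow-up in finite negative time. Running $U^-$ backward, $K_\gamma>0$ gives a uniform $H^1$ bound, and a Kenig--Merle concentration-compactness/rigidity argument in the spirit of \cite{DuRo10,CFR20}, adapted to the delta potential in the even class, yields backward scattering.

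\emph{Expected main obstacles.} The two delicate parts are (i) the spectral analysis of $\mathcal{L}$ in the even class, specifically ruling out embedded eigenvalues/resonances on the imaginary axis so that Strichartz estimates survive the linearization, and (ii) the backward scattering of $U^-$, which requires the full rigidity machinery at the radial threshold $r_{\omega,\gamma}$ in the presence of the delta singularity at the origin.
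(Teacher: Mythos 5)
Your overall route is the same as the paper's (build $U^{A}$ from the unstable mode $\mathcal{Y}_{+}$ via approximate solutions and a fixed point on $[T,\infty)$, read off the sign of a functional from the leading correction, then run the threshold dichotomy backward in time), but the three points you treat as routine are exactly where the content lies, and as written they are gaps. First, the assertion that ``a direct computation shows $dK_{\gamma}(Q)\cdot\mathcal{Y}_{+}\neq 0$'' is not available: $\mathcal{Y}_{+}$ is only defined variationally, so the nondegeneracy of the pairing is a lemma, not a computation. The paper proves the analogous fact $(Q_{\omega,\gamma},\mathcal{Y}_{1})_{H_{\omega,\gamma}^{1}}\neq 0$ by contradiction: if it vanished, $Q_{\omega,\gamma}$ would satisfy the orthogonality conditions defining $\widetilde{G}^{\perp}$, contradicting the coercivity of $\Phi$ there because $\Phi(Q_{\omega,\gamma})=-\frac{p-1}{2}\|Q_{\omega,\gamma}\|_{L^{p+1}}^{p+1}<0$; moreover the sign is detected through $\mu(f)=\|Q_{\omega,\gamma}\|_{\dot H_{\gamma}^{1}}^{2}-\|f\|_{\dot H_{\gamma}^{1}}^{2}$ (the Nehari functional under \eqref{ME}), not through $K_{\gamma}$ directly. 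You need some such argument.

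Second, your backward-in-time analysis misses the difficulty specific to $\gamma<0$: under \eqref{ME} the modulation functional $\mu$ and the virial functional $K_{\gamma}$ are no longer proportional, so the modulation estimates and the localized virial identity do not match as in the potential-free case. The paper's Proposition \ref{prop2.7} (proved via the generalized functionals $K_{\omega,\gamma}^{\alpha,\beta}$ with exponents outside the standard range) supplies $K_{\gamma}\geq c\mu>0$, resp.\ $K_{\gamma}\leq c\mu<0$, and this is what makes both the backward scattering of $U^{-}$ (through $\int_{\tau_{1}}^{\tau_{2}}\mu\,dt\lesssim \mu(\tau_{1})+\mu(\tau_{2})$) and the backward blow-up of $U^{+}$ (through the Banica-type inequality of Corollary \ref{cor4.15}) close; a naive ``virial/convexity'' argument fails at threshold since $K_{\gamma}(u(t))$ is not bounded away from zero. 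Relatedly, you cannot invoke the method of \cite{GuIn22pre} to ``remove finite variance'' and still conclude finite-time blow-up: that method alone yields only blow-up or grow-up. The paper instead proves that $U^{+}$ has finite variance (Lemma \ref{lem5.7}), exploiting the forward exponential convergence to $Q_{\omega,\gamma}$ in place of the radial Sobolev inequality unavailable in 1D, and then runs the finite-variance argument backward via the sign of $\im\int x\,\partial_{x}u\,\overline{u}\,dx$ and time reversal. Finally, a smaller remark on your Step 2: the paper's contraction needs no stable/center projections, no Strichartz spaces, and no resonance analysis for $\mathcal{L}_{\omega,\gamma}$ --- it works in a weighted $L_{t}^{\infty}H^{1}$ space with $L_{t}^{1}H^{1}$ source estimates, smallness coming from taking $k$ large --- and the eigenfunctions are not smooth at the origin, so their decay must be handled in the cut-off class $\widetilde{\mathcal{S}}$, a point your plan does not address.
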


\begin{theorem}[Global dynamics on the threshold]
\label{thm1.6}
Let $\omega>\gamma^{2}/4$. Assume that $u_{0} \in H^1(\mathbb{R})$ is even and satisfies the mass-energy condition
\begin{align}
\label{ME}
\tag{ME}
	M(u_{0})=M(Q_{\omega,\gamma}) \text{ and } E_{\gamma}(u_{0})=E_{\gamma}(Q_{\omega,\gamma}).
\end{align}
Then the following are true for~\eqref{NLS}. 
\begin{enumerate}
\item If $K_{\gamma}(u_{0})>0$, the solution $u$ scatters in both time directions, or else $u=U^{-}$ up to symmetry.
\item If $K_{\gamma}(u_{0})=0$, then $u=e^{i\omega t}Q_{\omega,\gamma}$ up to symmetry.
\item If $K_{\gamma}(u_{0})<0$ and $\int_{\mathbb{R}}|xu_{0}(x)|^{2}dx<\infty$, the solution $u$ blows up in both time directions, or else $u=U^{+}$ up to symmetry. 
\end{enumerate}
\end{theorem}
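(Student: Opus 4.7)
The plan follows the one-pass / modulation strategy of Duyckaerts--Merle and Duyckaerts--Roudenko, adapted to the repulsive delta potential in the even subspace. Part (2) is the variational endpoint: the hypotheses (ME) and $K_{\gamma}(u_{0})=0$ yield $S_{\omega,\gamma}(u_{0})=r_{\omega,\gamma}$, so $u_{0}$ attains the minimization defining $r_{\omega,\gamma}$, and the uniqueness-up-to-phase clause of Proposition~\ref{prop1.1}(3) forces $u_{0}=e^{i\theta}Q_{\omega,\gamma}$ for some $\theta\in\R$; hence $u(t)=e^{i(\omega t+\theta)}Q_{\omega,\gamma}$. For (1) and (3), a continuity argument based on the variational inequalities at threshold shows sign-preservation of $K_{\gamma}(u(t))$ along the flow, and that $K_{\gamma}(u(t))$ stays bounded away from zero whenever $u(t)$ stays at positive $H^{1}$-distance from the ground-state orbit $\{e^{i\theta}Q_{\omega,\gamma}:\theta\in\R\}$. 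Combined with the virial identity under the finite-variance hypothesis for (3), this produces the alternative: either $u$ scatters (resp.\ blows up) in positive time, or $\inf_{\theta\in\R}\|u(t)-e^{i\theta}Q_{\omega,\gamma}\|_{H^{1}}\to 0$ along some sequence $t_{n}\to\infty$.

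In the exceptional regime, I write
\[
u(t,x)=e^{i(\omega t+\theta(t))}\bigl(Q_{\omega,\gamma}(x)+h(t,x)\bigr),
\]
with $\theta(t)$ chosen to enforce $\langle h(t),iQ_{\omega,\gamma}\rangle=0$. Splitting $h=h_{1}+ih_{2}$, the linearization at $Q_{\omega,\gamma}$ is governed by
\[
L_{+}=-\partial_{x}^{2}-\gamma\delta+\omega-pQ_{\omega,\gamma}^{p-1},\qquad L_{-}=-\partial_{x}^{2}-\gamma\delta+\omega-Q_{\omega,\gamma}^{p-1},
\]
acting on even functions. As in the free $L^{2}$-supercritical NLS, on the even subspace the matrix operator $\mathcal{L}=\bigl(\begin{smallmatrix}0 & -L_{-}\\ L_{+} & 0\end{smallmatrix}\bigr)$ has exactly one pair of simple real eigenvalues $\pm e_{0}$ with eigenfunctions $\mathcal{Y}_{\pm}$, a generalized kernel from phase symmetry, and purely imaginary spectrum bounded away from zero elsewhere. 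Weinstein-type coercivity of $\tfrac12\langle L_{+}h_{1},h_{1}\rangle+\tfrac12\langle L_{-}h_{2},h_{2}\rangle$ on the symplectic complement of $\Span\{iQ_{\omega,\gamma},\mathcal{Y}_{\pm}\}$, combined with the constraint (ME), gives $\|h(t)\|_{H^{1}}^{2}\ceq |a_{+}(t)|^{2}+|a_{-}(t)|^{2}$, where $a_{\pm}(t):=\langle h(t),\mathcal{Z}_{\pm}\rangle$ are biorthogonal projections onto $\mathcal{Y}_{\pm}$. The modulation equations reduce at leading order to $\dot a_{\pm}=\pm e_{0}\,a_{\pm}+O(\|h\|_{H^{1}}^{2})$, so a standard ODE comparison forces either $a_{+}\equiv 0$ or exponential growth of $|a_{+}|$ in positive time (and symmetrically for $a_{-}$ in negative time).

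The rigidity step has three parts. First, the subsequential convergence is upgraded to $\inf_{\theta}\|u(t)-e^{i\theta}Q_{\omega,\gamma}\|_{H^{1}}\to 0$ as $t\to+\infty$, via a concentration-compactness / profile decomposition for $e^{it\Delta_{\gamma}}$ (Mizutani's wave-operator identification reduces the analysis to the free propagator), combined with the sub-threshold scattering theorem of Ikeda--Inui to eliminate every profile but the ground state. Second, the ODE bound $|a_{\pm}|\lesssim e^{-ct}$ is converted into $\|u(t)-e^{i(\omega t+\theta(t))}Q_{\omega,\gamma}\|_{H^{1}}\lesssim e^{-ct}$ as $t\to\infty$. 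Third, a uniqueness lemma---any two solutions converging to the same orbit at this exponential rate coincide up to phase and time translation, proved by a contraction argument in Strichartz norms weighted by $e^{ct}$---matches $u$ against the special solutions of Theorem~\ref{thm1.5}, yielding $u=U^{-}$ in (1) and $u=U^{+}$ in (3) up to the full symmetry group (phase, time translation, and complex conjugation combined with time reversal). The hardest step I expect is the first one: with the delta potential anchoring the origin and evenness restricting translations to symmetric pairs escaping to $\pm\infty$, ruling out energy leakage to infinity in the profile decomposition at the exact threshold requires a sharp even-symmetric Pythagorean energy decoupling adapted to the delta potential, together with a careful use of Ikeda--Inui to knock out all non-ground-state profiles.
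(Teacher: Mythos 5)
Your overall architecture (variational endpoint for (2), dichotomy plus modulation/rigidity for (1) and (3), identification with the special solutions by a uniqueness-near-the-orbit lemma) is the same as the paper's, and your treatment of (2) and your weighted-contraction uniqueness step are fine variants of Lemma \ref{lem2.5}(2) and Section \ref{sec5}. The first genuine gap is that you never confront the difficulty that is specific to $\gamma<0$: under \eqref{ME} the virial functional $K_\gamma$ and the quantity $\mu(u)=\|Q_{\omega,\gamma}\|_{\dot{H}_{\gamma}^{1}}^{2}-\|u\|_{\dot{H}_{\gamma}^{1}}^{2}$ that controls the modulation parameters are \emph{not} proportional (the $|u(0)|^{2}$ terms carry different coefficients), unlike the free case you are importing from Duyckaerts--Roudenko. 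The virial argument at threshold needs the quantitative comparison $K_{\gamma}(u(t))\geq c\,\mu(u(t))>0$ (resp. $\leq c\,\mu(u(t))<0$), because the localized virial error is only controlled as $|A_R(u(t))|\lesssim \varepsilon\,\mu(u(t))$ and the modulation derivatives as $|\rho'|+|\theta'|\lesssim \mu(u(t))$; your qualitative statement that $K_\gamma$ ``stays bounded away from zero at positive distance from the orbit'' is not enough to close the estimates of Lemma \ref{lem4.10} or Lemma \ref{lem4.16}. The paper supplies this comparison in Proposition \ref{prop2.7}, whose proof requires characterizing the ground state by the functional $K_{\omega,\gamma}^{\frac{1}{2}-\frac{2c}{p-1},1}$, i.e.\ by a pair $(\alpha,\beta)$ \emph{outside} the admissible range \eqref{eq2.1} (Lemma \ref{lem2.9}); this is the main new ingredient and is absent from your proposal.

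The second gap is the convergence mechanism itself. Concentration-compactness plus the sub-threshold scattering theorem only yields precompactness of $\{u(t)\}$ modulo phase (Lemma \ref{lem4.6.0}); it does not upgrade subsequential convergence to $\inf_\theta\|u(t)-e^{i\theta}Q_{\omega,\gamma}\|_{H^{1}}\to0$, since a precompact trajectory can recur near and away from the orbit. The paper obtains $\mu(u(t))\to0$ with $\int_t^\infty|\mu|\lesssim e^{-ct}$ from the localized virial/modulation interplay (Lemmas \ref{lem4.10}--\ref{lem4.12}) in case (1), and in case (3) from the finite-variance virial together with the Banica-type Cauchy--Schwarz argument based on the $\delta$-adapted Gagliardo--Nirenberg inequality (Lemmas \ref{GN}, \ref{lem4.14}, \ref{lem4.16}); your spectral ODE cannot replace this, for two reasons. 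First, the expansion $u=e^{i(\omega t+\theta)}(Q_{\omega,\gamma}+h)$ with $\|h\|_{H^{1}}\approx|a_+|+|a_-|$ and $\dot a_\pm=\pm e_\omega a_\pm+O(\|h\|^{2})$ is only valid once $u$ is already trapped near the orbit for all large times, which is exactly what is to be proved. Second, the dichotomy ``either $a_+\equiv0$ or $|a_+|$ grows exponentially'' is false for the nonlinear system: the unstable coefficient can be nonzero yet decaying, slaved to the quadratic forcing by the stable component (this is precisely what happens along the special solutions, whose expansions $\mathcal{V}_k^{A}$ contain nonzero corrections in all spectral directions at higher order). The $a_\pm$ analysis belongs to the uniqueness step (Section \ref{sec5}, Lemmas \ref{lem5.3} and \ref{lem5.6}), not to the proof of convergence; to repair your argument, keep your framework but insert Proposition \ref{prop2.7} and the virial/modulation lemmas of Section \ref{sec4} before invoking the linearized flow.
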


\begin{remark}
\begin{enumerate}
\item The phrase ``up to symmetry'' means up to multiplication by a complex phase, time translation, and time reversibility. 
\item We need not consider $S_{\omega,\gamma}(f) = r_{\omega,\gamma}$ for $\omega>\gamma^{2}/4$ since we have the following statement: 
If $f$ satisfies $S_{\omega,\gamma}(f) = r_{\omega,\gamma}$ for some $\omega > \gamma^{2}/4$, then either $f$ satisfies $S_{\omega',\gamma}(f) < r_{\omega',\gamma}$ for some $\omega' > 0$ or else $M(f)=M(Q_{\omega,\gamma})$ and $E_{\gamma}(f)=E_{\gamma}(Q_{\omega,\gamma})$. We will give the proof in Appendix \ref{appA}. 
\item We assume finite variance to show finite time blow-up. This can be removed by allowing for grow-up, and using the method of \cite{GuIn22pre},
but we do not pursue it here. 
\end{enumerate}
\end{remark}


\subsection{Background and idea of proof}

Global dynamics below and at the ground state level for the nonlinear Schr\"{o}dinger equation without potential is well investigated. See \cite{FXC11,AkNa13,Gue14,DWZ16,Guo16} for the dichotomy result below the ground state. Duyckaerts and Roundenko \cite{DuRo10} clarified the global dynamics on the threshold for 3d cubic NLS. Campos, Farah, and Roudenoko \cite{CFR20} generalized it to any dimensions and all mass-supercritical and energy-subcritical powers (and also treat the energy-critical nonlinearity). See also \cite{GuIn22pre} for the blow-up or grow-up result on the threshold. 

Our proof relies heavily on those of Duyckaerts and Roudenko \cite{DuRo10} and Campos, Farah, and Roudenko \cite{CFR20}. First, we show the existence of a family of solutions going to the ground state exponentially in positive time. In this step, the eigenfunctions of the linearized operator around the ground state play an important role. In the potential-free case, the corresponding eigenfunctions are of Schwartz class on $\mathbb{R}$, while in the present case, they are not smooth at the origin due to the delta interaction. We must introduce a cut-off function to remove the origin, somewhat complicating the proof. In the next step, we show that a threshold solution tends to the ground state if $K_\gamma > 0$ and it it non-scattering, or if $K_\gamma < 0$ and it is global. A modulation argument and the virial identity are crucial to this argument, but here we meet a difficulty. To control the modulation parameters, we use the functional
$\mu(u):= \|Q_{\omega,\gamma}\|_{\dot{H}_{\gamma}^{1}}^{2}-\|u\|_{\dot{H}_{\gamma}^{1}}^{2}$,
while the functional appearing in the virial identity is $K_\gamma(u)$. 
When $\gamma=0$, these are same under the assumption \eqref{ME}, and so the modulation argument and the virial identity are compatible. However, these are different when $\gamma<0$. To overcome this, we show that $K_\gamma (u)- c\mu (u) >0$ (resp. $<0$) when $K_\gamma >0$ (resp. $<0$), by characterizing the ground state in terms of a functional $K_{\omega,\gamma}^{\alpha,\beta}$ generalizing the 
virial and Nehari functionals. Though such an argument can be also seen in Ikeda and Inui \cite{IkIn17}, we need to treat a more general functional than that used there. 
Finally, we show that the solutions tending to the ground state exponentially are unique up to symmetries, by a bootstrap argument,
and investigate the behavior of those solutions in negative time. To determine the behavior of a solution with $K_{\gamma}(u_0)<0$ in negative time, we must show the solution has finite variance. In the higher dimensional cases, this fact follows from the radial symmetry (see e.g. Remark 5.2 in \cite{DuRo10}) since we can use the radial Sobolev inequality. However, the argument does not work in one dimensional case. Instead of radiality, we use the convergence to the ground state, which itself is of finite variance. See \cite{GuIn22pre} for such an argument.

In the low frequency case $0<\omega \leq \gamma^{2}/4$, there is no ground state at the threshold $r_{\omega,\gamma}$. The global dynamics is an interesting open problem. 

Though the global dynamics for NLS without potential above the ground state has been studied (see e.g. \cite{NaSc12,NaRo16,Nak17,AIKN21}), the global dynamics of \eqref{NLS} above $n_{\omega,\gamma}$ in the non-even setting, and above $r_{\omega,\gamma}$ in the even setting are open problems. 

The paper is organized as follows. In Section \ref{sec2}, we prepare some lemmas for variational arguments, linearized operators, and such. In Proposition \ref{prop2.7}, we relate $K_\gamma$ and $\mu$, as described above. Section \ref{sec3} is devoted to the construction of the family of special solutions. Section \ref{sec4} shows the convergence of threshold solutions to the ground state. In particular, we set up the modulation in Section \ref{sec4.1}, we treat the case of $K_\gamma>0$ in Section \ref{sec4.2.1}, and the case of $K_\gamma<0$ in Section \ref{sec4.2.2}. In Section \ref{sec5}, we prove that the family is the unique solution up to symmetries. Appendix \ref{appA} discusses the envelope of the threshold condition's family in terms of $\omega$. In Appendix \ref{appB}, we give some remarks about Schwartz class with cut-off.

\subsection{Notations}
We define $\langle x \rangle := (1+|x|^{2})^{1/2}$. 
We use $\langle F , \varphi \rangle:=F(\varphi)$ 
for $F \in H^{-1}(\mathbb{R})$ and $\varphi \in H^{1}(\mathbb{R})$. We define the inner products
\begin{align*}
	(f,g)_{L^{2}}&:=\re \int_{\mathbb{R}} f(x) \overline{g(x)} dx,
	\\
	(u,v)_{H_{\omega,\gamma}^{1}}&:= \re \int_{\mathbb{R}} \partial_{x}u(x) \overline{\partial_{x}v(x)} dx - \gamma \re \{u(0)\overline{v(0)} \}+ \omega \re \int_{\mathbb{R}} u(x) \overline{v(x)} dx
	\\
	(u,v)_{\dot{H}_{\gamma}^{1}}&:= \re \int_{\mathbb{R}} \partial_{x}u(x) \overline{\partial_{x}v(x)} dx - \gamma \re \{ u(0)\overline{v(0)}\}
\end{align*}
for $f,g \in L^{2}(\mathbb{R})$ and $u,v \in H^{1}(\mathbb{R})$. 
The norms of $H_{\omega,\gamma}^{1}$ and $\dot{H}_{\gamma}^{1}$ are defined by the above inner products: 
\begin{align*}
	\|f\|_{\dot{H}_{\gamma}^{1}}^{2}&:= \|\partial_{x} f\|_{L^{2}}^{2} - \gamma |f(0)|^{2}
	=\|\sqrt{-\Delta_{\gamma}}f\|_{L^2}^2,
	\\
	\|f\|_{H_{\omega,\gamma}^{1}}^{2}&:= \|\partial_{x} f\|_{L^{2}}^{2} - \gamma |f(0)|^{2} +\omega\|f\|_{L^{2}}^{2}
	=\|\sqrt{\omega -\Delta_{\gamma}}f\|_{L^2}^2,
\end{align*}
for $f \in H^{1}(\mathbb{R})$. We note that $\dot{H}_{\gamma}^{1}=H_{0,\gamma}^{1}$, and that $H_{\omega,\gamma}^{1}$ is equivalent to $H^{1}$. 
Let $\mathbb{N}$ denote the nonnegative integers. 
We denote the set of smooth functions such that all the derivatives are bounded on $\mathbb{R}$ by
\begin{align*}
	BC^\infty(\mathbb{R}) := \{f \in C^\infty(\mathbb{R}): \partial_x^\alpha f \in L^\infty(\mathbb{R}), \forall \alpha \in \mathbb{N}\}.
\end{align*}
For a function space $X(\mathbb{R})$, we sometimes omit $\mathbb{R}$ when it is clear. For a function space $X$, we denote 
\begin{align*}
	X_0&:= \{f \in X:  f(x)=0 \text{ on an open interval including } 0\},
	\\
	X_c&:=  \{f \in X:  \supp f \text{ is compact } \},
	\\
	X_\even &:= \{f \in X:  f \text{ is even } \}.
\end{align*}
We may also mix these notations, e.g. $X_{c,0}$ and $X_{c,0,\even}$. 
Let $\mathcal{S}(\mathbb{R})$ be the Schwartz space. We define 
\begin{align*}
	\widetilde{\mathcal{S}}(\mathbb{R}):= \{f \in C^\infty(\mathbb{R}\setminus\{0\}): \varphi f \in \mathcal{S}(\mathbb{R}), \forall \varphi \in BC_0^\infty(\mathbb{R})\}.
\end{align*}
We set 
\begin{align*}
	\mathcal{D}:=\mathcal{D}(-\Delta_\gamma)=\{ f \in H^{1}(\mathbb{R}) \cap H^{2}(\mathbb{R}\setminus\{0\}): f'(0+) - f'(0-)=- \gamma f(0)\},
\end{align*}
and note that $\mathcal{D} \subset W^{1,\infty}$ holds by the Sobolev embedding. 

For a time interval $I$, we define space-time norms by 
\begin{align*}
	\|F\|_{L_t^qX(I)} := \|\|F\|_X \|_{L_t^q(I)},
\end{align*}
where $X$ denotes the function space for the spatial variable. 

\section{Preliminaries}
\label{sec2}

In this section, we give some lemmas which are used in the sequel. In what follows, $\omega>\gamma^{2}/4$ is always assumed. We assume that the initial data $u_{0} \in H_\even^{1}(\mathbb{R})$ satisfies \eqref{ME} in this section.  

\subsection{Variational argument}

In this section, we revisit the variational argument not only for the 
virial and Nehari functionals but also for more general functionals $K_{\omega,\gamma}^{\alpha,\beta}$, defined by
%
\begin{align*}
	K_{\omega,\gamma}^{\alpha,\beta}(f)
	&:=\partial_{\lambda}S_{\omega,\gamma}(e^{\alpha \lambda}f(e^{\beta\lambda}\cdot))|_{\lambda=0}
	\\
	&=\frac{2\alpha+\beta}{2}\|\partial_{x}f\|_{L^{2}}^{2}
	-\alpha\gamma|f(0)|^{2}
	+\omega \frac{2\alpha-\beta}{2}\|f\|_{L^{2}}^{2}
	-\frac{(p+1)\alpha-\beta}{p+1}\|f\|_{L^{p+1}}^{p+1}.
\end{align*}
By definition, $(\alpha,\beta)=(1/2,1)$ gives the virial functional, that is $K_{\gamma}(f)=K_{\omega,\gamma}^{1/2,1}(f)$, and $(\alpha,\beta)=(1,0)$ gives the Nehari functional, that is
\begin{align*}
	I_{\omega,\gamma}(f):=K_{\omega,\gamma}^{1,0}(f)
	=\|\partial_{x}f\|_{L^{2}}^{2} -\gamma|f(0)|^{2} +\omega\|f\|_{L^{2}}^{2} -\|f\|_{L^{p+1}}^{p+1}. 
\end{align*}

If  $(\alpha,\beta)$ satisfies
\begin{align}
\label{eq2.1}
	\alpha>0, 2\alpha-\beta\geq 0, 2\alpha+\beta\geq0,
\end{align}
we obtain the following lemma by \cite[Lemmas 2.7--2.10]{IkIn17}. 

\begin{lemma}
\label{lem2.3}
Let $(\alpha,\beta)$ satisfy \eqref{eq2.1}. 
We have
\begin{align*}
	S_{\omega,\gamma}(Q_{\omega,\gamma})
	=\inf \{S_{\omega,\gamma}(f):f\in H_{\even}^{1}(\mathbb{R})\setminus \{0\}, K_{\omega,\gamma}^{\alpha,\beta}(f)=0\}.
\end{align*}
\end{lemma}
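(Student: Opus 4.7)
The plan is to bracket $r^{\alpha,\beta}_{\omega,\gamma} := \inf\{S_{\omega,\gamma}(f): f \in H^1_{\even}(\mathbb{R})\setminus\{0\}, K^{\alpha,\beta}_{\omega,\gamma}(f) = 0\}$ between $S_{\omega,\gamma}(Q_{\omega,\gamma})$ on both sides. The upper bound is immediate: since $Q_{\omega,\gamma}$ solves \eqref{eleq}, it is a critical point of $S_{\omega,\gamma}$ on $H^1(\mathbb{R})$, so every directional derivative vanishes. In particular
\[
K^{\alpha,\beta}_{\omega,\gamma}(Q_{\omega,\gamma}) = \partial_\lambda S_{\omega,\gamma}(e^{\alpha\lambda}Q_{\omega,\gamma}(e^{\beta\lambda}\cdot))\big|_{\lambda=0} = 0,
\]
and $Q_{\omega,\gamma}$ is admissible (even and nonzero), yielding $r^{\alpha,\beta}_{\omega,\gamma} \leq S_{\omega,\gamma}(Q_{\omega,\gamma})$.

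For the matching lower bound, I would extract the algebraic heart of the argument by introducing the reduced functional
\[
T^{\alpha,\beta}_{\omega,\gamma}(f) := S_{\omega,\gamma}(f) - \frac{1}{(p+1)\alpha-\beta}\, K^{\alpha,\beta}_{\omega,\gamma}(f),
\]
designed to kill the $L^{p+1}$ term. A direct computation gives
\[
T^{\alpha,\beta}_{\omega,\gamma}(f) = \frac{(p-1)\alpha-2\beta}{2((p+1)\alpha-\beta)}\|\partial_x f\|_{L^2}^2 - \frac{(p-1)\alpha-\beta}{2((p+1)\alpha-\beta)}\gamma|f(0)|^2 + \frac{(p-1)\alpha}{2((p+1)\alpha-\beta)}\omega\|f\|_{L^2}^2.
\]
Under \eqref{eq2.1}, the constraint $|\beta|\leq 2\alpha$ combined with $p>5$ yields $(p-1)\alpha-2\beta \geq (p-5)\alpha > 0$, $(p-1)\alpha-\beta \geq (p-3)\alpha > 0$, and $(p+1)\alpha-\beta \geq (p-1)\alpha > 0$, so all three coefficients are strictly positive. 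Together with $\gamma<0$ this gives the coercivity $T^{\alpha,\beta}_{\omega,\gamma}(f) \gtrsim \|f\|_{H^1}^2$, and on the constraint set $\{K^{\alpha,\beta}_{\omega,\gamma}=0\}$ one has $S_{\omega,\gamma}(f) = T^{\alpha,\beta}_{\omega,\gamma}(f) \gtrsim \|f\|_{H^1}^2$.

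With this coercivity in hand, the variational argument of \cite[Lemmas~2.7--2.10]{IkIn17} carries over with $K^{\alpha,\beta}_{\omega,\gamma}$ replacing the virial $K_\gamma$: I would analyze $\phi(\lambda) = S_{\omega,\gamma}(e^{\alpha\lambda}f(e^{\beta\lambda}\cdot))$ (whose second derivative at its unique critical point is negative by the same sign analysis) to show that the infimum over $\{K^{\alpha,\beta}_{\omega,\gamma}=0\}$ agrees with the infimum over $\{K^{\alpha,\beta}_{\omega,\gamma}\leq 0\}$; then extract a nonzero $H^1$-weak limit of a minimizing sequence, and apply Lagrange multipliers. Pairing $S_{\omega,\gamma}'(f^\ast) = \mu \, (K^{\alpha,\beta}_{\omega,\gamma})'(f^\ast)$ with the scaling vector field and invoking $K^{\alpha,\beta}_{\omega,\gamma}(f^\ast)=0$ forces $\mu = 0$, so $f^\ast$ solves \eqref{eleq}, and Proposition \ref{prop1.1} identifies $f^\ast$ with $Q_{\omega,\gamma}$ up to phase.

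The hardest step will be compactness: in one dimension, even functions do not enjoy a compact Sobolev embedding like higher-dimensional radial functions. I would resolve this via symmetric-decreasing rearrangement, which preserves evenness and the three positive-definite quantities in $T^{\alpha,\beta}_{\omega,\gamma}$; the trace term $|f(0)|^2$ is controlled using the one-dimensional embedding $H^1(\mathbb{R}) \hookrightarrow C^0(\mathbb{R})$ together with the sharp bound $\|f\|_{L^\infty}^2 \leq \|f\|_{L^2}\|\partial_x f\|_{L^2}$, ruling out both vanishing and dichotomy for the minimizing sequence. The novelty of the lemma beyond \cite{IkIn17} thus lies in the algebraic identification of $T^{\alpha,\beta}_{\omega,\gamma}$ and the uniform positivity of its coefficients throughout the region \eqref{eq2.1}.
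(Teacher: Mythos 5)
Your upper bound, your identification of $T^{\alpha,\beta}_{\omega,\gamma}$ with strictly positive coefficients under \eqref{eq2.1} (your trace coefficient $(p-1)\alpha-\beta$ is indeed the correct one), and the Lagrange-multiplier endgame are all sound, and they mirror what the paper itself does later in the proof of Lemma \ref{lem2.9}; for the present lemma the paper gives no argument beyond citing Lemmas 2.7--2.10 of Ikeda--Inui. The genuine gap is your compactness step. Symmetric-decreasing rearrangement does not ``preserve the three positive quantities'' in $T^{\alpha,\beta}_{\omega,\gamma}$: it fixes $\|f\|_{L^2}$ and $\|f\|_{L^{p+1}}$, decreases $\|\partial_x f\|_{L^2}$, but \emph{increases} the repulsive trace term, because $f^*(0)=\|f\|_{L^\infty}\ge |f(0)|$ and $-\gamma>0$. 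Hence rearrangement can raise $S_{\omega,\gamma}$ and it does not preserve the constraint $K^{\alpha,\beta}_{\omega,\gamma}(f)=0$, so the rearranged sequence is in general neither admissible nor minimizing. The minimizer itself warns against this tool: $Q_{\omega,\gamma}$ is not symmetric-decreasing, having two symmetric maxima at $|x|=-\tfrac{2}{(p-1)\sqrt{\omega}}\tanh^{-1}\bigl(\tfrac{\gamma}{2\sqrt{\omega}}\bigr)>0$ and a dip at the origin.

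More fundamentally, your outline never uses the standing assumption $\omega>\gamma^{2}/4$, yet by Proposition \ref{prop1.1} the even constrained minimization is \emph{not} attained when $0<\omega\le\gamma^{2}/4$: its value is $2n_{\omega,0}$, approached by two half-solitons escaping to $\pm\infty$. Any argument that produces a minimizer without invoking the high-frequency assumption therefore cannot be correct. The mechanism that actually rules out this dichotomy scenario is the strict inequality $r_{\omega,\gamma}<2n_{\omega,0}$, which holds precisely for $\omega>\gamma^{2}/4$ (Proposition \ref{prop1.1}), fed into a concentration--compactness argument on the half-line/even class; this is exactly the content hidden in the paper's citation to Ikeda--Inui (one can alternatively reduce general admissible $(\alpha,\beta)$ to the already-established Nehari and virial cases by scaling comparisons, again as done there, instead of re-running compactness for each $(\alpha,\beta)$). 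The embedding $H^{1}(\mathbb{R})\hookrightarrow C^{0}(\mathbb{R})$ only controls vanishing; it says nothing about dichotomy, which is where the real work lies.
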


We define 
\begin{align*}
	\mathcal{K}_{\omega,\gamma}^{\alpha,\beta,+}
	&:=\{f\in H_{\even}^{1}(\mathbb{R}): M(f)=M(Q_{\omega,\gamma}), E_{\gamma}(f)=E_{\gamma}(Q_{\omega,\gamma}), K_{\omega,\gamma}^{\alpha,\beta}(f)>0\},
	\\
	\mathcal{K}_{\omega,\gamma}^{\alpha,\beta,0}
	&:=\{f\in H_{\even}^{1}(\mathbb{R}): M(f)=M(Q_{\omega,\gamma}), E_{\gamma}(f)=E_{\gamma}(Q_{\omega,\gamma}), K_{\omega,\gamma}^{\alpha,\beta}(f)=0\},
	\\
	\mathcal{K}_{\omega,\gamma}^{\alpha,\beta,-}
	&:=\{f\in H_{\even}^{1}(\mathbb{R}): M(f)=M(Q_{\omega,\gamma}), E_{\gamma}(f)=E_{\gamma}(Q_{\omega,\gamma}), K_{\omega,\gamma}^{\alpha,\beta}(f)<0\}.
\end{align*}

We know that $\mathcal{K}_{\omega,\gamma}^{\alpha,\beta,0}=\{e^{i\theta}Q_{\omega,\gamma}: \theta \in \mathbb{R}\}$ by Proposition \ref{prop1.1} and Lemma \ref{lem2.3}.

\begin{lemma}
$\mathcal{K}_{\omega,\gamma}^{\alpha,\beta,+}$ is independent of $(\alpha,\beta)$; that is, $\mathcal{K}_{\omega,\gamma}^{\alpha,\beta,+}=\mathcal{K}_{\omega,\gamma}^{\alpha',\beta',+}$ for $(\alpha,\beta)$ and $(\alpha',\beta')$ satisfying \eqref{eq2.1}. The same holds for $\mathcal{K}_{\omega,\gamma}^{\alpha,\beta,-}$. 
\end{lemma}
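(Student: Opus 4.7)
The plan is to exploit two structural facts together with the identification $\mathcal{K}_{\omega,\gamma}^{\alpha,\beta,0}=\{e^{i\theta}Q_{\omega,\gamma}:\theta\in\mathbb{R}\}$ recorded just above. First, inspection of the defining formula for $K_{\omega,\gamma}^{\alpha,\beta}(f)$ shows that for each fixed $f$, the map $(\alpha,\beta)\mapsto K_{\omega,\gamma}^{\alpha,\beta}(f)$ is linear, since each of the four coefficients $(2\alpha+\beta)/2$, $\alpha$, $(2\alpha-\beta)/2$, $((p+1)\alpha-\beta)/(p+1)$ is linear in $(\alpha,\beta)$. Second, the admissible region \eqref{eq2.1} is convex, being cut out by linear (half-plane) inequalities. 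With these two facts at hand a short intermediate value argument will suffice.

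Concretely, fix $(\alpha,\beta)$ and $(\alpha',\beta')$ satisfying \eqref{eq2.1}, and take $f\in\mathcal{K}_{\omega,\gamma}^{\alpha,\beta,+}$. Assume for contradiction that $K_{\omega,\gamma}^{\alpha',\beta'}(f)\leq 0$, and interpolate linearly via $(\alpha_t,\beta_t):=(1-t)(\alpha,\beta)+t(\alpha',\beta')$ for $t\in[0,1]$. Convexity of \eqref{eq2.1} keeps $(\alpha_t,\beta_t)$ admissible, and linearity in $(\alpha,\beta)$ gives
\begin{align*}
K_{\omega,\gamma}^{\alpha_t,\beta_t}(f)=(1-t)\,K_{\omega,\gamma}^{\alpha,\beta}(f)+t\,K_{\omega,\gamma}^{\alpha',\beta'}(f),
\end{align*}
which is strictly positive at $t=0$ and non-positive at $t=1$. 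The intermediate value theorem then produces $t_\ast\in(0,1]$ with $K_{\omega,\gamma}^{\alpha_{t_\ast},\beta_{t_\ast}}(f)=0$. Since $f$ is even and satisfies \eqref{ME}, this forces $f\in\mathcal{K}_{\omega,\gamma}^{\alpha_{t_\ast},\beta_{t_\ast},0}=\{e^{i\theta}Q_{\omega,\gamma}:\theta\in\mathbb{R}\}$, so $f=e^{i\theta}Q_{\omega,\gamma}$ for some $\theta$. Applying the same identification at $(\alpha,\beta)$ shows $Q_{\omega,\gamma}\in\mathcal{K}_{\omega,\gamma}^{\alpha,\beta,0}$, i.e.\ $K_{\omega,\gamma}^{\alpha,\beta}(Q_{\omega,\gamma})=0$, whence $K_{\omega,\gamma}^{\alpha,\beta}(f)=0$, contradicting $K_{\omega,\gamma}^{\alpha,\beta}(f)>0$. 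Exchanging the two pairs yields equality $\mathcal{K}_{\omega,\gamma}^{\alpha,\beta,+}=\mathcal{K}_{\omega,\gamma}^{\alpha',\beta',+}$. The statement for $\mathcal{K}_{\omega,\gamma}^{\alpha,\beta,-}$ is handled identically after reversing inequalities.

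I do not anticipate any genuine obstacle here: the variational heavy lifting has already been done, inside Lemma \ref{lem2.3} and the resulting identification of $\mathcal{K}_{\omega,\gamma}^{\alpha,\beta,0}$ for the whole admissible family. The only mild checks are the linearity of $K_{\omega,\gamma}^{\alpha,\beta}(f)$ in $(\alpha,\beta)$ and that the interpolating parameters $(\alpha_{t_\ast},\beta_{t_\ast})$ remain in \eqref{eq2.1}, both of which are immediate from the structure of the definitions.
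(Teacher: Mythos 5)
Your argument is correct, and it is genuinely different from the one in the paper. You interpolate in the parameter: using that $(\alpha,\beta)\mapsto K_{\omega,\gamma}^{\alpha,\beta}(f)$ is affine (in fact linear) for fixed $f$ and that the admissible cone \eqref{eq2.1} is convex (note $\alpha_t=(1-t)\alpha+t\alpha'>0$ is preserved), you locate an intermediate admissible pair where the functional vanishes, and then invoke the identification $\mathcal{K}_{\omega,\gamma}^{\alpha_{t_*},\beta_{t_*},0}=\{e^{i\theta}Q_{\omega,\gamma}\}$ to force $f=e^{i\theta}Q_{\omega,\gamma}$ and reach a contradiction with $K_{\omega,\gamma}^{\alpha,\beta}(Q_{\omega,\gamma})=0$. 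The paper instead fixes the two pairs and perturbs the function: assuming $K_{\omega,\gamma}^{\alpha,\beta}(f)<0$, it applies the scaling $f_\lambda^{\alpha,\beta}=e^{\alpha\lambda}f(e^{\beta\lambda}\cdot)$ to push $S_{\omega,\gamma}$ strictly below $S_{\omega,\gamma}(Q_{\omega,\gamma})$ while keeping $K_{\omega,\gamma}^{\alpha,\beta}<0$, then cites the $(\alpha,\beta)$-independence of the sub-threshold set $\mathcal{R}_{\omega,\gamma}^{\alpha,\beta,-}$ (Proposition 2.15 of \cite{IkIn17}) and passes to the limit $\lambda\searrow 0$; the ground-state identification of the zero set is only used there to dispose of the borderline case $K_{\omega,\gamma}^{\alpha,\beta}(f)=0$. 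What your route buys is self-containedness and brevity: no scaling perturbation, no limiting argument, and no appeal to the external invariance result from \cite{IkIn17}; the price is that you use the characterization $\mathcal{K}_{\omega,\gamma}^{\alpha,\beta,0}=\{e^{i\theta}Q_{\omega,\gamma}\}$ at the interpolated parameter rather than only at the given ones, but since that identification is stated for every pair satisfying \eqref{eq2.1} (via Proposition \ref{prop1.1} and Lemma \ref{lem2.3}), this is legitimate and introduces no circularity.
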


\begin{proof}
Let $(\alpha,\beta)$ and $(\alpha',\beta')$ satisfy \eqref{eq2.1}. 
If $\mathcal{K}_{\omega,\gamma}^{\alpha,\beta,+}\subsetneq \mathcal{K}_{\omega,\gamma}^{\alpha',\beta',+}$, then there exists a function $f \in H_\even^1$ such that 
\begin{align*}
	M(f)=M(Q_{\omega,\gamma}),\ 
	E_{\gamma}(f)=E_{\gamma}(Q_{\omega,\gamma}),\ 
	K_{\omega,\gamma}^{\alpha,\beta}(f)\leq 0, \text{ and }
	K_{\omega,\gamma}^{\alpha',\beta'}(f)>0.
\end{align*}
If $K_{\omega,\gamma}^{\alpha,\beta}(f)= 0$, then $f=e^{i\theta}Q_{\omega,\gamma}$ and thus $K_{\omega,\gamma}^{\alpha',\beta'}(f)=0$. This is a contradiction. Hence $K_{\omega,\gamma}^{\alpha,\beta}(f)< 0$. Let $f_{\lambda}^{\alpha,\beta}(x)=e^{\alpha\lambda}f(e^{\beta\lambda}x)$. Then we have
\begin{align*}
	0>K_{\omega,\gamma}^{\alpha,\beta}(f) = \partial_{\lambda} S_{\omega,\gamma}(f_{\lambda}^{\alpha,\beta}) |_{\lambda=0}.
\end{align*}
This means that $S_{\omega,\gamma}(f_{\lambda}^{\alpha,\beta})$ is strictly decreasing in $\lambda$ near $\lambda=0$ and thus $S_{\omega,\gamma}(f_{\lambda}^{\alpha,\beta}) < S_{\omega,\gamma}(f)=S_{\omega,\gamma}(Q_{\omega,\gamma})$ for $\lambda>0$ sufficiently close to $0$. Since we have $K_{\omega,\gamma}^{\alpha,\beta}(f) <0$, it holds that $K_{\omega,\gamma}^{\alpha,\beta}(f_{\lambda}^{\alpha,\beta})<0$ for $\lambda>0$ sufficiently small. This means that $f_{\lambda}^{\alpha,\beta}$ belongs to 
\begin{align*}
	\mathcal{R}_{\omega,\gamma}^{\alpha,\beta,-}
	&:=\{f\in H_{\even}^{1}(\mathbb{R}): S_{\omega,\gamma}(f)<S_{\omega,\gamma}(Q_{\omega,\gamma}), K_{\omega,\gamma}^{\alpha,\beta}(f)<0\}.
\end{align*}
By Proposition 2.15 in \cite{IkIn17}, it holds that $\mathcal{R}_{\omega,\gamma}^{\alpha,\beta,-}=\mathcal{R}_{\omega,\gamma}^{\alpha',\beta',-}$ and thus we get $f_{\lambda}^{\alpha,\beta} \in \mathcal{R}_{\omega,\gamma}^{\alpha',\beta',-}$. In particular, $K_{\omega,\gamma}^{\alpha',\beta'}(f_{\lambda}^{\alpha,\beta})<0$ for $\lambda>0$ sufficiently close to $0$. Taking the limit $\lambda \searrow 0$, $K_{\omega,\gamma}^{\alpha',\beta'}(f)\leq 0$. This is a contradiction. Thus we get the statement. 
\end{proof}

By this lemma, we may omit $(\alpha,\beta)$ from $\mathcal{K}_{\omega,\gamma}^{\alpha,\beta,+}$ and so on. For example, we use $\mathcal{K}_{\omega,\gamma}^{+}$ instead of $\mathcal{K}_{\omega,\gamma}^{\alpha,\beta,+}$. By the variational structure of $Q_{\omega,\gamma}$, the sets are invariant by the flow.

\begin{lemma}
\label{lem2.5}
Let 
$u$ be the solution to \eqref{NLS} with $u(0)=u_{0}$.
\begin{enumerate}
\item If $K_{\gamma}(u_{0})>0$, then $u(t)\in \mathcal{K}_{\omega,\gamma}^{+}$ for the time of existence. 
In particular, the solution is global. 
\item If $K_{\gamma}(u_{0})=0$, then $u_{0}=e^{i\theta}Q_{\omega,\gamma}$ for some $\theta \in \mathbb{R}$. 
\item If $K_{\gamma}(u_{0})<0$, then $u(t)\in \mathcal{K}_{\omega,\gamma}^{-}$ for the time of existence. 
\end{enumerate}
\end{lemma}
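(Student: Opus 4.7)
The plan is to handle the three cases as follows. Part (2) is essentially immediate from the identification $\mathcal{K}_{\omega,\gamma}^{0} = \{e^{i\theta}Q_{\omega,\gamma} : \theta \in \mathbb{R}\}$ noted just after Lemma \ref{lem2.3}: since $u_{0}$ is even, satisfies \eqref{ME}, and $K_{\gamma}(u_{0})=0$, it lies in $\mathcal{K}_{\omega,\gamma}^{0}$, so it must equal $e^{i\theta}Q_{\omega,\gamma}$ for some $\theta$.

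For parts (1) and (3), the mass and energy conservation laws force $u(t)$ to satisfy \eqref{ME} throughout its interval of existence, so it suffices to verify that the sign of $K_{\gamma}(u(t))$ is preserved. I would argue this by a continuity-plus-uniqueness argument: the map $t \mapsto K_{\gamma}(u(t))$ is continuous, because $t \mapsto u(t)$ is continuous in $H^{1}(\mathbb{R})$ by local well-posedness, and each term of $K_{\gamma}$ is continuous in $H^{1}$ (the point-evaluation $|u(0)|^{2}$ by the Sobolev embedding $H^{1}\hookrightarrow C^{0}$, and $\|u\|_{L^{p+1}}^{p+1}$ by $H^{1}\hookrightarrow L^{p+1}$). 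Thus if the sign of $K_{\gamma}(u(t))$ were to change, there would be a time $t_{0}$ with $K_{\gamma}(u(t_{0}))=0$; by part (2) applied at $t_{0}$, we would then have $u(t_{0}) = e^{i\theta}Q_{\omega,\gamma}$, and uniqueness of the $H^{1}$ solution together with the fact that $e^{i\theta}e^{i\omega(t-t_{0})}Q_{\omega,\gamma}$ solves \eqref{NLS} would give $u(0)=e^{i(\theta-\omega t_{0})}Q_{\omega,\gamma}$, contradicting $K_{\gamma}(u_{0})\neq 0$. Hence $u(t)$ remains in $\mathcal{K}_{\omega,\gamma}^{+}$ or $\mathcal{K}_{\omega,\gamma}^{-}$, as the case may be.

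It remains to prove global existence in case (1). I would use $K_{\gamma}>0$ together with the conserved energy to bound $\|\partial_{x}u(t)\|_{L^{2}}$. Eliminating $\|u\|_{L^{p+1}}^{p+1}$ between the identities
\begin{align*}
\tfrac{2}{p+1}\|u\|_{L^{p+1}}^{p+1} &= \|u\|_{\dot{H}_{\gamma}^{1}}^{2} - 2E_{\gamma}(u), \\
\tfrac{p-1}{2(p+1)}\|u\|_{L^{p+1}}^{p+1} &= \|\partial_{x}u\|_{L^{2}}^{2} - \tfrac{\gamma}{2}|u(0)|^{2} - K_{\gamma}(u),
\end{align*}
one obtains an expression of the form
\begin{align*}
K_{\gamma}(u) = \tfrac{5-p}{4}\|\partial_{x}u\|_{L^{2}}^{2} + \tfrac{p-3}{4}\gamma|u(0)|^{2} + \tfrac{p-1}{2}E_{\gamma}(u).
\end{align*}
Since $p>5$ and $\gamma<0$, the first two coefficients on the right are both non-positive, so $K_{\gamma}(u)>0$ yields $\|\partial_{x}u(t)\|_{L^{2}}^{2} \lesssim E_{\gamma}(Q_{\omega,\gamma})$, uniformly in $t$. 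Combined with mass conservation, this gives a uniform $H^{1}$ bound, so the solution extends globally by the blow-up alternative.

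The only step requiring care is the sign-preservation argument: one must make sure to apply part (2) only after confirming continuity of $K_{\gamma}$ along the flow. No substantive obstacle is expected; the proof is entirely a combination of Proposition \ref{prop1.1}, Lemma \ref{lem2.3}, conservation of mass and energy, and standard local well-posedness.
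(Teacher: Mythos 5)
Your proof is correct and follows essentially the same route as the paper, whose (very terse) argument likewise combines the variational characterization of Lemma \ref{lem2.3} (i.e.\ $\mathcal{K}_{\omega,\gamma}^{0}=\{e^{i\theta}Q_{\omega,\gamma}\}$) with the continuity/sign-preservation argument of Ikeda--Inui and a coercivity bound for global existence when $K_{\gamma}>0$. The only point left implicit is the routine fact that evenness is preserved by the flow, which is needed for the conclusion $u(t)\in\mathcal{K}_{\omega,\gamma}^{\pm}$.
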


\begin{proof}
Lemma \ref{lem2.3} and an argument similar to that in \cite[Lemma 2.17]{IkIn17} give the statement. 
\end{proof}

\begin{lemma}
\label{lem2.6}
Let $u$ be the solution to \eqref{NLS} with $u(0)=u_{0}$. Then
\begin{align*}
	K_{\omega,\gamma}^{\alpha,\beta}(u(t)) 
	&= \frac{(p-1)\alpha-2\beta}{2}\|\partial_{x}Q_{\omega,\gamma}\|_{L^{2}}^{2} +\frac{(p-1)\alpha-\beta}{2}(-\gamma)|Q_{\omega,\gamma}(0)|^{2}
	\\
	&\quad -\left(  \frac{(p-1)\alpha-2\beta}{2}\|\partial_{x}u(t)\|_{L^{2}}^{2} +\frac{(p-1)\alpha-\beta}{2}(-\gamma)|u(t,0)|^{2}\right)
\end{align*}
for the time of existence. 
\end{lemma}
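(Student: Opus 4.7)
The plan is to derive the identity as an almost purely algebraic consequence of conservation of mass, conservation of energy, the assumption \eqref{ME}, and the fact that $Q_{\omega,\gamma}$ is a critical point of $S_{\omega,\gamma}$. The central observation is that, since $Q_{\omega,\gamma}$ solves the Euler--Lagrange equation \eqref{eleq}, the scaling derivative vanishes for every choice of parameters:
$$
K_{\omega,\gamma}^{\alpha,\beta}(Q_{\omega,\gamma})
= \partial_{\lambda} S_{\omega,\gamma}\bigl( e^{\alpha\lambda}Q_{\omega,\gamma}(e^{\beta\lambda}\cdot) \bigr)\big|_{\lambda=0} = 0.
$$
Therefore it is enough to rewrite $K_{\omega,\gamma}^{\alpha,\beta}(u(t)) = K_{\omega,\gamma}^{\alpha,\beta}(u(t)) - K_{\omega,\gamma}^{\alpha,\beta}(Q_{\omega,\gamma})$ in the claimed form.

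Next, I would use the two conservation laws together with \eqref{ME} to eliminate the $L^{2}$ and $L^{p+1}$ contributions in that difference. Mass conservation gives $\|u(t)\|_{L^{2}}^{2} = \|Q_{\omega,\gamma}\|_{L^{2}}^{2}$ directly, so the $\omega$-term drops out. Energy conservation combined with \eqref{ME} yields
$$
\frac{1}{p+1}\bigl(\|u(t)\|_{L^{p+1}}^{p+1} - \|Q_{\omega,\gamma}\|_{L^{p+1}}^{p+1}\bigr)
= \frac{1}{2}\bigl(\|\partial_{x} u(t)\|_{L^{2}}^{2} - \|\partial_{x}Q_{\omega,\gamma}\|_{L^{2}}^{2}\bigr) - \frac{\gamma}{2}\bigl(|u(t,0)|^{2} - |Q_{\omega,\gamma}(0)|^{2}\bigr),
$$
which lets me express the $L^{p+1}$ difference solely in terms of the kinetic and boundary-value differences.

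Substituting these two identities into the explicit formula for $K_{\omega,\gamma}^{\alpha,\beta}$ and collecting coefficients of $\|\partial_{x} u(t)\|_{L^{2}}^{2}-\|\partial_{x} Q_{\omega,\gamma}\|_{L^{2}}^{2}$ and of $|u(t,0)|^{2}-|Q_{\omega,\gamma}(0)|^{2}$ gives the kinetic coefficient $\tfrac{2\alpha+\beta}{2}-\tfrac{(p+1)\alpha-\beta}{2}=-\tfrac{(p-1)\alpha-2\beta}{2}$ and the boundary-value coefficient $-\alpha\gamma+\tfrac{((p+1)\alpha-\beta)\gamma}{2}=\tfrac{((p-1)\alpha-\beta)\gamma}{2}$. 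Writing $\gamma=-(-\gamma)$ and moving the $Q_{\omega,\gamma}$-terms to the front produces exactly the stated formula. There is no substantial obstacle here: the lemma is a direct algebraic consequence of the conservation laws and the critical-point identity $K_{\omega,\gamma}^{\alpha,\beta}(Q_{\omega,\gamma})=0$. The only care required is sign bookkeeping (recall $\gamma<0$) so that the factor $(-\gamma)$ appears correctly in the final expression.
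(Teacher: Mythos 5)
Your proposal is correct: the coefficient bookkeeping checks out, and $K_{\omega,\gamma}^{\alpha,\beta}(Q_{\omega,\gamma})=0$ indeed holds for every $(\alpha,\beta)$ since $S_{\omega,\gamma}'(Q_{\omega,\gamma})=0$. This is essentially the paper's own argument — the paper simply states it as a direct calculation using \eqref{ME} together with the conservation laws, and your reduction to $K_{\omega,\gamma}^{\alpha,\beta}(u)-K_{\omega,\gamma}^{\alpha,\beta}(Q_{\omega,\gamma})$ is just a clean way of organizing that same computation.
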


\begin{proof}
This is a direct calculation using \eqref{ME}.
\end{proof}

We define 
\begin{align*}
	\mu(f):= \|Q_{\omega,\gamma}\|_{\dot{H}_{\gamma}^{1}}^{2} - \|f\|_{\dot{H}_{\gamma}^{1}}^{2}.
\end{align*}
This is nothing but the Nehari functional under \eqref{ME}: $I_{\omega,\gamma}(u(t))= \frac{p-1}{2} \mu(u(t))$. Therefore, Lemma \ref{lem2.5} implies that the sign of $\mu$ is conserved by the flow.
This functional $\mu$ is used to control the modulation parameters. 

The following proposition allows for control of the virial functional $K_\gamma$ by $\mu$. 
\begin{proposition}
\label{prop2.7}
We have the following statements.
\begin{enumerate}
\item 
If $K_{\gamma}(u_{0})>0$, then there exists $c=c(p,u_{0})>0$ such that $K_{\gamma}(u(t))\geq  c \mu(u(t))>0$ for $t \in \mathbb{R}$. 
\item If $K_{\gamma}(u_{0})<0$, then there exists $c=c(p,u_{0})>0$ such that $K_{\gamma}(u(t)) \leq c \mu(u(t))<0$ for the time of existence.
\end{enumerate}
\end{proposition}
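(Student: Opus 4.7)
My plan is to reduce the desired inequality to a sign-preservation argument for a generalized virial-type functional. The starting point is the algebraic identity
\begin{align*}
K_\gamma(u)-c\mu(u)
&=K_{\omega,\gamma}^{1/2,1}(u)-\tfrac{2c}{p-1}K_{\omega,\gamma}^{1,0}(u)
=K_{\omega,\gamma}^{\alpha_c,1}(u),
\qquad \alpha_c:=\tfrac{1}{2}-\tfrac{2c}{p-1},
\end{align*}
valid for any $u$ satisfying \eqref{ME}. The first equality uses the Nehari--$\mu$ relation $I_{\omega,\gamma}(u)=\tfrac{p-1}{2}\mu(u)$ noted just after Lemma~\ref{lem2.6}, and the second uses the linearity of $K_{\omega,\gamma}^{\alpha,\beta}$ in $(\alpha,\beta)$. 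Thus case~(1) reduces to showing $K_{\omega,\gamma}^{\alpha_c,1}(u(t))>0$ throughout the flow for some $c=c(p,u_0)>0$; case~(2) is analogous with the reverse sign.

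The strategy mirrors Lemma~\ref{lem2.5}. By the $(\alpha,\beta)$-independence of $\mathcal{K}_{\omega,\gamma}^{\pm}$ (combined with $I_{\omega,\gamma}=\tfrac{p-1}{2}\mu$ under \eqref{ME}), the sign of $\mu(u(t))$ coincides with that of $K_\gamma(u(t))$ along the flow. I would then pick any $c>0$ with $\alpha_c\in(0,1/2)$ and $c<K_\gamma(u_0)/\mu(u_0)$ (a strictly positive ratio in both cases, since $K_\gamma$ and $\mu$ share a sign at $t=0$). With such a choice $K_{\omega,\gamma}^{\alpha_c,1}(u_0)$ has the same strict sign as $K_\gamma(u_0)$, and I would propagate this sign along the flow using continuity: if $K_{\omega,\gamma}^{\alpha_c,1}(u(t_\ast))=0$ at some $t_\ast$, a rigidity statement should force $u(t_\ast)=e^{i\theta}Q_{\omega,\gamma}$, hence $K_\gamma(u(t_\ast))=0$, contradicting Lemma~\ref{lem2.5}.

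The main obstacle is this rigidity --- an analogue of Lemma~\ref{lem2.3} for the pair $(\alpha_c,1)$ with $c>0$. Since $2\alpha_c-1=-\tfrac{4c}{p-1}<0$, $(\alpha_c,1)$ lies outside the admissibility range~\eqref{eq2.1} invoked in \cite[Lemmas~2.7--2.10, Proposition~2.15]{IkIn17}. Concretely, the $(\alpha_c,1)$-scaling $f_\lambda^{\alpha_c,1}$ no longer preserves $\|f\|_{L^2}^2$, and the coefficient $\omega(2\alpha_c-1)/2$ of $\|f\|_{L^2}^2$ in $K_{\omega,\gamma}^{\alpha_c,1}$ is negative, so the classical scaling argument for $\lambda\mapsto S_{\omega,\gamma}(f_\lambda^{\alpha_c,1})$ no longer isolates a unique interior maximum (the functional blows up to $+\infty$ as $\lambda\to-\infty$). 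I would handle this by restricting the minimization problem to the \eqref{ME} level set, where $\|f\|_{L^2}^2=\|Q_{\omega,\gamma}\|_{L^2}^2$ is fixed and the offending term contributes only a constant, and by re-running the scaling argument (along curves that preserve \eqref{ME} at leading order) to localize a maximum of $S_{\omega,\gamma}$ along the constrained direction and conclude that the unique minimizer on $\{K_{\omega,\gamma}^{\alpha_c,1}=0\}\cap\eqref{ME}$ is the phase orbit $\{e^{i\theta}Q_{\omega,\gamma}\}$.

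Case~(2) proceeds in parallel: $K_\gamma(u_0)<0$ forces $\mu(u_0)<0$, and the same small $c>0$ yields $K_{\omega,\gamma}^{\alpha_c,1}(u_0)<0$, which propagates along the flow to give $K_\gamma(u(t))\le c\mu(u(t))<0$ on the interval of existence.
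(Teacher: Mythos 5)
Your reduction is the same as the paper's: the identity $K_{\gamma}(f)-c\mu(f)=K_{\omega,\gamma}^{\frac12-\frac{2c}{p-1},1}(f)$ under \eqref{ME} is Lemma~\ref{lem2.8}, and the sign-propagation-by-continuity argument (if $K_{\omega,\gamma}^{\frac12-\frac{2c}{p-1},1}(u(t_{*}))=0$ then rigidity gives $u(t_{*})=e^{i\theta}Q_{\omega,\gamma}$, contradicting Lemma~\ref{lem2.5}; then take $c$ small enough that the functional is positive, resp.\ negative, at $t=0$) is exactly the paper's proof of Proposition~\ref{prop2.7}. The entire burden therefore falls on the rigidity statement you defer, which is the paper's Lemma~\ref{lem2.9} and is the genuinely new ingredient here: as the remark after Lemma~\ref{lem2.8} stresses, the pair $(\frac12-\frac{2c}{p-1},1)$ violates \eqref{eq2.1}, and for pairs outside that range the corresponding unconstrained infimum can even be $-\infty$, so this step cannot be waved through. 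Your sketch of it does not work as written.

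Two concrete problems. First, restricting the minimization of $S_{\omega,\gamma}$ to $\{K_{\omega,\gamma}^{\frac12-\frac{2c}{p-1},1}=0\}$ intersected with the \eqref{ME} level set is vacuous: on that set $S_{\omega,\gamma}(f)=E_{\gamma}(f)+\frac{\omega}{2}M(f)\equiv S_{\omega,\gamma}(Q_{\omega,\gamma})$ is constant, so every admissible $f$ is a ``minimizer'' and no Euler--Lagrange information is produced; moreover the scaling $f\mapsto e^{\alpha\lambda}f(e^{\beta\lambda}\cdot)$ preserves neither mass nor energy, so the ``curves preserving \eqref{ME} at leading order'' along which you propose to re-run the scaling argument are left unspecified, and constructing them together with the needed monotonicity of $S_{\omega,\gamma}$ is precisely the missing work. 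Second, your smallness condition $\alpha_{c}=\frac12-\frac{2c}{p-1}\in(0,\frac12)$, i.e.\ $c<\frac{p-1}{4}$, is not the right threshold: the paper requires $0<c<\frac{p-5}{4}$, which is exactly what makes all coefficients in the decomposition \eqref{eq2.2}, $S_{\omega,\gamma}=T_{\omega,\gamma}^{\alpha,\beta}+\frac{1}{(p+1)\alpha-\beta}K_{\omega,\gamma}^{\alpha,\beta}$ with $(\alpha,\beta)=(\frac12-\frac{2c}{p-1},1)$, positive. With that in hand, Lemma~\ref{lem2.9} minimizes $S_{\omega,\gamma}$ over all of $H^{1}_{\even}\setminus\{0\}$ subject only to the constraint $K_{\omega,\gamma}^{\frac12-\frac{2c}{p-1},1}=0$ (no \eqref{ME} restriction), and a Lagrange-multiplier computation using $\langle (K_{\omega,\gamma}^{\frac12-\frac{2c}{p-1},1})'(\varphi),\mathscr{L}^{\frac12-\frac{2c}{p-1},1}\varphi\rangle<0$ on the null set shows any minimizer is a critical point of $S_{\omega,\gamma}$, hence $e^{i\theta}Q_{\omega,\gamma}$ by Proposition~\ref{prop1.1}; the condition \eqref{ME} enters only at the very end to identify $f$. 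Until you supply an argument of this kind (or another proof of the rigidity), the sign-propagation step, and with it the proposition, is not established.
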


To show this, we prove the following two lemmas. 

\begin{lemma}
\label{lem2.8}
We have $K_{\gamma}(f) - c \mu(f)=K_{\omega,\gamma}^{\frac{1}{2}-\frac{2c}{p-1},1}(f)$ for $f \in H^1$ satisfying \eqref{ME} 
and $c\in \mathbb{R}$. 
\end{lemma}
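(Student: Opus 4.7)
My approach is algebraic and splits naturally into two steps. The first step exploits the fact that $K_{\omega,\gamma}^{\alpha,\beta}$, as written in its definition, depends linearly on the parameter pair $(\alpha,\beta)$, so
\[
K_{\omega,\gamma}^{\alpha,\beta} \;=\; \alpha\,K_{\omega,\gamma}^{1,0} + \beta\,K_{\omega,\gamma}^{0,1} \;=\; \alpha\,I_{\omega,\gamma} + \beta\,P,
\]
where I set $P := K_{\omega,\gamma}^{0,1}$. Taking $(\alpha,\beta)=(1/2,1)$ recovers $K_\gamma = \tfrac{1}{2}I_{\omega,\gamma} + P$, and applying linearity again at $(\alpha,\beta)=(\tfrac{1}{2}-\tfrac{2c}{p-1},\,1)$ gives at once
\[
K_{\omega,\gamma}^{\frac{1}{2}-\frac{2c}{p-1},\,1}(f) \;=\; K_\gamma(f) \;-\; \frac{2c}{p-1}\,I_{\omega,\gamma}(f).
\]
Thus the lemma reduces to the identity $I_{\omega,\gamma}(f) = \tfrac{p-1}{2}\mu(f)$ for any $f\in H^1$ satisfying \eqref{ME}, which is precisely the relation asserted in the discussion immediately preceding Proposition \ref{prop2.7}.

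For the second step I verify this reduced identity by direct substitution. The mass condition $M(f)=M(Q_{\omega,\gamma})$ replaces $\omega\|f\|_{L^2}^2$ by $\omega\|Q_{\omega,\gamma}\|_{L^2}^2$, while rearranging the energy condition $E_\gamma(f)=E_\gamma(Q_{\omega,\gamma})$ gives
\[
\|f\|_{L^{p+1}}^{p+1} \;=\; \|Q_{\omega,\gamma}\|_{L^{p+1}}^{p+1} + \frac{p+1}{2}\bigl(\|f\|_{\dot{H}_\gamma^1}^2 - \|Q_{\omega,\gamma}\|_{\dot{H}_\gamma^1}^2\bigr).
\]
Plugging these into the definition of $I_{\omega,\gamma}(f)$ and collecting terms, the $\|f\|_{\dot H_\gamma^1}^2$-dependence appears with coefficient $1-\tfrac{p+1}{2}=-\tfrac{p-1}{2}$, and the remaining $Q_{\omega,\gamma}$-dependent constant assembles into $\tfrac{p-1}{2}\|Q_{\omega,\gamma}\|_{\dot H_\gamma^1}^2 + I_{\omega,\gamma}(Q_{\omega,\gamma})$. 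Since $Q_{\omega,\gamma}$ solves \eqref{eleq}, testing the Euler--Lagrange equation against $Q_{\omega,\gamma}$ itself produces the Nehari identity $I_{\omega,\gamma}(Q_{\omega,\gamma})=0$, and the desired equality $I_{\omega,\gamma}(f) = \tfrac{p-1}{2}\mu(f)$ follows.

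The entire argument is elementary bookkeeping; the potential $\gamma\delta$ causes no extra difficulty because $I_{\omega,\gamma}$, $K_\gamma$, $P$ and $\mu$ already absorb the boundary term $-\gamma|f(0)|^2$ uniformly through $\|\cdot\|_{\dot H_\gamma^1}^2$. Consequently there is no real analytic obstacle here, only the need to ensure the linearity decomposition in Step 1 matches the explicit constants in the definitions. The one non-cosmetic ingredient is the Nehari identity for $Q_{\omega,\gamma}$, which is free from Proposition \ref{prop1.1}.
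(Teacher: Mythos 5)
Your proof is correct and is in substance the same direct computation the paper intends (its proof is just ``direct calculation and Lemma \ref{lem2.6}''): you organize it via linearity of $K_{\omega,\gamma}^{\alpha,\beta}$ in $(\alpha,\beta)$ together with the identity $I_{\omega,\gamma}(f)=\tfrac{p-1}{2}\mu(f)$ under \eqref{ME}, which the paper itself records just before Proposition \ref{prop2.7}, and the Nehari identity $I_{\omega,\gamma}(Q_{\omega,\gamma})=0$. The only (harmless) imprecision is the closing remark that $K_\gamma$ absorbs the delta term through $\|\cdot\|_{\dot H_\gamma^1}^2$ — it does not, since its $|f(0)|^2$ coefficient is $-\gamma/2$ rather than $-\gamma$ — but your argument never uses that claim, so the proof stands.
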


\begin{proof}
This follows from direct calculation and Lemma \ref{lem2.6}.
\end{proof}

We note that $(\frac{1}{2}-\frac{2c}{p-1},1)$ does not satisfy \eqref{eq2.1}. However, the functional $K_{\omega,\gamma}^{\frac{1}{2}-\frac{2c}{p-1},1}$ still 
characterizes the ground state if we take $c$ small depending on $p$. 

\begin{remark}
Let $(\alpha,\beta)$ not satisfying \eqref{eq2.1} be given. Then, it is shown by \cite[Proposition A.1]{IMN11} that there exists $p>5$ such that 
\begin{align*}
	\inf \{S_{\omega,0}(\varphi): \varphi\in H^{1}(\mathbb{R})\setminus\{0\}, K_{\omega,0}^{\alpha,\beta}(\varphi)=0\}
	=-\infty.
\end{align*}
Our statement is different. Indeed, for given $p>5$, there exists $c=c(p)$ such that the infimum with $(\alpha,\beta)=(\frac{1}{2}-\frac{2c}{p-1},1)$ is positive, as we will see. 
\end{remark}

\begin{lemma}
\label{lem2.9}
Let $0<c<(p-5)/4$. 
If $f \in H_\even^1$ satisfies \eqref{ME} and $K_{\omega,\gamma}^{\frac{1}{2}-\frac{2c}{p-1},1}(f)=0$, then $f=e^{i\theta}Q_{\omega,\gamma}$ for some $\theta$. 
\end{lemma}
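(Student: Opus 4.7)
The plan is first to apply Lemma~\ref{lem2.8}: under \eqref{ME}, the hypothesis $K_{\omega,\gamma}^{\alpha,\beta}(f) = 0$ becomes $K_{\gamma}(f) = c\,\mu(f)$, so $K_{\gamma}(f)$ and $\mu(f)$ share the same sign (since $c > 0$). If $\mu(f) = 0$, then $K_{\gamma}(f) = 0$, and Lemma~\ref{lem2.5}(2) immediately gives $f = e^{i\theta}Q_{\omega,\gamma}$. It then suffices to derive a contradiction when $\mu(f) \neq 0$; by symmetry (e.g.\ replacing $f$ by its complex conjugate), I would assume $\mu(f) > 0$, so $K_{\gamma}(f) > 0$.

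Next, use the $(\alpha,\beta)$-scaling $f_{\lambda}(x) := e^{\alpha\lambda}f(e^{\beta\lambda}x)$ and $\Psi(\lambda) := S_{\omega,\gamma}(f_{\lambda})$; then $\Psi'(\lambda) = K_{\omega,\gamma}^{\alpha,\beta}(f_{\lambda})$, so $\Psi'(0) = 0$ by hypothesis. The exponents appearing in $\Psi$ are $a_{1} = 2\alpha+\beta$, $a_{2} = 2\alpha$, $a_{3} = (p+1)\alpha - \beta$ (for $A := \|\partial_{x}f\|_{L^{2}}^{2}$, $B := -\gamma|f(0)|^{2}$, $D := \|f\|_{L^{p+1}}^{p+1}$ respectively) and $a_{4} = 2\alpha - \beta$ (for $C := \|f\|_{L^{2}}^{2}$). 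The assumption $0 < c < (p-5)/4$ yields the ordering $a_{4} < 0 < a_{2} < a_{1} < a_{3}$, and the strict inequality $a_{1} < a_{3}$ is \emph{precisely} equivalent to $c < (p-5)/4$. Consequently $\Psi(\lambda) \to +\infty$ as $\lambda \to -\infty$ (the $\omega C$-term dominates via $a_{4} < 0$) and $\Psi(\lambda) \to -\infty$ as $\lambda \to +\infty$ (the $-D$-term dominates via the largest exponent $a_{3}$); an analogous analysis gives $K_{\gamma}(f_{\lambda}) \to -\infty$ as $\lambda \to +\infty$. Since $K_{\gamma}(f) > 0$, the intermediate value theorem provides $\lambda_{*} > 0$ with $K_{\gamma}(f_{\lambda_{*}}) = 0$. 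Lemma~\ref{lem2.3} applied with $(1/2,1) \in \eqref{eq2.1}$ then yields $S_{\omega,\gamma}(f_{\lambda_{*}}) \geq S_{\omega,\gamma}(Q_{\omega,\gamma}) = S_{\omega,\gamma}(f)$, with equality forcing $f_{\lambda_{*}} = e^{i\theta}Q_{\omega,\gamma}$ by the uniqueness in Proposition~\ref{prop1.1}. But $M(f_{\lambda_{*}}) = e^{a_{4}\lambda_{*}}M(f) < M(Q_{\omega,\gamma})$ (as $a_{4}\lambda_{*} < 0$), ruling out equality, so $\Psi(\lambda_{*}) > \Psi(0)$ strictly.

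The main obstacle is the complementary bound $\Psi(\lambda_{*}) \leq \Psi(0)$, whose combination with the strict inequality above closes the contradiction. This reduces to showing $\Psi'(\lambda) = K_{\omega,\gamma}^{\alpha,\beta}(f_{\lambda}) \leq 0$ on $[0,\lambda_{*}]$. Now $\Psi'$ is a combination of four exponentials whose coefficients, in increasing order of exponents $a_{4} < a_{2} < a_{1} < a_{3}$, follow the sign pattern $(-,+,+,-)$, so generalized Descartes rules bound the number of real zeros of $\Psi'$ by two. Combined with $\Psi'(\pm\infty) = -\infty$ and $\Psi'(0) = 0$, a careful sign analysis—using the explicit constraints on $A$, $B$, $C$, $D$ imposed by \eqref{ME} together with $K_{\omega,\gamma}^{\alpha,\beta}(f) = 0$, which via Lemma~\ref{lem2.6} translates to $\bigl(\tfrac{p-5}{4}-c\bigr)(A(Q_{\omega,\gamma}) - A) + \bigl(\tfrac{p-3}{4}-c\bigr)(B(Q_{\omega,\gamma}) - B) = 0$ with both bracketed coefficients strictly positive under the hypothesis—completes the argument.
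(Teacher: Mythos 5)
Your proposal diverges from the paper (which runs a constrained minimization with the functional $T_{\omega,\gamma}^{\alpha,\beta}=S_{\omega,\gamma}-\frac{1}{(p+1)\alpha-\beta}K_{\omega,\gamma}^{\alpha,\beta}$ and a Lagrange-multiplier argument showing minimizers are critical points of $S_{\omega,\gamma}$), and as written it has two genuine gaps. First, the reduction ``by symmetry (replacing $f$ by its complex conjugate) assume $\mu(f)>0$'' is not valid: all the quantities involved ($\mu$, $K_\gamma$, $K_{\omega,\gamma}^{\alpha,\beta}$, $M$, $E_\gamma$) depend only on $|f|$, $|\partial_x f|$, $|f(0)|$ and $\|f\|_{L^{p+1}}$, so conjugation leaves the sign of $\mu(f)$ unchanged, and no symmetry of the problem interchanges the two signs. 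The case $\mu(f)<0$ (equivalently $K_\gamma(f)<0$, which by your own Lemma~\ref{lem2.6} computation corresponds to $\|\partial_x f\|_{L^2}>\|\partial_x Q_{\omega,\gamma}\|_{L^2}$ rather than $<$) therefore receives no argument at all, and it is not a cosmetic omission: the scaling picture you use (existence of $\lambda_*>0$ with $K_\gamma(f_{\lambda_*})=0$, strict mass decrease $e^{a_4\lambda_*}<1$, etc.) is specific to the sign you chose.

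Second, and more seriously, the step you yourself identify as ``the main obstacle'' --- that $\Psi'(\lambda)=K_{\omega,\gamma}^{\alpha,\beta}(f_\lambda)\leq 0$ on all of $[0,\lambda_*]$ --- is asserted, not proved. The Descartes-type bound (at most two real zeros for the exponential sum with sign pattern $(-,+,+,-)$) is correct but does not yield the claim: since $\Psi'(0)=0$ and $\Psi'\to-\infty$ at both ends, it is entirely consistent with that bound that the second zero $\lambda_1$ lies in $(0,\lambda_*)$ with $\Psi'>0$ on $(0,\lambda_1)$, and then $\Psi(\lambda_*)\leq\Psi(0)$ need not hold (note $\lambda_*$ is defined through the \emph{different} functional $K_\gamma=K_{\omega,\gamma}^{1/2,1}$, so you cannot locate it relative to $\lambda_1$ for free). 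To exclude that configuration you would need, at a minimum, $\Psi''(0)\leq 0$; but eliminating $D$ via $\Psi'(0)=0$ gives
\begin{align*}
	\Psi''(0)=\frac{a_1(a_1-a_3)}{2}A+\frac{a_2(a_2-a_3)}{2}B+\frac{a_4(a_4-a_3)}{2}\,\omega C,
\end{align*}
whose last term is strictly positive (both $a_4<0$ and $a_4-a_3<0$), so its sign is not determined by the ordering of the exponents alone; whether the constraints \eqref{ME} and $\bigl(\tfrac{p-5}{4}-c\bigr)(A(Q_{\omega,\gamma})-A)+\bigl(\tfrac{p-3}{4}-c\bigr)(B(Q_{\omega,\gamma})-B)=0$ force it nonpositive is exactly the delicate point, and the proposal supplies no argument for it. Until that analysis is actually carried out (for both signs of $\mu(f)$), the proof is incomplete at its crux; by contrast, the paper's minimization/Lagrange-multiplier route avoids this sign analysis entirely.
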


\begin{proof}
We consider the following minimizing problem:
\begin{align*}
	m_{\omega,\gamma}:= \inf \{S_{\omega,\gamma}(f) : f\in H_\even^{1}(\mathbb{R})\setminus\{0\}, K_{\omega,\gamma}^{\frac{1}{2}-\frac{2c}{p-1},1}(f)=0\}.
\end{align*}
In general, we have
\begin{align}
\notag
	T_{\omega,\gamma}^{\alpha,\beta}(f)&:=S_{\omega,\gamma}(f) - \frac{1}{(p+1)\alpha-\beta}K_{\omega,\gamma}^{\alpha,\beta}(f)
	\\ \notag
	&= \frac{(p-1)\alpha-2\beta}{2\{(p+1)\alpha-\beta\}} \|\partial_{x}f\|_{L^{2}}^{2}
	-\gamma \frac{p\alpha-\beta}{2\{(p+1)\alpha-\beta\}} |f(0)|^{2}
	\\ \label{eq2.2}
	&\quad 
	+ \frac{(p-1)\alpha}{2\{(p+1)\alpha-\beta\}}\omega \|f\|_{L^{2}}^{2}
\end{align}
for $f \in H^{1}(\mathbb{R})$ and $(\alpha,\beta)\in \mathbb{R}^{2}$ with $(p+1)\alpha-\beta \neq 0$. When $(\alpha,\beta)=(\frac{1}{2}-\frac{2c}{p-1},1)$, 
all the coefficients of \eqref{eq2.2} are positive. Therefore, we get
\begin{align*}
	m_{\omega,\gamma}=\inf \{ T_{\omega,\gamma}^{\frac{1}{2}-\frac{2c}{p-1},1}(f): f \in H_\even^{1}(\mathbb{R}),K_{\omega,\gamma}^{\frac{1}{2}-\frac{2c}{p-1},1}(f)=0 \} \geq 0. 
\end{align*}
Define sets
\begin{align*}
	\mathcal{M}_{\omega,\gamma}&:=\{f \in H_\even^{1}(\mathbb{R}): K_{\omega,\gamma}^{\frac{1}{2}-\frac{2c}{p-1},1}(f)=0, S_{\omega,\gamma}(f)= m_{\omega,\gamma}\},
	\\
	\mathcal{A}_{\omega,\gamma}&:=\{f \in H_\even^{1}(\mathbb{R})\setminus \{0\}: S_{\omega,\gamma}'(f)=0\}.  
\end{align*}
Let $\varphi \in \mathcal{M}_{\omega,\gamma}$. By the Lagrange multiplier method, there exists a constant $\nu$ such that $S_{\omega,\gamma}'(\varphi)=\nu (K_{\omega,\gamma}^{\frac{1}{2}-\frac{2c}{p-1},1})'(\varphi)$. 
Therefore, letting $\mathscr{L}^{\alpha,\beta}f(x)=\partial_\lambda \{e^{\alpha\lambda}f(e^{\beta\lambda}x)\}|_{\lambda=0}$, we obtain 
\begin{align*}
	0=(K_{\omega,\gamma}^{\frac{1}{2}-\frac{2c}{p-1},1})(\varphi) 
	=\langle S_{\omega,\gamma}'(\varphi) , \mathscr{L}^{\frac{1}{2}-\frac{2c}{p-1},1}\varphi \rangle
	=\nu \langle (K_{\omega,\gamma}^{\frac{1}{2}-\frac{2c}{p-1},1})'(\varphi) , \mathscr{L}^{\frac{1}{2}-\frac{2c}{p-1},1}\varphi \rangle
\end{align*}
Now we have
$ \langle (K_{\omega,\gamma}^{\frac{1}{2}-\frac{2c}{p-1},1})'(\varphi) , \mathscr{L}^{\frac{1}{2}-\frac{2c}{p-1},1}\varphi \rangle < 0$ since $K_{\omega,\gamma}^{\frac{1}{2}-\frac{2c}{p-1},1}(\varphi)=0$. 
This implies $\nu=0$. Thus, $S_{\omega,\gamma}'(f)=0$. This means that $\mathcal{M}_{\omega,\gamma} \subset \mathcal{A}_{\omega,\gamma}$. It is known by Proposition \ref{prop1.1} that $\mathcal{A}_{\omega,\gamma}=\{e^{i\theta}Q_{\omega,\gamma}: \theta \in \mathbb{R}\}$. Hence, the mass-energy condition \eqref{ME} and $K_{\omega,\gamma}^{\frac{1}{2}-\frac{2c}{p-1},1}(f)=0$ give us $f=e^{i\theta}Q_{\omega,\gamma}$. 
\end{proof}

We are ready to prove Proposition \ref{prop2.7}. 

\begin{proof}[Proof of Proposition \ref{prop2.7}]
Let $K_{\gamma}(u_{0})>0$. 
By Lemma \ref{lem2.8}, we have
\begin{align*}
	K_{\gamma}(u(t)) -c \mu (u(t)) = K_{\omega,\gamma}^{\frac{1}{2}-\frac{2c}{p-1},1}(u(t)).
\end{align*}
Suppose that there exists a time $t_{*}$ such that $K_{\omega,\gamma}^{\frac{1}{2}-\frac{2c}{p-1},1}(u(t_{*}))=0$. Then we find that $u(t_{*})=e^{i\theta}Q_{\omega,\gamma}$ by Lemma \ref{lem2.9}. It follows that $K_{\gamma}(u(t_{*}))=0$, contradicting $K_{\gamma}(u_{0})>0$. Therefore, $K_{\omega,\gamma}^{\frac{1}{2}-\frac{2c}{p-1},1}(u(t))\neq 0$  for the time of existence. Now, we have
\begin{align*}
	K_{\omega,\gamma}^{\frac{1}{2}-\frac{2c}{p-1},1}(u_{0}) \to K_{\gamma}(u_{0})>0 \text{ as } c \to 0.
\end{align*}
Hence $K_{\omega,\gamma}^{\frac{1}{2}-\frac{2c}{p-1},1}(u_{0})>0$ if $c$ is sufficiently small depending on $u_{0}$. Therefore, we obtain $K_{\omega,\gamma}^{\frac{1}{2}-\frac{2c}{p-1},1}(u(t))>0$ for the time of existence, if $c=c(p,u_{0})>0$ is sufficiently small. So there is $c=c(p,u_{0})>0$ such that $K_{\gamma}(u(t))\geq  c \mu(u(t))>0$ for $t \in \mathbb{R}$. 
The case $K_{\gamma}(u_{0})<0$ is proved in the same way.
\end{proof}

\subsection{The Gagliardo--Nirenberg inequality}

A Gagliardo--Nirenberg type inequality follows easily from the usual one: for $\gamma<0$,
\begin{align*}
	\|f\|_{L^{p+1}}^{p+1} \leq C_{GN} \|f\|_{L^{2}}^{\frac{p+3}{2}} \|f\|_{\dot{H}_{\gamma}^{1}}^{\frac{p-1}{2}}. 
\end{align*}
The optimal constant $C_{GN}$ is same as for the usual case: 
\begin{align*}
	C_{GN}^{-1}=\frac{\|Q_{1,0}\|_{L^{2}}^{\frac{p+3}{2}} \|Q_{1,0}\|_{\dot{H}^{1}}^{\frac{p-1}{2}}}{\|Q_{1,0}\|_{L^{p+1}}^{p+1}}.
\end{align*}
See e.g. \cite{Inu21pre,HIIS22pre}. 
This is not suitable to our problem since we consider a higher threshold than $Q_{1,0}$. We give here a Gagliardo--Nirenberg-type inequality 
with respect to the Dirac delta potential.

\begin{lemma}[Gagliardo--Nirenberg type inequality w.r.t $\delta$ potential]
\label{GN}
\begin{align*}
	\|f\|_{L^{p+1}}^{2} \leq C_{\omega,\gamma} \|f\|_{{H}_{\omega,\gamma}^{1}}^{2}.
\end{align*}
Moreover, the optimal constant $C_{\omega,\gamma}$ is attained by $Q_{\omega,\gamma}$ and 
\begin{align*}
	C_{\omega,\gamma}^{-1}
	=\frac{\|Q_{\omega,\gamma}\|_{{H}_{\omega,\gamma}^{1}}^{2}}{\|Q_{\omega,\gamma}\|_{L^{p+1}}^{2}}=\left\{\frac{2(p+1)}{p-1}S_{\omega,\gamma}(Q_{\omega,\gamma})\right\}^{\frac{p-1}{p+1}}=\|Q_{\omega,\gamma}\|_{L^{p+1}}^{p-1}.
\end{align*}
\end{lemma}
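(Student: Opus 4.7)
My plan is to exploit the Nehari-type variational characterization of the ground state $Q_{\omega,\gamma}$ provided by Lemma \ref{lem2.3}; the inequality is to be proved for $f \in H_{\even}^{1}(\mathbb{R})$, which is the standing assumption of the section (indeed, translates of $Q_{\omega,0}$ show the stated sharp inequality would fail for general $f\in H^{1}(\mathbb{R})$, since the translates make the ratio $\|f\|_{H_{\omega,\gamma}^{1}}^{2}/\|f\|_{L^{p+1}}^{2}$ tend to $\|Q_{\omega,0}\|_{L^{p+1}}^{p-1}<\|Q_{\omega,\gamma}\|_{L^{p+1}}^{p-1}$).

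First, for any $f \in H_{\even}^{1}(\mathbb{R})\setminus\{0\}$, I rescale onto the Nehari manifold: since
\begin{align*}
I_{\omega,\gamma}(\lambda f)=\lambda^{2}\|f\|_{H_{\omega,\gamma}^{1}}^{2}-\lambda^{p+1}\|f\|_{L^{p+1}}^{p+1},
\end{align*}
there is a unique $\lambda_{*}=\lambda_{*}(f)>0$, characterized by $\lambda_{*}^{p-1}=\|f\|_{H_{\omega,\gamma}^{1}}^{2}/\|f\|_{L^{p+1}}^{p+1}$, for which $I_{\omega,\gamma}(\lambda_{*}f)=0$. On the Nehari manifold the energy simplifies to
\begin{align*}
S_{\omega,\gamma}(\lambda_{*}f)=\frac{p-1}{2(p+1)}\|\lambda_{*}f\|_{L^{p+1}}^{p+1}=\frac{p-1}{2(p+1)}\lambda_{*}^{p+1}\|f\|_{L^{p+1}}^{p+1}.
\end{align*}

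Next, Lemma \ref{lem2.3} with $(\alpha,\beta)=(1,0)$ (which satisfies \eqref{eq2.1} and reproduces the Nehari functional $I_{\omega,\gamma}=K_{\omega,\gamma}^{1,0}$) gives $S_{\omega,\gamma}(Q_{\omega,\gamma})\leq S_{\omega,\gamma}(\lambda_{*}f)$. Since $Q_{\omega,\gamma}$ itself lies on the Nehari manifold (test \eqref{eleq} against $Q_{\omega,\gamma}$ to obtain $\|Q_{\omega,\gamma}\|_{H_{\omega,\gamma}^{1}}^{2}=\|Q_{\omega,\gamma}\|_{L^{p+1}}^{p+1}$), the same formula gives $S_{\omega,\gamma}(Q_{\omega,\gamma})=\frac{p-1}{2(p+1)}\|Q_{\omega,\gamma}\|_{L^{p+1}}^{p+1}$. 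Comparing yields $\lambda_{*}\|f\|_{L^{p+1}}\geq\|Q_{\omega,\gamma}\|_{L^{p+1}}$; raising to the $(p-1)$-th power and substituting $\lambda_{*}^{p-1}=\|f\|_{H_{\omega,\gamma}^{1}}^{2}/\|f\|_{L^{p+1}}^{p+1}$ produces $\|f\|_{H_{\omega,\gamma}^{1}}^{2}/\|f\|_{L^{p+1}}^{2}\geq\|Q_{\omega,\gamma}\|_{L^{p+1}}^{p-1}$, which is the desired inequality with $C_{\omega,\gamma}^{-1}=\|Q_{\omega,\gamma}\|_{L^{p+1}}^{p-1}$; equality at $f=Q_{\omega,\gamma}$ confirms that this constant is attained and hence sharp.

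For the alternative expressions of $C_{\omega,\gamma}^{-1}$, the Nehari identity $\|Q_{\omega,\gamma}\|_{H_{\omega,\gamma}^{1}}^{2}=\|Q_{\omega,\gamma}\|_{L^{p+1}}^{p+1}$ above immediately produces the first expression $\|Q_{\omega,\gamma}\|_{H_{\omega,\gamma}^{1}}^{2}/\|Q_{\omega,\gamma}\|_{L^{p+1}}^{2}$, and combining it with $S_{\omega,\gamma}(Q_{\omega,\gamma})=\tfrac{p-1}{2(p+1)}\|Q_{\omega,\gamma}\|_{L^{p+1}}^{p+1}$ gives the middle expression in terms of the action. The whole argument rests on Lemma \ref{lem2.3} with the Nehari choice $(\alpha,\beta)=(1,0)$, and I foresee no serious obstacle; the only delicate point is the implicit restriction to even functions, without which the sharp constant would coincide with the potential-free one.
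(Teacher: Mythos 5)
Your proof is correct and follows exactly the route the paper indicates: the sharp constant is derived from the even Nehari characterization $S_{\omega,\gamma}(Q_{\omega,\gamma})=\inf\{S_{\omega,\gamma}(f): f\in H^{1}_{\even}\setminus\{0\},\ I_{\omega,\gamma}(f)=0\}$ by scaling $f$ onto the Nehari manifold, which is precisely the argument the authors sketch and omit. Your explicit computation of $\lambda_{*}$, the identity $S=\tfrac{p-1}{2(p+1)}\|\cdot\|_{L^{p+1}}^{p+1}$ on the manifold, and the remark that the stated sharp constant requires evenness simply fill in the details the paper leaves out.
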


\begin{proof}
This estimate follows directly from the characterization of $Q_{\omega,\gamma}$ by the Nehari functional $I_{\omega,\gamma}$,
\begin{align*}
	S_{\omega,\gamma}(Q_{\omega,\gamma})
	=\inf\{S_{\omega,\gamma}(f): f \in H_\even^1(\mathbb{R}) \setminus\{0\}, I_{\omega,\gamma}(f)=0\},
\end{align*}
so we omit the proof. 
\end{proof}

\subsection{Localized virial indentity}

Let $u$ be a solution to \eqref{NLS}. We define
\begin{align*}
	J(t)=J_{\infty}(t) := \int_{\mathbb{R}} |x|^{2}|u(t,x)|^{2} dx
\end{align*}
and its localized version by
\begin{align}
\label{J_R}
	J_{R}(t) := \int_{\mathbb{R}} R^{2} \varphi\left(\frac{x}{R}\right) |u(t,x)|^{2} dx,
\end{align}
where $\varphi$ is an even function in $C_{c}^{\infty}(\mathbb{R})$ such that
\begin{align*}
	\varphi(x) =
	\begin{cases}
	x^{2}, & (|x|<1),
	\\
	0, & (|x|>2).
	\end{cases}
\end{align*}
Then we have 
\begin{align*}
	J'(t)&=4 \im \int_{\mathbb{R}} x \overline{u(t,x)} \partial_{x}u(t,x) dx,
	\\
	J''(t)&=8K_{\gamma}(u(t)),
\end{align*}
and for the localized version,
\begin{align*}
	J_{R}'(t)&=2R\im \int_{\mathbb{R}} (\partial_{x}\varphi)\left(\frac{x}{R}\right) \overline{u(t,x)} \partial_{x}u(t,x)dx
	\\
	J_{R}''(t)
	&=F_R(u(t)),
\end{align*}
where we set
\begin{align*}
	F_{R}(f)&:=4\int_{\mathbb{R}} (\partial_{x}^{2}\varphi)\left(\frac{x}{R}\right) |\partial_{x}f(x)|^{2} dx 
	-4\gamma |f(0)|^{2} 
	\\
	&\quad -\frac{2(p-1)}{p+1} \int_{\mathbb{R}} (\partial_{x}^{2}\varphi)\left(\frac{x}{R}\right) |f(x)|^{p+1} dx
	-\frac{1}{R^{2}} \int_{\mathbb{R}}(\partial_{x}^{4}\varphi)\left(\frac{x}{R}\right) |f(x)|^{2} dx
	\\
	&=A_{R}(f) + 8K_{\gamma}(f),
\end{align*}
and 
\begin{align*}
	A_{R}(f) &:=4\int_{\mathbb{R}} \left\{ (\partial_{x}^{2}\varphi)\left(\frac{x}{R}\right)-2\right\} |\partial_{x}f(x)|^{2} dx 
	\\
	&\quad -\frac{2(p-1)}{p+1} \int_{\mathbb{R}} \left\{ (\partial_{x}^{2}\varphi)\left(\frac{x}{R}\right)-2\right\}  |f(x)|^{p+1} dx
	-\frac{1}{R^{2}} \int_{\mathbb{R}}(\partial_{x}^{4}\varphi)\left(\frac{x}{R}\right) |f(x)|^{2} dx.
\end{align*}

\begin{lemma}
\label{lem2.11}
For any $\theta:\mathbb{R} \to \mathbb{R}$, we have
\begin{align*}
	F_{R}(e^{i\omega t }e^{i\theta(t)}Q_{\omega,\gamma})=A_{R}(e^{i\omega t }e^{i\theta(t)}Q_{\omega,\gamma})=0
\end{align*}
for all $t\in \mathbb{R}$. 
\end{lemma}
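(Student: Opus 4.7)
The key observation is that both functionals $F_R$ and $A_R$ depend on their argument only through the pointwise quantities $|f|^2$, $|\partial_x f|^2$, $|f(0)|^2$, and $|f|^{p+1}$. Since $e^{i\omega t}e^{i\theta(t)}Q_{\omega,\gamma}$ differs from $Q_{\omega,\gamma}$ only by multiplication by a complex phase (independent of $x$), we immediately have
\begin{equation*}
F_R(e^{i\omega t}e^{i\theta(t)}Q_{\omega,\gamma})=F_R(Q_{\omega,\gamma})
\quad\text{and}\quad
A_R(e^{i\omega t}e^{i\theta(t)}Q_{\omega,\gamma})=A_R(Q_{\omega,\gamma}).
\end{equation*}
Thus it suffices to prove $F_R(Q_{\omega,\gamma})=0$ and $A_R(Q_{\omega,\gamma})=0$.

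For the first identity, I would apply the localized virial computation to the genuine ground state solution $v(t,x):=e^{i\omega t}Q_{\omega,\gamma}(x)$ of \eqref{NLS}. Then $|v(t,x)|^2=|Q_{\omega,\gamma}(x)|^2$ is independent of $t$, so the localized variance
\begin{equation*}
J_R(t)=\int_{\mathbb{R}} R^{2}\varphi\!\left(\tfrac{x}{R}\right)|Q_{\omega,\gamma}(x)|^{2}\,dx
\end{equation*}
is a constant function of $t$. Therefore $J_R''(t)\equiv 0$, and by the formula $J_R''(t)=F_R(v(t))$ derived just above the lemma, we conclude $F_R(Q_{\omega,\gamma})=0$.

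For the second identity, I would use the decomposition $F_R(f)=A_R(f)+8K_{\gamma}(f)$ stated in the definition, combined with the fact that the ground state $Q_{\omega,\gamma}$ satisfies the virial identity $K_{\gamma}(Q_{\omega,\gamma})=0$. The latter is a consequence of Lemma \ref{lem2.3} applied with $(\alpha,\beta)=(1/2,1)$, since $Q_{\omega,\gamma}$ attains the minimization problem defining $S_{\omega,\gamma}(Q_{\omega,\gamma})$ under the constraint $K_{\gamma}=0$; equivalently, it follows by differentiating $S_{\omega,\gamma}(Q_{\omega,\gamma}^{1/2,1,\lambda})$ in $\lambda$ at $\lambda=0$ using that $Q_{\omega,\gamma}$ solves \eqref{eleq}. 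Combining, $A_R(Q_{\omega,\gamma})=F_R(Q_{\omega,\gamma})-8K_{\gamma}(Q_{\omega,\gamma})=0$.

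There is no real obstacle here: the statement is essentially a bookkeeping lemma that isolates the two facts (phase invariance of the integrands and the vanishing of the virial functional on the ground state) which together make the localized virial quantity vanish on the soliton manifold. The only minor point to mention explicitly is that $Q_{\omega,\gamma}$ is real-valued and the $\delta$-boundary term $-4\gamma|f(0)|^{2}$ is also phase-invariant, so there is no contribution from the interaction at $x=0$ beyond what is already present for $Q_{\omega,\gamma}$ itself.
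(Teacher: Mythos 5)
Your proposal is correct and follows essentially the same route as the paper: reduce to $e^{i\omega t}Q_{\omega,\gamma}$ by phase invariance of the integrands, observe that the localized virial quantity $J_R$ is constant along the standing wave solution so that $J_R''=F_R(e^{i\omega t}Q_{\omega,\gamma})=0$, and then use the decomposition $F_R=A_R+8K_\gamma$ together with $K_\gamma(Q_{\omega,\gamma})=0$ to conclude $A_R=0$. The only cosmetic difference is that the paper notes $J_R'\equiv 0$ from the real-valuedness of $Q_{\omega,\gamma}$, while you note $J_R$ itself is time-independent; these are interchangeable.
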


\begin{proof}
It follows from the definition of $F_{R}$ and $A_{R}$ that 
\begin{align*}
	F_{R}(e^{i\omega t }e^{i\theta(t)}Q_{\omega,\gamma})=F_{R}(e^{i\omega t }Q_{\omega,\gamma}),
	\quad 
	A_{R}(e^{i\omega t }e^{i\theta(t)}Q_{\omega,\gamma})=A_{R}(e^{i\omega t }Q_{\omega,\gamma}).
\end{align*}
Since $e^{i\omega t }Q_{\omega,\gamma}$ is a solution to \eqref{NLS}, $Q_{\omega,\gamma}$ is real-valued, and $K_{\gamma}(e^{i\omega t }Q_{\omega,\gamma})=0$, we get
\begin{align*}
	\frac{d}{dt}J_{R}(e^{i\omega t }Q_{\omega,\gamma})=0
\end{align*}
and 
\begin{align*}
	0&= \frac{d^{2}}{dt^{2}}J_{R}(e^{i\omega t }Q_{\omega,\gamma})
	=F_{R}(e^{i\omega t }Q_{\omega,\gamma})
	\\
	&=A_{R}(e^{i\omega t }Q_{\omega,\gamma}) +8K_{\gamma}(e^{i\omega t }Q_{\omega,\gamma})
	=A_{R}(e^{i\omega t }Q_{\omega,\gamma})
\end{align*}
for all $t\in\mathbb{R}$. This concludes the proof. 
\end{proof}

%
%

\subsection{Strichartz estimates}

Though we do not use the Strichartz norms explicitly, we give the definition. The Strichartz estimates are used to obtain the solution with compact orbit in Lemma \ref{lem4.6.0}, whose proof is omitted in the present paper.  
\begin{definition}
Let $I$ be a (possibly unbounded) time interval. We define function spaces by
\begin{align*}
	S(I):=L_{t}^{\frac{2(p^{2}-1)}{p+3}} L_{x}^{p+1}(I),
	\quad
	W(I):=L_{t}^{\frac{2(p^{2}-1)}{p^{2}-3p-2}} L_{x}^{p+1}(I),
	\quad
	X(I):=L_{t}^{p-1} L_{x}^{\infty}(I).
\end{align*}
Moreover, $W'$ denotes the dual space of $W$, that is, $W'(I)=L_{t}^{\frac{2(p^{2}-1)}{p(p+3)}} L_{x}^{\frac{p+1}{p}}(I)$.
\end{definition}

\begin{lemma}[Strichartz estimates]
The following estimates are valid: 
\begin{align*}
	&\|e^{it\Delta_{\gamma}}f\|_{S(I)}+\|e^{it\Delta_{\gamma}}f\|_{X(I)} \lesssim \|f\|_{H^{1}}
	\\
	&\|\int_{t_{0}}^{t} e^{i(t-s)\Delta_{\gamma}}F(s)ds\|_{S(t_{0},t_{1})}
	+\|\int_{t_{0}}^{t} e^{i(t-s)\Delta_{\gamma}}F(s)ds\|_{X(t_{0},t_{1})}
	\lesssim \|F\|_{W'(t_{0},t_{1})},
\end{align*}
where $I$ is a (possibly unbounded) time interval and $t_{1}>t_{0}$. 
\end{lemma}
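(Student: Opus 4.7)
The plan is to reduce everything to the standard dispersive estimate for the propagator with repulsive delta potential. Since $\gamma<0$, the operator $-\Delta_{\gamma}$ is nonnegative self-adjoint with purely absolutely continuous spectrum, so $e^{it\Delta_{\gamma}}$ is unitary on $L^{2}$, and one has the dispersive bound
\begin{equation*}
\|e^{it\Delta_{\gamma}}f\|_{L^{\infty}(\mathbb{R})}\lesssim |t|^{-1/2}\|f\|_{L^{1}(\mathbb{R})},\qquad t\neq 0,
\end{equation*}
which is by now classical for one-dimensional Schr\"odinger operators with a delta potential (see \cite{Miz20} and the references therein, which the paper already cites). From this and the $L^{2}$ isometry, the Keel--Tao $TT^{*}$ argument yields the full set of admissible Strichartz estimates for $e^{it\Delta_{\gamma}}$ at every 1D admissible pair $(q,r)$ with $\frac{2}{q}+\frac{1}{r}=\frac{1}{2}$ (excluding only the forbidden endpoint $(4,\infty)$).

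First I would handle the $S$-norm estimate. A direct check shows that $\frac{2}{q_S}+\frac{1}{p+1}=\frac{2}{p-1}=\frac{1}{2}-s_c$ with $s_{c}=\frac{1}{2}-\frac{2}{p-1}$, so $(q_{S},p+1)$ is $\dot H^{s_c}$-scaling admissible rather than $L^{2}$-admissible. The corresponding fractional Strichartz estimate
\begin{equation*}
\|e^{it\Delta_{\gamma}}f\|_{S(I)}\lesssim \|f\|_{\dot H^{s_c}_{\gamma}}\lesssim \|f\|_{H^{1}}
\end{equation*}
follows by interpolating between an admissible $L^{2}$-Strichartz estimate (at some pair $(\tilde q_{0},r_{0})$ with $r_{0}$ chosen so that $p+1\in(2,r_{0})$) and the trivial energy estimate $(\infty,2)$, together with the Sobolev embedding controlling the $\dot H^{s_c}$ norm by $H^{1}$ (valid since $s_c<1$ for $p>5$). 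The inhomogeneous bound with the $W'$ norm on the right is then obtained by the usual duality/double-Strichartz argument: $W'$ is engineered so that $\|\cdot\|_{W'}$ sits on the dual side of an appropriate admissible pair, and the combination reproduces the relation $\frac{1}{q_{W'}}=\frac{p}{q_S}$ that is consistent with the Hölder estimate $\||u|^{p-1}u\|_{W'}\lesssim\|u\|_{S}^{p}$.

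Next I would treat the $X=L_{t}^{p-1}L_{x}^{\infty}$ component. The 1D pointwise bound $\|u\|_{L^{\infty}}^{p+1}\lesssim\|u\|_{L^{p+1}}^{p}\|\partial_{x}u\|_{L^{p+1}}$ (a standard one-dimensional Gagliardo--Nirenberg identity) reduces the $L_{x}^{\infty}$-control to $W^{1,p+1}$-control, so it suffices to estimate $\|e^{it\Delta_{\gamma}}f\|_{L^{p+1}}$ and $\|\partial_{x}e^{it\Delta_{\gamma}}f\|_{L^{p+1}}$ in appropriate time-Lebesgue norms. The second is handled by applying the $S$-type estimate to $\sqrt{-\Delta_{\gamma}}f$, using the spectral calculus identity $\|\partial_{x}f\|_{L^{2}}\approx\|\sqrt{-\Delta_{\gamma}}f\|_{L^{2}}$ (an equivalence up to constants depending on $\gamma$, thanks to the norm equivalence $H^{1}_{\omega,\gamma}\sim H^{1}$ noted in the paper) together with the fact that $\sqrt{-\Delta_{\gamma}}$ commutes with $e^{it\Delta_{\gamma}}$ by functional calculus. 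A Hölder interpolation in time between the resulting $L^{q_S}$ and $L^{\infty}$-type bounds then recovers the $L_{t}^{p-1}$ outer norm. The Duhamel counterpart for $X$ is obtained analogously, using the dual-admissible Strichartz against $W'$.

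The main obstacle I anticipate is justifying the fractional, non-$L^{2}$-admissible Strichartz estimate at the $\dot H^{s_c}$ level for the perturbed propagator $e^{it\Delta_{\gamma}}$ rather than the free one. Two routes are available: either run the Foschi-type interpolation argument directly from the dispersive and energy estimates for $e^{it\Delta_{\gamma}}$ (which works because only $L^{1}\to L^{\infty}$ and $L^{2}\to L^{2}$ bounds are used as input), or alternatively invoke the boundedness of the wave operators $W_{\pm}=s\text{-}\lim_{t\to\pm\infty}e^{-it\Delta_{\gamma}}e^{it\partial_{x}^{2}}$ on $L^{p}(\mathbb{R})$ for $1<p<\infty$ (established for delta potentials) to transfer the free Strichartz estimates to the perturbed ones. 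Since these steps are entirely parallel to what is done in \cite{IkIn17,ArIn21,Inu21pre}, I would at this point simply reference those works for the detailed derivation.
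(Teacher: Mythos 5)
The paper gives no proof of this lemma at all—it simply cites Banica--Visciglia \cite{BaVi16}, Section 3.1, and your sketch is essentially a reconstruction of the argument carried out there: the dispersive bound for $e^{it\Delta_{\gamma}}$ plus the $TT^{*}$/Keel--Tao machinery gives the admissible estimates, interpolation (or transfer via $L^{p}$-bounded wave operators) reaches the non-admissible $\dot H^{s_c}$-level pairs underlying $S$ and $X$, and the inhomogeneous bounds against $W'$ follow by duality and Christ--Kiselev. So your route coincides with the paper's (i.e.\ its cited source); the only adjustments I would make are to send the deferred details to \cite{BaVi16} directly rather than to \cite{IkIn17,ArIn21,Inu21pre} (which themselves rely on it), and to note that the homogeneous equivalence $\|\partial_x f\|_{L^2}\approx\|\sqrt{-\Delta_\gamma}f\|_{L^2}$ you invoke is only one-sided for $\gamma<0$ (the reverse inequality needs the $L^2$ norm), which is harmless here since the final bound is at the $H^{1}$ level, while the corresponding exchange of $\partial_x$ and $\sqrt{-\Delta_\gamma}$ in $L^{p+1}$ is precisely where the wave-operator boundedness you mention is genuinely needed.
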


\begin{proof}
See \cite[Section 3.1]{BaVi16}. 
\end{proof}

\subsection{The linearized operators}

\subsubsection{Eigenvalues and eigenfunctions}

Here we consider the linearized operators around $Q_{\omega,\gamma}$. 
Define
\begin{align*}
	L^{-}_{\omega,\gamma} 
	&:= -\partial_{x}^{2} -\gamma \delta + \omega -Q_{\omega,\gamma}^{p-1},
	\\
	L^{+}_{\omega,\gamma} 
	&:= -\partial_{x}^{2}-\gamma \delta + \omega -pQ_{\omega,\gamma}^{p-1}.
\end{align*}
More precisely, $L^{-}_{\omega,\gamma}f:= -\partial_{x}^{2}f  + \omega f -Q_{\omega,\gamma}^{p-1}f$, $L^{+}_{\omega,\gamma}f := -\partial_{x}^{2} f+ \omega f -pQ_{\omega,\gamma}^{p-1}f$
with domains $\mathcal{D}(L^{-}_{\omega,\gamma})=\mathcal{D}(L^{+}_{\omega,\gamma})=\mathcal{D}$.
It is known by \cite{LFFKS08} (see also \cite{FuJe08}) that the linearized operator $L^{+}_{\omega,\gamma}$ on general functions has two negative eigenvalues, with one even and one odd eigenfunction. In what follows, we consider $L^{\pm}_{\omega,\gamma}$ as operators defined on $\mathcal{D}_\even$.

\begin{lemma}[\cite{FuJe08}]  \label{specinfo}
We have the following properties.
\begin{enumerate}
\item $L_{\omega,\gamma}^{-}$ and $L_{\omega,\gamma}^{+}$ are self-adjoint on the real Hilbert space $L^{2}(\mathbb{R})$. 
\item $\ker L_{\omega,\gamma}^{-}= \spn \{Q_{\omega,\gamma}\}$.
\item $\ker L_{\omega,\gamma}^{+}=\{0\}$ if $\gamma < 0$. 
\item $L_{\omega,\gamma}^{-} \geq 0$.
\item $L_{\omega,\gamma}^{-}|_{(\spn \{Q_{\omega,\gamma}\})^{\perp}} >0$.
\item $L_{\omega,\gamma}^{+}$ has one negative eigenvalue
(on $\mathcal{D}_\even$).
\end{enumerate}
\end{lemma}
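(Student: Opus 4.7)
The plan is to establish each item by combining general Schr\"odinger-operator principles with the cited spectral analyses of \cite{FuJe08, LFFKS08}.

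For (1), since $Q_{\omega,\gamma} \in L^\infty(\R)$ from the explicit $\sech$ formula, the multiplications $Q_{\omega,\gamma}^{p-1}$ and $pQ_{\omega,\gamma}^{p-1}$ are bounded symmetric operators on $L^2(\R)$, so $L^{\pm}_{\omega,\gamma}$ is a bounded symmetric perturbation of the self-adjoint $-\Delta_\gamma + \omega$ on $\mathcal{D}$ and hence self-adjoint on $\mathcal{D}$ by Kato--Rellich. Evenness of $Q_{\omega,\gamma}$ and of $\delta_0$ makes $\mathcal{D}_\even$ an invariant subspace, so the restriction stays self-adjoint.

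For (2), (4), (5), the key input is that $Q_{\omega,\gamma}$ is strictly positive on $\R$ and satisfies $L^-_{\omega,\gamma} Q_{\omega,\gamma} = 0$ by \eqref{eleq}. I would identify $Q_{\omega,\gamma}$ as the ground state of $L^-_{\omega,\gamma}$ by showing that $e^{-tL^-_{\omega,\gamma}}$ is positivity improving: the quadratic form
\[ q^-(f) = \|\partial_x f\|_{L^2}^2 - \gamma|f(0)|^2 + \int_{\R}(\omega - Q_{\omega,\gamma}^{p-1})|f|^2\,dx \]
satisfies the Beurling--Deny criterion $q^-(|f|)\leq q^-(f)$, since $|f|(0) = |f(0)|$ and $|\partial_x |f||\leq |\partial_x f|$ almost everywhere. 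Hence the bottom eigenvalue is simple with a strictly positive eigenfunction, which must coincide with $Q_{\omega,\gamma}$; this yields $\ker L^-_{\omega,\gamma} = \spn\{Q_{\omega,\gamma}\}$, $L^-_{\omega,\gamma}\geq 0$, and strict positivity on $\{Q_{\omega,\gamma}\}^\perp$. An alternative route is to invoke the variational characterization of $Q_{\omega,\gamma}$ as a minimizer on the Nehari manifold (Proposition \ref{prop1.1} and Lemma \ref{lem2.3}) and run the usual constrained second-variation/Lagrange-multiplier argument to extract the same three conclusions.

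For (6), testing with $Q_{\omega,\gamma}$ itself gives
\[ \langle L^+_{\omega,\gamma} Q_{\omega,\gamma}, Q_{\omega,\gamma}\rangle = \langle L^-_{\omega,\gamma} Q_{\omega,\gamma}, Q_{\omega,\gamma}\rangle - (p-1)\|Q_{\omega,\gamma}\|_{L^{p+1}}^{p+1} = -(p-1)\|Q_{\omega,\gamma}\|_{L^{p+1}}^{p+1} < 0, \]
so by min-max $L^+_{\omega,\gamma}$ has at least one negative eigenvalue on $\mathcal{D}_\even$. That there is exactly one, and that (3) holds, I would import from \cite{LFFKS08, FuJe08}, where for $\gamma<0$ the full spectral picture on $L^2(\R)$ shows precisely two negative eigenvalues with eigenfunctions of opposite parity and trivial kernel; restricting to $\mathcal{D}_\even$ then deletes the odd negative eigenfunction and leaves a trivial even kernel. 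Concretely, on $\R \setminus\{0\}$ the equation $L^+_{\omega,\gamma} f = 0$ reduces to a linear ODE whose even $L^2$ solutions must obey the interface condition $2f'(0+) = -\gamma f(0)$, and combining this with decay at infinity forces $f\equiv 0$ when $\gamma<0$.

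The main obstacle will be cleanly implementing the positivity-improving property of $e^{-tL^-_{\omega,\gamma}}$ in the presence of the singular delta: the Beurling--Deny approach for $q^-$ works, but one must check that the quadratic-form domain $H^1(\R)$ is stable under $f \mapsto |f|$ and that the boundary term $-\gamma|f(0)|^2$ behaves correctly under this truncation (both are immediate). Outsourcing the exact count in (6) and the triviality in (3) to \cite{FuJe08,LFFKS08} then closes the argument.
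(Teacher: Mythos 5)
The paper offers no proof of this lemma at all: it is imported wholesale from \cite{FuJe08}, with the statement about the negative eigenvalues of $L^{+}_{\omega,\gamma}$ traced to \cite{LFFKS08}, so your proposal actually supplies arguments where the paper only cites. The parts you prove are sound: (1) is indeed Kato--Rellich, since $Q_{\omega,\gamma}^{p-1}$ is a bounded symmetric multiplication and $-\Delta_\gamma+\omega$ is self-adjoint on $\mathcal{D}$; for (2), (4), (5) the key facts are $L^{-}_{\omega,\gamma}Q_{\omega,\gamma}=0$ from \eqref{eleq}, the strict positivity of $Q_{\omega,\gamma}$, and the location $\sigma_{\ess}=[\omega,\infty)$, after which the ground-state argument you describe gives all three items; and the computation $\langle L^{+}_{\omega,\gamma}Q_{\omega,\gamma},Q_{\omega,\gamma}\rangle=-(p-1)\|Q_{\omega,\gamma}\|_{L^{p+1}}^{p+1}<0$ with $Q_{\omega,\gamma}$ even correctly yields at least one negative even eigenvalue. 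The one soft spot is your claim that the Beurling--Deny step is ``immediate'': the inequality $q^-(|f|)\le q^-(f)$ only gives that $e^{-tL^{-}_{\omega,\gamma}}$ is positivity \emph{preserving}, whereas simplicity of the bottom eigenvalue (hence (2) and (5)) requires positivity \emph{improving}, i.e.\ irreducibility. This does hold here --- for $\gamma<0$ the heat kernel of $e^{t\Delta_\gamma}$ is explicitly strictly positive, or one can bypass the semigroup entirely by noting that on each half-line the decaying solution of $-f''+(\omega-Q_{\omega,\gamma}^{p-1})f=0$ is unique up to a constant multiple of $Q_{\omega,\gamma}$ and then matching at the origin with the jump condition --- but it deserves a sentence rather than being folded into the Beurling--Deny check. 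Outsourcing (3) and the exactness of the count in (6) to \cite{FuJe08,LFFKS08} is precisely what the paper itself does, so your treatment is consistent with, and more self-contained than, the source.
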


Let $u$ satisfy the equation \eqref{NLS} and $u=e^{i\omega t}(Q_{\omega,\gamma}+v)$. Then $v$ satisfies 
\begin{align*}
	i\partial_{t}v +\partial_{x}^{2}v +\gamma \delta v -\omega v +P(v)
	+R(v)=0,
\end{align*}
where 
\begin{align*}
	P(v)&:= p Q_{\omega,\gamma}^{p-1}v_{1} +i Q_{\omega,\gamma}^{p-1}v_{2}
	=\frac{p+1}{2}Q_{\omega,\gamma}^{p-1}v+\frac{p-1}{2}Q_{\omega,\gamma}^{p-1}\overline{v},
	\\
	R(v)&:=|Q_{\omega,\gamma}+v|^{p-1}(Q_{\omega,\gamma}+v)
	-Q_{\omega,\gamma}^{p} - p Q_{\omega,\gamma}^{p-1}v_{1} -i Q_{\omega,\gamma}^{p-1}v_{2}
\end{align*}
for $v=v_{1}+iv_{2}$. 
In vector notation, i.e. $v = \begin{bmatrix} v_{1} \\ v_{2} \end{bmatrix}$, it can be also written as
\begin{align*}
	\partial_t v + \mathcal{L}_{\omega,\gamma}v=iR(v),
\end{align*}
where 
\begin{align*}
	\mathcal{L}_{\omega,\gamma}f= 
	\begin{bmatrix}
	0 & -L^{-}_{\omega,\gamma} 
	\\
	L^{+}_{\omega,\gamma} & 0
	\end{bmatrix}
	\begin{bmatrix}
	f_{1} \\ f_{2}
	\end{bmatrix} 
\end{align*}
for $f=f_1 + if_2$. 

\begin{proposition}
Let $\sigma(\mathcal{L}_{\omega,\gamma})$ be the spectrum of the operator $\mathcal{L}_{\omega,\gamma}$, defined on $(L_{\even}^{2}(\mathbb{R}))^{2}$ and let $\sigma_{\ess} (\mathcal{L}_{\omega,\gamma})$ be its essential spectrum.
We have
\begin{align*}
	&\sigma_{\ess} (\mathcal{L}_{\omega,\gamma})=\{i\eta :\eta \in \mathbb{R}, |\eta|\geq \omega\}, 
	\\
	&\sigma(\mathcal{L}_{\omega,\gamma}) \cap \mathbb{R} =\{-e_{\omega},0,e_{\omega}\} \text{ with } e_{\omega}>0.
\end{align*}
Moreover, $e_{\omega}$ and $-e_{\omega}$ are simple eigenvalues of $\mathcal{L}_{\omega,\gamma}$ with eigenfunctions $\mathcal{Y}_{+}$ and $\mathcal{Y}_{-}=\overline{\mathcal{Y}_{+}}$, respectively. 
\end{proposition}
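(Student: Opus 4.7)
The plan is to break the proof into three steps: identifying the essential spectrum, locating the nonzero real eigenvalues via a reduction to $L_{\omega,\gamma}^{+}L_{\omega,\gamma}^{-}$, and then establishing simplicity together with the conjugation relation $\mathcal{Y}_{-}=\overline{\mathcal{Y}_{+}}$.

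For the essential spectrum, I first introduce the ``free'' operator
\[
	\mathcal{L}_{0} := \begin{bmatrix} 0 & -(\omega - \Delta_{\gamma}) \\ \omega - \Delta_{\gamma} & 0 \end{bmatrix}
\]
obtained by erasing the localized term $Q_{\omega,\gamma}^{p-1}$. A direct computation gives $\mathcal{L}_{0}^{2} = -(\omega - \Delta_{\gamma})^{2}$ on the diagonal, so $\sigma(\mathcal{L}_{0}) = \{i\eta:|\eta|\geq \omega\}$. Since $Q_{\omega,\gamma}$ decays exponentially (see the explicit formula in Proposition \ref{prop1.1}), multiplication by $Q_{\omega,\gamma}^{p-1}$ is a relatively compact perturbation of $\omega - \Delta_{\gamma}$; consequently $\mathcal{L}_{\omega,\gamma}-\mathcal{L}_{0}$ is a relatively compact perturbation of $\mathcal{L}_{0}$ (one may verify this at the level of $\mathcal{L}^{2}-\mathcal{L}_{0}^{2}$ if more convenient), and Weyl's theorem delivers $\sigma_{\ess}(\mathcal{L}_{\omega,\gamma})=\sigma_{\ess}(\mathcal{L}_{0})=\{i\eta:|\eta|\geq \omega\}$.

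For the real spectrum, note first that $(0,Q_{\omega,\gamma})^{T}\in \ker \mathcal{L}_{\omega,\gamma}$ by Lemma \ref{specinfo}, so $0\in \sigma(\mathcal{L}_{\omega,\gamma})$. For any nonzero real eigenvalue $\lambda$ with eigenvector $(f_{1},f_{2})^{T}$, the system $-L_{\omega,\gamma}^{-}f_{2}=\lambda f_{1}$, $L_{\omega,\gamma}^{+}f_{1}=\lambda f_{2}$ yields $L_{\omega,\gamma}^{+}L_{\omega,\gamma}^{-}f_{2}=-\lambda^{2}f_{2}$, so I reduce the question to counting the negative spectrum of $L_{\omega,\gamma}^{+}L_{\omega,\gamma}^{-}$ on $\mathcal{D}_{\even}$. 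Using $L_{\omega,\gamma}^{-}\geq 0$ with $\ker L_{\omega,\gamma}^{-} = \spn\{Q_{\omega,\gamma}\}$ (Lemma \ref{specinfo}), I form the self-adjoint operator $B := (L_{\omega,\gamma}^{-})^{1/2}L_{\omega,\gamma}^{+}(L_{\omega,\gamma}^{-})^{1/2}$, whose negative spectrum coincides with that of $L_{\omega,\gamma}^{+}L_{\omega,\gamma}^{-}$. A Vakhitov--Kolokolov-type counting argument — combined with the identities $\langle L_{\omega,\gamma}^{+}Q_{\omega,\gamma},Q_{\omega,\gamma}\rangle = (1-p)\|Q_{\omega,\gamma}\|_{L^{p+1}}^{p+1}<0$ and the mass-supercritical monotonicity $\partial_{\omega}\|Q_{\omega,\gamma}\|_{L^{2}}^{2}>0$ for $p>5$ — then shows that $B$ has exactly one simple negative eigenvalue $-e_{\omega}^{2}$ with $e_{\omega}>0$. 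This produces the pair $\pm e_{\omega}\in \sigma(\mathcal{L}_{\omega,\gamma})\cap \mathbb{R}$ and rules out any other real eigenvalue.

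Simplicity and conjugation then follow quickly. Simplicity of $\pm e_{\omega}$ as eigenvalues of $\mathcal{L}_{\omega,\gamma}$ is inherited from the simplicity of $-e_{\omega}^{2}$ for $L_{\omega,\gamma}^{+}L_{\omega,\gamma}^{-}$, since for $\lambda\neq 0$ any $\mathcal{L}_{\omega,\gamma}$-eigenvector $(f_{1},f_{2})^{T}$ has $f_{2}\neq 0$ (otherwise $f_{1}\in \ker L_{\omega,\gamma}^{+}=\{0\}$), and $f_{2}$ is then an eigenfunction of $L_{\omega,\gamma}^{+}L_{\omega,\gamma}^{-}$ at $-\lambda^{2}$. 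Fixing a real eigenvector $(Y_{1},Y_{2})^{T}$ for $e_{\omega}$ and setting $\mathcal{Y}_{+}:=Y_{1}+iY_{2}$, I read off from the defining system that $(Y_{1},-Y_{2})^{T}$ — which represents $\overline{\mathcal{Y}_{+}}$ in the real-vector identification — satisfies $\mathcal{L}_{\omega,\gamma}(Y_{1},-Y_{2})^{T}=-e_{\omega}(Y_{1},-Y_{2})^{T}$, so $\mathcal{Y}_{-}:=\overline{\mathcal{Y}_{+}}$ is an eigenfunction for $-e_{\omega}$. The main obstacle I anticipate is the Morse-index bookkeeping for $B$: justifying the exact count of one simple negative eigenvalue in the presence of the $\delta$-interaction, on the restricted space $\mathcal{D}_{\even}$ where the transmission condition $f'(0+)-f'(0-)=-\gamma f(0)$ is imposed at the origin. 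Once this count is in hand, the other two steps are routine.
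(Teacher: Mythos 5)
Your overall architecture (Weyl for the essential spectrum, reduction of real eigenvalues to the negative spectrum of $L_{\omega,\gamma}^{+}L_{\omega,\gamma}^{-}$ via $B=(L_{\omega,\gamma}^{-})^{1/2}L_{\omega,\gamma}^{+}(L_{\omega,\gamma}^{-})^{1/2}$, and the conjugation step) is reasonable, and it differs from the paper, which instead follows Chang--Gustafson--Nakanishi--Tsai: minimize the quotient $\langle L_{\omega,\gamma}^{+}u,u\rangle/\langle (L_{\omega,\gamma}^{-})^{-1}u,u\rangle$ over even $u\perp Q_{\omega,\gamma}$, show the infimum $\mu_{1}$ is negative, read off the eigenfunctions of $\mathcal{L}_{\omega,\gamma}$ at $\pm e_{\omega}=\pm\sqrt{-\mu_{1}}$ from the Euler--Lagrange equation, and quote Grillakis--Shatah--Strauss (Theorem 5.8) for uniqueness/simplicity.

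However, there is a genuine error at exactly the step you flag as the main obstacle. Your count of negative eigenvalues of $B$ invokes the Vakhitov--Kolokolov slope condition with the wrong sign: for $p>5$ (and also for the repulsive delta with $\omega>\gamma^{2}/4$, by Fukuizumi--Jeanjean, as recorded in Appendix \ref{appA} of the paper) one has $\partial_{\omega}\|Q_{\omega,\gamma}\|_{L^{2}}^{2}<0$, not $>0$. This sign is not a cosmetic detail: since $\operatorname{Ran}(L_{\omega,\gamma}^{-})^{1/2}\subset\{Q_{\omega,\gamma}\}^{\perp}$, the negative spectrum of $B$ is controlled by $L_{\omega,\gamma}^{+}$ restricted to $\{Q_{\omega,\gamma}\}^{\perp}$, and with $L_{\omega,\gamma}^{+}\partial_{\omega}Q_{\omega,\gamma}=-Q_{\omega,\gamma}$ the standard index formula gives $\langle (L_{\omega,\gamma}^{+})^{-1}Q_{\omega,\gamma},Q_{\omega,\gamma}\rangle=-\tfrac{1}{2}\partial_{\omega}\|Q_{\omega,\gamma}\|_{L^{2}}^{2}$; so if $\partial_{\omega}\|Q_{\omega,\gamma}\|_{L^{2}}^{2}>0$ held, as you assert, then $L_{\omega,\gamma}^{+}|_{\{Q_{\omega,\gamma}\}^{\perp}}\geq 0$, $B$ would have no negative spectrum, and your argument would conclude that $\mathcal{L}_{\omega,\gamma}$ has no nonzero real eigenvalue --- the opposite of the proposition (this is the stable, mass-subcritical scenario). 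The identity $\langle L_{\omega,\gamma}^{+}Q_{\omega,\gamma},Q_{\omega,\gamma}\rangle=(1-p)\|Q_{\omega,\gamma}\|_{L^{p+1}}^{p+1}<0$ does not rescue this, because $Q_{\omega,\gamma}$ is neither orthogonal to $Q_{\omega,\gamma}$ nor in the range of $(L_{\omega,\gamma}^{-})^{1/2}$, so it produces no negative direction for $B$. The fix is to use the correct sign $\partial_{\omega}\|Q_{\omega,\gamma}\|_{L^{2}}^{2}<0$, which yields exactly one simple negative eigenvalue of $B$ and hence the pair $\pm e_{\omega}$; with that correction (plus the routine care needed to apply Weyl-type stability of the essential spectrum to the non-self-adjoint $\mathcal{L}_{\omega,\gamma}$, which the paper does not even spell out), your route would work, though it is heavier than the paper's direct variational construction.
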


\begin{proof}
Following Chang et.al. \cite[(2.19)]{CGNT07}, we consider 
\begin{align*}
	\mu_{1}:=\inf_{u \in H^1_{\even}, u\perp Q_{\omega,\gamma}} \frac{\langle L_{\omega,\gamma}^{+}u,u\rangle}{\langle (L_{\omega,\gamma}^{-})^{-1}u,u\rangle}.
\end{align*}
Then this is negative (by Lemma~\ref{specinfo} (6)),
and attained by some function $\xi$. By the Lagrange 
multiplier theorem, we get
\begin{align*}
	L_{\omega,\gamma}^{+} \xi =\beta (L_{\omega,\gamma}^{-})^{-1}\xi +\alpha Q_{\omega,\gamma}
\end{align*}
for some $\alpha, \beta \in \mathbb{R}$. Multiplying by $\xi$ and integrating, we obtain $\beta=\mu_1$. Thus $\pm e_\omega$, where $e_{\omega}:=\sqrt{-\mu_{1}}>0$, are eigenvalues of $\mathcal{L}_{\omega,\gamma}$ with eigenfunctions  
\begin{align*}
	\begin{bmatrix}
	\mp \xi \\ e_{\omega} (L_{\omega,\gamma}^{-})^{-1}\xi- e_{\omega}^{-1} \alpha Q_{\omega,\gamma}
	\end{bmatrix}.
\end{align*}
We note that $\xi$ belongs to the domain of $L_{\omega,\gamma}^{+}$,
i.e. $\xi \in \mathcal{D}_\even$. 
The uniqueness follows from \cite[Theorem 5.8]{GSS90}. 
\end{proof}


Set $\mathcal{Y}_{1}:=\re \mathcal{Y}_{+}$ and $\mathcal{Y}_{2}:=\im \mathcal{Y}_{+}$. 
Then we have
\begin{align*}
	L_{\omega,\gamma}^{+}\mathcal{Y}_{1} = e_{\omega}\mathcal{Y}_{2}, 
	\quad
	L_{\omega,\gamma}^{-}\mathcal{Y}_{2}=-e_{\omega}\mathcal{Y}_{1}.
\end{align*}

The eigenfunctions are not smooth on $\mathbb{R}$ because of the delta interaction at the origin. However, they are smooth away from the origin and decay fast at infinity.

\begin{lemma}[Smoothness of eigenfunctions]
\label{lem2.16}
We have $\mathcal{Y}_{1},\mathcal{Y}_{2} \in C^{\infty}(\mathbb{R}\setminus\{0\})$. 
\end{lemma}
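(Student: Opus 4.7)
The plan is a standard elliptic/ODE bootstrap, exploiting the fact that the Dirac delta is supported at the origin, so on each half-line $(-\infty,0)$ and $(0,\infty)$ the eigenfunctions satisfy a coupled system of linear ODEs with smooth coefficients.

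First I would record that $Q_{\omega,\gamma} \in C^{\infty}(\mathbb{R}\setminus\{0\})$. Indeed, the explicit formula from Proposition~\ref{prop1.1} shows that on $(0,\infty)$ the function $Q_{\omega,\gamma}$ is a composition of the smooth functions $\sech$ and $\tanh^{-1}$ with the affine function $\frac{(p-1)\sqrt{\omega}}{2}x + \tanh^{-1}(\gamma/(2\sqrt{\omega}))$, so it is smooth on $(0,\infty)$, and similarly on $(-\infty,0)$. Consequently the coefficient $Q_{\omega,\gamma}^{p-1}$ appearing in $L^{\pm}_{\omega,\gamma}$ is in $C^{\infty}(\mathbb{R}\setminus\{0\})$.

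Next, since $\mathcal{Y}_{\pm}$ are eigenfunctions, $\mathcal{Y}_{1},\mathcal{Y}_{2}\in \mathcal{D}_{\even} \subset H^{1}(\mathbb{R})\cap H^{2}(\mathbb{R}\setminus\{0\})$, so in particular $\mathcal{Y}_{1},\mathcal{Y}_{2}\in H^{2}_{\mathrm{loc}}(\mathbb{R}\setminus\{0\})$. On $\mathbb{R}\setminus\{0\}$ the $\delta$-contribution in $L^{\pm}_{\omega,\gamma}$ vanishes, so the identities $L^{+}_{\omega,\gamma}\mathcal{Y}_{1}=e_{\omega}\mathcal{Y}_{2}$ and $L^{-}_{\omega,\gamma}\mathcal{Y}_{2}=-e_{\omega}\mathcal{Y}_{1}$ translate into the pointwise ODE system
\begin{align*}
\partial_{x}^{2}\mathcal{Y}_{1} &= \omega \mathcal{Y}_{1} - p Q_{\omega,\gamma}^{p-1}\mathcal{Y}_{1} - e_{\omega}\mathcal{Y}_{2},\\
\partial_{x}^{2}\mathcal{Y}_{2} &= \omega \mathcal{Y}_{2} - Q_{\omega,\gamma}^{p-1}\mathcal{Y}_{2} + e_{\omega}\mathcal{Y}_{1},
\end{align*}
valid on each connected component of $\mathbb{R}\setminus\{0\}$.

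Now I would run the bootstrap: suppose $\mathcal{Y}_{1},\mathcal{Y}_{2}\in H^{k}_{\mathrm{loc}}(\mathbb{R}\setminus\{0\})$ for some integer $k\geq 2$. Since $Q_{\omega,\gamma}^{p-1}\in C^{\infty}(\mathbb{R}\setminus\{0\})$, the right-hand sides of the system above lie in $H^{k}_{\mathrm{loc}}(\mathbb{R}\setminus\{0\})$, and hence $\partial_{x}^{2}\mathcal{Y}_{j}\in H^{k}_{\mathrm{loc}}(\mathbb{R}\setminus\{0\})$ for $j=1,2$, which upgrades $\mathcal{Y}_{j}$ to $H^{k+2}_{\mathrm{loc}}(\mathbb{R}\setminus\{0\})$. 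Iterating gives $\mathcal{Y}_{1},\mathcal{Y}_{2}\in \bigcap_{k\geq 0} H^{k}_{\mathrm{loc}}(\mathbb{R}\setminus\{0\})$, and Sobolev embedding yields $\mathcal{Y}_{1},\mathcal{Y}_{2}\in C^{\infty}(\mathbb{R}\setminus\{0\})$.

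There is no real obstacle here; the argument is purely local away from the origin and does not see the delta interaction. The only point that requires a moment's thought is the smoothness of the coefficient $Q_{\omega,\gamma}^{p-1}$ on $\mathbb{R}\setminus\{0\}$, which is immediate from the explicit formula above. (If one preferred, one could equivalently invoke classical linear ODE theory: the system is a $2\times 2$ second-order linear ODE on each half-line with smooth coefficients, so any $H^{2}_{\mathrm{loc}}$ solution is automatically $C^{\infty}$ there.)
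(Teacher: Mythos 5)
Your argument is correct, and at its core it is the same bootstrap the paper runs: use the eigenvalue relations $L^{+}_{\omega,\gamma}\mathcal{Y}_{1}=e_{\omega}\mathcal{Y}_{2}$, $L^{-}_{\omega,\gamma}\mathcal{Y}_{2}=-e_{\omega}\mathcal{Y}_{1}$ together with the smoothness of $Q_{\omega,\gamma}^{p-1}$ away from the origin to trade two derivatives per iteration, then conclude by Sobolev embedding. The only real difference is the framing: you work purely locally, in $H^{k}_{\mathrm{loc}}(\mathbb{R}\setminus\{0\})$ (equivalently, classical ODE regularity on each half-line), whereas the paper multiplies by cut-off functions $\psi\in BC_{0}^{\infty}(\mathbb{R})$ vanishing near the origin and bootstraps the global quantities $\psi\mathcal{Y}_{j}\in H^{s}(\mathbb{R})$. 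For the statement of Lemma \ref{lem2.16} your local version is entirely sufficient and arguably more elementary; what the paper's cut-off formulation buys is that the conclusion ``$\varphi(x/R)\mathcal{Y}_{j}\in H^{s}(\mathbb{R})$ for every such cut-off'' is exactly the input reused in Lemma \ref{lem2.17} (where it is made quantitative in $R$) and then in Lemma \ref{lem2.18} to get the $\widetilde{\mathcal{S}}$-decay, so if you intended to continue along the paper's route you would want to record your bootstrap in that cut-off form as well. Your preliminary observations (positivity and explicit smoothness of $Q_{\omega,\gamma}$ off the origin, and $\mathcal{Y}_{1},\mathcal{Y}_{2}\in\mathcal{D}\subset H^{2}(\mathbb{R}\setminus\{0\})$ so the ODE system holds pointwise on each component) are the right justifications and match what the paper implicitly uses.
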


\begin{proof}
Let $s \in \mathbb{N}$. Assume that $\psi \mathcal{Y}_{1},\psi \mathcal{Y}_{2} \in H^{s}(\mathbb{R})$ for an arbitrary function $\psi \in BC_0^{\infty}(\mathbb{R})$. 
By the equation $L_{\omega,\gamma}^{+}\mathcal{Y}_{1} = e_{\omega}\mathcal{Y}_{2}$, we have
\begin{align*}
	\psi\partial_{x}^{2} \mathcal{Y}_{1}=\psi (\omega \mathcal{Y}_{1} - pQ_{\omega,\gamma}^{p-1}\mathcal{Y}_{1}-e_{\omega}\mathcal{Y}_{2}) \in H^{s}(\mathbb{R}), 
\end{align*}
where we note that $\psi Q_{\omega,\gamma}^{p-1}\in BC_0^{\infty}(\mathbb{R})$. 
Since $\mathcal{Y}_{1} \in H^{1}$, we have $\psi \partial_{x} \mathcal{Y}_{1} \in L^{2}$. Thus, it holds that
\begin{align*}
	\partial_{x} (\psi \partial_{x} \mathcal{Y}_{1})
	=(\partial_{x} \psi ) \partial_{x} \mathcal{Y}_{1} + \psi \partial_{x}^{2} \mathcal{Y}_{1} \in H^{\min\{0,s\}}
\end{align*}
and thus $\psi \partial_{x} \mathcal{Y}_{1} \in H^{\min\{1,s+1\}}$. Since $\psi$ is arbitrary, we get
\begin{align*}
	\partial_{x} (\psi \partial_{x} \mathcal{Y}_{1})
	=(\partial_{x} \psi ) \partial_{x} \mathcal{Y}_{1} + \psi \partial_{x}^{2} \mathcal{Y}_{1} \in H^{\min\{1,s\}}
\end{align*}
and thus $\psi \partial_{x} \mathcal{Y}_{1} \in  H^{\min\{2,s+1\}}$. Iterating this procedure, we get $\psi \partial_{x} \mathcal{Y}_{1} \in  H^{s+1}$. 
\begin{align}
\label{eqq2.6}
	\partial_{x}(\psi \mathcal{Y}_{1})=\partial_{x} \psi \mathcal{Y}_{1} + \psi \partial_{x}\mathcal{Y}_{1} \in H^{s}
\end{align}
Thus $\psi \mathcal{Y}_{1} \in H^{s+1}$. By this, $\psi \partial_{x} \mathcal{Y}_{1} \in  H^{s+1}$, and \eqref{eqq2.6} again, we get $\psi \mathcal{Y}_{1} \in H^{s+2}$. Similarly, $\psi \mathcal{Y}_{2} \in H^{s+2}$. We start this argument with $s=0$ and thus we get $\psi \mathcal{Y}_{1},\psi \mathcal{Y}_{2} \in H^{m}$ for any integer $m>0$. This shows that $\mathcal{Y}_{1},\mathcal{Y}_{2} \in C^{\infty}(\mathbb{R}\setminus\{0\})$ since $\psi$ is arbitrary. 
\end{proof}

Since we have $L_{\omega,\gamma}^{+}\mathcal{Y}_{1} = e_{\omega}\mathcal{Y}_{2}$ and $L_{\omega,\gamma}^{-}\mathcal{Y}_{2}=-e_{\omega}\mathcal{Y}_{1}$, we get
\begin{align}
	\notag
	&\{(-\partial_{x}^{2}+\omega)^{2} + e_{\omega}^{2}\}(\psi\mathcal{Y}_{1})
	\\ \notag
	&=L_{\omega,\gamma}^{-}\{-2\partial_{x}\psi \partial_{x}\mathcal{Y}_{1}-(\partial_{x}^{2}\psi)\mathcal{Y}_{1}\}
	+e_{\omega}\{-2\partial_{x}\psi \partial_{x}\mathcal{Y}_{2} - (\partial_{x}^{2}\psi)\mathcal{Y}_{2}\}
	\\ \label{eqq2.7}
	&\quad +p(-\partial_{x}^{2}+\omega)(V\psi\mathcal{Y}_{1})
	+V(-\partial_{x}^{2}+\omega)(\psi\mathcal{Y}_{1})-pV^{2}(\psi\mathcal{Y}_{1}),
\end{align}
where $V:=Q_{\omega,\gamma}^{p-1}$, for $\psi \in BC_{0,\even}^{\infty}(\mathbb{R})$. We note that the above calculation is valid since $\psi$ vanishes near the origin and so the functions 
are in the domain $\mathcal{D}$. 

\begin{lemma}
\label{lem2.17}
For any function $\varphi \in C_{c,0,\even}^{\infty}(\mathbb{R})$ and $k,s\in \mathbb{N}$, there exists $C_{s,k,\varphi}>0$ such that 
\begin{align*}
	\|\varphi(x/R)\mathcal{Y}_{1}\|_{H^{s}}<\frac{C_{s,k,\varphi}}{R^{k}}
\end{align*}
for any $R\geq1$. A similar estimate for $\mathcal{Y}_{2}$ also holds.
\end{lemma}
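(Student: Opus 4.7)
The plan is to combine equation \eqref{eqq2.7} with elliptic regularity for the positive fourth-order operator $\mathcal{L} := (-\partial_x^2 + \omega)^2 + e_\omega^2$ and the exponential decay of $V = Q_{\omega,\gamma}^{p-1}$ (inherited from the explicit $\sech$-formula in Proposition \ref{prop1.1}). Since $\varphi \in C_{c,0,\even}^\infty$ vanishes on a neighborhood of $0$, we have $\supp \varphi \subset \{a \leq |x| \leq b\}$ for some $0<a<b$, so $\supp \varphi(\cdot/R) \subset \{aR \leq |x| \leq bR\}$, which recedes to infinity as $R \to \infty$. On this set, $V$ and all of its derivatives are bounded by $C_\ell e^{-cR}$ for some $c>0$.

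First I would fix a nested family of cutoffs $\varphi = \varphi_0 \prec \varphi_1 \prec \varphi_2 \prec \cdots$ in $C_{c,0,\even}^\infty$, all supported in a common compact set $K \subset \mathbb{R} \setminus \{0\}$, with $\varphi_{j+1} \equiv 1$ on $\supp \varphi_j$, and write $\psi_j^R(x) := \varphi_j(x/R)$. Applying \eqref{eqq2.7} with $\psi = \psi_j^R$ and the elliptic bound $\|u\|_{H^{s+4}} \lesssim \|\mathcal{L}u\|_{H^s}$ (which holds because $\mathcal{L}$ is self-adjoint and strictly positive on $L^2$, hence an isomorphism $H^{s+4} \to H^s$ for every $s \geq 0$), I would estimate each term of the right-hand side of \eqref{eqq2.7}. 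The terms involving $\partial_x \psi_j^R$ or $\partial_x^2 \psi_j^R$ are of size $O(R^{-1})$ in supremum (since $\|\partial_x^\ell \psi_j^R\|_{L^\infty} \lesssim R^{-\ell}$ for $R \geq 1$); moreover, these derivatives are supported in the set where $\psi_{j+1}^R \equiv 1$, which lets us replace derivatives of $\mathcal{Y}_i$ by the corresponding derivatives of $\psi_{j+1}^R \mathcal{Y}_i$ at the cost of a constant. The terms involving $V$ are exponentially small in $R$ on $\supp \psi_{j+1}^R$. Setting $N_j(m, R) := \|\psi_j^R \mathcal{Y}_1\|_{H^m} + \|\psi_j^R \mathcal{Y}_2\|_{H^m}$ and using the symmetric version of \eqref{eqq2.7} for $\mathcal{Y}_2$, I obtain
\begin{align*}
N_j(m, R) \lesssim R^{-1}\, N_{j+1}(m-1, R) + e^{-cR}\, N_{j+1}(m-2, R), \qquad m \geq 4,\ R \geq 1.
\end{align*}

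To close the argument I need uniform-in-$R$ boundedness $N_j(m, R) \leq C_{m,j}$ for every $m$ and $j$. For $m \in \{0, 1\}$ this is immediate from $\mathcal{Y}_i \in H^1$. For higher $m$, one uses the eigenvalue equations $L^+_{\omega,\gamma} \mathcal{Y}_1 = e_\omega \mathcal{Y}_2$ and $L^-_{\omega,\gamma} \mathcal{Y}_2 = -e_\omega \mathcal{Y}_1$, which hold classically on $\mathbb{R} \setminus \{0\}$, to recursively express $\partial_x^m \mathcal{Y}_i$ on $\{|x| \geq a/2\}$ in terms of lower-order derivatives of $\mathcal{Y}_i$ and the smooth potential $V$; this is essentially the bootstrap from the proof of Lemma \ref{lem2.16}, noting that all relevant $C^\ell$ norms of $\psi_j^R$ and of $V\psi_j^R$ are bounded independently of $R \geq 1$. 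Given this uniform bound and the trivial inequality $N_{j+1}(m-1, R) \leq N_{j+1}(m, R)$, the displayed estimate simplifies to $N_j(m, R) \lesssim R^{-1}\, N_{j+1}(m, R)$ for $m \geq 4$ and $R$ sufficiently large. Iterating $k$ times yields
\begin{align*}
N_0(m, R) \lesssim R^{-k}\, N_k(m, R) \lesssim R^{-k},
\end{align*}
which proves the claim for $m \geq 4$; the case $m < 4$ follows from $\|\cdot\|_{H^m} \leq \|\cdot\|_{H^{4}}$, and bounded $R$ is absorbed by enlarging the constant.

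The main obstacle is the careful bookkeeping of cutoffs: the estimate $N_j \lesssim R^{-1} N_{j+1}$ uses a strictly larger cutoff on the right-hand side, so each step of the bootstrap consumes one member of the nested family $\{\varphi_j\}$, and \emph{a priori} the constants could grow with $k$. However, since the target decay rate $R^{-k}$ is achieved in exactly $k$ iterations, only $k+1$ cutoffs from the fixed family are ever used, so all constants remain bounded by a finite $C_{s,k,\varphi}$ depending only on $s$, $k$, and $\varphi$ (through the chosen family). The estimate for $\mathcal{Y}_2$ is identical.
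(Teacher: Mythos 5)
Your proof is correct and follows essentially the same route as the paper's: both rest on the identity \eqref{eqq2.7}, the invertibility of $(-\partial_{x}^{2}+\omega)^{2}+e_{\omega}^{2}$ between Sobolev spaces, the smallness of $V=Q_{\omega,\gamma}^{p-1}$ on the support of the rescaled cutoff, the uniform-in-$R$ base case supplied by the proof of Lemma \ref{lem2.16}, and an induction in $k$ gaining one power of $R^{-1}$ per step at the cost of passing to an enlarged cutoff. The only differences are organizational: the paper extracts the $O(R^{-1})$ gain from the commutator bound $\|[(-\partial_{x}^{2}+\omega)^{2}+e_{\omega}^{2},\varphi(\cdot/R)]\|_{H^{s}\to H^{s-3}}\lesssim R^{-1}$ and raises the Sobolev index along with $k$, while you keep the Sobolev index fixed, read the same $O(R^{-1})$ off the $\partial_{x}\psi$-terms appearing explicitly in \eqref{eqq2.7} (together with the exponentially small $V$-terms), and consume one member of a pre-fixed nested family of cutoffs per iteration.
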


\begin{proof}
For $j=1,2$, we say that $P_{s,k}^{j}$ is true if there exists a constant $C=C_{s,k,\varphi}>0$ such that
\begin{align*}
	\|\varphi(x/R)\mathcal{Y}_{j}\|_{H^{s}}<\frac{C}{R^{k}}.
\end{align*}
holds. We note that $P_{\sigma,k}^{j}$ is true for $\sigma\leq s$ if $P_{s,k}^{j}$ is valid. 
Now, $P_{s,0}^{1}$ and $P_{s,0}^{2}$ are true for all $s \in \mathbb{N}$ since we have $\varphi(x/R)\mathcal{Y}_{j} \in H^{s}(\mathbb{R})$ by the proof of Lemma \ref{lem2.16} for $j=1,2$. Let $k\geq 0$. We will show that $P_{s+1,k+1}^{1}$ is valid if $P_{s,k}^{1}$ is true for all $s\in \mathbb{N}$. Assume that $P_{s,k}^{1}$ is true for all $s\in \mathbb{N}$. 
For given $\varphi$, take a function $\widetilde{\varphi} \in C_{c,0,\even}^{\infty}(\mathbb{R})$ satisfying $\widetilde{\varphi}(x)=1$ on the support of $\varphi$. 
Then we have
\begin{align*}
	\|\varphi(x/R)\mathcal{Y}_{1}\|_{H^{s+1}}
	&=\|\varphi(x/R)\widetilde{\varphi}(x/R) \mathcal{Y}_{1}\|_{H^{s+1}}\\
	&\approx \|\{ (-\partial_{x}^{2}+\omega)^{2}+e_{\omega}^{2} \}\varphi(x/R)\widetilde{\varphi}(x/R) \mathcal{Y}_{1}\|_{H^{s-3}}\\
	&\leq \| [\{ (-\partial_{x}^{2}+\omega)^{2}+e_{\omega}^{2} \},\varphi(x/R)]\widetilde{\varphi}(x/R) \mathcal{Y}_{1}\|_{H^{s-3}} \\
	&\quad + \|\varphi(x/R) \{ (-\partial_{x}^{2}+\omega)^{2}+e_{\omega}^{2} \}\widetilde{\varphi}(x/R) \mathcal{Y}_{1}\|_{H^{s-3}}.
\end{align*}
Now we know that $\|[\{ (-\partial_{x}^{2}+\omega)^{2}+e_{\omega}^{2} \},\varphi(x/R)]\|_{H^{s}\to H^{s-3}} \lesssim R^{-1}$. 

Thus we get
\begin{align*}
	\|\varphi(x/R)\mathcal{Y}_{1}\|_{H^{s+1}}
	&\leq \frac{C}{R}\| \widetilde{\varphi}(x/R) \mathcal{Y}_{1}\|_{H^{s}}
	+ \|\varphi(x/R) \{ (-\partial_{x}^{2}+\omega)^{2}+e_{\omega}^{2} \}\widetilde{\varphi}(x/R) \mathcal{Y}_{1}\|_{H^{s-3}}.
\end{align*}
The second term can be estimated by using \eqref{eqq2.7} as $\psi=\widetilde{\varphi}(\cdot/R)$. Indeed, by $\varphi\widetilde{\varphi}=\varphi$, $\varphi\partial_x^\alpha\widetilde{\varphi}=0$ for $\alpha \geq 1$, and \eqref{eqq2.7}, we obtain 
\begin{align*}
	&\|\varphi(x/R) \{(-\partial_{x}^{2}+\omega)^{2} + e_{\omega}^{2}\}(\widetilde{\varphi}(x/R)\mathcal{Y}_{1})\|_{H^{s-3}}
	\\ 
	&\lesssim \|\varphi(x/R) V \mathcal{Y}_{1}\|_{H^{s-3}}
	+\|\varphi(x/R) \partial_xV \partial_x\mathcal{Y}_{1}\|_{H^{s-3}}
	\\
	&\quad +\|\varphi(x/R) \partial_x^2V\mathcal{Y}_{1}\|_{H^{s-3}}
	+\|\varphi(x/R) V\partial_x^2\mathcal{Y}_{1}\|_{H^{s-3}}
	+\|\varphi(x/R) V^2 \mathcal{Y}_{1}\|_{H^{s-3}}
	\\
	&\lesssim \sum_{\alpha+\beta+\gamma\leq s-1}\|\partial_x^\alpha \varphi(x/R) \partial_x^\beta V \partial_x^\gamma \mathcal{Y}_{1}\|_{L^2}.
\end{align*}
Since $V=Q_{\omega,\gamma}^{p-1}$ decays exponentially, we have $\partial_x^\alpha \varphi(x/R) \partial_x^\beta V \lesssim R^{-1}\partial_x^\alpha \varphi(x/R)$. 
Thus, it holds that 
\begin{align*}
	\sum_{\alpha+\beta+\gamma\leq s-1}\|\partial_x^\alpha \varphi(x/R) \partial_x^\beta V \partial_x^\gamma \mathcal{Y}_{1}\|_{L^2}
	&\lesssim R^{-1} \sum_{\alpha+\gamma\leq s-1}\|\partial_x^\alpha \varphi(x/R) \partial_x^\gamma \mathcal{Y}_{1}\|_{L^2}
	\\
	&\lesssim R^{-1} \|\varphi(x/R) \mathcal{Y}_{1}\|_{H^{s-1}}.
\end{align*}
This means that 
\begin{align*}
	\|\varphi(x/R) \{ (-\partial_{x}^{2}+\omega)^{2}+e_{\omega}^{2} \}\widetilde{\varphi}(x/R) \mathcal{Y}_{1}\|_{H^{s-3}}
\lesssim R^{-1} \|\varphi(x/R) \mathcal{Y}_{1}\|_{H^{s-1}}.
\end{align*}
Therefore, since $P_{s,k}^{1}$ is true, we get 
\begin{align*}
	\|\varphi(x/R)\mathcal{Y}_{1}\|_{H^{s+1}}
	\leq  \frac{C}{R}\| \widetilde{\varphi}(x/R) \mathcal{Y}_{1}\|_{H^{s}}
	+\frac{C}{R} \|\varphi(x/R) \mathcal{Y}_{1}\|_{H^{s-1}}
	\leq  \frac{C_{s,k,\varphi}}{R^{k+1}}.
\end{align*}
This means that $P_{s+1,k+1}^{1}$ is true. In the same way, we also can show that  $P_{s+1,k+1}^{2}$ is true. 
\end{proof}

\begin{lemma}[Decay of eigenfunctions]
\label{lem2.18}
We have $\mathcal{Y}_{1}, \mathcal{Y}_{2} \in \widetilde{\mathcal{S}}(\mathbb{R})$. 
\end{lemma}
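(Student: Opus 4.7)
The plan is to combine the smoothness result of Lemma~\ref{lem2.16} with the quantitative dyadic estimates of Lemma~\ref{lem2.17} to verify the decay condition defining $\widetilde{\mathcal{S}}(\mathbb{R})$. Since Lemma~\ref{lem2.16} already yields $\mathcal{Y}_j \in C^\infty(\mathbb{R}\setminus\{0\})$ for $j=1,2$, it suffices to check that $\varphi\mathcal{Y}_j \in \mathcal{S}(\mathbb{R})$ for every $\varphi \in BC_0^\infty(\mathbb{R})$.

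First I would fix one even cutoff $\chi \in C_{c,0,\even}^\infty(\mathbb{R})$ with $\chi \equiv 1$ on $\{1\leq|x|\leq 2\}$ and $\supp\chi \subset \{1/2\leq|x|\leq 3\}$. Applying Lemma~\ref{lem2.17} to $\chi$, for each $s,k\in\mathbb{N}$ there is a constant $C_{s,k}$ with
\begin{equation*}
    \|\chi(\cdot/R)\mathcal{Y}_j\|_{H^{s+1}(\mathbb{R})}\leq \frac{C_{s,k}}{R^k},\qquad R\geq 1,\ j=1,2.
\end{equation*}
The one-dimensional Sobolev embedding $H^{s+1}\hookrightarrow W^{s,\infty}$, combined with the fact that $\chi(x/R)=1$ on $R\leq |x|\leq 2R$ (so that all derivatives of $\chi(x/R)$ vanish there), upgrades this to the pointwise dyadic bound
\begin{equation*}
    \sup_{R\leq |x|\leq 2R} |\partial_x^s \mathcal{Y}_j(x)| \leq \frac{C_{s,k}}{R^k},\qquad s,k\in\mathbb{N},\ R\geq 1,
\end{equation*}
so that $\mathcal{Y}_j$ and each of its derivatives decay at infinity faster than any polynomial.

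Now, for $\varphi\in BC_0^\infty(\mathbb{R})$ there is $r_0>0$ with $\varphi\equiv 0$ on $(-r_0,r_0)$, hence $\varphi\mathcal{Y}_j$ vanishes there and, combined with Lemma~\ref{lem2.16}, lies in $C^\infty(\mathbb{R})$. To verify the Schwartz seminorms I would fix $N,\beta\in\mathbb{N}$; Leibniz and the boundedness of each $\partial^\alpha\varphi$ give
\begin{equation*}
    |\partial_x^\beta(\varphi\mathcal{Y}_j)(x)| \leq C_{\varphi,\beta}\max_{\gamma\leq\beta}|\partial_x^\gamma\mathcal{Y}_j(x)|,
\end{equation*}
and for $|x|\geq 1$ the dyadic bound applied with $k=N$ yields, on the annulus $R\leq|x|\leq 2R$,
\begin{equation*}
    |x|^N|\partial_x^\beta(\varphi\mathcal{Y}_j)(x)| \leq (2R)^N\, C_{\varphi,\beta}\,\frac{C_{\beta,N}}{R^N}\leq C_{\varphi,\beta,N}.
\end{equation*}
On $r_0\leq|x|\leq 1$ the same quantity is finite by the smoothness of $\mathcal{Y}_j$ away from the origin, and it vanishes on $|x|<r_0$, so taking the supremum concludes $\varphi\mathcal{Y}_j\in\mathcal{S}(\mathbb{R})$. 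The main analytic input---rapid $H^s$ decay of $\mathcal{Y}_j$ on annuli of scale $R$---has already been packaged in Lemma~\ref{lem2.17}, so there is essentially no obstacle; the only mildly delicate point is that elements of $BC_0^\infty$ need not be compactly supported, which is why genuine decay of $\mathcal{Y}_j$ itself (and not merely a cutoff statement) is required, and which the dyadic $L^\infty$-bound above supplies.
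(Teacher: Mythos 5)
Your proof is correct and follows essentially the same route as the paper: both rest on the dyadic-annulus bounds of Lemma~\ref{lem2.17} combined with the Sobolev embedding. The only cosmetic difference is that you extract pointwise rapid decay of $\partial_x^s\mathcal{Y}_j$ on annuli and verify the defining property of $\widetilde{\mathcal{S}}(\mathbb{R})$ directly for every $\varphi\in BC_0^\infty(\mathbb{R})$, whereas the paper bounds the weighted norms $\|x^{l}\psi\mathcal{Y}_{1}\|_{H^{s}}$ for a single cutoff $\psi$ and then passes to all cutoffs via the equivalence recorded in Appendix~\ref{appB}.
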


\begin{proof}
We only consider $\mathcal{Y}_{1}$. 
Let $\psi \in C_{0,\even}^{\infty}$ satisfy
\begin{align*}
	\psi(x)=
	\begin{cases}
	0, & |x|\leq 1,
	\\
	1, & |x|\geq 2. 
	\end{cases}
\end{align*}
Let $s,l \in \mathbb{N}$. 
Now, $x^{l} \psi \mathcal{Y}_{1}$ is even or odd. Thus we have
\begin{align*}
	\|x^{l} \psi \mathcal{Y}_{1}\|_{H^{s}(\mathbb{R})}^{2}
	=2\|x^{l} \psi \mathcal{Y}_{1}\|_{H^{s}(1,\infty)}^{2}
	=2\sum_{n=0}^{\infty}\|x^{l} \psi \mathcal{Y}_{1}\|_{H^{s}([2^{n},2^{n+1}])}^{2}.
\end{align*}
Let $\varphi \in C_{c,0,\even}^{\infty}(\mathbb{R})$ satisfy 
\begin{align*}
	\varphi(x)=\begin{cases}
	1 & \text{ if } 1\leq |x|\leq 2,
	\\
	0 & \text{ if } |x|>4,|x|<1/2.
	\end{cases}
\end{align*}
We have
\begin{align*}
	\|x^{l} \psi \mathcal{Y}_{1}\|_{H^{s}([2^{n},2^{n+1}])}
	= \|x^{l} \psi \varphi(\cdot/2^{n}) \mathcal{Y}_{1}\|_{H^{s}([2^{n},2^{n+1}])}
	\lesssim 2^{nl} \|\varphi(\cdot/2^{n}) \mathcal{Y}_{1}\|_{H^{s}}.
\end{align*}
By Lemma \ref{lem2.17} with $k=l+1$, we get
\begin{align*}
	2^{nl} \|\varphi(\cdot/2^{n}) \mathcal{Y}_{1}\|_{H^{s}([2^{n},2^{n+1}])}
	\lesssim 2^{nl} \frac{C_{s,l,\varphi}}{2^{n(l+1)}} \lesssim 2^{-n}.
\end{align*}
Therefore, it holds that
\begin{align*}
	\sum_{n=0}^{\infty}\|x^{l} \psi \mathcal{Y}_{1}\|_{H^{s}([2^{n},2^{n+1}])}^{2}
	\leq C\sum_{n=0}^{\infty}2^{-2n} <C
\end{align*}
and thus $\|x^{l} \psi \mathcal{Y}_{1}\|_{H^{s}(\mathbb{R})}<\infty$. This and the Sobolev embedding imply $\psi \mathcal{Y}_{1} \in \mathcal{S}(\mathbb{R})$. 
Thus we have $\mathcal{Y}_1 \in \widetilde{\mathcal{S}}(\mathbb{R})$. 
A similar argument gives us $\mathcal{Y}_2 \in \widetilde{\mathcal{S}}(\mathbb{R})$. 
\end{proof}

\subsubsection{Estimates of the ground state and eigenfunctions}

In this section, we give estimates for the ground state and eigenfunctions. They will be used to construct special solutions in Section \ref{sec3}. 

\begin{lemma}
\label{lem2.19}
Let $\alpha=0,1$. 
We have
\begin{align*}
	\|Q_{\omega,\gamma}^{-1}\partial_{x}^{\alpha} Q_{\omega,\gamma}\|_{L^{\infty}} < \infty. 
\end{align*}
\end{lemma}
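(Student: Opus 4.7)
The plan is to prove this by direct computation using the explicit formula for $Q_{\omega,\gamma}$ given in Proposition~\ref{prop1.1}. The case $\alpha=0$ is immediate, since $Q_{\omega,\gamma}^{-1} Q_{\omega,\gamma} \equiv 1$, so the work is entirely in the case $\alpha = 1$.

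For $\alpha = 1$, set $a := \frac{(p-1)\sqrt{\omega}}{2}$ and $b := \tanh^{-1}(\gamma/(2\sqrt{\omega}))$, so that
\begin{align*}
    Q_{\omega,\gamma}(x) = C \, \sech^{2/(p-1)}(a|x| + b),
    \qquad C := \left[\tfrac{(p+1)\omega}{2}\right]^{1/(p-1)}.
\end{align*}
Differentiating in $x$ away from the origin and using $(\sech)' = -\sech\tanh$ together with $\partial_x |x| = \sign(x)$ gives, after simplification,
\begin{align*}
    \partial_x Q_{\omega,\gamma}(x) = -\frac{2a}{p-1}\,\tanh(a|x| + b)\,\sign(x)\, Q_{\omega,\gamma}(x)
    \quad \text{for } x\neq 0.
\end{align*}
Hence $Q_{\omega,\gamma}^{-1}\partial_x Q_{\omega,\gamma}(x) = -\sqrt{\omega}\,\tanh(a|x|+b)\,\sign(x)$ almost everywhere, and since $|\tanh|\leq 1$ and $|\sign|\leq 1$, we obtain $\|Q_{\omega,\gamma}^{-1}\partial_x Q_{\omega,\gamma}\|_{L^\infty} \leq \sqrt{\omega} < \infty$.

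I do not expect any real obstacle. The only small point to be careful about is the jump of $\partial_x Q_{\omega,\gamma}$ at $x=0$, which is exactly what encodes the delta-potential boundary condition in $\mathcal{D}(-\Delta_\gamma)$; but since we work with $L^\infty$ (essential supremum) and $Q_{\omega,\gamma} \in H^2(\mathbb{R}\setminus\{0\})$, this jump is harmless and the bound from the two one-sided derivatives matches.
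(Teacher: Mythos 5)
Your computation is correct: differentiating the explicit profile $Q_{\omega,\gamma}(x)=C\,\sech^{2/(p-1)}(a|x|+b)$ away from the origin does give $Q_{\omega,\gamma}^{-1}\partial_x Q_{\omega,\gamma}=-\sqrt{\omega}\,\tanh(a|x|+b)\,\sign(x)$, hence the uniform bound $\sqrt{\omega}$, and your remark that the jump at $x=0$ (which encodes the $\delta$ boundary condition) is irrelevant for an essential-supremum bound is exactly the right caveat. The paper takes a slightly different, shorter route: rather than differentiating the explicit formula, it invokes the known bound $\|Q_{\omega,0}^{-1}\partial_x Q_{\omega,0}\|_{L^\infty}<\infty$ from \cite[Corollary 3.8]{CFR20} together with the structural identity $Q_{\omega,\gamma}(x)=Q_{\omega,0}(x+\xi)$ on $(0,\infty)$ (Lemma \ref{lemA.1}) and evenness, reducing the statement to the potential-free case restricted to $(\xi,\infty)$. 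Your version is fully self-contained and yields the explicit constant $\sqrt{\omega}$, at the cost of redoing the calculation; the paper's version buys brevity and reuses the machinery already set up in Appendix \ref{appA}, which it also needs elsewhere. Either argument is acceptable.
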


\begin{proof}
We have $Q_{\omega,\gamma}(x)>0$ for all $x\in \mathbb{R}$. The case of $\alpha=0$ is trivial. 
The derivative $\partial_{x} Q_{\omega,\gamma}$ is well-defined except for the origin. 
Since we have $\|Q_{\omega,0}^{-1}\partial_{x}Q_{\omega,0}\|_{L^{\infty}}< \infty$ by \cite[Corollary 3.8]{CFR20} and $Q_{\omega,\gamma}(x)=Q_{\omega,0}(x+\xi)$ on $(0,\infty)$ for some $\xi=\xi(\omega,\gamma)$ (see Lemma \ref{lemA.1}), we get
\begin{align*}
	\|Q_{\omega,\gamma}^{-1}\partial_{x}  Q_{\omega,\gamma}\|_{L^{\infty}}
	=\|Q_{\omega,0}^{-1}\partial_{x} Q_{\omega,0}\|_{L^{\infty}(\xi,\infty)}
	<\infty.
\end{align*}
This completes the proof.
\end{proof}

\begin{lemma}
\label{lem2.20}
Let $f \in \mathcal{S}(\mathbb{R})$ and $\lambda \in \mathbb{R}$. If $f$ satisfies $(1-\partial_{x}^{2}+i\lambda)f=G$ with $|G(x)| \lesssim e^{-a|x|}$ for $0<a \neq \re \sqrt{1+i\lambda}$, then we have
\begin{align*}
	|f(x)| \lesssim e^{-\min\{a, \re \sqrt{1+i\lambda}\}|x|}.
\end{align*}
\end{lemma}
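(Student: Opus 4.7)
Set $\nu := \sqrt{1+i\lambda}$, choosing the branch with $\re\nu>0$, so that $\nu^{2}=1+i\lambda$. The plan is to represent $f$ as convolution with the Green's function of $1-\partial_{x}^{2}+i\lambda$, and then perform elementary exponential estimates.

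First, I would verify that the function $K(x):=\frac{1}{2\nu}e^{-\nu|x|}$ is a fundamental solution, i.e.\ $(1-\partial_{x}^{2}+i\lambda)K=\delta_{0}$ in $\mathcal{S}'(\R)$. A direct distributional computation (the jump of $K'$ at $0$ is $-1$, and on $\R\setminus\{0\}$ one has $(1-\partial_{x}^{2}+i\lambda)K=(1-\nu^{2}+i\lambda)K=0$) gives this. Since $\re\nu>0$, both $K$ and $\partial_{x}K$ decay exponentially.

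Next I would argue that $f=K*G$. Indeed, $f\in\mathcal{S}(\R)$ and $G\in\mathcal{S}'(\R)$ (in fact $G\in L^{\infty}$ with exponential decay, so $G\in\mathcal{S}'$), and the difference $h:=f-K*G$ is a tempered distribution solving $(1-\partial_{x}^{2}+i\lambda)h=0$. Because the characteristic roots $\pm\nu$ both have nonzero real part, the only tempered solution of this ODE is $h\equiv 0$. Alternatively, one takes Fourier transforms: $\hat{f}(\xi)=\hat{G}(\xi)/(1+\xi^{2}+i\lambda)$, and $\widehat{K}(\xi)$ is precisely $(1+\xi^{2}+i\lambda)^{-1}$.

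It then remains to bound
\begin{align*}
|f(x)| \lesssim \int_{\R}e^{-\re\nu\,|x-y|}e^{-a|y|}dy.
\end{align*}
By evenness of the bound it suffices to take $x\geq0$ and split the integral into $y<0$, $0<y<x$ and $y>x$. The first and third regions contribute geometric series bounded by constants times $e^{-\re\nu\,x}$ and $e^{-a x}$ respectively, hence at most $e^{-\min\{a,\re\nu\}x}$. The middle region yields
\begin{align*}
e^{-\re\nu\,x}\int_{0}^{x}e^{(\re\nu-a)y}dy
=\frac{1}{\re\nu-a}\bigl(e^{-a x}-e^{-\re\nu\,x}\bigr),
\end{align*}
which is bounded by a constant times $e^{-\min\{a,\re\nu\}x}$ in either subcase $a<\re\nu$ or $a>\re\nu$. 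The hypothesis $a\neq\re\nu$ enters precisely to prevent this middle integral from producing an extra factor of $x$ (which would give a logarithmic loss in the exponential rate).

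The argument is essentially routine; the only mildly delicate point is the identification $f=K*G$, which rests on uniqueness of tempered solutions of a constant-coefficient ODE whose characteristic exponents avoid the imaginary axis. Everything else is calculus.
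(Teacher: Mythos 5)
Your argument is correct: the Green's function $K(x)=\frac{1}{2\nu}e^{-\nu|x|}$ with $\re\nu>0$, the identification $f=K*G$ via uniqueness of tempered solutions (or the Fourier transform), and the three-region convolution estimate (with $a\neq\re\nu$ ruling out the resonant factor of $|x|$) are all sound. The paper itself gives no proof, simply citing \cite[Lemma 3.7]{CFR20}, and the proof there proceeds by essentially the same resolvent-kernel convolution argument, so your proposal matches the intended route.
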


\begin{proof}
See \cite[Lemma 3.7]{CFR20}. 
\end{proof}

\begin{lemma}
\label{lem2.21}
For $\alpha=0,1$, the following estimates hold: 
\begin{align*}
	\|Q_{\omega,\gamma}^{-1}e^{\eta|x|}\partial_{x}^{\alpha}\mathcal{Y}_{\pm}\|_{L^{\infty}}<\infty
\end{align*} 
for some $0<\eta \ll 1$. 
\end{lemma}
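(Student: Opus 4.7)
The strategy is to extract a decoupled scalar equation for $\mathcal{Y}_+=\mathcal{Y}_1+i\mathcal{Y}_2$ away from the origin and apply a rescaled version of Lemma \ref{lem2.20}. Combining the identities $L^+_{\omega,\gamma}\mathcal{Y}_1=e_\omega\mathcal{Y}_2$ and $L^-_{\omega,\gamma}\mathcal{Y}_2=-e_\omega\mathcal{Y}_1$ (which on $\mathbb{R}\setminus\{0\}$ no longer involve $\gamma\delta$) gives
\begin{equation*}
(-\partial_x^2+\omega+ie_\omega)\mathcal{Y}_+ = P(\mathcal{Y}_+)\quad\text{on }\mathbb{R}\setminus\{0\},
\end{equation*}
and Lemma \ref{lem2.18} together with the exponential decay of $Q_{\omega,\gamma}$ yields $|P(\mathcal{Y}_+)(x)|\lesssim Q_{\omega,\gamma}^{p-1}(x)\lesssim e^{-(p-1)\sqrt{\omega}|x|}$.

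To remove the singularity at $0$, I would pick an even $\psi\in BC_0^\infty(\mathbb{R})$ with $\psi\equiv 0$ on $[-1,1]$ and $\psi\equiv 1$ for $|x|\geq 2$, so that $\psi\mathcal{Y}_+\in\mathcal{S}(\mathbb{R})$ by Lemma \ref{lem2.18}. Then
\begin{equation*}
(-\partial_x^2+\omega+ie_\omega)(\psi\mathcal{Y}_+)=\psi P(\mathcal{Y}_+)-2\psi'\partial_x\mathcal{Y}_+-\psi''\mathcal{Y}_+ =: F,
\end{equation*}
where $F$ is supported in $\{|x|\geq 1\}$, and the bound on $P(\mathcal{Y}_+)$ together with the compact support of the commutator terms yields $|F(x)|\lesssim e^{-(p-1)\sqrt{\omega}|x|}$. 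Setting $\tilde f(y):=(\psi\mathcal{Y}_+)(y/\sqrt\omega)$ converts the equation into $(1-\partial_y^2+ie_\omega/\omega)\tilde f = \omega^{-1}F(y/\sqrt{\omega})$, so Lemma \ref{lem2.20} (with $\eta$ perturbed slightly, if needed, off the exceptional rate $\re\sqrt{1+ie_\omega/\omega}$) and scaling back give
\begin{equation*}
|(\psi\mathcal{Y}_+)(x)|\lesssim e^{-\eta'|x|},\qquad \eta':=\min\bigl\{(p-1)\sqrt\omega,\ \re\sqrt{\omega+ie_\omega}\bigr\}.
\end{equation*}

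The crux is that, since $e_\omega>0$,
\begin{equation*}
\re\sqrt{\omega+ie_\omega} = \sqrt{\tfrac{1}{2}\bigl(\sqrt{\omega^2+e_\omega^2}+\omega\bigr)} > \sqrt{\omega},
\end{equation*}
and also $(p-1)\sqrt\omega>\sqrt\omega$, whence $\eta'>\sqrt\omega$ strictly. Choosing any $\eta\in(0,\eta'-\sqrt\omega)$ and combining $Q_{\omega,\gamma}(x)\gtrsim e^{-\sqrt\omega|x|}$ at infinity with a uniform positive lower bound of $Q_{\omega,\gamma}$ on $[-2,2]$ (where $\mathcal{Y}_\pm$ is merely bounded) yields the estimate for $\alpha=0$.

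For $\alpha=1$, the equation $\partial_x^2\mathcal{Y}_+=(\omega+ie_\omega)\mathcal{Y}_+-P(\mathcal{Y}_+)$ on $\mathbb{R}\setminus\{0\}$ gives $|\partial_x^2\mathcal{Y}_+(x)|\lesssim e^{-\eta'|x|}$ from the $\alpha=0$ bound, and one recovers $\partial_x\mathcal{Y}_+$ by integrating from $\pm\infty$, where $\partial_x\mathcal{Y}_+$ vanishes by Lemma \ref{lem2.17}. The main difficulty is organizing the cutoff and commutator bookkeeping so that the forcing term $F$ has a clean decay rate compatible with Lemma \ref{lem2.20}; conceptually everything rides on the inequality $\re\sqrt{\omega+ie_\omega}>\sqrt\omega$, which provides the slack between the decay rates of $\mathcal{Y}_\pm$ and $Q_{\omega,\gamma}$.
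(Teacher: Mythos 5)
Your proposal is correct, and it reaches the conclusion by a slightly different (and somewhat leaner) algebraic reduction than the paper. The paper works with the real component $\mathcal{Y}_1$ alone: it hits $\psi\mathcal{Y}_1$ with the fourth-order operator $(-\partial_x^2+\omega)^2+e_\omega^2$ (identity \eqref{eqq2.7}), factors it as $(-\partial_x^2+\omega+ie_\omega)(-\partial_x^2+\omega-ie_\omega)$ via the auxiliary function $g=(-\partial_x^2+\omega-ie_\omega)(\psi\mathcal{Y}_1)$, and applies Lemma \ref{lem2.20} twice (first to $g$, then to $\psi\mathcal{Y}_1$); for $\alpha=1$ it repeats the argument for the differentiated system. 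You instead diagonalize from the start by using the complex eigenfunction: $L^+_{\omega,\gamma}\mathcal{Y}_1=e_\omega\mathcal{Y}_2$ and $L^-_{\omega,\gamma}\mathcal{Y}_2=-e_\omega\mathcal{Y}_1$ combine to $(-\partial_x^2+\omega+ie_\omega)\mathcal{Y}_+=P(\mathcal{Y}_+)$ off the origin, so after the same cutoff one application of Lemma \ref{lem2.20} suffices, and you recover $\partial_x\mathcal{Y}_+$ by integrating the second-derivative bound from infinity rather than redoing the ODE step. Both routes rest on the same ingredients (Lemma \ref{lem2.18} for the Schwartz property of $\psi\mathcal{Y}_\pm$, Lemma \ref{lem2.20} for the decay transfer, and the strict gap $\re\sqrt{\omega\pm ie_\omega}>\sqrt\omega$, which you make explicit while the paper leaves it implicit in the choice of $\eta$); your version buys a shorter computation, while the paper's component-wise fourth-order scheme is the template it reuses in Lemma \ref{lem2.22}, where the resolvent components do not decouple into a single complex scalar equation. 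Two small points: the quantity to perturb off the resonance in Lemma \ref{lem2.20} is the forcing rate $a$ (here $p-1$ after rescaling), not $\eta$ — harmless, since lowering $a$ is free; and the vanishing of $\partial_x\mathcal{Y}_+$ at infinity is most directly from Lemma \ref{lem2.18} (Lemma \ref{lem2.17} plus Sobolev embedding also gives it).
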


\begin{proof}
It is enough to show the statement for $\mathcal{Y}_{1}$. 
Let $\psi \in C_{0,\even}^\infty(\mathbb{R})$ satisfy
\begin{align*}
	\psi(x) =
	\begin{cases}
	0, & |x|\leq 1,
	\\
	1, & |x|\geq 2.
	\end{cases}
\end{align*}
Now we have
\begin{align*}
	\|Q_{\omega,\gamma}^{-1}e^{\eta|x|}\partial_{x}^{\alpha}\mathcal{Y}_{1}\|_{L^{\infty}}
	\leq \|Q_{\omega,\gamma}^{-1}e^{\eta|x|}\partial_{x}^{\alpha}(\psi\mathcal{Y}_{1})\|_{L^{\infty}}
	+\|Q_{\omega,\gamma}^{-1}e^{\eta|x|}\partial_{x}^{\alpha}\{(1-\psi)\mathcal{Y}_{1}\}\|_{L^{\infty}}.
\end{align*}
Since $1-\psi \in C_c^\infty(\mathbb{R})$ and $\mathcal{Y}_1 \in \mathcal{D} \subset W^{1,\infty}$, we have
\begin{align*}
	\|Q_{\omega,\gamma}^{-1}e^{\eta|x|}\partial_{x}^{\alpha}\{(1-\psi)\mathcal{Y}_{1}\}\|_{L^{\infty}}
	\lesssim \|\partial_{x}^{\alpha}\{(1-\psi)\mathcal{Y}_{1}\}\|_{L^{\infty}}
	\lesssim \|\mathcal{Y}_{1}\|_{W^{1,\infty}}<\infty.
\end{align*}
Thus, it is sufficient to prove that the first term is finite. As seen in Lemma \ref{lem2.18}, $\psi \mathcal{Y}_{1}$ is of Schwartz class. Set $g:=(-\partial_{x}^{2}+\omega - ie_{\omega})(\psi\mathcal{Y}_{1}) \in \mathcal{S}(\mathbb{R})$. Then we have
\begin{align*}
	\begin{cases}
	(-\partial_{x}^{2}+\omega + ie_{\omega})g=G(\mathcal{Y}_{1},\mathcal{Y}_{2}),
	\\
	(-\partial_{x}^{2}+\omega - ie_{\omega})(\psi\mathcal{Y}_{1})=g,
	\end{cases}
\end{align*}
where
\begin{align*}
	G(\mathcal{Y}_{1},\mathcal{Y}_{2})
	&:= \{(-\partial_{x}^{2})^{2}+\omega)^{2} +e_{\omega}^{2}\}(\psi\mathcal{Y}_{1})
	\\
	&=L_{\omega,\gamma}^{-}\{-2\partial_{x}\psi \partial_{x}\mathcal{Y}_{1}-(\partial_{x}^{2}\psi)\mathcal{Y}_{1}\}
	+e_{\omega}\{-2\partial_{x}\psi \partial_{x}\mathcal{Y}_{2} - (\partial_{x}^{2}\psi)\mathcal{Y}_{2}\}
	\\
	&\quad +p(-\partial_{x}^{2}+\omega)(V\psi\mathcal{Y}_{1})
	+V(-\partial_{x}^{2}+\omega)(\psi\mathcal{Y}_{1})-pV^{2}(\psi\mathcal{Y}_{1})
\end{align*}
and $V=Q_{\omega,\gamma}^{p-1}$. Since $\partial_{x}^{\beta}\psi=0$ on $(-1,1)\cup (\mathbb{R}\setminus (-2,2))$ for $\beta\geq 1$ and $\partial_{x}^{\alpha}\mathcal{Y}_{1} \in L^{\infty}(\mathbb{R})$, we have
\begin{align*}
	|G(\mathcal{Y}_{1},\mathcal{Y}_{2})| \lesssim V \lesssim e^{-(p-1)\omega^{\frac{1}{2}}|x|}.
\end{align*}
Lemma \ref{lem2.20} yields that 
\begin{align*}
	|g(x)| \lesssim e^{-\min\{(p-1)\omega^{1/2}, \re \sqrt{\omega + ie_{\omega}}\}|x|}. 
\end{align*}
Set $a:=\min\{(p-1)\omega^{1/2}, \re \sqrt{\omega + ie_{\omega}}\}$. 
By Lemma \ref{lem2.20} again, we get
\begin{align*}
	|\psi\mathcal{Y}_{1}| \lesssim e^{-\min\{a,\sqrt{\omega-ie_{\omega}}\}|x|}.
\end{align*}
Since $(p-1)\omega^{1/2},\re \sqrt{\omega + ie_{\omega}}, \re \sqrt{\omega - ie_{\omega}} \geq \omega^{1/2}$ and $Q_{\omega,\gamma}\lesssim e^{-\omega^{1/2}|x|}$, we obtain
\begin{align*}
	\|Q_{\omega,\gamma}^{-1}e^{\eta|x|}(\psi\mathcal{Y}_{1})\|_{L^{\infty}} < \infty,
\end{align*}
where $\eta>0$ is smaller than $\min\{(p-1)\omega^{1/2},\re \sqrt{\omega + ie_{\omega}}, \re \sqrt{\omega - ie_{\omega}}\} - \omega^{1/2}$. For the estimate of the derivative $\partial_{x}\mathcal{Y}_{1}$, we argue similarly, using
\begin{align*}
	\begin{cases}
	(-\partial_{x}^{2}+\omega + ie_{\omega}) \partial_{x}g=\partial_{x}G(\mathcal{Y}_{1},\mathcal{Y}_{2}),
	\\
	(-\partial_{x}^{2}+\omega - ie_{\omega})\partial_{x}(\psi\mathcal{Y}_{1})=\partial_{x}g.
	\end{cases}
\end{align*}
This completes the proof. 
\end{proof}

\begin{lemma}
\label{lem2.22}
Let $\lambda \in \mathbb{R} \setminus \sigma(\mathcal{L}_{\omega,\gamma})$ and let $f \in L^2_\even \cap \widetilde{\mathcal{S}}$. 
If $f$ satisfies 
\begin{align*}
	\|Q_{\omega,\gamma}^{-1}e^{\eta|x|}\partial_{x}^{\alpha}f\|_{L^{\infty}} < \infty
\end{align*}
for some $0<\eta < \re \sqrt{\omega + i \lambda}$ and for $\alpha=0,1$, then 
$f$ also satisfies 
\begin{align*}
	\|Q_{\omega,\gamma}^{-1}e^{\eta|x|}\partial_{x}^{\beta}\{ (\mathcal{L}_{\omega,\gamma}-\lambda)^{-1}f\}\|_{L^{\infty}} < \infty
\end{align*}
for $\beta=0,1$. 
\end{lemma}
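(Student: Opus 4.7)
The plan is to follow the strategy of Lemma~\ref{lem2.21} adapted to the resolvent equation. Set $g := (\mathcal{L}_{\omega,\gamma} - \lambda)^{-1} f$, which exists in $L^2_\even$ since $\lambda \notin \sigma(\mathcal{L}_{\omega,\gamma})$. A bootstrap identical to Lemmas~\ref{lem2.16} and~\ref{lem2.18} shows componentwise that $g \in \mathcal{D}_\even \cap \widetilde{\mathcal{S}}(\mathbb{R})$, and in particular $g \in W^{1,\infty}$ with $\psi g \in \mathcal{S}(\mathbb{R})$ for every even cutoff $\psi$ vanishing near the origin. Fix such a $\psi$ with $\psi=0$ for $|x|\leq 1$ and $\psi=1$ for $|x|\geq 2$, and split
\begin{align*}
  \|Q_{\omega,\gamma}^{-1} e^{\eta|x|}\partial_x^\beta g\|_{L^\infty}
  \leq \|Q_{\omega,\gamma}^{-1} e^{\eta|x|}\partial_x^\beta(\psi g)\|_{L^\infty}
  + \|Q_{\omega,\gamma}^{-1} e^{\eta|x|}\partial_x^\beta((1-\psi) g)\|_{L^\infty};
\end{align*}
the second summand is immediately finite since $(1-\psi)g$ is compactly supported and $g\in W^{1,\infty}$.

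For the first summand, commute $\psi$ past the equation to get $(\mathcal{L}_{\omega,\gamma}-\lambda)(\psi g) = \psi f + [\mathcal{L}_{\omega,\gamma},\psi]g =: F$, where $[\mathcal{L}_{\omega,\gamma},\psi]$ is given entrywise by $-2\psi'\partial_x - \psi''$ (the $\delta$-contribution drops because $\psi$ vanishes near the origin). Applying $\mathcal{L}_{\omega,\gamma}+\lambda$ and expanding $L^\mp L^\pm$ via $L^\pm = H - V^\pm$ with $H := -\partial_x^2+\omega$, $V^- := Q_{\omega,\gamma}^{p-1}$, $V^+ := pQ_{\omega,\gamma}^{p-1}$, one obtains componentwise
\begin{align*}
  (H^2+\lambda^2)(\psi g_j) = G_j, \qquad j=1,2,
\end{align*}
where $G_j$ collects $L^\mp F_{j'}\pm\lambda F_j$ together with all $V^\pm$-weighted perturbation terms arising from the factorization, exactly in the spirit of the identity preceding \eqref{eqq2.7}.

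A term-by-term inspection gives $|G_j(x)|\lesssim e^{-(\sqrt{\omega}+\eta)|x|}$: the $\psi f$ and $\psi\partial_x f$ contributions decay at this rate by hypothesis, the $\psi'$- and $\psi''$-commutator pieces are compactly supported, and every factor of $V^\pm$, $(V^\pm)'$, $(V^\pm)''$, or $V^+V^-$ brings decay $\lesssim e^{-(p-1)\sqrt{\omega}|x|}$, which dominates whenever $\eta$ is sufficiently small. The pointwise bounds on $\partial_x^k(\psi g)$ for $k\leq 2$ needed to close these estimates follow from $\psi g\in\mathcal{S}(\mathbb{R})$. Now factor $H^2+\lambda^2 = (H-i\lambda)(H+i\lambda)$ and set $h_j := (H+i\lambda)(\psi g_j)$. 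Lemma~\ref{lem2.20} applied to $(H-i\lambda)h_j=G_j$ gives $|h_j(x)|\lesssim e^{-\min\{\sqrt{\omega}+\eta,\,\re\sqrt{\omega+i\lambda}\}|x|}$, and a second application to $(H+i\lambda)(\psi g_j)=h_j$ yields the same bound for $\psi g_j$. The hypothesis makes this minimum equal to $\sqrt{\omega}+\eta$, so $|\psi g_j(x)|\lesssim Q_{\omega,\gamma}(x)e^{-\eta|x|}$, closing the case $\beta=0$. For $\beta=1$, differentiate the equation for $\psi g$ once in $x$ and run the same two-step iteration with source $\partial_x F$, handled by the $\alpha=1$ part of the hypothesis on $f$.

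The main obstacle is coordinating two bootstraps: the regularity bootstrap must precede the decay argument because the sources $G_j$ depend on pointwise bounds on $\partial_x^k(\psi g)$ with $k\leq 2$, while the decay bootstrap must be carried out for $g_1$ and $g_2$ simultaneously since they are coupled through $\mathcal{L}_{\omega,\gamma}$. A secondary subtlety is that Lemma~\ref{lem2.20} caps the output decay at $\re\sqrt{\omega\pm i\lambda}$, which is precisely the role played by the upper bound $\eta<\re\sqrt{\omega+i\lambda}$ in the hypothesis.
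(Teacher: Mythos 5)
Your argument is correct and follows essentially the same route as the paper's proof: cut off away from the origin (the compactly supported piece being trivial since $(\mathcal{L}_{\omega,\gamma}-\lambda)^{-1}f \in \mathcal{D}\subset W^{1,\infty}$), reduce the coupled resolvent system to the scalar fourth-order equation $\{(-\partial_{x}^{2}+\omega)^{2}+\lambda^{2}\}(\psi g_{j})=G_{j}$, factor it and apply Lemma \ref{lem2.20} twice after the regularity bootstrap giving $\psi g\in\mathcal{S}(\mathbb{R})$, and differentiate the same system to handle $\beta=1$. Your explicit smallness requirement on $\eta$ (so that $\sqrt{\omega}+\eta$ is the relevant minimum and the $V^{\pm}$-terms dominate) is a harmless deviation that is likewise implicit in the paper's use of the lemma.
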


\begin{proof}
We have 
\begin{align*}
	&\|Q_{\omega,\gamma}^{-1}e^{\eta|x|}\partial_{x}^{\beta}\{ (\mathcal{L}_{\omega,\gamma}-\lambda)^{-1}f\}\|_{L^{\infty}} 
	\\
	&\leq \|Q_{\omega,\gamma}^{-1}e^{\eta|x|}\partial_{x}^{\beta}\{\psi (\mathcal{L}_{\omega,\gamma}-\lambda)^{-1}f\}\|_{L^{\infty}} 
	\\
	&\quad +\|Q_{\omega,\gamma}^{-1}e^{\eta|x|}\partial_{x}^{\beta}\{ (1-\psi)(\mathcal{L}_{\omega,\gamma}-\lambda)^{-1}f\}\|_{L^{\infty}}.
\end{align*}
Since we have $1-\psi \in C_c^\infty$ and $(\mathcal{L}_{\omega,\gamma}-\lambda)^{-1}f \in \mathcal{D} \subset W^{1,\infty}$, the second term in the right hand side is finite. It is enough to estimate the first term. 

Let $g=(\mathcal{L}_{\omega,\gamma}-\lambda)^{-1}f$. Then we have $g \in \mathcal{D}\subset W^{1,\infty}$ and $(\mathcal{L}_{\omega,\gamma}-\lambda)g=f$, that is,
\begin{align*}
	\begin{cases}
	-(-\partial_{x}^{2} + \omega -Q_{\omega,\gamma}^{p-1})g_{2} -\lambda g_{1}=f_{1},
	\\
	(-\partial_{x}^{2} + \omega -pQ_{\omega,\gamma}^{p-1})g_{1} -\lambda g_{2}=f_{2}.
	\end{cases}
\end{align*}
From this, we obtain
\begin{align*}
	(-\partial_{x}^{2} + \omega -Q_{\omega,\gamma}^{p-1})(\psi g_{2})
	&= -2\partial_{x}\psi \partial_{x}g_{2} -\partial_{x}^{2}\psi g_{2} 
	+\psi(-\partial_{x}^{2} + \omega -Q_{\omega,\gamma}^{p-1}) g_{2}
	\\
	&= 2\partial_{x}\psi \partial_{x}g_{2} + \partial_{x}^{2}\psi g_{2} 
	-\lambda \psi g_{1} +\psi f_{1}.
\end{align*}
Now, in the same way as in Lemmas \ref{lem2.16}--\ref{lem2.18}, we have $\psi g\in \mathcal{S}(\mathbb{R})$. 
Thus we obtain
\begin{align*}
	&(-\partial_{x}^{2} + \omega -pQ_{\omega,\gamma}^{p-1})(-\partial_{x}^{2} + \omega -Q_{\omega,\gamma}^{p-1})(\psi g_{2})
	\\
	&=(-\partial_{x}^{2} + \omega -pQ_{\omega,\gamma}^{p-1})( -2\partial_{x}\psi \partial_{x}g_{2} - \partial_{x}^{2}\psi g_{2})
	\\
	&\quad -\lambda ( -2\partial_{x}\psi \partial_{x}g_{1} -\partial_{x}^{2}\psi g_{1} +\lambda \psi g_{2} + \psi f_{2})
	\\
	&\quad + (-\partial_{x}^{2} + \omega -pQ_{\omega,\gamma})(\psi f_{1}).
\end{align*}
This implies that
\begin{align*}
	\{(-\partial_{x}^{2} + \omega)^{2}+\lambda^{2}\} (\psi g_{2})
	&=pQ_{\omega,\gamma}^{p-1}(-\partial_{x}^{2} + \omega)(\psi g_{2})+ (-\partial_{x}^{2} + \omega) (Q_{\omega,\gamma}^{p-1}\psi g_{2})
	\\
	&\quad +(-\partial_{x}^{2} + \omega -pQ_{\omega,\gamma}^{p-1})( -2\partial_{x}\psi \partial_{x}g_{2} - \partial_{x}^{2}\psi g_{2})
	\\
	&\quad -\lambda ( -2\partial_{x}\psi \partial_{x}g_{1} -\partial_{x}^{2}\psi g_{1} + \psi f_{2})
	\\
	&\quad + (-\partial_{x}^{2} + \omega -pQ_{\omega,\gamma})(\psi f_{1})
	\\&=:\widetilde{G}(f,g).
\end{align*}
We set $F:=(-\partial_{x}^{2} + \omega+i\lambda) (\psi g_{2})$. 
Then we have
\begin{align*}
	\begin{cases}
	(-\partial_{x}^{2} + \omega-i\lambda) F=\widetilde{G}(f,g),
	\\
	(-\partial_{x}^{2} + \omega+i\lambda) (\psi g_{2})=F,
	\end{cases}
\end{align*}
where we note that $F$ and $\psi g_{2}$ are of Schwartz class. 
Since we have $|\widetilde{G}(f,g)| \lesssim e^{-\omega^{1/2}|x|}$ 
from the assumption, we get the conclusion by the similar argument to above. 
We also have
\begin{align*}
	\begin{cases}
	(-\partial_{x}^{2} + \omega-i\lambda) \partial_x^\beta F=\partial_x^\beta \widetilde{G}(f,g),
	\\
	(-\partial_{x}^{2} + \omega+i\lambda) \partial_x^\beta (\psi g_{2})=\partial_x^\beta F,
	\end{cases}
\end{align*}
and thus the estimate of the derivative is also obtained. In particular, we get
\begin{align*}
	\|Q_{\omega,\gamma}^{-1}e^{\eta|x|}\partial_{x}^{\alpha}\{\psi (\mathcal{L}_{\omega,\gamma}-\lambda)^{-1}f\}\|_{L^{\infty}} < \infty
\end{align*}
for any $\alpha \in \mathbb{N}$. 
\end{proof}

\subsubsection{Coercivity}
In this section, we show the coercivity of the linearized operators on sets with orthogonality conditions.

We define bilinear operators $B_{\omega,\gamma}^{\pm}$ by 
\begin{align*}
	B_{\omega,\gamma}^{+}(f,g)
	:= \langle L_{\omega,\gamma}^{+} f , g \rangle,
	\quad 
	B_{\omega,\gamma}^{-}(f,g)
	:= \langle L_{\omega,\gamma}^{-} f , g \rangle
\end{align*}
for $f,g \in H^{1}(\mathbb{R};\mathbb{R})$. More precisely, for example, the first one is defined by 
\begin{align*}
	B_{\omega,\gamma}^{+}(f,g)
	=\int_{\mathbb{R}} \partial_{x}f \partial_{x}g  dx
	+\omega \int_{\mathbb{R}} f  g dx
	-\gamma f(0) g(0) 
	-p\int_{\mathbb{R}}Q_{\omega,\gamma}^{p-1} f  g  dx.
\end{align*}
We also define 
\begin{align*}
	B_{\omega,\gamma}(f,g):= \frac{1}{2}B_{\omega,\gamma}^{+}(f_{1},g_{1})
	+\frac{1}{2}B_{\omega,\gamma}^{-}(f_{2},g_{2})
\end{align*}
for $f=f_{1}+if_{2},g=g_{1}+ig_{2} \in H^{1}(\mathbb{R})$ and 
\begin{align*}
	\Phi(f):=B_{\omega,\gamma}(f,f)
\end{align*}
for $f \in H^{1}(\mathbb{R})$.

We consider the following orthogonality conditions:
\begin{align}
	\label{eq2.8}
	&\int_{\mathbb{R}} f_{2}(x)Q_{\omega,\gamma}(x)dx=
	\int_{\mathbb{R}} f_{1}(x)Q_{\omega,\gamma}^{p}(x)dx=0,
	\\
	\label{eq2.9}
	&\int_{\mathbb{R}} f_{2}(x)Q_{\omega,\gamma}(x)dx=
	\int_{\mathbb{R}} f_{2}(x)\mathcal{Y}_{1}(x)dx=
	\int_{\mathbb{R}} f_{1}(x)\mathcal{Y}_{2}(x)dx=0,
\end{align}
where $f_{1}=\re f$ and $f_{2}=\im f$. 
The sets in $H_{\even}^{1}(\mathbb{R})$ satisfying \eqref{eq2.8} and \eqref{eq2.9} are denoted by $G^{\perp}$ and $\widetilde{G}^{\perp}$, respectively. That is, we set 
\begin{align*}
	&G^{\perp}:=\{f \in H_{\even}^{1}(\mathbb{R}): ( iQ, f )_{L^{2}} =( Q^{p} , f )_{L^{2}} =0\},
	\\
	&\widetilde{G}^{\perp}:=\{f \in H_{\even}^{1}(\mathbb{R}): ( iQ, f )_{L^{2}}=( \mathcal{Y}_{1}, f_{2} )_{L^{2}} = ( \mathcal{Y}_{2} , f_{1} )_{L^{2}} =0\}.
\end{align*}

Then we have the following coercivity of $\Phi$ on $G^{\perp}$ and $\widetilde{G}^{\perp}$. 

\begin{lemma}[Coercivity]
\label{coercivity}
There exists a positive constant $c$ such that 
\begin{align*}
	\Phi(f) \geq c\|f\|_{H^{1}}^{2}
\end{align*}
for all $f \in G^{\perp} \cup \widetilde{G}^{\perp}$. 
\end{lemma}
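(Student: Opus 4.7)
The plan is to decouple $\Phi$ via
$\Phi(f) = \tfrac{1}{2}\langle L_{\omega,\gamma}^+ f_1, f_1\rangle + \tfrac{1}{2}\langle L_{\omega,\gamma}^- f_2, f_2\rangle$,
observing that both orthogonality systems defining $G^\perp$ and $\widetilde{G}^\perp$ also separate along real and imaginary parts. The problem then reduces to two $H^1$-coercivity statements: first, $\langle L_{\omega,\gamma}^- g, g\rangle \gtrsim \|g\|_{H^1}^2$ on $\{Q_{\omega,\gamma}\}^\perp \cap H_\even^1$ (which handles the imaginary part for both sets), and second, $\langle L_{\omega,\gamma}^+ h, h\rangle \gtrsim \|h\|_{H^1}^2$ on $\{Q_{\omega,\gamma}^p\}^\perp \cap H_\even^1$ (for $G^\perp$) and on $\{\mathcal{Y}_2\}^\perp \cap H_\even^1$ (for $\widetilde{G}^\perp$).

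The $L^-$ statement is immediate from Lemma~\ref{specinfo}: $L_{\omega,\gamma}^-\geq 0$ has simple kernel $\mathrm{span}\{Q_{\omega,\gamma}\}$ and essential spectrum $[\omega,\infty)$, so there is a uniform $L^2$-spectral gap on $\{Q_{\omega,\gamma}\}^\perp$; this lifts to $H^1$ by a standard convex combination with the trivial bound $\langle L^- g, g\rangle + C\|g\|_{L^2}^2 \geq c\|g\|_{H^1}^2$, valid since $Q_{\omega,\gamma}^{p-1}\in L^\infty$ and $\|\cdot\|_{H_{\omega,\gamma}^1}\sim \|\cdot\|_{H^1}$. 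For the $L^+$ statement, let $\chi$ be the even, $L^2$-normalized negative eigenfunction of $L_{\omega,\gamma}^+$ with $L^+ \chi = -\mu_1\chi$, $\mu_1>0$, provided by Lemma~\ref{specinfo}(6); the same upgrade gives $\langle L^+ h, h\rangle \geq c\|h\|_{H^1}^2$ for even $h\perp \chi$ in $L^2$. The crux becomes showing that each orthogonality condition produces quantitative transversality to $\chi$, namely $\langle Q_{\omega,\gamma}^p,\chi\rangle \neq 0$ and $\langle \mathcal{Y}_2,\chi\rangle\neq 0$.

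The first non-orthogonality follows from $L^+ Q_{\omega,\gamma} = -(p-1)Q_{\omega,\gamma}^p$, which reduces the claim to $\langle Q_{\omega,\gamma}, \chi\rangle \neq 0$: otherwise $Q_{\omega,\gamma}\in \chi^\perp$ would force $\langle L^+ Q_{\omega,\gamma}, Q_{\omega,\gamma}\rangle\geq 0$, contradicting $\langle L^+ Q_{\omega,\gamma}, Q_{\omega,\gamma}\rangle = -(p-1)\|Q_{\omega,\gamma}\|_{L^{p+1}}^{p+1}<0$. The second follows similarly from $L^+\mathcal{Y}_1 = e_\omega\mathcal{Y}_2$, reducing to $\langle \mathcal{Y}_1,\chi\rangle \neq 0$; here the input is the strict sign $\langle L^+\mathcal{Y}_1, \mathcal{Y}_1\rangle < 0$, which I verify using the explicit formulas $\mathcal{Y}_1 = -\xi$ and $\mathcal{Y}_2 = e_\omega (L^-)^{-1}\xi - e_\omega^{-1}\alpha Q_{\omega,\gamma}$ from the preceding spectral proposition together with the orthogonality $\xi\perp Q_{\omega,\gamma}$ built into its variational definition, giving $\langle L^+\mathcal{Y}_1,\mathcal{Y}_1\rangle = e_\omega\langle \mathcal{Y}_2,\mathcal{Y}_1\rangle = -e_\omega^2\langle (L^-)^{-1}\xi,\xi\rangle < 0$. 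With both non-orthogonalities in hand, coercivity follows by a compactness-contradiction argument: assuming $\inf\{\langle L^+ f, f\rangle : f\in H_\even^1, \|f\|_{L^2}=1, f\perp \phi\}\leq 0$ with $\phi\in\{Q_{\omega,\gamma}^p,\mathcal{Y}_2\}$, extract a minimizing sequence $f_n \rightharpoonup f_*$ in $H^1$; the only non-coercive term $-p\int Q_{\omega,\gamma}^{p-1} f_1^2$ is weakly continuous thanks to the exponential decay of $Q_{\omega,\gamma}$, so weak lower semi-continuity plus rescaling delivers a nontrivial minimizer $f_*$ satisfying the Lagrange multiplier equation $L^+ f_* = \lambda_0 f_* + \nu\phi$, and then writing $\phi$ as a nonzero multiple of $L^+\psi$ (with $\psi = Q_{\omega,\gamma}$ or $\mathcal{Y}_1$) and invoking $\ker L^+ = \{0\}$ on $H_\even^1$ forces a contradiction with the orthogonality $f_*\perp \phi$ via the non-orthogonality $\langle \psi,\phi\rangle\neq 0$. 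The main obstacle I foresee is verifying the sign $\langle L^+\mathcal{Y}_1,\mathcal{Y}_1\rangle < 0$, since it requires reaching back into the implicit construction of $\mathcal{Y}_\pm$ in the preceding spectral proposition to exploit the variational identity defining $\mu_1$ together with the constraint $\xi\perp Q_{\omega,\gamma}$.
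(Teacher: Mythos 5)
Your proposal is correct in substance but takes a genuinely different route from the paper's. The paper only argues the sign $\langle L^{+}_{\omega,\gamma}f_{1},f_{1}\rangle\geq 0$ under the constraints \eqref{eq2.8}, and does so variationally: it perturbs $Q_{\omega,\gamma}$ along $f_{1}$, rescales back onto the even Nehari manifold (possible precisely because the two orthogonality relations force $\lambda_{\varepsilon}=1+O(\varepsilon^{2})$), and Taylor-expands $S_{\omega,\gamma}$ using the minimality of $Q_{\omega,\gamma}$ from Lemma \ref{lem2.3}; the upgrade from nonnegativity to $H^{1}$-coercivity, and the entire $\widetilde{G}^{\perp}$ case, are then delegated to \cite{CFR20} and \cite{DuRo10}. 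You instead run the classical spectral argument: split $\Phi$ into its $L^{+}$ and $L^{-}$ parts, use the spectral gap of $L^{-}_{\omega,\gamma}$ on $\{Q_{\omega,\gamma}\}^{\perp}$, and for $L^{+}_{\omega,\gamma}$ exploit the ``one negative eigenvalue, trivial kernel'' structure of Lemma \ref{specinfo} together with the transversality/sign conditions $\langle (L^{+}_{\omega,\gamma})^{-1}\phi,\phi\rangle<0$ for $\phi=Q_{\omega,\gamma}^{p}$ (via $L^{+}_{\omega,\gamma}Q_{\omega,\gamma}=-(p-1)Q_{\omega,\gamma}^{p}$) and $\phi=\mathcal{Y}_{2}$ (via $L^{+}_{\omega,\gamma}\mathcal{Y}_{1}=e_{\omega}\mathcal{Y}_{2}$ and $\langle L^{+}_{\omega,\gamma}\mathcal{Y}_{1},\mathcal{Y}_{1}\rangle=-e_{\omega}^{2}\langle (L^{-}_{\omega,\gamma})^{-1}\xi,\xi\rangle<0$, which correctly uses $\xi\perp Q_{\omega,\gamma}$ from the construction of $\mathcal{Y}_{\pm}$). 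Your route is self-contained and treats $G^{\perp}$ and $\widetilde{G}^{\perp}$ uniformly by one criterion; the paper's displayed argument needs no spectral input at all for $G^{\perp}$, but at the cost of outsourcing the rest.

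One step is compressed: in the concluding Lagrange-multiplier contradiction, the combination ``$\phi$ is a nonzero multiple of $L^{+}_{\omega,\gamma}\psi$ plus $\ker L^{+}_{\omega,\gamma}=\{0\}$'' only disposes of the case $\lambda_{0}=0$. If the constrained infimum is attained with $\lambda_{0}<0$, you need the standard extra case analysis: if $\nu=0$, then $f_{*}$ is the negative eigenfunction $\chi$ up to scalar, and $f_{*}\perp\phi$ contradicts the non-orthogonality $\langle\chi,\phi\rangle\neq0$ you proved; if $\nu\neq0$, then $0=\langle f_{*},\phi\rangle=\nu\langle (L^{+}_{\omega,\gamma}-\lambda_{0})^{-1}\phi,\phi\rangle$, which is impossible because this quantity is positive for $\lambda_{0}$ below the negative eigenvalue and, by monotonicity of $\lambda\mapsto\langle (L^{+}_{\omega,\gamma}-\lambda)^{-1}\phi,\phi\rangle$, strictly negative on the interval between the negative eigenvalue and $0$ thanks to $\langle (L^{+}_{\omega,\gamma})^{-1}\phi,\phi\rangle<0$. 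All the ingredients for this are already in your argument, so it is a routine completion rather than a genuine gap; remember also to finish with the same $L^{2}$-to-$H^{1}$ convex-combination upgrade you used for $L^{-}_{\omega,\gamma}$.
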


\begin{proof}
Let $f=f_{1}+i f_{2} \in G^{\perp}$. We show that 
$\langle L_{\omega,\gamma}^{+}f_{1} , f_{1} \rangle \geq c\|f_{1}\|_{H^{1}}^{2}$ 
for some constant $c>0$. For given small $\varepsilon>0$, we set 
\begin{align}
\label{eq2.10}
	u_{\varepsilon}:=\lambda_{\varepsilon} (Q_{\omega,\gamma}+\varepsilon f_{1})
\end{align} with $\lambda_{\varepsilon} \in \mathbb{R}$ chosen such that 
\begin{align}
\label{eq2.11}
	0=I_{\omega,\gamma}(u_{\varepsilon})=\lambda_{\varepsilon}^{2}\left\{\|Q_{\omega,\gamma}+\varepsilon f_{1}\|_{H_{\omega,\gamma}^{1}}^{2} -\lambda_{\varepsilon}^{p-1}\|Q_{\omega,\gamma}+\varepsilon f_{1}\|_{L^{p+1}}^{p+1}\right\}.
\end{align}
Since it follows from the equation \eqref{eleq} and the orthogonality assumption \eqref{eq2.8} that 
\begin{align*}
	(Q_{\omega,\gamma},f_{1})_{H_{\omega,\gamma}^{1}}
	=(Q_{\omega,\gamma}^{p},f_{1})_{L^{2}}
	=0,
\end{align*}
we get
\begin{align*}
	\|Q_{\omega,\gamma}+\varepsilon f_{1}\|_{H_{\omega,\gamma}^{1}}^{2}
	=\|Q_{\omega,\gamma}\|_{H_{\omega,\gamma}^{1}}^{2}+\varepsilon^{2}\|f_{1}\|_{H_{\omega,\gamma}^{1}}^{2}.
\end{align*}
Moreover, we also have
\begin{align*}
	\|Q_{\omega,\gamma}+\varepsilon f_{1}\|_{L^{p+1}}^{p+1}
	&=\|Q_{\omega,\gamma}\|_{L^{p+1}}^{p+1}
	+\varepsilon (p+1)(Q^{p},f_{1})_{L^{2}} + O(\varepsilon^{2})
	\\
	&=\|Q_{\omega,\gamma}\|_{L^{p+1}}^{p+1}
	+ O(\varepsilon^{2}).
\end{align*}
Thus \eqref{eq2.11} and these equalities yield that
\begin{align*}
	\lambda_{\varepsilon}^{p-1}
	=\frac{\|Q_{\omega,\gamma}+\varepsilon f_{1}\|_{H_{\omega,\gamma}^{1}}^{2}}{\|Q_{\omega,\gamma}+\varepsilon f_{1}\|_{L^{p+1}}^{p+1}}
	=\frac{\|Q_{\omega,\gamma}\|_{H_{\omega,\gamma}^{1}}^{2}+O(\varepsilon^{2})}{\|Q_{\omega,\gamma}\|_{L^{p+1}}^{p+1}
	+ O(\varepsilon^{2})}
	=1+O(\varepsilon^{2}),
\end{align*}
where we have used $I_{\omega,\gamma}(Q_{\omega,\gamma})=0$, that is, $\|Q_{\omega,\gamma}\|_{H_{\omega,\gamma}^{1}}^{2}=\|Q_{\omega,\gamma}\|_{L^{p+1}}^{p+1}$. This means that
\begin{align*}
	\lambda_{\varepsilon}=1+O(\varepsilon^{2}).
\end{align*}
Substituting this into \eqref{eq2.10}, we get
\begin{align*}
	u_{\varepsilon}=Q_{\omega,\gamma} + \varepsilon f_{1} +O(\varepsilon^{2}).
\end{align*}
Since $u_{\varepsilon}$ satisfies $I_{\omega,\gamma}(u_{\varepsilon})=0$ by \eqref{eq2.11} and $S_{\omega,\gamma}(Q_{\omega,\gamma})=\min\{S(f): f \in H_{\even}^{1}(\mathbb{R})\setminus\{0\}, I_{\omega,\gamma}(f)=0\}$,  we get
\begin{align*}
	0 \leq S_{\omega,\gamma}(u_{\varepsilon}) -S_{\omega,\gamma}(Q_{\omega,\gamma})
	&=\langle S_{\omega,\gamma}'(Q_{\omega,\gamma}),\varepsilon f_{1} +O(\varepsilon^{2})\rangle +\frac{1}{2} \varepsilon^{2} \langle L_{\omega,\gamma}^{+}f_{1}, f_{1}\rangle + O(\varepsilon^{3})
	\\
	&=\frac{1}{2} \varepsilon^{2} \langle L_{\omega,\gamma}^{+}f_{1}, f_{1}\rangle + O(\varepsilon^{3})
\end{align*}
by the Taylor expansion and the equation \eqref{eleq}.  By taking $\varepsilon>0$ sufficiently small, this means that
\begin{align*}
	\langle L_{\omega,\gamma}^{+}f_{1}, f_{1}\rangle \geq 0. 
\end{align*}
In the similar way to Step 2 in the proof of Lemma 3.5 (intercritical case) in \cite{CFR20}, we get the coercivity on $G^{\perp}$. We can also get the coercivity on $\widetilde{G}^{\perp}$ in the same way as in \cite{DuRo10}. We omit the details of the proof. 
\end{proof}

\subsubsection{Estimates on the linearized equation}

In this section, we give estimates of $P$ and $R$.

\begin{lemma}
\label{lem2.23}
Let $I$ be a bounded time interval. 
For $f \in L_{t}^{\infty}H^{1}$, we have
\begin{align*}
	\|P(f)\|_{L_{t}^{1}H^{1}(I)} \lesssim |I|\|f\|_{L_{t}^{\infty}H^{1}(I)}.
\end{align*}
\end{lemma}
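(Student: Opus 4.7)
The plan is to prove this by a pointwise-in-time estimate $\|P(f(t))\|_{H^1} \lesssim \|f(t)\|_{H^1}$, then integrate over $I$. Since $P(v) = \frac{p+1}{2} Q_{\omega,\gamma}^{p-1} v + \frac{p-1}{2} Q_{\omega,\gamma}^{p-1} \overline{v}$ is $\mathbb{R}$-linear in $v$, it suffices to show that multiplication by $Q_{\omega,\gamma}^{p-1}$ is bounded on $H^1(\mathbb{R})$.

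The $L^2$ bound is immediate: $Q_{\omega,\gamma}$ is given by the explicit $\sech$ formula in Proposition~\ref{prop1.1}, so $Q_{\omega,\gamma}^{p-1} \in L^\infty(\mathbb{R})$ and $\|Q_{\omega,\gamma}^{p-1} g\|_{L^2} \lesssim \|g\|_{L^2}$. For the derivative bound, I would first observe that although $Q_{\omega,\gamma}$ has a kink at the origin (so it is not smooth there), it is continuous and even, so its distributional derivative $\partial_x Q_{\omega,\gamma}$ equals the pointwise a.e.\ derivative (no delta mass appears). By Lemma~\ref{lem2.19} we have $\|Q_{\omega,\gamma}^{-1} \partial_x Q_{\omega,\gamma}\|_{L^\infty} < \infty$, so
\begin{equation*}
|\partial_x (Q_{\omega,\gamma}^{p-1})| = (p-1) Q_{\omega,\gamma}^{p-1} |Q_{\omega,\gamma}^{-1} \partial_x Q_{\omega,\gamma}| \lesssim Q_{\omega,\gamma}^{p-1} \in L^\infty(\mathbb{R}).
\end{equation*}
Applying the Leibniz rule distributionally, $\partial_x(Q_{\omega,\gamma}^{p-1} g) = \partial_x(Q_{\omega,\gamma}^{p-1}) \, g + Q_{\omega,\gamma}^{p-1} \partial_x g$, and both factors on the right are bounded in $L^2$ by $\|g\|_{H^1}$. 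Therefore $\|Q_{\omega,\gamma}^{p-1} g\|_{H^1} \lesssim \|g\|_{H^1}$, which yields $\|P(f(t))\|_{H^1} \lesssim \|f(t)\|_{H^1}$ for a.e.\ $t \in I$.

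Integrating the pointwise estimate over $I$ and bounding by the $L^\infty_t$ norm gives
\begin{equation*}
\|P(f)\|_{L^1_t H^1(I)} = \int_I \|P(f(t))\|_{H^1}\, dt \lesssim \int_I \|f(t)\|_{H^1}\, dt \leq |I| \, \|f\|_{L^\infty_t H^1(I)},
\end{equation*}
which is the desired conclusion. There is no serious obstacle here; the only point that requires a moment's care is ensuring that the Leibniz rule applies despite the kink of $Q_{\omega,\gamma}$ at the origin, which is handled by the continuity of $Q_{\omega,\gamma}$ together with Lemma~\ref{lem2.19}.
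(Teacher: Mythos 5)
Your proof is correct and is essentially the paper's argument: both bound $P(f)$ in $L^2$ and its derivative via the boundedness of $Q_{\omega,\gamma}^{p-1}$ and $\partial_x Q_{\omega,\gamma}$ (the latter equivalent to invoking Lemma \ref{lem2.19}), and then integrate in time to pick up the factor $|I|$. The extra remark about the Leibniz rule across the kink at the origin is a harmless refinement of the same route.
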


\begin{proof}
Since we have $P(v)=\frac{p+1}{2}Q_{\omega,\gamma}^{p-1}v+\frac{p-1}{2}Q_{\omega,\gamma}^{p-1}\overline{v}$,
it follows from the H\"{o}lder inequality that
\begin{align*}
	\|P(f)\|_{L_{t}^{1}L^{2}(I)} \lesssim  \|Q_{\omega,\gamma}^{p-1}f\|_{L^{1}L^{2}(I)}
	\lesssim \|f\|_{L_{t}^{1}L^{2}(I)}
	\lesssim  |I| \|f\|_{L_{t}^{\infty}L^{2}(I)}.
\end{align*}
Moreover, we also have
\begin{align*}
	\|\partial_{x} P(f)\|_{L_{t}^{1}L^{2}(I)} 
	&\lesssim  \|Q_{\omega,\gamma}^{p-2}\partial_{x}Q_{\omega,\gamma}f\|_{L_{t}^{1}L^{2}(I)}
	+  \|Q_{\omega,\gamma}^{p-1}\partial_{x}f\|_{L_{t}^{1}L^{2}(I)} 
	\\
	&\lesssim |I|\|f\|_{L_{t}^{\infty}L^{2}(I)}+|I|\|\partial_{x}f\|_{L_{t}^{\infty}L^{2}(I)}
\end{align*}
since $\partial_{x}Q_{\omega,\gamma}\in L^{\infty}$. Thus we obtain the desired estimate. 
\end{proof}

\begin{lemma}
\label{lem2.24}
Let $I$ be a time interval with $|I|\leq 1$. 
For $f,g \in L_{t}^{\infty}H^{1}$, we have
\begin{align*}
	&\|R(f)-R(g)\|_{L_{t}^{1}H^{1}(I)} 
	\\
	&\lesssim \|f-g\|_{L^{\infty}H^{1}}(\|f\|_{L^{\infty}H^{1}} +\|g\|_{L^{\infty}H^{1}} + \|f\|_{L^{\infty}H^{1}}^{p-1}+ \|g\|_{L^{\infty}H^{1}}^{p-1}).
\end{align*}
\end{lemma}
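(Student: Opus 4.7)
The plan is to reduce everything to a pointwise bound on $R(f)-R(g)$ and its derivative, then combine the 1D Sobolev embedding $H^{1}\hookrightarrow L^{\infty}$ with a standard interpolation inequality.

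First, observe that $R(v)=N(Q_{\omega,\gamma}+v)-N(Q_{\omega,\gamma})-DN(Q_{\omega,\gamma})v$, where $N(z)=|z|^{p-1}z$ is the nonlinearity written as a map $\C\simeq\R^2\to\C$ and $DN(Q_{\omega,\gamma})$ denotes its (real-linear) differential at the real-valued function $Q_{\omega,\gamma}$. Using the fundamental theorem of calculus, I would write
\[
R(f)-R(g)=\int_{0}^{1}\bigl[DN\bigl(Q_{\omega,\gamma}+g+\theta(f-g)\bigr)-DN(Q_{\omega,\gamma})\bigr](f-g)\,d\theta.
\]
Because $p>5$ we have $p-1>2$, so the bound $|DN(a)-DN(b)|\lesssim(|a|+|b|)^{p-2}|a-b|$ is available, yielding the pointwise estimate
\[
|R(f)(x)-R(g)(x)|\lesssim|f(x)-g(x)|\bigl(|f(x)|+|g(x)|\bigr)\bigl(Q_{\omega,\gamma}(x)^{p-2}+|f(x)|^{p-2}+|g(x)|^{p-2}\bigr).
\]

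Next I would take the $L^{2}_{x}$ norm. The factor $|f-g|$ goes out in $L^{\infty}_{x}$, controlled by $\|f-g\|_{H^{1}}$ via Sobolev. The remaining product splits into two kinds of terms. Terms containing $Q_{\omega,\gamma}^{p-2}\in L^{\infty}$ give
\[
\bigl\|(|f|+|g|)Q_{\omega,\gamma}^{p-2}\bigr\|_{L^{2}}\lesssim\|f\|_{L^{2}}+\|g\|_{L^{2}}\lesssim\|f\|_{H^{1}}+\|g\|_{H^{1}}.
\]
The purely nonlinear terms, of the form $|f|^{a}|g|^{b}$ with $a+b=p-1$, are handled via the 1D interpolation
\[
\|h\|_{L^{q}}\leq\|h\|_{L^{\infty}}^{(q-2)/q}\|h\|_{L^{2}}^{2/q},
\]
applied with $q=2(p-1)$, together with $H^{1}\hookrightarrow L^{\infty}$, to conclude $\||f|^{p-1}\|_{L^{2}}\lesssim\|f\|_{H^{1}}^{p-1}$ and similarly for mixed terms. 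Combining these produces
\[
\|R(f)-R(g)\|_{L^{2}}\lesssim\|f-g\|_{H^{1}}\bigl(\|f\|_{H^{1}}+\|g\|_{H^{1}}+\|f\|_{H^{1}}^{p-1}+\|g\|_{H^{1}}^{p-1}\bigr).
\]

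For the $H^{1}$ estimate I would differentiate $R(f)-R(g)$. Schematically,
\[
\partial_{x}[R(f)-R(g)]=\int_{0}^{1}\bigl[D^{2}N(Q_{\omega,\gamma}+g+\theta(f-g))\partial_{x}(Q_{\omega,\gamma}+g+\theta(f-g))-D^{2}N(Q_{\omega,\gamma})\partial_{x}Q_{\omega,\gamma}\bigr](f-g)\,d\theta
\]
plus the corresponding term with $\partial_{x}(f-g)$. Since $p>5$ gives $N\in C^{2}$ with $|D^{2}N(z)|\lesssim|z|^{p-2}$ and a Hölder bound $|D^{2}N(a)-D^{2}N(b)|\lesssim(|a|+|b|)^{p-3}|a-b|$, and since $Q_{\omega,\gamma}\in W^{1,\infty}$ with $Q_{\omega,\gamma},\partial_{x}Q_{\omega,\gamma}$ exponentially decaying, the same Sobolev plus interpolation machinery produces exactly the same $H^{1}$-flavored bound, now involving an extra factor among $\|\partial_{x}f\|_{L^{2}},\|\partial_{x}g\|_{L^{2}}$. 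Finally, integrating in time on $I$ with $|I|\le 1$ gives $\|\cdot\|_{L^{1}_{t}}\le|I|\|\cdot\|_{L^{\infty}_{t}}\le\|\cdot\|_{L^{\infty}_{t}}$, which absorbs the extra factor of $|I|$ into the implicit constant.

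The main nuisance (rather than obstacle) is the derivative estimate: one must bookkeep cross terms of the form $Q_{\omega,\gamma}^{p-2-k}|f|^{j}|g|^{k}$ with derivatives falling on different factors, and check that every summand reduces to products already controlled by $\|f\|_{H^{1}},\|g\|_{H^{1}}$ via the interpolation above. Because $Q_{\omega,\gamma}\in W^{1,\infty}$ and $p>5$, no borderline case appears, and the exponents of $\|f\|_{H^{1}}+\|g\|_{H^{1}}$ produced are exactly $1$ (from the linear-in-$v$ terms) and $p-1$ (from the purely nonlinear $|v|^{p-1}$ terms), matching the statement.
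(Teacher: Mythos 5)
Your plan is correct and follows essentially the same route as the paper's proof: a fundamental-theorem-of-calculus representation of $R(f)-R(g)$, pointwise difference bounds on the (first and second) derivatives of the power nonlinearity as in \eqref{eq2.12}, the Sobolev embedding $H^{1}(\mathbb{R})\hookrightarrow L^{\infty}(\mathbb{R})$, and $|I|\leq 1$ for the time integration. The only cosmetic difference is that the paper first factors $R(f)=Q_{\omega,\gamma}^{p}N(Q_{\omega,\gamma}^{-1}f)$ and controls the resulting weights via $Q_{\omega,\gamma}^{-1}\partial_{x}Q_{\omega,\gamma}\in L^{\infty}$ (Lemma \ref{lem2.19}), whereas you keep the unfactored Taylor remainder and use bounds on $DN$, $D^{2}N$; both versions close, with the cross terms (exponents between $1$ and $p-1$) absorbed into the stated right-hand side by Young's inequality.
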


\begin{proof}
Since $R(f)=Q_{\omega,\gamma}^{p}N(Q_{\omega,\gamma}^{-1}f)$, where $N(z)=|1+z|^{p-1}(1+z)-1-\frac{p+1}{2}z-\frac{p-1}{2}\overline{z}$, we get
\begin{align*}
	&R(f)-R(g)=Q_{\omega,\gamma}^{p} ( N(Q_{\omega,\gamma}^{-1}f)-N(Q_{\omega,\gamma}^{-1}g))
	\\
	&=Q_{\omega,\gamma}^{p-1} \int_{0}^{1} N_{z}(Q_{\omega,\gamma}^{-1}(sf+(1-s)g))(f-g) + N_{\overline{z}}(Q_{\omega,\gamma}^{-1}(sf+(1-s)g))(\overline{f-g}) ds.
\end{align*}
Now it holds that
\begin{align}
\label{eq2.12}
	| N_{z}(z_{1})- N_{z}(z_{2})| + |N_{\overline{z}}(z_{1})-N_{\overline{z}}(z_{2})|
	\lesssim |z_{1}-z_{2}|(1+|z_{1}|^{p-2}+|z_{2}|^{p-2}).
\end{align}
Thus we have
\begin{align}
	\notag
	&|R(f)-R(g)|
	\\ \notag
	&\leq Q_{\omega,\gamma}^{p-1}|f-g|\int_{0}^{1} |N_{z}(Q_{\omega,\gamma}^{-1}(sf+(1-s)g))| + |N_{\overline{z}}(Q_{\omega,\gamma}^{-1}(sf+(1-s)g))| ds
	\\ \notag
	&\lesssim Q_{\omega,\gamma}^{p-1}|f-g|Q_{\omega,\gamma}^{-1}(|f|+|g|)\{1+Q_{\omega,\gamma}^{-(p-2)}(|f|^{p-2}+|g|^{p-2})\}
	\\ \label{eq2.13}
	&\lesssim |f-g|(Q_{\omega,\gamma}^{p-2}|f|+Q_{\omega,\gamma}^{p-2}|g|+|f|^{p-1}+|g|^{p-1}).
\end{align}
Thus, by the Sobolev embedding $H^{1}(\mathbb{R}) \hookrightarrow L^{\infty}(\mathbb{R})$ and $|I|\leq 1$, we get
\begin{align*}
	&\|R(f)-R(g)\|_{L_{t}^{1}L^{2}}
	\\
	&\lesssim \|f-g\|_{L_{t}^{\infty}L^{2}}
	(\|Q_{\omega,\gamma}\|_{L^{p-2}L^{\infty}}^{p-2}\|f\|_{L_{t}^{\infty}L^{\infty}}
	+\|Q_{\omega,\gamma}\|_{L^{p-2}L^{\infty}}^{p-2}\|g\|_{L_{t}^{\infty}L^{\infty}}
	\\
	&\quad +\|f\|_{L^{p-2}L^{\infty}}^{p-2}\|f\|_{L_{t}^{\infty}L^{\infty}}
	+\|g\|_{L^{p-2}L^{\infty}}^{p-2}\|g\|_{L_{t}^{\infty}L^{\infty}})
	\\
	&\lesssim \|f-g\|_{L_{t}^{\infty}L^{2}} 
	(\|f\|_{L_{t}^{\infty}H^{1}}
	+\|g\|_{L_{t}^{\infty}H^{1}}
	+\|f\|_{L^{\infty}H^{1}}^{p-1}
	+\|g\|_{L^{\infty}H^{1}}^{p-1}).
\end{align*}
Moreover, by a direct calculation, we have
\begin{align*}
	\partial_{x}(R(f)-R(g))
	&=pQ^{p-1}\partial_{x}Q(N(Q^{-1}f) - N(Q^{-1}g))
	\\
	&\quad -Q^{p-2}(N_{z}(Q^{-1}f)(\partial_{x}Q)f - N_{z}(Q^{-1}g)(\partial_{x}Q)g)
	\\
	&\quad +Q^{p-1}(N_{z}(Q^{-1}f)\partial_{x}f - N_{z}(Q^{-1}g)\partial_{x}g)
	\\
	&\quad -Q^{p-2}(N_{\overline{z}}(Q^{-1}f)(\partial_{x}Q)\overline{f} - N_{\overline{z}}(Q^{-1}g)(\partial_{x}Q)\overline{g})
	\\
	&\quad +Q^{p-1}(N_{\overline{z}}(Q^{-1}f)\overline{\partial_{x}f} - N_{\overline{z}}(Q^{-1}g)\overline{\partial_{x}g})
	\\
	&=(i) + (ii) + (iii) + (iv)+(v),
\end{align*}
where we denote $Q=Q_{\omega,\gamma}$ for simplicity. It holds that 
\begin{align*}
	|(i)|
	\lesssim |\partial_{x}Q| Q^{-1} |R(f)-R(g)|
	\lesssim |f-g|(Q^{p-2}(|f|+|g|)+|f|^{p-1}+|g|^{p-1}),
\end{align*}
where we used $|\partial_{x}Q| Q^{-1} \lesssim 1$ by Lemma \ref{lem2.19}. We have 
\begin{align*}
	|(ii)|
	&\lesssim Q^{p-2}|\partial_{x}Q||N_{z}(Q^{-1}f)f - N_{z}(Q^{-1}g)g|
	\\ 
	&= Q^{p-2}|\partial_{x}Q| |N_{z}(Q^{-1}f)f - N_{z}(Q^{-1}f)g  + N_{z}(Q^{-1}f)g - N_{z}(Q^{-1}g)g|
	\\ 
	&\lesssim Q^{p-2}|\partial_{x}Q|\{ |N_{z}(Q^{-1}f)||f -g|  + |N_{z}(Q^{-1}f) - N_{z}(Q^{-1}g)||g|\}.
\end{align*}
From \eqref{eq2.12}, this can be estimated by 
\begin{align*}
	&Q^{p-2}|\partial_{x}Q|\{ |N_{z}(Q^{-1}f)||f -g|  + |N_{z}(Q^{-1}f) - N_{z}(Q^{-1}g)||g|\}
	\\
	&\lesssim Q^{p-2}|\partial_{x}Q|\{ (|Q^{-1}f| + |Q^{-1}f|^{p-1})|f -g|  
	+ Q^{-1}|f-g|(1+|Q^{-1}f|^{p-2}+|Q^{-1}g|^{p-2})|g|\}
	\\
	&\approx Q^{p-2}|\partial_{x}Q||f-g|\{ |Q^{-1}f| + |Q^{-1}f|^{p-1} 
	+ Q^{-1}|g|+Q^{-1}|g||Q^{-1}f|^{p-2}+|Q^{-1}g|^{p-1})\}.
\end{align*}
By the Young inequality, we have
\begin{align*}
	Q^{-1}|g||Q^{-1}f|^{p-2}
	\lesssim (Q^{-1}|g|)^{p-1} + |Q^{-1}f|^{p-1}.
\end{align*}
Thus we obtain 
\begin{align*}
	 &Q^{p-2}|\partial_{x}Q||f-g|\{ |Q^{-1}f| + |Q^{-1}f|^{p-1} 
	+ Q^{-1}|g|+Q^{-1}|g||Q^{-1}f|^{p-2}+|Q^{-1}g|^{p-1})\}
	\\
	&\lesssim  Q^{p-2}|\partial_{x}Q||f-g|\{ |Q^{-1}f| 
	+ Q^{-1}|g|+|Q^{-1}f|^{p-1}+|Q^{-1}g|^{p-1})\}
\end{align*}
Combining these estimates give us that 
\begin{align*}
	|(ii)|
	&\lesssim Q^{p-2}|\partial_{x}Q||f-g|\{ |Q^{-1}f| 
	+ Q^{-1}|g|+|Q^{-1}f|^{p-1}+|Q^{-1}g|^{p-1})\}
	\\
	&\lesssim |f-g|(Q^{p-2}|f| +Q^{p-2}|g|+|f|^{p-1}+|g|^{p-1}),
\end{align*}
where we used $Q^{-1}|\partial_{x}Q|\lesssim 1$. We can estimate (iv) similarly. 
\begin{align*}
	|(iii)|
	&\lesssim Q^{p-1}\{ |N_{z}(Q^{-1}f)||\partial_{x}f-\partial_{x}g|
	+|N_{z}(Q^{-1}f)- N_{z}(Q^{-1}g)||\partial_{x}g|\}
	\\
	&\lesssim (Q^{p-2}|f| + |f|^{p-1})|\partial_{x}f-\partial_{x}g|
	+|f-g|(Q^{p-2}+|f|^{p-2}+|g|^{p-2}))|\partial_{x}g|.
\end{align*}
(v) can be estimated in the similar way to (iii). We omit the calculation.  

As a consequence, we get
\begin{align*}
	\|(i)\|_{L_{t}^{1}L^{2}}
	\lesssim  \|f-g\|_{L^{\infty}L^{2}}(\|f\|_{L^{\infty}H^{1}}+\|g\|_{L^{\infty}H^{1}}+\|f\|_{L^{\infty}H^{1}}^{p-1}+\|g\|_{L^{\infty}H^{1}}^{p-1}),
\end{align*}
\begin{align*}
	\|(ii)\|_{L_{t}^{1}L^{2}}+\|(iv)\|_{L_{t}^{1}L^{2}}
	\lesssim \|f-g\|_{L^{\infty}L^{2}}(\|f\|_{L^{\infty}H^{1}} 
	+ \|g\|_{L^{\infty}H^{1}} + \|f\|_{L^{\infty}H^{1}}^{p-1}+ \|g\|_{L^{\infty}H^{1}} ^{p-1}),
\end{align*}
and
\begin{align*}
	\|(iii)\|_{L_{t}^{1}L^{2}}+\|(v)\|_{L_{t}^{1}L^{2}}
	&\lesssim  (\|f\|_{L^{\infty}H^{1}} + \|f\|_{L^{\infty}H^{1}}^{p-1})\|\partial_{x}f-\partial_{x}g\|_{L^{\infty}L^{2}}
	\\
	&\quad+\|f-g\|_{L^{\infty}L^{\infty}}(1+\|f\|_{L^{\infty}H^{1}}^{p-2}+\|g\|_{L^{\infty}H^{1}}^{p-2})\|\partial_{x}g\|_{L^{\infty}L^{2}}
	\\
	&\lesssim \|f-g\|_{L^{\infty}H^{1}}(\|f\|_{L^{\infty}H^{1}} + \|f\|_{L^{\infty}H^{1}}^{p-1}+\|g\|_{L^{\infty}H^{1}} + \|g\|_{L^{\infty}H^{1}}^{p-1}).
\end{align*}
Combining all the above estimates, we get
\begin{align*}
	&\|\partial_{x}(R(f)-R(g))\|_{L^{1}L^{2}}
	\\
	&\lesssim  \|f-g\|_{L^{\infty}H^{1}}(\|f\|_{L^{\infty}H^{1}} + \|f\|_{L^{\infty}H^{1}}^{p-1}+\|g\|_{L^{\infty}H^{1}} + \|g\|_{L^{\infty}H^{1}}^{p-1}).
\end{align*}
This estimate and the estimate for the $L_{t}^{1}L^{2}$-norm finish the proof. 
\end{proof}

\section{Construction of the family of special solutions}
\label{sec3}

In this section, we prove the existence of the family of the special solutions. We first construct an approximate solution. Secondly, we show the existence of the special solutions by contraction mapping principle. 

\subsection{Approximate solutions}
\label{sec3.1}

To estimate the error term, we define the following notion.


%

\begin{definition}
We say that $f(t,x)=O(g(t))$ as $t \to \infty$ in $\mathscr{S}^{1}(\mathbb{R})$ if it satisfies 
\begin{align*}
	\limsup_{t\to \infty} \frac{\|x^\beta \partial_x^\alpha f(t)\|_{L^\infty}}{g(t)} < C_{\alpha,\beta}
\end{align*}
for $\alpha=0,1$ and $\beta \in \mathbb{N}$. 
\end{definition}

We construct an approximate solution. 

\begin{lemma}
\label{lem3.1}
Let $A \in \mathbb{R}$. There exists a sequence $\{Z_{k}^{A}\}_{k\in\mathbb{N}}$ of functions in $\mathcal{D}_{\even}\cap \widetilde{\mathcal{S}}$ such that $Z_{1}^{A}=A\mathcal{Y}_{+}$ and, if $k \geq 1$ and $\mathcal{V}_{k}^{A}=\sum_{j=1}^{k}e^{-je_{\omega}t}Z_{j}^{A}$, then we have
\begin{align}
\label{eq3.1}
	\partial_{t} \mathcal{V}_{k}^{A} + \mathcal{L}_{\omega,\gamma}\mathcal{V}_{k}^{A} =iR(\mathcal{V}_{k}^{A})+O(e^{-(k+1)e_{\omega}t})
\end{align}
as $t \to \infty$ in $\mathscr{S}^{1}(\mathbb{R})$. 
\end{lemma}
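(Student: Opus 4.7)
The plan is to construct $\{Z_k^A\}$ by induction on $k$. For the base case $k=1$, set $Z_1^A = A\mathcal{Y}_+$. Since $\mathcal{L}_{\omega,\gamma}\mathcal{Y}_+ = e_\omega \mathcal{Y}_+$ and hence $\partial_t\mathcal{V}_1^A + \mathcal{L}_{\omega,\gamma}\mathcal{V}_1^A = 0$, the error reduces to $-iR(\mathcal{V}_1^A)$. Since $R$ vanishes to second order at $v=0$ and is $C^\infty$ in $(v_1,v_2)$ near the origin (because $Q_{\omega,\gamma} > 0$ pointwise), combining with Lemma \ref{lem2.18} ($\mathcal{Y}_+ \in \widetilde{\mathcal{S}}$) and Lemma \ref{lem2.21} (the exponential decay of $Q_{\omega,\gamma}^{-1}e^{\eta|x|}\partial_x^\alpha \mathcal{Y}_+$) yields $\mathcal{E}_1 = O(e^{-2e_\omega t})$ in $\mathscr{S}^1(\mathbb{R})$.

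For the inductive step, assume $Z_1,\dots,Z_k \in \mathcal{D}_\even \cap \widetilde{\mathcal{S}}$ are constructed with the uniform decay $\|Q_{\omega,\gamma}^{-1}e^{\eta|x|}\partial_x^\alpha Z_j\|_{L^\infty} < \infty$ ($\alpha=0,1$) for some $\eta>0$, and with $\mathcal{E}_k := \partial_t\mathcal{V}_k^A + \mathcal{L}_{\omega,\gamma}\mathcal{V}_k^A - iR(\mathcal{V}_k^A) = O(e^{-(k+1)e_\omega t})$ in $\mathscr{S}^1$. Taylor-expand $R$ in $(v_1,v_2)$ to order $k+1$ and substitute $v = \mathcal{V}_k^A$; since $\|\mathcal{V}_k^A\|_{L^\infty} = O(e^{-e_\omega t})$, the Taylor remainder contributes $O(e^{-(k+2)e_\omega t})$ in $\mathscr{S}^1$. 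Collecting powers of $e^{-e_\omega t}$ in the polynomial part, and using the inductive cancellations at orders $j \leq k$, we obtain
\[
\mathcal{E}_k = e^{-(k+1)e_\omega t} G_{k+1}(x) + O(e^{-(k+2)e_\omega t}),
\]
where $G_{k+1}$ is a time-independent, even function consisting of finite sums of products $Z_{j_1}\cdots Z_{j_\ell}$ (with $\ell \geq 2$, $j_1+\cdots+j_\ell = k+1$) weighted by coefficients $Q_{\omega,\gamma}^{p-\ell}$ arising from the Taylor coefficients of $R$.

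Now define $Z_{k+1}^A := -(\mathcal{L}_{\omega,\gamma} - (k+1)e_\omega)^{-1}G_{k+1}$. Since $(k+1)e_\omega \geq 2e_\omega$ lies in the resolvent set of $\mathcal{L}_{\omega,\gamma}$ on $L^2_\even$ (the real spectrum is $\{-e_\omega, 0, e_\omega\}$ and the essential spectrum is purely imaginary), this resolvent is bounded. The crucial regularity/decay propagation uses Lemma \ref{lem2.22}: the inductive bounds give $|\partial_x^\alpha G_{k+1}| \lesssim Q_{\omega,\gamma}^{p}e^{-2\eta|x|} \lesssim Q_{\omega,\gamma} e^{-\eta|x|}$ (since $p > 5$), so $G_{k+1} \in L^2_\even \cap \widetilde{\mathcal{S}}$ satisfies the hypothesis of Lemma \ref{lem2.22}, and hence $Z_{k+1}^A$ inherits membership in $\mathcal{D}_\even \cap \widetilde{\mathcal{S}}$ with the required exponential decay (possibly for a smaller $\eta$). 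Finally, writing $\mathcal{V}_{k+1}^A = \mathcal{V}_k^A + e^{-(k+1)e_\omega t}Z_{k+1}^A$ and expanding, the defining equation $(\mathcal{L}_{\omega,\gamma}-(k+1)e_\omega)Z_{k+1}^A = -G_{k+1}$ eliminates the leading order term, while $R(\mathcal{V}_{k+1}^A) - R(\mathcal{V}_k^A) = O(e^{-(k+2)e_\omega t})$ by the mean-value inequality and $|DR(v)|\lesssim |v|$ near $v=0$; this gives $\mathcal{E}_{k+1} = O(e^{-(k+2)e_\omega t})$ and closes the induction. The main technical obstacle is ensuring the $\mathscr{S}^1$-type regularity and decay are faithfully propagated at each step, for which Lemma \ref{lem2.22} together with the exponential decay of the coefficients $Q_{\omega,\gamma}^{p-\ell}$ is essential.
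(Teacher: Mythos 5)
Your proposal is correct and follows essentially the same route as the paper's proof: induction starting from $Z_{1}^{A}=A\mathcal{Y}_{+}$, expansion of $R(f)=Q_{\omega,\gamma}^{p}N(Q_{\omega,\gamma}^{-1}f)$ in powers of $e^{-e_{\omega}t}$ using the weighted bounds of Lemma \ref{lem2.21}, inversion of $\mathcal{L}_{\omega,\gamma}-(k+1)e_{\omega}$ (legitimate since $(k+1)e_{\omega}\notin \sigma(\mathcal{L}_{\omega,\gamma})$) with the decay propagated by Lemma \ref{lem2.22}, and a weighted Lipschitz estimate giving $R(\mathcal{V}_{k+1}^{A})-R(\mathcal{V}_{k}^{A})=O(e^{-(k+2)e_{\omega}t})$. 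The only extra bookkeeping in the paper is carrying the weighted bound \eqref{eq3.b} on $\mathcal{L}_{\omega,\gamma}Z_{j}^{A}$ through the induction, which in your scheme follows from the resolvent identity exactly as you indicate.
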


\begin{proof}
We use an induction argument. First, we define $Z_{1}^{A}:=A \mathcal{Y}_{+}\in \mathcal{D}_\even \cap \widetilde{\mathcal{S}}$ and $\mathcal{V}_{1}^{A}:=e^{-e_{\omega}t}Z_{1}^{A}$. Then it clearly holds that
\begin{align*}
	\partial_{t} \mathcal{V}_{1}^{A} + \mathcal{L}_{\omega,\gamma}\mathcal{V}_{1}^{A} -iR(\mathcal{V}_{1}^{A})
	= -e_{\omega} e^{-e_{\omega}t}A \mathcal{Y}_{+} + A e^{-e_{\omega}t} \mathcal{L}_{\omega,\gamma} \mathcal{Y}_{+} -iR(\mathcal{V}_{1}^{A})
	=-iR(\mathcal{V}_{1}^{A})
\end{align*}
by $\mathcal{L}_{\omega,\gamma}\mathcal{Y}_{+}=e_{\omega}\mathcal{Y}_{+}$. Therefore, it is enough to show $iR(\mathcal{V}_{1}^{A})=O(e^{-2e_{\omega}t})$ as $t\to \infty$ in $\mathscr{S}^{1}$. 
Now, we have $R(f)=Q_{\omega,\gamma}^{p}N(Q_{\omega,\gamma}^{-1}f)$, where $N(z)=|1+z|^{p-1}(1+z)-1 - \frac{p+1}{2}z -\frac{p-1}{2}\overline{z}$. Since $N$ is real-analytic in the open disc $\{z:|z|<1\}$ and $N(0)=\partial_{z}N(0)=\partial_{\overline{z}}N(0)=0$, it follows from the Taylor expansion that 
\begin{align*}
	N(z)=\sum_{l+m\geq 2} a_{lm}z^{l}\overline{z}^{m}.
\end{align*}
The convergence of the series and the derivatives is uniform on the compact disc $\{z:|z|\leq 1/2\}$. We know that $\|Q_{\omega,\gamma}^{-1}e^{\eta|x|}\partial_{x}^{\alpha}\mathcal{Y}_{+}\|_{L^{\infty}}<\infty$ for $\alpha=0,1$ by Lemma \ref{lem2.21}. We note that we also have $\|Q_{\omega,\gamma}^{-1}e^{\eta|x|}\partial_{x}^{\alpha}\mathcal{L}_{\omega,\gamma}\mathcal{Y}_{+}\|_{L^{\infty}}<\infty$ for $\alpha=0,1$. 
By taking large $t_{1}>0$, we get 
\begin{align*}
	\|Q_{\omega,\gamma}^{-1}e^{\eta|x|}\partial_{x}^{\alpha}\mathcal{V}_{1}^{A}\|_{L^{\infty}}
	\leq \frac{1}{2},
\end{align*}
for $t \geq t_{1}$. This ensures the above Taylor expansion and thus we obtain
\begin{align*}
	|N(Q_{\omega,\gamma}^{-1}\mathcal{V}_{1}^{A})|
	&\leq  \sum_{l+m\geq 2} |a_{lm}|
	|Q_{\omega,\gamma}^{-1}\mathcal{V}_{1}^{A}|^{l+m}
	\\
	&\leq  \left(\sum_{l+m\geq 2} 
	\frac{|a_{lm}|}{2^{l+m-2}}\right)
	|Q_{\omega,\gamma}^{-1}\mathcal{V}_{1}^{A}|^{2}
	\\
	&\leq C |Q_{\omega,\gamma}^{-1}\mathcal{V}_{1}^{A}|^{2}
	\\
	&\leq Ce^{-2e_{\omega}t}A^{2}e^{-2\eta|x|}\|Q_{\omega,\gamma}^{-1}e^{\eta|x|}\mathcal{Y}_{+}\|_{L^\infty}^{2}
	\\
	&\leq C e^{-2e_{\omega}t}e^{-2\eta|x|}.
\end{align*}
Thus we get
\begin{align*}
	|R(\mathcal{V}_{1}^{A})| \leq \|Q_{\omega,\gamma}\|_{L^\infty}^{p} |N(Q_{\omega,\gamma}^{-1}\mathcal{V}_{1}^{A})|
	\lesssim e^{-2e_{\omega}t}e^{-2\eta|x|}.
\end{align*}
Next, we consider the estimate of the derivative. For $x \neq 0$, we have
\begin{align*}
	\partial_{x} R(\mathcal{V}_{1}^{A})
	&=p(\partial_{x}Q_{\omega,\gamma})Q_{\omega,\gamma}^{p-1}N(Q_{\omega,\gamma}^{-1}\mathcal{V}_{1}^{A})
	\\
	&\quad +Q_{\omega,\gamma}^{p}\{N_{z}(Q_{\omega,\gamma}^{-1}\mathcal{V}_{1}^{A}) + N_{\overline{z}}(Q_{\omega,\gamma}^{-1}\mathcal{V}_{1}^{A})\}((\partial_{x}Q_{\omega,\gamma})Q_{\omega,\gamma}^{-2}\mathcal{V}_{1}^{A}+ Q_{\omega,\gamma}^{-1}\partial_{x}\mathcal{V}_{1}^{A})
\end{align*}
and thus, by Lemma \ref{lem2.19}, 
\begin{align*}
	|\partial_{x} R(\mathcal{V}_{1}^{A})|
	&\lesssim \|Q_{\omega,\gamma}^{-1}\partial_{x}Q_{\omega,\gamma}\|_{L^{\infty}}\|Q_{\omega,\gamma}\|_{L^{\infty}}^{p} |N(Q_{\omega,\gamma}^{-1}\mathcal{V}_{1}^{A})|
	\\
	&+\|Q_{\omega,\gamma}\|_{L^{\infty}}^{p} \{|N_{z}(Q_{\omega,\gamma}^{-1}\mathcal{V}_{1}^{A})| + |N_{\overline{z}}(Q_{\omega,\gamma}^{-1}\mathcal{V}_{1}^{A})|\}
	\\
	&\quad \times e^{-\eta|x|}(\|Q_{\omega,\gamma}^{-1}\partial_{x}Q_{\omega,\gamma}\|_{L^{\infty}}\|Q_{\omega,\gamma}^{-1}e^{\eta|x|}\mathcal{V}_{1}^{A}\|_{L^{\infty}}+\|Q_{\omega,\gamma}^{-1}e^{\eta|x|}\partial_{x}\mathcal{V}_{1}^{A}\|_{L^{\infty}}).
\end{align*}
Now we have
\begin{align*}
	|N_{z}(Q_{\omega,\gamma}^{-1}\mathcal{V}_{1}^{A})| + |N_{\overline{z}}(Q_{\omega,\gamma}^{-1}\mathcal{V}_{1}^{A})|
	\lesssim |Q_{\omega,\gamma}^{-1}\mathcal{V}_{1}^{A}|
	\lesssim e^{-e_{\omega}t}e^{-\eta|x|}
\end{align*}
and thus 
\begin{align*}
	|\partial_{x} R(\mathcal{V}_{1}^{A})|\lesssim e^{-2e_{\omega}t}e^{-2\eta|x|}.
\end{align*}
These show that $\|x^\beta \partial_x^\alpha R(\mathcal{V}_{1}^{A})\|_{L^{\infty}} \lesssim e^{-2e_{\omega}t}$ for $\alpha=0,1$ and $\beta\in \mathbb{N}$. Hence, we get \eqref{eq3.1} for $k=1$. 

Secondly, let $k \geq 1$ and we assume that $Z_{j}^{A} \in \mathcal{D}_\even \cap \widetilde{\mathcal{S}}$ ($j=1,...,k$) are defined and $\mathcal{V}_{j}^{A}=\sum_{l=1}^{j}e^{-le_{\omega}t}Z_{l}^{A}$ satisfies \eqref{eq3.1} for any $j \leq k$. Moreover, we assume that
\begin{align}
\label{eq3.a}
	&\|Q_{\omega,\gamma}^{-1}e^{\eta|x|}\partial_{x}^{\alpha}Z_{j}^{A}\|_{L^{\infty}} < \infty,
	\\
\label{eq3.b}
	&\|Q_{\omega,\gamma}^{-1}e^{\eta|x|}\partial_{x}^{\alpha} \mathcal{L}_{\omega,\gamma}Z_{j}^{A}\|_{L^{\infty}} < \infty,
\end{align}
for $\alpha=0,1$ and all $j \leq k$. 
We define 
\begin{align*}
	\varepsilon_{k}:=\partial_{t} \mathcal{V}_{k}^{A} + \mathcal{L}_{\omega,\gamma} \mathcal{V}_{k}^{A} -iR(\mathcal{V}_{k}^{A}).
\end{align*}
Since we have 
\begin{align*}
	\partial_{t} \mathcal{V}_{k}^{A} 
	=\partial_{t} \left( \sum_{j=1}^{k}e^{-je_{\omega}t}Z_{j}^{A} \right)
	=\sum_{j=1}^{k} (-je_{\omega}) e^{-je_{\omega}t}Z_{j}^{A},
\end{align*}
we obtain
\begin{align*}
	\varepsilon_{k}
	= \sum_{j=1}^{k}  e^{-je_{\omega}t}( -je_{\omega}Z_{j}^{A} 
	+\mathcal{L}_{\omega,\gamma}Z_{j}^{A})
	-iR(\mathcal{V}_{k}^{A}).
\end{align*}
Since it holds that $\mathcal{V}_{k}^{A}=\sum_{j=1}^{k}e^{-je_{\omega}t}Z_{j}^{A}$ and $\|Q_{\omega,\gamma}^{-1}e^{\eta|x|}\partial_{x}^{\alpha}Z_{j}^{A}\|_{L^{\infty}} < \infty$ for $\alpha=0,1$ and $j \leq k$, we have
\begin{align*}
	\|Q_{\omega,\gamma}^{-1}e^{\eta|x|}\partial_{x}^{\alpha}\mathcal{V}_{k}^{A}\|_{L^{\infty}}
	\leq \sum_{j=1}^{k}e^{-je_{\omega}t} \|Q_{\omega,\gamma}^{-1}e^{\eta|x|}\partial_{x}^{\alpha}Z_{j}^{A}\|_{L^{\infty}}
	\leq \frac{1}{2}
\end{align*}
for $t \geq t_{k}$, where $t_{k}$ is sufficiently large. 
\begin{align*}
	R(\mathcal{V}_{k}^{A})
	&=Q_{\omega,\gamma}^{p}  \sum_{l+m\geq 2} a_{lm} Q_{\omega,\gamma}^{-(l+m)} (\mathcal{V}_{k}^{A})^{l} \overline{\mathcal{V}_{k}^{A}}^{m}
	\\
	&=Q_{\omega,\gamma}^{p}  \sum_{l+m \geq 2} a_{lm} Q_{\omega,\gamma}^{-(l+m)} \\
	&\quad \times \sum_{\mathit{\Delta}} c_{\mathit{\Delta}} e^{-(\sum_{j=1}^{k}jl_{j})e_{\omega}t}e^{-(\sum_{j=1}^{k}jm_{j})e_{\omega}t} (Z_{1}^{A})^{l_{1}} \cdots (Z_{k}^{A})^{l_{k}}\overline{(Z_{1}^{A})^{m_{1}} \cdots (Z_{k}^{A})^{m_{k}}},
\end{align*}
where $\sum_{\mathit{\Delta}}$ denotes the summation on the divisions of $l$ and $m$, that is, $\sum_{\mathit{\Delta}}=\sum_{l_{1}+...+l_{k}=l, m_{1}+...+m_{k}=m}$, and $c_{\mathit{\Delta}} =\frac{l!}{l_{1}!...l_{k}!} \frac{m!}{m_{1}!...m_{k}!}$.

Now we have $Z_j^A \in \mathcal{D}_\even \cap \widetilde{\mathcal{S}}$ and $\mathcal{L}_{\omega,\gamma}Z_j^A \in L^2_\even \cap \widetilde{\mathcal{S}}$ for any $j=1,...,k$. We also have $Q_{\omega,\gamma}^p Q_{\omega,\gamma}^{-(l+m)} (Z_{1}^{A})^{l_{1}} \cdots (Z_{k}^{A})^{l_{k}}\overline{(Z_{1}^{A})^{m_{1}} \cdots (Z_{k}^{A})^{m_{k}}} \in L^2_\even \cap \widetilde{\mathcal{S}}$, where $l_1+\cdots+l_k=l$, $m_1+\cdots+m_k=m$, and $l,m\in \mathbb{N}$. 
Moreover,  by the induction's assumption, we have \eqref{eq3.a}, \eqref{eq3.b}, and 
\begin{align*}
	\|Q_{\omega,\gamma}^{-1}e^{\eta(l+m)|x|} \partial_x^\alpha \{Q_{\omega,\gamma}^p Q_{\omega,\gamma}^{-(l+m)} (Z_{1}^{A})^{l_{1}} \cdots (Z_{k}^{A})^{l_{k}}\overline{(Z_{1}^{A})^{m_{1}} \cdots (Z_{k}^{A})^{m_{k}}}\}\|_{L^\infty}<\infty. 
\end{align*}
These give us that
\begin{align}
\label{eq3.e}
	\varepsilon_{k}=\sum_{j=1}^{k+1} e^{-je_{\omega}t} F_{j}(x)
	+O(e^{-(k+2)e_{\omega}t})
\end{align}
as $t\to \infty$ in $\mathscr{S}^{1}$, where $F_{j} \in L^2_\even  \cap \widetilde{\mathcal{S}}$ and it satisfies
\begin{align}
\label{eq3.d}
	\|Q_{\omega,\gamma}^{-1}e^{\eta|x|}\partial_{x}^{\alpha}F_{j}\|_{L^{\infty}} < \infty.
\end{align} Since $\mathcal{V}_{k}^{A}$ satisfies \eqref{eq3.1} by assumption, we must have $F_{j}=0$ 
for $j=1,...,k$. Hence we have
\begin{align*}
	\varepsilon_{k}= e^{-(k+1)e_{\omega}t} F_{k+1}(x)
	+O(e^{-(k+2)e_{\omega}t}).
\end{align*}
Since $(k+1)e_{\omega}$ is not the spectrum of $\mathcal{L}_{\omega,\gamma}$, $Z_{k+1}^{A}:=(-\mathcal{L}_{\omega,\gamma}+(k+1)e_{\omega})^{-1}F_{k+1}$ is well-defined. We have $Z_{k+1}^{A} \in \mathcal{D}_\even \cap \widetilde{\mathcal{S}}$ by definition. By Lemma \ref{lem2.22}, we also have
\begin{align}
\label{eq3.c}
	\|Q_{\omega,\gamma}^{-1}e^{\eta|x|}\partial_{x}^{\alpha}Z_{k+1}^{A}\|_{L^{\infty}} < \infty.
\end{align}
Moreover, since $\mathcal{L}_{\omega,\gamma}Z_{k+1}^{A} = (k+1)e_\omega Z_{k+1}^{A}-F_{k+1}$, by \eqref{eq3.c} and \eqref{eq3.d}, we also have
\begin{align*}
	\|Q_{\omega,\gamma}^{-1}e^{\eta|x|}\partial_{x}^{\alpha} \mathcal{L}_{\omega,\gamma}Z_{k+1}^{A}\|_{L^{\infty}} < \infty.
\end{align*}
We define $\mathcal{V}_{k+1}^{A}:=\mathcal{V}_{k}^{A}+e^{-(k+1)e_{\omega}t}Z_{k+1}^{A}$. Then, by the definition, we have
\begin{align*}
	\varepsilon_{k+1}
	&=\partial_{t} \mathcal{V}_{k}^{A} + \mathcal{L}_{\omega,\gamma} \mathcal{V}_{k}^{A} -e^{-(k+1)e_{\omega}t}F_{k+1} -iR(\mathcal{V}_{k+1}^{A})
	\\
	&=\varepsilon_{k}-e^{-(k+1)e_{\omega}t}F_{k+1} +i\{ R(\mathcal{V}_{k}^{A}) -R(\mathcal{V}_{k+1}^{A})\}.
\end{align*}
By \eqref{eq3.e}, we have $\varepsilon_{k}-e^{-(k+1)e_{\omega}t}F_{k+1}=O(e^{-(k+2)e_{\omega}t})$ as $t\to \infty$ in $\mathscr{S}^{1}(\mathbb{R})$. We can show that $R(\mathcal{V}_{k}^{A}) -R(\mathcal{V}_{k+1}^{A})=O(e^{-(k+2)e_{\omega}t})$ as $t\to \infty$ in $\mathscr{S}^{1}(\mathbb{R})$. Indeed, in a similar way to Lemma \ref{lem2.24}, it holds that
\begin{align*}
	\|x^\beta ( R(f)-R(g))\|_{L^\infty}
	&\leq  C(\|f\|_{W^{1,\infty}},\|g\|_{W^{1,\infty}}) \|x^\beta (f-g)\|_{L^\infty},
	\\
	\|x^\beta \partial_x ( R(f)-R(g))\|_{L^\infty}
	&\leq  C(\|f\|_{W^{1,\infty}},\|g\|_{W^{1,\infty}})
	\\
	&\quad \times ( \|x^\beta (f-g)\|_{L^\infty}+\|x^\beta \partial_x (f-g)\|_{L^\infty}),
\end{align*}
where $C(a,b)$ is a $C^{1}$-function of $a,b$ and $C(0,0)=0$. Since $\mathcal{V}_{k+1}^{A} - \mathcal{V}_{k}^{A}= e^{-(k+1)e_\omega t}Z_{k+1}^A$ and $\|\mathcal{V}_{k}^{A}\|_{W^{1,\infty}}+\|\mathcal{V}_{k+1}^{A}\|_{W^{1,\infty}} \lesssim e^{-e_\omega t}$, we have $R(\mathcal{V}_{k}^{A}) -R(\mathcal{V}_{k+1}^{A})=O(e^{-(k+2)e_{\omega}t})$. This completes the proof. 
\end{proof}


\subsection{Existence by contraction mapping theorem}

Let $k \in \mathbb{N}$ and let $t_k >0$ be sufficiently large. 
We define a norm 
\begin{align*}
	\|h\|_{\mathcal{E}}&:=\sup_{t \in [t_{k},\infty)} e^{(k+\frac{1}{2})e_{\omega}t} \|h\|_{L_{t}^{\infty}H^{1}(t,\infty)}
\end{align*}
and function spaces
\begin{align*}
	\mathcal{E}&=\mathcal{E}(k,t_{k}):=\{h \in C ( [t_{k},\infty);H^{1}(\mathbb{R})): \|h\|_{\mathcal{E}}<\infty\},
	\\
	\mathcal{B}&=\mathcal{B}(k,t_{k}):=\{h \in \mathcal{E}, \|h\|_{\mathcal{E}} \leq 1\},
\end{align*}
equipped with the metric
\begin{align*}
	d(u,v):=\sup_{t \in [t_{k},\infty)} e^{(k+\frac{1}{2})e_{\omega}t} \|u-v\|_{L_{t}^{\infty}H^{1}(t,\infty)}.
\end{align*}
Then $(\mathcal{B},d)$ is a complete metric space. We will consider a contraction map on $\mathcal{B}$ and construct the solution to \eqref{NLS} going to the ground state. Before discussing the construction, we prepare the following estimates.

\begin{lemma}
\label{lem3.2}
For any $\eta>0$, there exists $\widetilde{k}=\widetilde{k}(\eta)\in \mathbb{N}$ such that if $k\geq \widetilde{k}\geq 1$ then for $g,h \in \mathcal{B}$ the following estimates are true for $t\geq t_{k}$.
\begin{enumerate}
\item $\|P(h)\|_{L_{t}^{1}H^{1}(t,\infty)} \leq \eta e^{-(k+\frac{1}{2})e_{\omega}t} \|h\|_{\mathcal{E}}$,
\item $\|R(\mathcal{V}_k^A+h)-R(\mathcal{V}_k^A)\|_{L_{t}^{1}H^{1}(t,\infty)} \leq C_{k} e^{-e_{\omega}t}e^{-(k+\frac{1}{2})e_{\omega}t} \|h\|_{\mathcal{E}}$,
\item $\|R(\mathcal{V}_k^A+g)-R(\mathcal{V}_k^A+h)\|_{L_{t}^{1}H^{1}(t,\infty)} \leq C_{k} e^{-e_{\omega}t}e^{-(k+\frac{1}{2})e_{\omega}t}d(g,h)$,
\item $\|\varepsilon_{k}\|_{L_{t}^{1}H^{1}(t,\infty)} \leq C_{k} e^{-(k+1)e_{\omega}t}$,
\end{enumerate}
where $\mathcal{V}_k^A$ and $\varepsilon_k$ are defined in Section \ref{sec3.1} for $A \in \mathbb{R}$. 
\end{lemma}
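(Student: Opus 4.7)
The strategy for all four estimates is to combine the pointwise (or unit-interval) bounds from Lemmas~\ref{lem2.23} and~\ref{lem2.24} with the exponential time decay built into the $\mathcal{E}$-norm (i.e.\ $\|h(s)\|_{H^{1}}\leq\|h\|_{\mathcal{E}}\,e^{-(k+1/2)e_{\omega}s}$ for $s\geq t_{k}$) and with the decay of $\mathcal{V}_{k}^{A}$, whose leading term is $e^{-e_{\omega}t}Z_{1}^{A}$, so that $\|\mathcal{V}_{k}^{A}(s)\|_{H^{1}}\lesssim C_{k}e^{-e_{\omega}s}$ for $s\geq t_{k}$. Since Lemma~\ref{lem2.24} is stated for intervals of length at most $1$ while we need $L_{t}^{1}H^{1}$ over the unbounded interval $(t,\infty)$, the common device is to partition $(t,\infty)=\bigcup_{n\geq 0}[t+n,t+n+1]$ and sum a geometric series.

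For (1), the proof of Lemma~\ref{lem2.23} actually yields the pointwise bound $\|P(h(s))\|_{H^{1}}\leq C\|h(s)\|_{H^{1}}$, because $Q_{\omega,\gamma}^{p-1}$ and its derivative are in $L^{\infty}$. Integrating and applying the $\mathcal{E}$-decay of $h$ produces
\begin{align*}
\|P(h)\|_{L_{t}^{1}H^{1}(t,\infty)}
\leq C\int_{t}^{\infty}\|h(s)\|_{H^{1}}\,ds
\leq \frac{C}{(k+\tfrac{1}{2})e_{\omega}}\,\|h\|_{\mathcal{E}}\,e^{-(k+1/2)e_{\omega}t},
\end{align*}
so I would choose $\widetilde{k}=\widetilde{k}(\eta)$ so that $C/[(\widetilde{k}+\tfrac{1}{2})e_{\omega}]\leq\eta$, giving (1).

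For (2) and (3), on each slab $[s,s+1]\subset[t,\infty)$ with $s\geq t_{k}$ I would apply Lemma~\ref{lem2.24} with $(f,g)=(\mathcal{V}_{k}^{A}+h,\mathcal{V}_{k}^{A})$ for (2) and $(\mathcal{V}_{k}^{A}+g,\mathcal{V}_{k}^{A}+h)$ for (3). Since $h,g\in\mathcal{B}$ satisfy $\|\cdot\|_{\mathcal{E}}\leq 1$ (and thus are uniformly bounded in $L_{t}^{\infty}H^{1}$), the bracket
$\|f\|_{L^{\infty}H^{1}}+\|g\|_{L^{\infty}H^{1}}+\|f\|^{p-1}_{L^{\infty}H^{1}}+\|g\|^{p-1}_{L^{\infty}H^{1}}$
in Lemma~\ref{lem2.24} is dominated (once $t_{k}$ is chosen large enough) by $C_{k}e^{-e_{\omega}s}$ coming from $\mathcal{V}_{k}^{A}$. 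Combining this with the prefactor $\|f-g\|_{L_{t}^{\infty}H^{1}(s,s+1)}\leq\|h\|_{\mathcal{E}}e^{-(k+1/2)e_{\omega}s}$ (respectively $d(g,h)\,e^{-(k+1/2)e_{\omega}s}$) and summing over $n$ yields the claimed bound $\lesssim C_{k}e^{-e_{\omega}t}e^{-(k+1/2)e_{\omega}t}\|h\|_{\mathcal{E}}$.

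For (4), Lemma~\ref{lem3.1} gives $\varepsilon_{k}=O(e^{-(k+1)e_{\omega}t})$ in $\mathscr{S}^{1}$, meaning $\|x^{\beta}\partial_{x}^{\alpha}\varepsilon_{k}(t)\|_{L^{\infty}}\lesssim e^{-(k+1)e_{\omega}t}$ for $\alpha\in\{0,1\}$ and every $\beta\in\mathbb{N}$. Using $\beta=0,1$ to deduce $|\varepsilon_{k}(t,x)|\lesssim\langle x\rangle^{-1}e^{-(k+1)e_{\omega}t}$ (and the same for $\partial_{x}\varepsilon_{k}$) gives $\|\varepsilon_{k}(t)\|_{H^{1}}\lesssim e^{-(k+1)e_{\omega}t}$, and integrating in $t$ delivers (4). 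The only genuine obstacle is in (1), where the constant in front of $e^{-(k+1/2)e_{\omega}t}\|h\|_{\mathcal{E}}$ must be made strictly smaller than the prescribed $\eta$; this is precisely what forces the $k$-threshold $\widetilde{k}(\eta)$. The remaining estimates are robust, with constants depending only on $k$.
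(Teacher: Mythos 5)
Your argument is correct and matches the paper's proof in all essentials: items (2)--(4) are treated exactly as in the paper (unit slabs plus Lemma \ref{lem2.24}, with the bracket dominated by $C_{k}e^{-e_{\omega}s}$ --- note that this step really uses $\|h\|_{L_{t}^{\infty}H^{1}(s,s+1)}\leq e^{-(k+\frac{1}{2})e_{\omega}s}\leq e^{-e_{\omega}s}$ from the $\mathcal{E}$-decay you stated at the outset, not mere uniform boundedness of $h,g$ --- and, for (4), the $\mathscr{S}^{1}$ bound on $\varepsilon_{k}$ with the weight $\langle x\rangle$ and integrability of $\langle x\rangle^{-1}$). For (1) you integrate the pointwise bound $\|P(h(s))\|_{H^{1}}\lesssim\|h(s)\|_{H^{1}}$ in time and extract the smallness $\eta$ from the factor $\bigl((k+\tfrac{1}{2})e_{\omega}\bigr)^{-1}$, whereas the paper sums over slabs of length $\tau=\eta/(2C)$ and takes $k$ large only to control the geometric series; both mechanisms produce the required threshold $\widetilde{k}(\eta)$, so your version is a harmless and slightly more direct variant of the same argument.
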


\begin{proof}
(1). Let $\tau>0$ be fixed later and let $t_{j}:=t+j\tau$ for a nonnegative integer $j$. By Lemma \ref{lem2.23}, we have
\begin{align*}
	\|P(h)\|_{L_{t}^{1}H^{1}(t,\infty)}
	\leq C \tau \sum_{j=0}^{\infty}  \|h\|_{L_{t}^{\infty}H^{1}(t_{j},t_{j+1})}
	\leq C\tau \frac{e^{-(k+\frac{1}{2})e_{\omega}t} }{1-e^{-(k+\frac{1}{2})e_{\omega}\tau} } \|h\|_{\mathcal{E}}.
\end{align*}
For $\eta>0$, we take $\tau:=\eta/(2C)$ and take $k$ large enough (depending on $\tau$, i.e., $\eta$) such that $e^{-(k+\frac{1}{2})e_{\omega}\tau} \leq1/2$. Then the desired estimate is obtained. 

(2). Let $t_j:=t+j$ for $j\in \mathbb{N}$. Since $\|\mathcal{V}_{k}^A\|_{L^{\infty}H^{1}(t_j,t_{j+1})} \lesssim e^{-e_\omega t_j}$ by the definition of $\mathcal{V}_k^A$, it follows from Lemma \ref{lem2.24} that
\begin{align*}
	\|R(\mathcal{V}_{k}^A+h)-R(\mathcal{V}_{k}^A)\|_{L_{t}^{1}H^{1}(t_j,t_{j+1})}
	&\lesssim e^{-e_\omega t_j} \|h\|_{L_{t}^{\infty}H^{1}(t_j,t_{j+1})}
	\\
	&\lesssim e^{-e_\omega t_j} e^{-(k+1/2)e_\omega t_j}\|h\|_{\mathcal{E}},
\end{align*}
where we also used  and $\|h\|_{L^{\infty}H^{1}(t_j,t_{j+1})} \leq e^{-(k+1/2)e_\omega t_j}\|h\|_{\mathcal{E}}\leq e^{-(k+1/2)e_\omega t_j}$.
Therefore, we get
\begin{align*}
	\|R(\mathcal{V}_{k}^A+h)-R(\mathcal{V}_{k}^A)\|_{L_{t}^{1}H^{1}(t,\infty)}
	&\lesssim \sum_{j=0}^{\infty} \|R(\mathcal{V}_{k}^A+h)-R(\mathcal{V}_{k}^A)\|_{L_{t}^{1}H^{1}(t_j,t_{j+1})}
	\\
	&\lesssim e^{-e_\omega t} e^{-(k+1/2)e_\omega t}\|h\|_{\mathcal{E}}.
\end{align*}

(3). This follows from the similar argument to (2). 

(4). As seen in the proof of Lemma \ref{lem3.1}, we have $\|x^\beta \partial_x^\alpha \varepsilon_{k}\|_{L^\infty} \leq C_{k} e^{-(k+1)e_{\omega}t}$ for $t>t_k$, $\alpha=0,1$, and $\beta \in \mathbb{N}$. By the H\"{o}lder inequality, it holds that
\begin{align*}
	\|\varepsilon_{k}\|_{L_{t}^{1}H^{1}(t,\infty)} 
	&\lesssim  \| \langle x \rangle \varepsilon_{k}\|_{L_{t}^{1}L^\infty (t,\infty)}
	+\|  \langle x \rangle \partial_x \varepsilon_{k}\|_{L_{t}^{1}L^\infty(t,\infty)} 
	\\
	&\lesssim \| e^{-(k+1)e_{\omega}t} \|_{L_{t}^{1}(t,\infty)}
	\\
	&\leq C_{k} e^{-(k+1)e_{\omega}t}.
\end{align*}
This completes the proof. 
\end{proof}

The following proposition gives the family $\{U^A\}_{A\in \mathbb{R}}$ of the solutions going to the ground state in positive time. The behavior of $U^A$ in the negative time will be investigated later (see the proof of Theorem \ref{thm1.5}). 

\begin{proposition}
\label{prop3.3}
Let $A \in \mathbb{R}$. There exists $k_{0} \in \mathbb{N}$ such that for any $k \geq k_{0}$ there exists $t_{k}>0$ and a solution $U^{A}$ to \eqref{NLS} such that 
\begin{align}
\label{eq00}
	\|U^{A}(t) - e^{i\omega t}Q_{\omega,\gamma} - e^{i\omega t}\mathcal{V}_{k}^A\|_{L^{\infty}(t,\infty;H^{1})} \leq e^{-(k+1/2)e_{\omega}t}
\end{align}
for all $t \geq t_{k}$. 
Moreover, $U^{A}$ is the unique solution satisfying \eqref{eq00} for $t \geq \widetilde{t}$, where $\widetilde{t}$ is larger than $ t_{k}$. 
Furthermore, $U^{A}$ is independent of $k$ and satisfies 
\begin{align}
\label{eq003}
	\|U^{A}(t) - e^{i\omega t}Q_{\omega,\gamma} - Ae^{-e_{\omega}t + i\omega t}\mathcal{Y}_{+}\|_{H^{1}} \leq e^{-2e_{\omega}t}. 
\end{align}
\end{proposition}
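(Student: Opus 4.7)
\textbf{Proof plan for Proposition \ref{prop3.3}.} The strategy is a contraction-mapping argument on the ball $\mathcal{B}(k,t_k)$ introduced above Lemma \ref{lem3.2}. I would seek the desired solution in the form $U^A(t) = e^{i\omega t}(Q_{\omega,\gamma} + \mathcal{V}_k^A(t) + h(t))$, which reduces the construction to that of the remainder $h$. Substituting this ansatz into \eqref{NLS}, using that $P$ is $\mathbb{R}$-linear, and recalling from the proof of Lemma \ref{lem3.1} that $\mathcal{V}_k^A$ solves the linearized equation up to the error $\varepsilon_k$, a direct calculation shows that $h$ must satisfy
\begin{equation*}
i\partial_t h + \partial_x^2 h + \gamma\delta h - \omega h + P(h) = -\bigl[R(\mathcal{V}_k^A + h) - R(\mathcal{V}_k^A)\bigr] - \widetilde{\varepsilon}_k,
\end{equation*}
where $\widetilde{\varepsilon}_k$ is the scalar version of $\varepsilon_k$. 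Because we want $h\to 0$ as $t\to\infty$, I would use the backward Duhamel representation
\begin{equation*}
\Phi(h)(t) := -i\int_t^\infty e^{-i(s-t)(\omega - \Delta_\gamma)}\bigl[\,P(h) + R(\mathcal{V}_k^A + h) - R(\mathcal{V}_k^A) + \widetilde{\varepsilon}_k\,\bigr](s)\,ds
\end{equation*}
and look for a fixed point $h = \Phi(h)$ in $\mathcal{B}$.

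The core of the argument is to check that, for $k$ and $t_k$ chosen sufficiently large, $\Phi$ is a strict contraction on $(\mathcal{B},d)$. Since $e^{it\Delta_\gamma}$ is an $H^1$-isometry, the minimal estimate
\begin{equation*}
\|\Phi(h)(t)\|_{H^1} \lesssim \|P(h)\|_{L^1_tH^1(t,\infty)} + \|R(\mathcal{V}_k^A+h) - R(\mathcal{V}_k^A)\|_{L^1_tH^1(t,\infty)} + \|\widetilde{\varepsilon}_k\|_{L^1_tH^1(t,\infty)}
\end{equation*}
holds. Applying items (1), (2), (4) of Lemma \ref{lem3.2} and multiplying by $e^{(k+\frac{1}{2})e_\omega t}$, these three terms are controlled by $\eta\|h\|_{\mathcal{E}}$, $C_k e^{-e_\omega t}\|h\|_{\mathcal{E}}$, and $C_k e^{-e_\omega t/2}$ respectively. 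I would first fix $k \geq \widetilde{k}(1/4)$ to absorb the $P$-term and then choose $t_k$ so large that $C_k e^{-e_\omega t_k} \leq 1/4$ and $C_k e^{-e_\omega t_k/2} \leq 1/2$; this forces $\Phi(\mathcal{B}) \subset \mathcal{B}$. The contraction estimate $d(\Phi(g),\Phi(h)) \leq \frac{1}{2}d(g,h)$ follows identically from items (1) and (3) of Lemma \ref{lem3.2}. The Banach fixed-point theorem then produces a unique $h \in \mathcal{B}$ with $h = \Phi(h)$, and the resulting $U^A$ satisfies \eqref{NLS} together with \eqref{eq00}.

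For the remaining assertions I would argue as follows. For uniqueness, any other solution $\widetilde{U}$ verifying \eqref{eq00} on $[\widetilde{t},\infty)$ with $\widetilde{t}\geq t_k$ yields $\widetilde{h} := e^{-i\omega t}\widetilde{U} - Q_{\omega,\gamma} - \mathcal{V}_k^A \in \mathcal{B}(k,\widetilde{t})$ that also satisfies $\widetilde{h} = \Phi(\widetilde{h})$; after further enlarging $\widetilde{t}$ if necessary to fall inside the contraction regime, the uniqueness in Banach's theorem gives $\widetilde{h} = h$. For independence of $k$, the solution $U^{A,k'}$ built with $k' > k$ automatically satisfies \eqref{eq00} at level $k$ as well, because
\begin{equation*}
\mathcal{V}_{k'}^A - \mathcal{V}_k^A = \sum_{j=k+1}^{k'} e^{-je_\omega t}Z_j^A = O\bigl(e^{-(k+1)e_\omega t}\bigr) \quad \text{in } H^1,
\end{equation*}
so the uniqueness just proven forces $U^{A,k'} = U^{A,k}$. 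Finally, the asymptotic \eqref{eq003} is obtained by specializing \eqref{eq00} to $k = k_0 \geq 2$, isolating the leading term $Ae^{-e_\omega t}\mathcal{Y}_+$ in $\mathcal{V}_k^A$, and using $\|Z_j^A\|_{H^1} < \infty$ (a by-product of $Z_j^A \in \mathcal{D}_{\even} \cap \widetilde{\mathcal{S}}$).

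The main obstacle is not in the present proposition itself but is already packed into Lemmas \ref{lem3.1} and \ref{lem3.2}: once one has the approximate solution $\mathcal{V}_k^A$ with its super-polynomial gain in $k$ and the nonlinear estimates of Section \ref{sec2}, the contraction is essentially bookkeeping. The delicate quantitative point is the gap between the $e^{-(k+1)e_\omega t}$ decay of $\widetilde{\varepsilon}_k$ (Lemma \ref{lem3.2} (4)) and the target rate $e^{-(k+\frac{1}{2})e_\omega t}$ of the ball $\mathcal{B}$: this extra half-exponent is precisely what leaves room for $e^{-e_\omega t_k/2}$ smallness and drives $\Phi$ back into $\mathcal{B}$. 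Without this slack (or analogously without the gain in (2) of Lemma \ref{lem3.2}) the fixed-point scheme would fail.
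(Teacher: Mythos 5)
Your proposal is correct and follows essentially the same route as the paper: the ansatz $U^A=e^{i\omega t}(Q_{\omega,\gamma}+\mathcal{V}_k^A+h)$, the backward Duhamel fixed-point map on $(\mathcal{B},d)$ controlled via Lemma \ref{lem3.2} with $\eta$ small and $k,t_k$ large, uniqueness from the contraction (valid on $[\widetilde{t},\infty)$ for any $\widetilde{t}\geq t_k$), independence of $k$ via $\|\mathcal{V}_{k_1}^A-\mathcal{V}_k^A\|_{H^1}\lesssim e^{-(k+1)e_\omega t}$ plus uniqueness, and \eqref{eq003} from the leading term of $\mathcal{V}_k^A$. The only differences are bookkeeping conventions (whether the phase $e^{i\omega t}$ is included in $h$, and explicit numerical choices of $\eta$), which do not affect the argument.
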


\begin{proof}
Let $A \in \mathbb{R}$ be fixed.  We define
\begin{align*}
	h:=U^{A}-e^{i\omega t}Q_{\omega,\gamma} - e^{i\omega t}\mathcal{V}_{k}^{A}.
\end{align*}
The function $U^{A}$ is a solution if and only if $h$ satisfies
\begin{align*}
	i\partial_{t} h + \Delta_{\gamma} h =-P(h) -(R(\mathcal{V}_{k}^A+h)-R(\mathcal{V}_{k}^A)) +i \varepsilon_k
\end{align*}
where $\varepsilon_k =O(e^{-(k+1)e_{\omega}t})$ in $\mathscr{S}^{1}$ for all $k$. Thus it is enough to solve the fixed-point problem 
\begin{align*}
	h(t)=\Psi(h)(t),
\end{align*}
where
\begin{align*}
	\Psi(h):=-i \int_{t}^{\infty} e^{i(t-s)\Delta_{\gamma}} \{-P(h) -(R(\mathcal{V}_{k}^A+h)-R(\mathcal{V}_{k}^A)) +i \varepsilon_k \} ds.
\end{align*}
We show that $\Psi$ is a contraction map on $\mathcal{B}$. 
Since $\|f\|_{H^1} \approx \|\sqrt{1-\Delta_\gamma}f\|_{L^2}$, we have
\begin{align*}
	\|\Psi(h)\|_{L_{t}^{\infty}H^{1}(t,\infty)} 
	&\lesssim  \| P(h)\|_{L_{t}^{1}H^{1}(t,\infty)}  
	\\
	&\quad +\|R(\mathcal{V}_{k}^A+h)-R(\mathcal{V}_{k}^A)\|_{L_{t}^{1}H^{1}(t,\infty)}  +\|\varepsilon_k \|_{L_{t}^{1}H^{1}(t,\infty)}.
\end{align*}
Let $\eta>0$ be sufficiently small and let $k\geq \widetilde{k}(\eta)$ be fixed, where $\widetilde{k}(\eta)$ is defined in Lemma \ref{lem3.2}. 
It holds from Lemma \ref{lem3.2} that
\begin{align*}
	\|\Psi(h)\|_{L_{t}^{\infty}H^{1}(t,\infty)} 
	\lesssim \eta e^{-(k+\frac{1}{2})e_\omega t} +e^{-e_\omega t} e^{-(k+\frac{1}{2})e_\omega t}
	\leq e^{-(k+\frac{1}{2})e_\omega t}
\end{align*}
for $t >t_k$ by taking $t_k$ large. Hence we get $\|\Psi(h)\|_{\mathcal{E}} \leq 1$. This means $\Psi: \mathcal{B} \to \mathcal{B}$ is well defined. Noting that $P$ is linear, we have
\begin{align*}
	\|\Psi(g)-\Psi(h)\|_{L_{t}^{\infty}H^{1}(t,\infty)}
	&\lesssim \|P(g-h)\|_{L_{t}^{1}H^{1}(t,\infty)} 
	\\
	&\quad + \|R(\mathcal{V}_{k}^A+h)-R(\mathcal{V}_{k}^A+g)\|_{L_{t}^{1}H^{1}(t,\infty)}.
\end{align*}
By Lemma \ref{lem3.2}, we also obtain
\begin{align*}
	\|\Psi(g)-\Psi(h)\|_{L_{t}^{\infty}H^{1}(t,\infty)}
	&\lesssim \eta e^{-(k+\frac{1}{2})e_{\omega}t} d(g,h) + e^{-e_{\omega}t} e^{-(k+\frac{1}{2})e_{\omega}t} d(g,h)
	\\
	&\leq \frac{1}{2} e^{-(k+\frac{1}{2})e_{\omega}t} d(g,h)
\end{align*}
for $t >t_k$ by taking $\eta$ small and $t_k$ large. Thus we get 
\begin{align*}
	d(\Psi(g),\Psi(h))=\|\Psi(g)-\Psi(h)\|_{\mathcal{E}}
	\leq  \frac{1}{2} d(g,h).
\end{align*}
The function $\Psi:\mathcal{B} \to \mathcal{B}$ is a contraction map. By the contraction mapping theorem, there exists unique $h\in \mathcal{B}$ such that $\Psi(h)=h$. 
We note that the fixed point argument holds for $\widetilde{t}$ larger than $t_k$, so that the uniqueness is also true for any $\widetilde{t} \geq t_k$ in the class of solutions to \eqref{NLS} in $C([\widetilde{t},\infty),H^1(\mathbb{R}))$ satisfying \eqref{eq00} for any $t \geq \widetilde{t}$. 

Let $k_1>k$. 
By the construction, we also have $t_{k_1}>0$ and the solution $\widetilde{U}^A \in C([t_{k_1},\infty),H^1(\mathbb{R}))$ to \eqref{NLS} satisfying
\begin{align*}
	\|\widetilde{U}^{A}(t) - e^{i\omega t}Q_{\omega,\gamma} - e^{i\omega t}\mathcal{V}_{k_1}^A\|_{L^{\infty}(t,\infty;H^{1})} \leq e^{-(k_1+1/2)e_{\omega}t}
\end{align*}
for $t > t_{k_1}$. Then it holds by the triangle inequality that
\begin{align*}
	&\|\widetilde{U}^{A}(t) - e^{i\omega t}Q_{\omega,\gamma} - e^{i\omega t}\mathcal{V}_{k}^A\|_{L^{\infty}(t,\infty;H^{1})} 
	\\
	&\leq \|\widetilde{U}^{A}(t) - e^{i\omega t}Q_{\omega,\gamma} - e^{i\omega t}\mathcal{V}_{k_1}^A\|_{L^{\infty}(t,\infty;H^{1})} 
	+\|\mathcal{V}_{k_1}^A-\mathcal{V}_{k}^A\|_{L^{\infty}(t,\infty;H^{1})} 
	\\
	&\leq e^{-(k_1+1/2)e_{\omega}t} + \|\mathcal{V}_{k_1}^A-\mathcal{V}_{k}^A\|_{L^{\infty}(t,\infty;H^{1})}.
\end{align*}
Now, we have
\begin{align*}
	\|\mathcal{V}_{k_1}^A-\mathcal{V}_{k}^A\|_{L^{\infty}(t,\infty;H^{1})}
	\lesssim e^{-(k+1)e_\omega t}
\end{align*}
by the definition of $\mathcal{V}_{k}^A$. Since $k_1>k$, we obtain
\begin{align*}
	\|\widetilde{U}^{A}(t) - e^{i\omega t}Q_{\omega,\gamma} - e^{i\omega t}\mathcal{V}_{k}^A\|_{L^{\infty}(t,\infty;H^{1})} 
	&\leq e^{-(k_1+1/2)e_{\omega}t} +Ce^{-(k+1)e_\omega t}
	\\
	&\leq e^{-(k+1/2)e_\omega t} 
\end{align*}
for any sufficiently large $t$. By the uniqueness, we get $U^A = \widetilde{U}^A$. This shows that $U^A$ is independent of $k$. Since $\mathcal{V}_k^A$ satisfies 
\begin{align*}
	\mathcal{V}_k^A = e^{-e_\omega t}\mathcal{Y}_+ + O(e^{-2e_\omega t}),
\end{align*}
 as $t \to \infty$ in $\mathscr{S}^1$, $U^A$ fulfilled 
\begin{align*}
	\|U^{A}(t) - e^{i\omega t}Q_{\omega,\gamma} - Ae^{-e_{\omega}t + i\omega t}\mathcal{Y}_{+}\|_{H^{1}} \leq e^{-2e_{\omega}t}. 
\end{align*}
This completes the proof. 
\end{proof}

\section{Non-scattering solutions converge to the ground state}
\label{sec4}

In this section, we show that a global non-scattering solution on the threshold goes to the ground state solution.

\subsection{Modulation}
\label{sec4.1}

\begin{lemma}
\label{modulation}
There exist $\mu_{0}>0$ and a function $\varepsilon:(0,\mu_{0}) \to (0,\infty)$ with $\varepsilon(\mu) \to 0$ as $\mu \to 0$ such that, for all $f \in H_{\even}^{1}(\mathbb{R})$ satisfying $M(f)=M(Q_{\omega,\gamma})$, $E_{\gamma}(f)=E_{\gamma}(Q_{\omega,\gamma})$ and $|\mu(f)|\leq \mu_0$, there exists a $C^{1}$-function $\theta=\theta(f)$ such that 
\begin{align*}
	\|e^{-i\theta}f - Q_{\omega,\gamma}\|_{H^{1}} \leq \varepsilon(\mu)
\end{align*}
and 
\begin{align*}
	\im \int_{\mathbb{R}} g Q_{\omega,\gamma}dx=0,
\end{align*}
where $g=e^{-i\theta}f -Q_{\omega,\gamma}$. 
\end{lemma}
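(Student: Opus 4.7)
The plan is to first establish that the $H^1$-distance from $f$ to the orbit $\{e^{i\theta}Q_{\omega,\gamma}:\theta\in\mathbb{R}\}$ is controlled by $|\mu(f)|$ (vanishing as $\mu\to 0$) within the class (ME), and then use the implicit function theorem to select a $C^1$ phase realizing the orthogonality $\im\int gQ_{\omega,\gamma}\,dx=0$.

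\textbf{Step 1 (orbit closeness).} I would argue by contradiction: suppose there exist $\varepsilon_0>0$ and a sequence $\{f_n\}\subset H^1_{\even}$ satisfying (ME) with $\mu(f_n)\to 0$ but $\inf_{\theta}\|e^{-i\theta}f_n-Q_{\omega,\gamma}\|_{H^1}\geq\varepsilon_0$. Using (ME), a direct computation (expanding $S_{\omega,\gamma}(f_n)=S_{\omega,\gamma}(Q_{\omega,\gamma})$ and substituting $\|f_n\|_{\dot H^1_\gamma}^2=\|Q_{\omega,\gamma}\|_{\dot H^1_\gamma}^2-\mu(f_n)$) yields
\begin{align*}
I_{\omega,\gamma}(f_n)=\tfrac{p-1}{2}\mu(f_n)\to 0,\qquad S_{\omega,\gamma}(f_n)=S_{\omega,\gamma}(Q_{\omega,\gamma}).
\end{align*}
Hence $\{f_n\}$ is essentially a minimizing sequence for the Nehari problem characterized by Lemma~\ref{lem2.3} with $(\alpha,\beta)=(1,0)$; by an $H^1_{\even}$-compactness argument for this problem (whose attainedness is guaranteed by Proposition~\ref{prop1.1}), up to a subsequence and a sequence of phases $\theta_n$, $e^{-i\theta_n}f_n\to Q_{\omega,\gamma}$ in $H^1$, contradicting the hypothesis. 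This compactness step is where I expect the main technical work: vanishing is ruled out using the sharp Gagliardo–Nirenberg bound of Lemma~\ref{GN} (which forces $\|f_n\|_{L^{p+1}}$ bounded below), while dichotomy is ruled out by the strict subadditivity implicit in the uniqueness part of Proposition~\ref{prop1.1}. Note that translation invariance is broken by the $\delta$ potential, so no spatial translation needs to be modulated out.

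\textbf{Step 2 (implicit function theorem).} I define
\begin{align*}
F:\mathbb{R}\times H^1(\mathbb{R})\to\mathbb{R},\qquad F(\theta,f):=\im\int_{\mathbb{R}}e^{-i\theta}f(x)Q_{\omega,\gamma}(x)\,dx.
\end{align*}
Then $F(0,Q_{\omega,\gamma})=0$ and
\begin{align*}
\partial_\theta F(0,Q_{\omega,\gamma})=-\re\int_{\mathbb{R}}Q_{\omega,\gamma}(x)^2\,dx=-M(Q_{\omega,\gamma})\neq 0,
\end{align*}
so the implicit function theorem produces a $C^1$ map $f\mapsto\theta(f)$ on an $H^1$-neighborhood $\mathcal{U}$ of $Q_{\omega,\gamma}$, with $F(\theta(f),f)=0$ and $\theta(Q_{\omega,\gamma})=0$. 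Because $F(\theta+\theta_0,e^{i\theta_0}f)=F(\theta,f)$, the construction extends by rotation to a neighborhood of the whole orbit $\{e^{i\theta_0}Q_{\omega,\gamma}\}$.

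\textbf{Step 3 (conclusion).} Choose $\mu_0>0$ small enough that Step~1 places every $f$ with $|\mu(f)|\leq\mu_0$ (and $f$ satisfying (ME)) into the $H^1$-neighborhood of the orbit where Step~2 applies. Setting $g:=e^{-i\theta(f)}f-Q_{\omega,\gamma}$, the orthogonality $\im\int gQ_{\omega,\gamma}\,dx=0$ follows at once from $F(\theta(f),f)=0$ and the fact that $Q_{\omega,\gamma}$ is real; the quantitative bound $\|g\|_{H^1}\leq\varepsilon(\mu)$ with $\varepsilon(\mu)\to 0$ as $\mu\to 0$ follows from Step~1 together with the local Lipschitz continuity of $\theta(\cdot)$ provided by Step~2. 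Defining $\varepsilon(\mu)$ as the supremum of $\|g\|_{H^1}$ over all admissible $f$ with $|\mu(f)|\leq\mu$ makes this uniform.
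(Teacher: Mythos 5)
Your proposal is correct and follows essentially the same route as the paper: the implicit function theorem applied to $F(\theta,f)=\im\int_{\mathbb{R}}e^{-i\theta}f\,Q_{\omega,\gamma}\,dx$ with $\partial_\theta F(0,Q_{\omega,\gamma})=-\|Q_{\omega,\gamma}\|_{L^{2}}^{2}\neq 0$, combined with closeness of $f$ to the orbit $\{e^{i\theta}Q_{\omega,\gamma}\}$ when $|\mu(f)|$ is small under \eqref{ME}. The paper simply asserts this closeness (choosing $\theta_0$ with $\|e^{-i\theta_0}f-Q_{\omega,\gamma}\|_{H^{1}}\leq\widetilde{\varepsilon}(\mu)$), whereas you sketch its proof by a minimizing-sequence compactness argument; that is consistent, with the minor remarks that the strict inequality ruling out dichotomy is $r_{\omega,\gamma}<2n_{\omega,0}$ from Proposition~\ref{prop1.1}(2) rather than the uniqueness statement, and that since $I_{\omega,\gamma}(f_n)=\frac{p-1}{2}\mu(f_n)\to 0$ only asymptotically, one should rescale $f_n$ onto the Nehari constraint before invoking Lemma~\ref{lem2.3}.
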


\begin{proof}
First, we will show the statement when $f$ is close to $Q_{\omega,\gamma}$. 
We define 
\begin{align*}
	G(\theta,v):= \im \int_{\mathbb{R}} (e^{-i\theta}v -Q_{\omega,\gamma})Q_{\omega,\gamma} dx
\end{align*}
for $\theta \in \mathbb{R}$ and $v \in H_{\even}^{1}(\mathbb{R})$. Then we have $G(0,Q_{\omega,\gamma})=0$. Moreover, we also have
\begin{align*}
	\frac{\partial G}{\partial \theta}(0,Q_{\omega,\gamma})
	=-\|Q_{\omega,\gamma}\|_{L^{2}}^{2} \neq 0
\end{align*}
Therefore, by the implicit function theorem, we get a $C^{1}$ function $\theta$ defined on a neighborhood of $Q_{\omega,\gamma}$ such that 
\begin{align*}
	\im \int_{\mathbb{R}} (e^{-i\theta(v)}v -Q_{\omega,\gamma})Q_{\omega,\gamma} dx=0.
\end{align*}
If $f$ is a function satisfying the assumption, we can take $\theta_{0}$ such that $\widetilde{f}:=e^{-i\theta_{0}}f - Q_{\omega,\gamma}$ and $\|\widetilde{f}\|_{H^{1}} \leq \widetilde{\varepsilon}(\mu)$, where $\widetilde{\varepsilon}(\mu) \to 0$ as $\mu \to 0$. Namely, $e^{-i\theta_{0}}f$ is close to $Q_{\omega,\gamma}$. The implicit function theorem ensures the uniqueness of $\theta$ and the regularity of the map. We get the statement. 
\end{proof}

Let $u$ be an even solution satisfying the mass-energy condition 
\begin{align*}
	M(u)=M(Q_{\omega,\gamma}) \text{ and } E_{\gamma}(u)=E_{\gamma}(Q_{\omega,\gamma}).
\end{align*}
We set $I_{\mu_{0}}:=\{ t \in I_{\max}: |\mu(u(t))| < \mu_{0}\}$, where $I_{\max}$ denotes the maximal existence time interval. By Lemma \ref{modulation}, we obtain a $C^{1}$-function $\widetilde{\theta}=\widetilde{\theta}(t)$ satisfying the statement of Lemma \ref{modulation} for $t \in I_{\mu_{0}}$. Set $\theta:=\widetilde{\theta}-\omega$ and 
\begin{align*}
	u(t,x)&=e^{i\theta(t)+i\omega t}(Q_{\omega,\gamma} + g(t,x))
	\\
	&=e^{i\theta(t)+i\omega t}(Q_{\omega,\gamma} +\rho(t)Q_{\omega,\gamma} +h(t,x)),
\end{align*}
where 
\begin{align*}
	\rho(t):=\frac{\int_{\mathbb{R}} (e^{-i\theta(t)-i\omega t}u - Q_{\omega,\gamma})Q_{\omega,\gamma}^{p}dx }{\|Q_{\omega,\gamma}\|_{L^{p+1}}^{p+1}}
	=\frac{\int_{\mathbb{R}} gQ_{\omega,\gamma}^{p}dx }{\|Q_{\omega,\gamma}\|_{L^{p+1}}^{p+1}}.
\end{align*}
Then $h$ satisfies the following orthogonality condition:
\begin{align*}
	\im \int_{\mathbb{R}} hQ_{\omega,\gamma}dx 
	=\re  \int_{\mathbb{R}} hQ_{\omega,\gamma}^{p}dx 
	=0.
\end{align*}

We give estimates of the parameters $\theta,\rho, g, h$. 

\begin{proposition}[Estimates of parameters]
\label{prop4.2}
We have
\begin{align*}
	|\rho|\approx \|g\|_{H^{1}} \approx \|h\|_{H^{1}} \approx |\mu(u)|
\end{align*}
for $t \in I_{\mu_{0}}$. 
\end{proposition}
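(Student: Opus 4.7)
The plan is to exploit three ingredients: (i) the mass and energy conservation to produce a Taylor-type identity $\Phi(g) = O(\|g\|_{H^1}^3)$; (ii) the coercivity Lemma \ref{coercivity} applied on $G^\perp$, to which $h$ belongs by construction; and (iii) a direct calculation of $\mu(u)$ in terms of $\rho$ and $\|g\|_{H^1_{\omega,\gamma}}^2$.

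First I would expand $\|u\|_{\dot{H}^1_\gamma}^2 = \|Q_{\omega,\gamma}+g\|_{\dot{H}^1_\gamma}^2$ and use the Euler--Lagrange equation \eqref{eleq} together with the orthogonality $\mathrm{Re}\int h Q_{\omega,\gamma}^p\,dx=0$ and mass conservation $2\,\mathrm{Re}\int Q_{\omega,\gamma}g\,dx+\|g\|_{L^2}^2=0$ to get the identity
\[
\mu(u)=-2\rho\,\|Q_{\omega,\gamma}\|_{L^{p+1}}^{p+1}-\|g\|_{H^1_{\omega,\gamma}}^2.
\]
Next, since the mass-energy condition \eqref{ME} gives $S_{\omega,\gamma}(Q_{\omega,\gamma}+g)=S_{\omega,\gamma}(Q_{\omega,\gamma})$ and $S'_{\omega,\gamma}(Q_{\omega,\gamma})=0$ by \eqref{eleq}, Taylor expansion yields $\Phi(g)=O(\|g\|_{H^1}^3)$. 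Using $L^+_{\omega,\gamma}Q_{\omega,\gamma}=-(p-1)Q_{\omega,\gamma}^p$ and the orthogonality $\mathrm{Re}\int h Q_{\omega,\gamma}^p\,dx=0$, the decomposition $g=\rho Q_{\omega,\gamma}+h$ makes the cross term in $\Phi(g)=\Phi(\rho Q_{\omega,\gamma})+2B_{\omega,\gamma}(\rho Q_{\omega,\gamma},h)+\Phi(h)$ vanish, leaving
\[
\Phi(g)=-\tfrac{p-1}{2}\rho^2\,\|Q_{\omega,\gamma}\|_{L^{p+1}}^{p+1}+\Phi(h).
\]

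Combining this with $\Phi(g)=O(\|g\|_{H^1}^3)$ and the coercivity $\Phi(h)\geq c\|h\|_{H^1}^2$ (valid because $h\in G^\perp$), I obtain $\|h\|_{H^1}^2\lesssim \rho^2+\|g\|_{H^1}^3$. The trivial bound $|\rho|\lesssim\|g\|_{H^1}$ from Hölder, together with $\|g\|_{H^1}^2\lesssim \rho^2+\|h\|_{H^1}^2$, then gives $\|g\|_{H^1}^2\lesssim \rho^2+\|g\|_{H^1}^3$. For sufficiently small $\mu_0$ (hence small $\|g\|_{H^1}$ by the modulation lemma), this closes the inequality and yields $\|g\|_{H^1}\approx|\rho|$, and therefore $\|h\|_{H^1}\lesssim|\rho|$.

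To finish, I plug $\|g\|_{H^1}\lesssim|\rho|$ back into the identity from Step~1: the bound $|\mu|\leq 2|\rho|\|Q_{\omega,\gamma}\|_{L^{p+1}}^{p+1}+\|g\|_{H^1}^2\lesssim|\rho|$ is immediate, while the reverse direction $2|\rho|\|Q_{\omega,\gamma}\|_{L^{p+1}}^{p+1}\leq |\mu|+\|g\|_{H^1}^2\lesssim |\mu|+\rho^2$ forces $|\rho|\lesssim|\mu|$ for small parameters, giving $|\rho|\approx|\mu|$. Finally, to get $\|h\|_{H^1}\gtrsim|\rho|$ I re-examine the mass conservation identity, which expanded in $\rho$ and $h$ takes the form
\[
\rho\bigl(1+\tfrac{\rho}{2}\bigr)\|Q_{\omega,\gamma}\|_{L^2}^2=-(1+\rho)\,\mathrm{Re}\!\int Q_{\omega,\gamma}h\,dx-\tfrac{1}{2}\|h\|_{L^2}^2,
\]
so $|\rho|\lesssim\|h\|_{L^2}+|\rho|^2+\|h\|_{L^2}^2\lesssim\|h\|_{H^1}$ after absorbing $|\rho|^2$ for small $|\rho|$. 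The main obstacle I expect is the careful bookkeeping in Step~1 to verify that the Taylor remainder in $\|g\|_{H^1}^3$ really can be absorbed, together with the crucial algebraic cancellation of the cross term in $\Phi(\rho Q_{\omega,\gamma}+h)$; all the remaining estimates are routine interpolations combined with the coercivity provided by Lemma~\ref{coercivity}.
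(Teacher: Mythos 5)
Your proof is correct and follows essentially the same route as the paper: mass conservation yields $|\rho|\lesssim\|h\|_{H^{1}}$, the conserved action together with the coercivity Lemma~\ref{coercivity} on $G^{\perp}$ yields $\|h\|_{H^{1}}\lesssim|\rho|$, and the expansion of $\mu(u)$ with a nonvanishing linear coefficient in $\rho$ yields $|\mu(u)|\approx|\rho|$. The only (harmless) refinement is that you note the cross term $\rho\,\langle L^{+}_{\omega,\gamma}Q_{\omega,\gamma},h_{1}\rangle$ vanishes exactly, via $L^{+}_{\omega,\gamma}Q_{\omega,\gamma}=-(p-1)Q_{\omega,\gamma}^{p}$ and the orthogonality $\re\int Q_{\omega,\gamma}^{p}\overline{h}\,dx=0$, whereas the paper simply bounds it by $|\rho|\,\|h\|_{H^{1}}$ and absorbs it with Young's inequality.
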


\begin{lemma}
\label{lem4.3}
We have
\begin{align*}
	&|\rho| \leq \|g\|_{L^{2}},
	\\
	&\|g\|_{H^{1}} \lesssim |\rho| + \|h\|_{H^{1}},
	\\
	&\|h\|_{H^{1}} \lesssim \|g\|_{H^{1}}+|\rho| \lesssim \|g\|_{H^{1}}.
\end{align*}
\end{lemma}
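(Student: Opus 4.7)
The plan is to treat the three claimed inequalities in sequence, each of which is essentially one line given the algebraic identity $g = \rho Q_{\omega,\gamma} + h$ built into the decomposition and the defining formula for $\rho$. None of the bounds requires the flow equation, so the proof is purely linear-algebraic in $H^1$.

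For the first inequality $|\rho| \lesssim \|g\|_{L^{2}}$, I would substitute
\[
	\rho = \frac{\re \int_{\mathbb{R}} g\, Q_{\omega,\gamma}^{p}\, dx}{\|Q_{\omega,\gamma}\|_{L^{p+1}}^{p+1}}
\]
and apply Cauchy--Schwarz to get $|\rho| \le C_{\omega,\gamma,p}\,\|g\|_{L^{2}}$, where the constant $C_{\omega,\gamma,p} = \|Q_{\omega,\gamma}^{p}\|_{L^{2}}/\|Q_{\omega,\gamma}\|_{L^{p+1}}^{p+1}$ is finite (and independent of $t$) because $Q_{\omega,\gamma}$ is a fixed, exponentially decaying profile by Proposition \ref{prop1.1}.

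For the second inequality $\|g\|_{H^{1}} \lesssim |\rho| + \|h\|_{H^{1}}$, I would use the identity $g = \rho Q_{\omega,\gamma} + h$ together with the triangle inequality to write
\[
	\|g\|_{H^{1}} \le |\rho|\,\|Q_{\omega,\gamma}\|_{H^{1}} + \|h\|_{H^{1}},
\]
and absorb the constant $\|Q_{\omega,\gamma}\|_{H^{1}}$ into the $\lesssim$. For the third inequality I would rearrange to $h = g - \rho Q_{\omega,\gamma}$ and again apply the triangle inequality to get $\|h\|_{H^{1}} \le \|g\|_{H^{1}} + |\rho|\,\|Q_{\omega,\gamma}\|_{H^{1}} \lesssim \|g\|_{H^{1}} + |\rho|$. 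The chained bound $\|g\|_{H^{1}} + |\rho| \lesssim \|g\|_{H^{1}}$ is then immediate from the first inequality, since $|\rho| \lesssim \|g\|_{L^{2}} \le \|g\|_{H^{1}}$.

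There is no real obstacle here; the only point to be careful about is that $g$ is complex-valued while $\rho$ is real, so the definition of $\rho$ must be interpreted with a real part in the numerator, which is consistent with the orthogonality $\re \int h\,Q_{\omega,\gamma}^{p}\,dx = 0$ used to set up the decomposition. Every implicit constant depends only on the fixed profile $Q_{\omega,\gamma}$, so all three estimates hold uniformly for $t \in I_{\mu_{0}}$.
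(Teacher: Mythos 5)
Your proposal is correct and is essentially the paper's own argument: Hölder/Cauchy--Schwarz applied to the defining formula for $\rho$ gives the first bound, and the triangle inequality applied to $g=\rho Q_{\omega,\gamma}+h$ and $h=g-\rho Q_{\omega,\gamma}$, combined with the first bound, gives the other two. The only cosmetic difference is that you keep track of the constant $\|Q_{\omega,\gamma}^{p}\|_{L^{2}}/\|Q_{\omega,\gamma}\|_{L^{p+1}}^{p+1}$ (so you prove $|\rho|\lesssim\|g\|_{L^{2}}$ rather than the literal $|\rho|\leq\|g\|_{L^{2}}$), which is harmless since the lemma is only used up to multiplicative constants in Proposition \ref{prop4.2}.
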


\begin{proof}
By the definition of $\rho$ and the H\"{o}lder inequality, the first inequality is obtained. 
The second inequality comes from $g=\rho Q_{\omega,\gamma}+h$.
The last one is obtained by $h=g-\rho Q_{\omega,\gamma}$ and the first one.
\end{proof}

From this lemma, we may assume that $|\rho|$ and $\|h\|_{H^{1}}$ are sufficiently small since $\|g\|_{H^{1}}$ is small for $t \in I_{\mu_{0}}$.

\begin{proof}[Proof of Proposition \ref{prop4.2}]
\textbf{Claim 1.} $|\rho| \lesssim \|h\|_{H^{1}}$.

Since $M(u)=M(Q_{\omega,\gamma})$, we have
\begin{align*}
	M(Q_{\omega,\gamma})
	=M(Q_{\omega,\gamma}) + 2\re \int_{\mathbb{R}} Q_{\omega,\gamma}(\rho Q_{\omega,\gamma}+h)dx +\|\rho Q_{\omega,\gamma}+h\|_{L^{2}}^{2}
\end{align*}
and thus 
\begin{align*}
	 2\re \int_{\mathbb{R}} Q_{\omega,\gamma}(\rho Q_{\omega,\gamma}+h)dx +\|\rho Q_{\omega,\gamma}+h\|_{L^{2}}^{2}=0.
\end{align*}
Thus this implies that
\begin{align}
\label{eqqq}
	 2 \rho \|Q_{\omega,\gamma}\|_{L^{2}}^{2} 
	 +2\re \int_{\mathbb{R}} Q_{\omega,\gamma}hdx 
	 +\|\rho Q_{\omega,\gamma}+h\|_{L^{2}}^{2}=0
	 \\ \notag
	 \Leftrightarrow \rho = -\frac{2\re \int_{\mathbb{R}} Q_{\omega,\gamma}hdx + \|\rho Q_{\omega,\gamma}+h\|_{L^{2}}^{2}}{2\|Q_{\omega,\gamma}\|_{L^{2}}^{2} }.
\end{align}
Therefore, we obtain
\begin{align*}
	|\rho| \lesssim \|h\|_{L^{2}} + O(|\rho|^{2} + \|h\|_{L^{2}}^{2}).
\end{align*}
Since $|\rho|$ and $\|h\|_{L^{2}}$ can be taken small, we get
\begin{align*}
	|\rho| \lesssim \|h\|_{L^{2}}.
\end{align*}
This finishes the proof of the claim. 

\textbf{Claim 2.} $\|h\|_{H^{1}} \lesssim |\rho|$. 

Since we have $S_{\omega,\gamma}(u)=S_{\omega,\gamma}(Q_{\omega,\gamma})$, by the Taylor expansion, we obtain
\begin{align*}
	0&=S_{\omega,\gamma}(Q_{\omega,\gamma}+g) - S_{\omega,\gamma}(Q_{\omega,\gamma})
	\\
	&=\langle S_{\omega,\gamma}'(Q_{\omega,\gamma}) , g \rangle
	+\frac{1}{2} \langle S_{\omega,\gamma}''(Q_{\omega,\gamma})g , g \rangle
	+o(\|g\|_{H^{1}}^{2}).
\end{align*}
Since $Q_{\omega,\gamma}$ is the solution to $-\partial_{x}^{2} Q -\gamma \delta Q +\omega Q =Q^{p}$, we find that $S_{\omega,\gamma}'(Q_{\omega,\gamma})=0$ and thus the first term in the right hand side disappears. 
For the second term, we have
\begin{align*}
	&\langle S_{\omega,\gamma}''(Q_{\omega,\gamma})g , g \rangle
	\\
	&=\rho^{2} \langle S_{\omega,\gamma}''(Q_{\omega,\gamma}) Q_{\omega,\gamma} ,  Q_{\omega,\gamma} \rangle
	+2\rho \langle S_{\omega,\gamma}''(Q_{\omega,\gamma}) Q_{\omega,\gamma} , h \rangle 
	+\langle S_{\omega,\gamma}''(Q_{\omega,\gamma})h  ,h\rangle.
\end{align*}
Thus we get
\begin{align*}
	\langle S_{\omega,\gamma}''(Q_{\omega,\gamma})h  ,h\rangle
	&\leq -(\rho^{2} \langle S_{\omega,\gamma}''(Q_{\omega,\gamma}) Q_{\omega,\gamma} ,  Q_{\omega,\gamma} \rangle
	\\
	&\quad +2\rho \langle S_{\omega,\gamma}''(Q_{\omega,\gamma}) Q_{\omega,\gamma} , h \rangle ) 
	+o(\|g\|_{H^{1}}^{2})
	\\
	&\lesssim \rho^{2} + |\rho| \|h\|_{H^{1}} +o(\|g\|_{H^{1}}^{2}).
\end{align*}
By Lemma \ref{coercivity}, we have $\langle S_{\omega,\gamma}''(Q_{\omega,\gamma})h  ,h\rangle \gtrsim c\|h\|_{H^{1}}^{2}$ 
and thus  we obtain
\begin{align*}
	\|h\|_{H^{1}}^{2} 
	\lesssim  \rho^{2} + |\rho| \|h\|_{H^{1}} +o(\|g\|_{H^{1}}^{2})
\end{align*}
Now Lemma \ref{lem4.3} and Claim 1 imply that $\|g\|_{H^{1}} \lesssim |\rho|+ \|h\|_{H^{1}} \lesssim  \|h\|_{H^{1}}$. Hence $o(\|g\|_{H^{1}}^{2})$ can be absorbed by the left hand side, that is, we get
\begin{align*}
	\|h\|_{H^{1}}^{2} \lesssim   \rho^{2}+  |\rho| \|h\|_{H^{1}}.
\end{align*}
It follows from the Young inequality that 
\begin{align*}
	\|h\|_{H^{1}}^{2} \leq  C\rho^{2} + \frac{1}{2}\|h\|_{H^{1}}^{2}.
\end{align*}
Thus it holds that $\|h\|_{H^{1}}^{2} \lesssim \rho^{2}$ and this completes the proof of Claim 2.

\textbf{Claim 3.} $|\mu(u)| \approx |\rho|$. 

We have
\begin{align*}
	\mu(u)
	=-2\rho \|Q_{\omega,\gamma}\|_{\dot{H}_{\gamma}^{1}}^{2}
	-2 ( Q_{\omega,\gamma} , h )_{\dot{H}_{\gamma}^{1}}
	+\|\rho Q_{\omega,\gamma}+h\|_{\dot{H}_{\gamma}^{1}}^{2}.
\end{align*}
Since $Q_{\omega,\gamma}$ is the solution of the elliptic equation and $\re \int Q_{\omega,\gamma}^{p} \overline{h}dx =0$, we get
\begin{align*}
	-2\langle Q_{\omega,\gamma} , h \rangle_{\dot{H}_{\gamma}^{1}}
	=2\omega \re   \int_{\mathbb{R}}  Q_{\omega,\gamma} h dx.
\end{align*}
By \eqref{eqqq}, we obtain 
\begin{align*}
	2\omega \re   \int_{\mathbb{R}}  Q_{\omega,\gamma} h dx
	=-2\rho \omega \|Q_{\omega,\gamma}\|_{L^{2}}^{2} - \omega \|\rho Q_{\omega,\gamma}+h\|_{L^{2}}^{2}.
\end{align*}
Therefore, it holds that
\begin{align*}
	\mu(u)
	= 2(-\|Q_{\omega,\gamma}\|_{\dot{H}_{\gamma}^{1}}^{2}
	-\omega \|Q_{\omega,\gamma}\|_{L^{2}}^{2})\rho  - \omega \|\rho Q_{\omega,\gamma}+h\|_{L^{2}}^{2}
	+\|\rho Q_{\omega,\gamma}+h\|_{\dot{H}_{\gamma}^{1}}^{2}
\end{align*}
and the coefficient  $-\|Q_{\omega,\gamma}\|_{\dot{H}_{\gamma}^{1}}^{2}
-\omega \|Q_{\omega,\gamma}\|_{L^{2}}^{2}$ of $\rho$ is not zero. 
This means that
\begin{align*}
	|\mu(u)| \lesssim |\rho| + o(|\rho|^{2}+\|h\|_{H^{1}}^{2})
\end{align*}
and thus $|\mu(u)| \lesssim |\rho|$ since we have $|\rho| \approx \|h\|_{H^{1}}$ and they are small. Moreover, we also have
\begin{align*}
	 |\rho| \lesssim |\mu(u)| + o(|\rho|^{2}+\|h\|_{H^{1}}^{2})
\end{align*}
and thus $ |\rho| \lesssim |\mu(u)|$. This completes the proof of Claim 3. 

By combining those claims, the proof is done. 
\end{proof}

We also have estimates of the derivatives of parameters.

\begin{lemma}
\label{lem4.4}
We have
\begin{align*}
	|\rho'(t)|+ |\theta'(t)| \lesssim |\mu(u(t))|
\end{align*}
for $t \in I_{\mu_{0}}$. 
\end{lemma}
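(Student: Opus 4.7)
The plan is to substitute the modulation ansatz $u = e^{i(\theta(t)+\omega t)}(Q_{\omega,\gamma} + \rho(t) Q_{\omega,\gamma} + h(t,x))$ into \eqref{NLS} and exploit the orthogonality conditions on $h$. Writing $v = \rho Q_{\omega,\gamma} + h$, the same calculation as in the paper's derivation of the linearized equation yields
\begin{align*}
i \partial_t v + \partial_x^2 v + \gamma \delta v - \omega v + P(v) + R(v) = \theta'(t)(Q_{\omega,\gamma} + v),
\end{align*}
where the only new feature compared to the paper's setup is the $\theta'$-term arising from the time-dependent phase. Splitting into real/imaginary parts and using $v_1 = \rho Q_{\omega,\gamma} + h_1$, $v_2 = h_2$ together with $L^+_{\omega,\gamma} Q_{\omega,\gamma} = -(p-1)Q_{\omega,\gamma}^p$ and $L^-_{\omega,\gamma}Q_{\omega,\gamma}=0$, this gives
\begin{align*}
-\partial_t h_2 + \rho(p-1)Q_{\omega,\gamma}^p - L^+_{\omega,\gamma} h_1 + \re R(v) &= \theta'(1+\rho)Q_{\omega,\gamma} + \theta' h_1,\\
\rho' Q_{\omega,\gamma} + \partial_t h_1 - L^-_{\omega,\gamma} h_2 + \im R(v) &= \theta' h_2.
\end{align*}

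Next, I differentiate the two orthogonality conditions $\int h_2 Q_{\omega,\gamma}\, dx = 0$ and $\int h_1 Q_{\omega,\gamma}^p\, dx = 0$ in time and test the above equations against appropriately chosen weights. Testing the real equation against $Q_{\omega,\gamma}$, the term $\int(\partial_t h_2) Q_{\omega,\gamma}\, dx$ vanishes by the first orthogonality, and $\int L^+_{\omega,\gamma} h_1 \cdot Q_{\omega,\gamma}\,dx = \int h_1 L^+_{\omega,\gamma} Q_{\omega,\gamma}\,dx = -(p-1)\int h_1 Q_{\omega,\gamma}^p\,dx$ vanishes by the second. This yields
\begin{align*}
\theta'\bigl[(1+\rho)\|Q_{\omega,\gamma}\|_{L^2}^2 + (h_1,Q_{\omega,\gamma})_{L^2}\bigr] = \rho(p-1)\|Q_{\omega,\gamma}\|_{L^{p+1}}^{p+1} + \int \re R(v)\, Q_{\omega,\gamma}\, dx.
\end{align*}
Similarly, testing the imaginary equation against $Q_{\omega,\gamma}^p$, the term $\int(\partial_t h_1) Q_{\omega,\gamma}^p\,dx$ vanishes by the second orthogonality, giving
\begin{align*}
\rho' \|Q_{\omega,\gamma}\|_{L^{p+1}}^{p+1} = \int h_2\, L^-_{\omega,\gamma} Q_{\omega,\gamma}^p\, dx - \int \im R(v)\, Q_{\omega,\gamma}^p\,dx + \theta' \int h_2 Q_{\omega,\gamma}^p\, dx.
\end{align*}

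Finally, I close the estimates. For $\theta'$, the coefficient on the left is $\|Q_{\omega,\gamma}\|_{L^2}^2$ up to $O(|\mu|)$ by Proposition \ref{prop4.2} and is therefore bounded away from zero; the right side is controlled by $|\rho|$ plus $\|R(v)\|_{L^1}$, and the Taylor expansion $R(v) = O(|v|^2)$ (valid since $\|v\|_{L^\infty} \lesssim \|v\|_{H^1}$ is small) bounds the remainder by $\|v\|_{H^1}^2 \lesssim |\mu|^2$. Combined with $|\rho| \lesssim |\mu|$, this gives $|\theta'| \lesssim |\mu|$. For $\rho'$, the first integral on the right is $O(\|h_2\|_{L^2}) = O(|\mu|)$, the second is $O(|\mu|^2)$, and the last is $O(|\theta'|\cdot|\mu|) = O(|\mu|^2)$, yielding $|\rho'| \lesssim |\mu|$. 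The only delicate point is that the linear system for $(\theta',\rho')$ is (trivially) triangular here, so no inversion of an almost-singular matrix is required; the main bookkeeping is verifying that the $R(v)$ contributions are genuinely quadratic in $v$, which follows from the Taylor expansion of $N(z)=|1+z|^{p-1}(1+z)-1-\tfrac{p+1}{2}z-\tfrac{p-1}{2}\bar z$ around $z=0$ already used in Section \ref{sec3.1}.
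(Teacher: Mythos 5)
Your proposal is correct and takes essentially the same route as the paper: substitute the modulation ansatz, test the resulting equation against $Q_{\omega,\gamma}$ (real part) and $Q_{\omega,\gamma}^{p}$ (imaginary part), use the two orthogonality conditions to eliminate $\partial_t h_2$ and $\partial_t h_1$, and bound the remaining terms by $|\rho|+\|g\|_{H^1}+\|h\|_{H^1}\lesssim|\mu(u)|$ via Proposition \ref{prop4.2}. The only minor caveat is that $Q_{\omega,\gamma}^{p}\notin\mathcal{D}(-\Delta_{\gamma})$, so the term you write as $\int h_2\, L^{-}_{\omega,\gamma}Q_{\omega,\gamma}^{p}\,dx$ should be read as the quadratic form $B_{\omega,\gamma}^{-}(h_2,Q_{\omega,\gamma}^{p})$ (it carries a boundary contribution at the origin), which is $O(\|h_2\|_{H^1})=O(|\mu(u)|)$ rather than $O(\|h_2\|_{L^2})$ — this does not affect the conclusion.
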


\begin{proof}
Since $h(t)=e^{-i\theta(t)-i\omega t}u(t)-(1+\rho(t))Q_{\omega,\gamma}$, $u$ satisfies the equation \eqref{NLS}, and $Q_{\omega,\gamma}$ satisfies the elliptic equation, direct calculation shows that
\begin{align*}
	i \partial_{t} h +\partial_{x}^{2} h +\gamma \delta h
	&=\theta' (Q_{\omega,\gamma}+g) + \omega g 
	\\
	&\quad +|Q_{\omega,\gamma}+g|^{p-1}(Q_{\omega,\gamma}+g) - |Q_{\omega,\gamma}|^{p-1}Q_{\omega,\gamma}
	\\
	&\quad -i\rho' Q_{\omega,\gamma} - \rho (\omega Q_{\omega,\gamma} 
	-|Q_{\omega,\gamma}|^{p-1}Q_{\omega,\gamma}).
\end{align*}
Testing this with $Q_{\omega,\gamma}$ and taking the real part, we obtain
\begin{align*}
	|\theta'(t)| \lesssim |\rho(t)|+\|g(t)\|_{H^{1}}+\|h\|_{H^{1}}
	\lesssim |\mu(u(t))|
\end{align*}
since the orthogonality condition $\im \int_{\mathbb{R}}Q_{\omega,\gamma} h dx=0$ implies $\im \int_{\mathbb{R}}Q_{\omega,\gamma} \partial_{t}h dx=0$. 
Testing this with $Q_{\omega,\gamma}^{p}$ and taking the imaginary part, we obtain
\begin{align*}
	|\rho'(t)| \lesssim |\rho(t)|+\|g(t)\|_{H^{1}}+\|h\|_{H^{1}}
	\lesssim |\mu(u(t))|
\end{align*}
since the condition $\re \int_{\mathbb{R}}Q_{\omega,\gamma}^{p} h dx=0$ implies $\re \int_{\mathbb{R}}Q_{\omega,\gamma}^{p} \partial_{t} h dx=0$. 
\end{proof}


\subsection{Convergence to the ground state}
\label{sec4.2}

\subsubsection{Case of positive $K_{\gamma}$}
\label{sec4.2.1}

We first consider the case that $K_{\gamma}(u_{0})$ is positive. 
In this case, we have $\mu(u(t))>0$. 
We show that a solution which does not scatter converges to the ground state. 

\begin{proposition}
\label{prop4.5}
Let $u$ be the solution to \eqref{NLS} with $u(0)=u_0$. Assume that $u_0\in H_\even^1(\mathbb{R})$ satisfies \eqref{ME} and $K_{\gamma}(u_{0})>0$. 
Furthermore, we assume that the solution $u$ satisfies $\|u\|_{S(0,\infty)}=\infty$. 
Then there exist $\theta_{0} \in \mathbb{R}$ and $c>0$ such that 
\begin{align*}
	\|u(t) - e^{i\theta_{0}} e^{i\omega t}Q_{\omega,\gamma}\|_{H^{1}} \lesssim e^{-ct}
\end{align*}
for $t>0$. Moreover, $u$ scatters in the negative time direction. 
\end{proposition}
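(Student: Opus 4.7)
My plan is to follow the Duyckaerts--Roudenko scheme, adapted to the delta potential, in four steps. Throughout I use the modulation of Section \ref{sec4.1}: $u(t)=e^{i(\omega t+\theta(t))}(Q_{\omega,\gamma}+g(t))$ with $g=\rho Q_{\omega,\gamma}+h$ and $\|g\|_{H^1}\approx\|h\|_{H^1}\approx|\rho|\approx|\mu(u)|$ by Proposition \ref{prop4.2}. \emph{Step 1 (liminf zero):} the non-scattering hypothesis $\|u\|_{S(0,\infty)}=\infty$, together with the compact-orbit construction the paper invokes from the Strichartz framework, yields a sequence $t_n\to\infty$ along which $u(t_n)$ converges modulo modulation to a minimal non-scattering threshold object. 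Since $K_\gamma(u_0)>0$ is preserved by Lemma \ref{lem2.5}(1) and the only elements of $\mathcal{K}_{\omega,\gamma}^{0}$ are $\{e^{i\theta}Q_{\omega,\gamma}\}$ by Proposition \ref{prop1.1}(3) and Lemma \ref{lem2.3}, this compact limit must be the ground state, so $\mu(u(t_n))\to 0$.

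\emph{Step 2 (full limit zero):} I apply the localized virial identity $J_R''(t)=8K_\gamma(u(t))+A_R(u(t))$. By Lemma \ref{lem2.11} and a Taylor expansion around $Q_{\omega,\gamma}$, $|A_R(u(t))|\leq o_{R\to\infty}(1)\|g(t)\|_{H^1}^2\lesssim o_{R\to\infty}(1)\,\mu(u(t))$, while Proposition \ref{prop2.7} gives $K_\gamma(u(t))\geq c\,\mu(u(t))$. For $R$ large, $J_R''(t)\gtrsim\mu(u(t))$ on the small-$\mu$ set; boundedness of $J_R$ and $J_R'$ on the trajectory (from mass conservation and $u\in L^\infty_t H^1_x$) combined with Step 1 then yields $\int_0^\infty\mu(u(t))\,dt<\infty$. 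Lemma \ref{lem4.4} implies $\mu$ is uniformly Lipschitz, so this integral bound upgrades to $\mu(u(t))\to 0$ as $t\to\infty$.

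\emph{Step 3 (exponential rate):} substituting the ansatz into \eqref{NLS}, $g$ solves a perturbed linearized equation $\partial_t g+\mathcal{L}_{\omega,\gamma}g=N(g)$ where $N(g)$ collects the modulation corrections and the quadratic remainder $R(g)$; Lemmas \ref{lem4.4} and \ref{lem2.24} bound $\|N(g)\|_{H^1}=O(\mu(u))$. I decompose $g=a_+(t)\mathcal{Y}_++a_-(t)\mathcal{Y}_-+g_\perp$; after a small additional modulation the orthogonality $g\in\widetilde{G}^\perp$ from Lemma \ref{coercivity} holds and yields $\|g_\perp\|_{H^1}^2\lesssim\Phi(g)+O(\mu^{3/2})$. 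The scalar coefficients satisfy $a_\pm'=\pm e_\omega a_\pm+O(\mu)$, and combined with $\mu(u(t))\to 0$ the standard scalar ODE argument (a nonnegative $\eta$ with $\eta''\geq 4c^2\eta$ and $\eta(\infty)=0$ obeys $\eta\leq Ce^{-2ct}$) forces $\mu(u(t))\lesssim e^{-2ct}$ for some $c\in(0,e_\omega)$. Integrating $|\theta'(t)|\lesssim\mu(u(t))$ then gives $\theta(t)\to\theta_0$ at rate $e^{-ct}$, which is the first assertion.

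\emph{Step 4 (backward scattering) and the main obstacle:} if $u$ failed to scatter backward, Steps 1--3 applied to $\overline{u(-t)}$ would give analogous exponential convergence as $t\to-\infty$, so $\int_{-\infty}^\infty\mu(u(t))\,dt<\infty$ and $J_R'(\pm\infty)=0$ (since $J_R$ has finite limits at $\pm\infty$). Then $0=\int_{-\infty}^\infty J_R''(t)\,dt\gtrsim\int_{-\infty}^\infty\mu(u(t))\,dt$ forces $\mu\equiv 0$, hence $u\equiv e^{i(\omega t+\theta_0)}Q_{\omega,\gamma}$, contradicting $K_\gamma(u_0)>0$. The main obstacle is Step 3: in the potential-free case $\mathcal{Y}_\pm$ are Schwartz, but here they only lie in $\widetilde{\mathcal{S}}(\mathbb{R})$ with a singularity at the origin (Lemma \ref{lem2.16}), so extracting clean ODEs for $a_\pm$ by pairing the equation against $\mathcal{Y}_\pm$ requires cutoff approximations in the spirit of Lemmas \ref{lem2.17}--\ref{lem2.22} and careful tracking of the boundary contributions at the delta.
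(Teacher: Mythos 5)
Your Steps 1, 2 and 4 are broadly in line with the paper's route (precompactness of the orbit, localized virial with Proposition \ref{prop2.7}, modulation estimates), but several justifications are off. In Step 1, the claim that the compact limit ``must be the ground state'' because $\mathcal{K}_{\omega,\gamma}^{0}=\{e^{i\theta}Q_{\omega,\gamma}\}$ does not follow: the limit object need not satisfy $K_\gamma=0$. In the paper the vanishing sequence of Corollary \ref{cor4.9} comes from a time-averaged virial estimate (and indeed your own Step 2, once repaired, already produces it). In Step 2, the bound $|A_R(u)|\lesssim o_{R\to\infty}(1)\|g\|_{H^1}^2$ is not correct as stated, since the first variation of $A_R$ at $Q_{\omega,\gamma}$ does not vanish; the correct and sufficient bound, proved in Claim 2 of Lemma \ref{lem4.10} using the compactness \eqref{eq4.a} (and a separate treatment of the regime $\mu(u(t))>\mu_0$ where no modulation is available), is $|A_R(u(t))|\lesssim\varepsilon\,\mu(u(t))$, linear in $\mu$. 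In Step 4, ``$J_R$ has finite limits'' does not give $J_R'(\pm\infty)=0$; what does is Claim 1 of Lemma \ref{lem4.10}, $|J_R'(t)|\lesssim R\,\mu(u(t))$, together with $\mu\to0$, or, as in the paper, simply the two-endpoint inequality applied with $\tau_1=-n$, $\tau_2=n$.

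The genuine gap is Step 3, which is where the exponential rate -- the heart of the proposition -- must be produced. The paper does not linearize at all in this section: the rate comes from the self-improving virial estimate of Lemma \ref{lem4.10}, $\int_{\tau_1}^{\tau_2}\mu(u(t))\,dt\lesssim\mu(u(\tau_1))+\mu(u(\tau_2))$, which combined with the vanishing sequence gives $\int_t^{\infty}\mu(u(s))\,ds\lesssim\mu(u(t))$; this is a differential inequality for $\phi(t)=\int_t^\infty\mu(u(s))\,ds$ forcing $\phi(t)\lesssim e^{-ct}$, and then $|\rho(t)|\le\int_t^\infty|\rho'|\lesssim\int_t^\infty\mu\lesssim e^{-ct}$ and $|\theta'|\lesssim\mu$ integrate to give the statement (Lemma \ref{lem4.4}, Proposition \ref{prop4.2}). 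Your substitute -- decomposing $g=a_+\mathcal{Y}_++a_-\mathcal{Y}_-+g_\perp$ and solving $a_\pm'=\pm e_\omega a_\pm+O(\mu)$ -- does not close as written: the stated error $O(\mu)\approx O(\|g\|_{H^1})$ is of the same size as $a_\pm$ themselves, so no decay can be extracted from these ODEs; making the source quadratic in $g$ requires the orthogonality conditions to remove the linear modulation terms $\theta' Q_{\omega,\gamma}$ and $\rho' Q_{\omega,\gamma}$, and even then one only obtains decay through a bootstrap of the kind the paper reserves for Section \ref{sec5} (Lemmas \ref{lem5.3} and \ref{lem5.6}), where an exponential rate is already an input. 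Moreover the convexity inequality $\eta''\ge 4c^2\eta$ you invoke is never derived for any concrete $\eta$. As it stands the exponential convergence is asserted rather than proved; the fix is to replace Step 3 by the two-endpoint estimate of Lemma \ref{lem4.10} and the ensuing Gronwall-type argument, after which your Steps 1--2 reduce to producing the compactness \eqref{eq4.a} and the sequence of Corollary \ref{cor4.9}.
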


\begin{lemma}
\label{lem4.6.0}
Under the assumption of Proposition \ref{prop4.5}, 
$\{u(t): t >0\}$ is precompact in $H_\even^{1}(\mathbb{R})$. 
\end{lemma}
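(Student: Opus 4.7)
The plan is to show that any sequence $t_n\to\infty$ admits an $H^1$-convergent subsequence of $\{u(t_n)\}$; compactness on bounded time intervals $[0,T]$ is automatic from the continuity $u\in C([0,T];H^{1}(\mathbb{R}))$. Uniform $H^{1}$-boundedness of the forward orbit follows from Lemma~\ref{lem2.5} (so $u(t)\in\mathcal{K}_{\omega,\gamma}^{+}$ for all $t\geq 0$), conservation of $M$ and $E_{\gamma}$, and the Gagliardo--Nirenberg bound of Lemma~\ref{GN}. Extract a weakly convergent subsequence $u(t_n)\rightharpoonup\phi_{\ast}$ in $H_{\even}^{1}(\mathbb{R})$; the aim is to upgrade this to strong convergence by isolating a single nonlinear profile and forcing the remainder to vanish.

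The main tool is a linear profile decomposition adapted to $e^{it\Delta_{\gamma}}$ on even data, of the form developed for $-\Delta_{\gamma}$ in \cite{BaVi16}:
\begin{align*}
	u(t_n)=\sum_{j=1}^{J}e^{i\tau_n^{j}\Delta_{\gamma}}\phi^{j}+w_n^{J},
\end{align*}
with pairwise asymptotic orthogonality $|\tau_n^{j}-\tau_n^{k}|\to\infty$ for $j\neq k$, Pythagorean decomposition of $M$ and of the quadratic part of $E_{\gamma}$ in the limit $n,J\to\infty$, and the vanishing-dispersion condition $\limsup_{n}\|e^{it\Delta_{\gamma}}w_n^{J}\|_{S(\mathbb{R})}\to 0$ as $J\to\infty$. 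The absence of spatial-translation freedom within the even class is a simplification: only time parameters arise, and $e^{it\Delta_{\gamma}}$ preserves evenness. Each $\phi^{j}$ yields a nonlinear profile $v^{j}$ solving \eqref{NLS}, built via wave operators if $\tau_n^{j}\to\pm\infty$ and via ordinary Cauchy theory otherwise. Pythagorean additivity together with the threshold mass-energy $M(u_{0})+E_{\gamma}(u_{0})=M(Q_{\omega,\gamma})+E_{\gamma}(Q_{\omega,\gamma})$ forces at most one $v^{j}$ to reach the threshold and all others to lie strictly below it; by the sub-threshold dichotomy of Ikeda--Inui, every sub-threshold profile in the $K_{\gamma}>0$ regime scatters.

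We then rule out having all nonlinear profiles scatter: otherwise long-time stability for \eqref{NLS}, built on the Strichartz estimates above, would yield $\|u\|_{S(0,\infty)}<\infty$, contradicting the hypothesis. Hence exactly one threshold profile $v^{1}$ is present, with $\tau_n^{1}\to\tau_{\ast}\in\mathbb{R}$ (if $\tau_n^{1}\to\pm\infty$ then $v^{1}$ itself would be a scattering wave and would propagate scattering to $u$ through stability), and $v^{1}(\tau_{\ast})$ satisfies the mass-energy equalities with $K_{\gamma}(v^{1}(\tau_{\ast}))>0$ by sign preservation. Vanishing of the $L^{p+1}$-defect together with conservation of $M$ and $E_{\gamma}$ then implies $\|w_n^{J}\|_{H_{\omega,\gamma}^{1}}\to 0$; since $H_{\omega,\gamma}^{1}$ is Hilbert and equivalent to $H^{1}$, combining the norm limit with weak convergence promotes the convergence $u(t_n)\to v^{1}(\tau_{\ast})$ to strong convergence in $H^{1}$ along a subsequence, establishing precompactness.

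The technical heart of the argument is the Pythagorean decomposition of the $\dot{H}_{\gamma}^{1}$-inner product in the presence of the boundary term $-\gamma|f(0)|^{2}$, which has to split cleanly across profiles with diverging time parameters; this relies on the dispersive decay of $e^{it\Delta_{\gamma}}\phi$ at the origin, and is precisely the content carried out in \cite{BaVi16}. The companion obstacle is the long-time perturbation theory for \eqref{NLS} required to transfer scattering from nonlinear profiles to $u$; once both are granted, the remaining steps -- sign preservation of $K_{\gamma}$ under the flow, exclusion of profiles escaping to $\pm\infty$, and the trivial $[0,T]$ case -- are bookkeeping.
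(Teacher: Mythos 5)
Your overall architecture (profile decomposition for $e^{it\Delta_{\gamma}}$, nonlinear profiles, sub-threshold scattering from Ikeda--Inui, long-time perturbation theory to contradict $\|u\|_{S(0,\infty)}=\infty$) is exactly the route the paper intends: the proof is omitted there with a reference to Lemma 3.11 of \cite{IkIn17}, which is this concentration-compactness argument. However, there is one concrete step in your write-up that would fail as stated: the claim that ``only time parameters arise'' in the even class. Evenness does not remove spatial translations from the profile decomposition; it only forces them to appear in symmetric pairs. For example, $f(\cdot-x_n)+f(-\cdot-x_n)$ with $x_n\to\infty$ is an even, $H^1$-bounded sequence with no strongly convergent subsequence, and it cannot be represented by profiles of the form $e^{i\tau_n\Delta_{\gamma}}\phi$ plus a remainder with vanishing Strichartz norm. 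So a decomposition with time parameters alone is simply false for even sequences, and your scheme collapses at the Pythagorean/remainder step unless these symmetric escaping cores are included (as they are in the even profile decompositions of \cite{IkIn17}, built on \cite{BaVi16}).

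The fix is standard but is precisely where the structure of the problem enters: profiles whose cores escape to spatial infinity no longer feel the delta potential (in particular the $-\gamma|f(0)|^2$ term drops out of their $\dot H_{\gamma}^1$ contribution), so their nonlinear profiles solve the potential-free NLS; by the Pythagorean expansion each escaping bump carries action at most $\tfrac12 S_{\omega,\gamma}(Q_{\omega,\gamma})=\tfrac12 r_{\omega,\gamma}<n_{\omega,0}$ (this is exactly where $\omega>\gamma^{2}/4$, i.e.\ $r_{\omega,\gamma}<2n_{\omega,0}$ from Proposition \ref{prop1.1}, is used), and it inherits the correct sign of the virial functional, hence scatters by the sub-threshold theory for the free equation. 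Only after these profiles are disposed of in this way can you conclude that the surviving profile has bounded translation parameter (indeed none), and then your remaining steps (exactly one threshold profile, $\tau_n^{1}$ bounded, vanishing remainder, upgrade of weak to strong $H^1$ convergence) go through as in the cited argument. Aside from this omission, and the slightly garbled phrasing of the mass-energy condition (mass and energy are separately equal to those of $Q_{\omega,\gamma}$, not their sum), your proposal follows the paper's intended proof.
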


\begin{proof}
The proof is similar to that of Lemma 3.11 in Ikeda and Inui \cite{IkIn17} so we omit it.
\end{proof}

By the compactness, we find that, for any $\varepsilon>0$, there exists $R=R_{\varepsilon}>0$ such that 
\begin{align}
\label{eq4.a}
	\int_{|x|>R} |\partial_{x}u(t,x)|^{2} + |u(t,x)|^{2} dx < \varepsilon
\end{align}
for all $t>0$.

\begin{lemma}
Under the same assumption of Proposition \ref{prop4.5},
 we have 
\begin{align*}
	\lim_{T\to \infty} \frac{1}{T} \int_{0}^{T} \mu(u(t))dt =0.
\end{align*}
\end{lemma}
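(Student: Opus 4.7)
The plan is to use the localized virial identity together with the precompactness of the orbit. Recall from Section~2.3 that with $J_R(t) := \int_\mathbb{R} R^2\varphi(x/R)|u(t,x)|^2\,dx$ we have
\begin{align*}
J_R''(t) = A_R(u(t)) + 8K_\gamma(u(t)),
\end{align*}
and that the cutoff factor $\partial_x^2\varphi(x/R)-2$ (as well as $\partial_x^4\varphi(x/R)$) vanishes on $\{|x|<R\}$. Hence $A_R(u(t))$ is controlled by the $H^1$-plus-$L^{p+1}$ mass of $u(t)$ outside the ball of radius $R$.

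First I would integrate the virial identity from $0$ to $T$ and rearrange to get
\begin{align*}
8\int_0^T K_\gamma(u(t))\,dt = J_R'(T) - J_R'(0) - \int_0^T A_R(u(t))\,dt.
\end{align*}
The boundary term is estimated via the formula $J_R'(t) = 2R\im\int (\partial_x\varphi)(x/R)\bar u\,\partial_x u\,dx$, which by Cauchy--Schwarz gives $|J_R'(t)|\lesssim R\|u(t)\|_{L^2}\|\partial_x u(t)\|_{L^2}$. Since $K_\gamma(u_0)>0$ implies $u(t)\in\mathcal{K}_{\omega,\gamma}^+$ for all time (Lemma~\ref{lem2.5}), the solution is global and uniformly bounded in $H^1$, so $|J_R'(t)|\leq C_0 R$ for a constant independent of $t$.

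Next I would handle the $A_R$ term using the precompactness established in Lemma~\ref{lem4.6.0}. By the precompactness of $\{u(t):t>0\}$ in $H^1_{\even}$ together with the Sobolev embedding, for any $\varepsilon>0$ there is $R_\varepsilon$ (independent of $t$) such that \eqref{eq4.a} holds together with $\int_{|x|>R_\varepsilon}|u(t)|^{p+1}dx<\varepsilon$. Since $\partial_x^2\varphi-2$ and $\partial_x^4\varphi$ are bounded functions supported in $\{|y|>1\}$, this yields $|A_R(u(t))|\leq C\varepsilon$ uniformly in $t$ for $R\geq R_\varepsilon$. Fixing such an $R$, dividing by $T$, and sending $T\to\infty$ gives
\begin{align*}
\limsup_{T\to\infty}\frac{1}{T}\int_0^T K_\gamma(u(t))\,dt \leq \frac{C\varepsilon}{8},
\end{align*}
and since $\varepsilon>0$ is arbitrary the average of $K_\gamma(u(t))$ tends to $0$.

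Finally I would invoke Proposition~\ref{prop2.7}(1): since $K_\gamma(u_0)>0$, there exists $c=c(p,u_0)>0$ such that $0 < c\mu(u(t))\leq K_\gamma(u(t))$ for all $t$. Dividing through and using the bound just obtained,
\begin{align*}
0\leq \lim_{T\to\infty}\frac{1}{T}\int_0^T \mu(u(t))\,dt \leq \frac{1}{c}\lim_{T\to\infty}\frac{1}{T}\int_0^T K_\gamma(u(t))\,dt = 0,
\end{align*}
which gives the claim. The only subtle point is verifying the uniform-in-$t$ smallness of the tails of $u(t)$ in the three norms appearing in $A_R$, but this is immediate from the $H^1_{\even}$-precompactness of the orbit combined with Sobolev embedding; everything else is straightforward bookkeeping.
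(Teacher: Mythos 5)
Your argument is correct and is essentially the proof the paper has in mind: the paper omits it, citing \cite[Lemma 6.5]{DuRo10} and \cite[Lemma A.11]{CFR20} together with Proposition \ref{prop2.7}, and your write-up is exactly that scheme (integrate $J_R''=A_R+8K_\gamma$, bound $|J_R'|\lesssim R$ by the uniform $H^1$ bound, make $A_R$ uniformly small via the translation-free compactness \eqref{eq4.a} plus Sobolev embedding for the $L^{p+1}$ tail, then convert the vanishing average of $K_\gamma$ into that of $\mu$ via $K_\gamma(u(t))\geq c\,\mu(u(t))>0$).
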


\begin{proof}
This can be shown in the same way as in \cite[Lemma 6.5]{DuRo10} and  \cite[Lemma A.11]{CFR20}, where we note that we use Proposition \ref{prop2.7}. We omit the proof. 
\end{proof}

By this lemma, we immediately obtain the following corollary.  

\begin{corollary}
\label{cor4.9}
Under the assumption of Proposition \ref{prop4.5}, there exists a time sequence $\{t_{n}\}_{n \in \mathbb{N}}$ with $t_{n} \to \infty$ such that
\begin{align*}
	\lim_{n\to \infty} \mu(u(t_{n})) =0. 
\end{align*}
\end{corollary}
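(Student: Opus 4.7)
The plan is to derive the corollary directly from the preceding lemma by a standard Cesàro-type extraction argument, using that $\mu(u(t))$ is nonnegative in the regime under consideration.

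First, I would record the sign property: by Proposition \ref{prop2.7}, the assumption $K_\gamma(u_0)>0$ together with $M(u)=M(Q_{\omega,\gamma})$, $E_\gamma(u)=E_\gamma(Q_{\omega,\gamma})$ gives
\[
  \mu(u(t)) \;\geq\; c^{-1} K_\gamma(u(t)) \;\geq\; 0
\]
for every $t$ in the (forward) maximal interval, and in particular for all $t\geq 0$ since Lemma \ref{lem2.5} guarantees the solution is global in positive time.

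Next, I would argue by contradiction. Suppose no sequence $t_n\to\infty$ with $\mu(u(t_n))\to 0$ exists. Then $\liminf_{t\to\infty}\mu(u(t))>0$, so there exist $\delta>0$ and $T_0>0$ with $\mu(u(t))\geq \delta$ for all $t\geq T_0$. Combined with $\mu(u(t))\geq 0$ on $[0,T_0]$, this yields, for every $T>T_0$,
\[
  \frac{1}{T}\int_0^T \mu(u(t))\,dt \;\geq\; \frac{\delta\,(T-T_0)}{T} \;\xrightarrow[T\to\infty]{}\; \delta \;>\; 0,
\]
which contradicts the previous lemma asserting that this Cesàro average tends to $0$. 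Hence the desired sequence exists.

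This corollary is really just a one-line consequence of the vanishing of the time-average, so there is no genuine obstacle; the only thing to be careful about is verifying that $\mu(u(t))\geq 0$ throughout $[0,\infty)$, which is precisely what Proposition \ref{prop2.7} supplies under the positivity assumption on $K_\gamma(u_0)$.
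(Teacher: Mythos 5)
Your proof is correct and is essentially the paper's (the paper treats the corollary as an immediate consequence of the preceding Cesàro-average lemma, exactly via the positivity of $\mu$ along the flow). One small slip to fix: Proposition \ref{prop2.7} gives $K_{\gamma}(u(t)) \geq c\,\mu(u(t)) > 0$, not $\mu(u(t)) \geq c^{-1}K_{\gamma}(u(t))$; the positivity $\mu(u(t))>0$ that your contradiction argument actually needs is nonetheless part of that statement (equivalently, it follows from Lemma \ref{lem2.5} together with $I_{\omega,\gamma}(u(t))=\tfrac{p-1}{2}\mu(u(t))$), so the argument goes through unchanged.
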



\begin{lemma}
\label{lem4.10}
Under the assumption of Proposition \ref{prop4.5}, we have
\begin{align*}
	\int_{\tau_{1}}^{\tau_{2}} \mu(u(t)) dt \lesssim \mu(u(\tau_{1})) + \mu(u(\tau_{2}))
\end{align*}
for any $0 < \tau_{1} < \tau_{2}$. 
\end{lemma}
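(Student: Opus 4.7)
The plan is to derive the bound from the localized virial identity of Section 2.3 combined with the coercivity of $K_\gamma$ by $\mu$ (Proposition \ref{prop2.7}) and the precompactness of the orbit (Lemma \ref{lem4.6.0}). Integrating $J_R''(t) = A_R(u(t)) + 8 K_\gamma(u(t))$ over $[\tau_1,\tau_2]$ yields
\begin{align*}
8 \int_{\tau_1}^{\tau_2} K_\gamma(u(t))\,dt = J_R'(\tau_2) - J_R'(\tau_1) - \int_{\tau_1}^{\tau_2} A_R(u(t))\,dt.
\end{align*}
Since $K_\gamma(u(t)) \geq c\,\mu(u(t))$ by Proposition \ref{prop2.7}, it suffices to bound each term on the right by a multiple of $\mu(u(\tau_1))+\mu(u(\tau_2))$, plus a small multiple of $\int_{\tau_1}^{\tau_2}\mu(u(t))\,dt$ that can be absorbed into the left-hand side after choosing $R$ large.

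For the boundary terms, I would first note that $J_R'(e^{i\psi}Q_{\omega,\gamma}) = 0$ for every phase $\psi$, since $Q_{\omega,\gamma}$ is real-valued and hence $\im(\overline{e^{i\psi}Q_{\omega,\gamma}}\,\partial_x(e^{i\psi}Q_{\omega,\gamma})) \equiv 0$. Using the modulation decomposition $u(\tau_i) = e^{i(\omega \tau_i + \theta(\tau_i))}(Q_{\omega,\gamma} + g(\tau_i))$ from Section \ref{sec4.1}, together with H\"older's inequality on $\supp\partial_x\varphi(\cdot/R)\subset[-2R,2R]$ and Proposition \ref{prop4.2}, one gets $|J_R'(\tau_i)| \lesssim R\,\|g(\tau_i)\|_{H^1} \lesssim R\,\mu(u(\tau_i))$ whenever $\mu(u(\tau_i))<\mu_0$. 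When $\mu(u(\tau_i))\geq \mu_0$, the precompactness of $\{u(t)\}$ in $H^1$ gives the crude bound $|J_R'(\tau_i)|\lesssim R$, which is in turn controlled by $(R/\mu_0)\,\mu(u(\tau_i))$. In either case $|J_R'(\tau_i)|\leq C_R\,\mu(u(\tau_i))$.

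For the error term, Lemma \ref{lem2.11} gives $A_R(e^{i(\omega t+\theta(t))}Q_{\omega,\gamma})=0$, so $A_R(u(t))$ equals the difference of $A_R$ at $u(t)$ and at the modulated ground state. Since $\partial_x^2\varphi(\cdot/R)-2$ and $\partial_x^4\varphi(\cdot/R)$ vanish for $|x|\leq R$, standard multilinear estimates reduce this difference to tail integrals of $u(t)$ and $Q_{\omega,\gamma}$ over $|x|\geq R$ times $\|u(t)-e^{i(\omega t+\theta(t))}Q_{\omega,\gamma}\|_{H^1}$. The exponential decay of $Q_{\omega,\gamma}$ and the tightness of the orbit in $H^1\cap L^{p+1}$ supplied by Lemma \ref{lem4.6.0} give a uniform tail factor $\varepsilon(R)\to 0$ as $R\to\infty$. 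Combining with the same $I_{\mu_0}$/complement dichotomy as above produces $|A_R(u(t))|\lesssim \eta(R)\,\mu(u(t))$ uniformly in $t$, with $\eta(R)\to 0$. Substituting these three bounds and choosing $R$ large enough that $C\eta(R)\leq c/2$ absorbs the $\int A_R$ contribution into the left-hand side and yields the stated estimate.

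The main technical obstacle I anticipate is establishing the uniform estimate $|A_R(u(t))|\lesssim \eta(R)\mu(u(t))$: one must split into $t\in I_{\mu_0}$ (where modulation and Proposition \ref{prop4.2} give sharp linear control by $\|g(t)\|_{H^1}$) and its complement (where only a crude precompactness bound is available), and then coordinate the cutoff scale $R$, the modulation threshold $\mu_0$, and the tightness function $\varepsilon(R)$ so that the resulting $\eta(R)$ can be made small enough to close the absorption step.
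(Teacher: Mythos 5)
Your proposal is correct and follows essentially the same route as the paper's proof: the paper's Claim 1 is exactly your boundary-term bound $|J_R'(\tau_i)|\lesssim R\,\mu(u(\tau_i))$ (crude bound when $\mu\geq\mu_0$, modulation plus Proposition \ref{prop4.2} and the real-valuedness of $Q_{\omega,\gamma}$ when $\mu<\mu_0$), and its Claim 2 is your error estimate $|A_R(u(t))|\lesssim \varepsilon\,\mu(u(t))$ via Lemma \ref{lem2.11}, the compactness tail bound \eqref{eq4.a}, and the exponential decay of $Q_{\omega,\gamma}$, after which Proposition \ref{prop2.7} and the same absorption argument close the estimate. No gaps.
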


\begin{proof}
The proof is similar to \cite[Lemma 6.7]{DuRo10} and \cite[Lemma A.12]{CFR20}. However, we give the precise proof, which is easier here because of the lack of the spatial translation parameter. 

\textbf{Claim 1.} We have $|J_{R}'(t)| \lesssim R \mu(u(t))$ for all $t>0$. 

When $\mu(u(t))> \mu_{0}$, we obtain
\begin{align*}
	|J_{R}'(t)| \lesssim R\int_{\mathbb{R}} |u(t,x)||\partial_{x}u(t,x)|dx 
	\lesssim R \|u\|_{L_{t}^{\infty}H^{1}}^{2} \lesssim R \lesssim R \frac{\mu(u(t))}{\mu_{0}}. 
\end{align*}
Next we consider the case of $\mu(u(t))\leq \mu_{0}$. 
We have
\begin{align*}
	&|J_{R}'(t)|
	\\
	&= 2R \left| \im \int_{\mathbb{R}} \partial_{x}\varphi \left( \frac{x}{R}\right)  \{ \overline{u(t,x)} \partial_{x} u(t,x) - \overline{e^{i\theta(t)+i\omega t}Q_{\omega,\gamma}(x)}\partial_{x}(e^{i\theta(t)+i\omega t}Q_{\omega,\gamma}(x)) \}dx\right|
	\\
	&\lesssim R (\|u(t)\|_{H^{1}}+\|Q_{\omega,\gamma}\|_{H^{1}})\|u(t)-e^{i\theta(t)+i\omega t}Q_{\omega,\gamma}(x)\|_{H^{1}}
	\\
	&\lesssim R\mu(u(t)), 
\end{align*}
where we used the fact that $Q_{\omega,\gamma}$ is real valued and Proposition \ref{prop4.2}. This completes the proof of Claim 1. 

\textbf{Claim 2.} For sufficiently small $\varepsilon>0$, there exists $R>0$ such that $|A_{R}(u(t))|\lesssim \varepsilon \mu(u(t))$ for all $t>0$. 

Let $\varepsilon>0$ be smaller than $\min\{\mu_{0},1\}$. 
By the compactness \eqref{eq4.a}, we obtain $|A_{R}(u(t))| \lesssim \varepsilon^{2}$ for sufficiently large $R>0$. If $\mu(u(t)) > \varepsilon$ then we have
\begin{align*}
	|A_{R}(u(t))| \lesssim \varepsilon^{2} \frac{\mu(u(t))}{\varepsilon} =\varepsilon\mu(u(t)).
\end{align*}
Next we consider the case of $\mu(u(t)) \leq \varepsilon (< \mu_{0})$. 
We have
\begin{align*}
	A_{R}(u(t))=A_{R}(u(t)) - A_{R}(e^{i\omega t} e^{i\theta(t)}Q_{\omega,\gamma})
\end{align*}
since $A_{R}(e^{i\omega t} e^{i\theta(t)}Q_{\omega,\gamma})=0$ by Lemma \ref{lem2.11}. By direct calculations, we get
\begin{align*}
	&|A_{R}(u(t))| 
	\\
	& \lesssim (\|u(t)\|_{H^{1}(|x|>R)} + \|Q_{\omega,\gamma}\|_{H^{1}(|x|>R)} + \|u(t)\|_{H^{1}(|x|>R)}^{p} + \|Q_{\omega,\gamma}\|_{H^{1}(|x|>R)}^{p})
	\\
	&\quad \times \|u(t)-e^{i\omega t} e^{i\theta(t)}Q_{\omega,\gamma}\|_{H^{1}}
\end{align*}
By the compactness lemma \eqref{eq4.a} and exponential decay of $Q_{\omega,\gamma}$, there exists $R>0$ such that 
\begin{align*}
	\|u(t)\|_{H^{1}(|x|>R)} + \|Q_{\omega,\gamma}\|_{H^{1}(|x|>R)} + \|u(t)\|_{H^{1}(|x|>R)}^{p} + \|Q_{\omega,\gamma}\|_{H^{1}(|x|>R)}^{p} \lesssim \varepsilon
\end{align*}
for all $t>0$ and thus we get
\begin{align*}
	|A_{R}(u(t))| \lesssim \varepsilon \mu(u(t)). 
\end{align*}
for all $t>0$. Combining these estimates, in all cases, we have
\begin{align*}
	|A_{R}(u(t))| \lesssim \varepsilon \mu(u(t))
\end{align*}
by taking $R$ sufficiently large. This shows Claim 2. 

By Proposition \ref{prop2.7} and Claims 1 and 2, we obtain
\begin{align*}
	&\int_{\tau_{1}}^{\tau_{2}}\mu(u(t))dt 
	\lesssim \int_{\tau_{1}}^{\tau_{2}}( c \mu(u(t)) -\varepsilon \mu(u(t)) )dt \lesssim \int_{\tau_{1}}^{\tau_{2}}( 8K_{\gamma}(u(t))+A_{R}(u(t)))dt 
	\\
	&=\int_{\tau_{1}}^{\tau_{2}} J_{R}''(t)dt
	\leq |J_{R}'(\tau_{1})| +|J_{R}'(\tau_{2})|
	\lesssim R\{ \mu(u(\tau_{1})) + \mu(u(\tau_{2}))\}. 
\end{align*}
\end{proof}

\begin{lemma}
Under the assumption of Proposition \ref{prop4.5}, we have that there exists $c>0$ such that
\begin{align*}
	\int_{t}^{\infty} \mu(u(s)) ds \lesssim e^{-ct}
\end{align*}
\end{lemma}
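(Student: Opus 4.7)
The plan is to combine the integrated virial bound of Lemma 4.10 with the existence of a good sequence from Corollary 4.9 to obtain an integral self-bound, then turn this into a differential inequality for the tail integral of $\mu$.

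First I would define, for $t > 0$,
\begin{align*}
 f(t) := \int_{t}^{\infty} \mu(u(s))\,ds,
\end{align*}
which is well defined (allowing $+\infty$ a priori) since $\mu(u(s)) \geq 0$ when $K_\gamma(u_0) > 0$, by Lemma 2.5 and Proposition 2.7. The first step is to show $f(t) < \infty$ and more precisely
\begin{align*}
 f(t) \lesssim \mu(u(t)) \qquad \text{for all } t > 0.
\end{align*}
To this end, apply Lemma 4.10 with $\tau_1 = t$ and $\tau_2 = t_n$, where $\{t_n\}$ is the sequence of Corollary 4.9 with $\mu(u(t_n)) \to 0$. We may assume $t_n > t$ for $n$ large, and obtain
\begin{align*}
 \int_{t}^{t_n} \mu(u(s))\,ds \lesssim \mu(u(t)) + \mu(u(t_n)).
\end{align*}
Letting $n \to \infty$ and using monotone convergence on the left plus $\mu(u(t_n)) \to 0$ on the right gives $f(t) \lesssim \mu(u(t))$.

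Next I would observe that $f$ is absolutely continuous with $f'(t) = -\mu(u(t))$ for a.e.\ $t > 0$ (indeed $\mu(u(\cdot))$ is continuous in $t$ because $u \in C(\mathbb{R}; H^1)$). The bound just proved therefore reads $f(t) \lesssim -f'(t)$, i.e.\ there exists $c > 0$ such that
\begin{align*}
 f'(t) + c\,f(t) \leq 0 \qquad \text{for a.e. } t > 0.
\end{align*}
Multiplying by the integrating factor $e^{ct}$ yields $(e^{ct} f(t))' \leq 0$, so $f(t) \leq f(0)\,e^{-ct}$, which is the claim with an implicit constant depending on $f(0) = \int_0^\infty \mu(u(s))\,ds$; the finiteness of $f(0)$ is itself a consequence of the first step applied at $t = 0$ together with the boundedness of $\mu(u)$ on bounded time intervals.

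The only point that requires care is the passage $\tau_2 = t_n \to \infty$ in Lemma 4.10: the implicit constant there is independent of $\tau_1,\tau_2$ (it depends only on the constant $R$ chosen in Claim 2 of that proof, which is uniform in time by the precompactness of Lemma 4.6), so the limit is legitimate. I do not expect any other obstacle: everything else is a clean Gronwall-style argument once the integral self-bound is in hand.
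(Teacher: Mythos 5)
Your proof is correct and is essentially the argument the paper intends: the paper's own proof simply cites Lemma \ref{lem4.10} and Corollary \ref{cor4.9} and defers to the end of the proof of Proposition 6.1 in \cite{DuRo10}, which is exactly your scheme of sending $\tau_2=t_n\to\infty$ to get $\int_t^\infty \mu(u(s))\,ds \lesssim \mu(u(t))$ and then running the Gronwall-type argument on $f(t)=\int_t^\infty\mu(u(s))\,ds$. The only cosmetic point is that Lemma \ref{lem4.10} is stated for $\tau_1>0$, so finiteness at $t=0$ should be obtained from the bound at some $t_0>0$ plus boundedness of $\mu(u)$ on $[0,t_0]$, which you in effect note.
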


\begin{proof}
This follows from Lemma \ref{lem4.10} and Corollary \ref{cor4.9}. See the end of the proof of Proposition 6.1 (below (6.28)) in \cite{DuRo10} for the detailed proof. 
\end{proof}

\begin{lemma}
\label{lem4.12}
We have
\begin{align*}
	\lim_{t\to \infty} \mu(u(t))=0
\end{align*}
\end{lemma}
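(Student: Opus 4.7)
My approach is to derive uniform Lipschitz continuity of $t \mapsto \mu(u(t))$ and combine it with the integrated decay from Lemma \ref{lem4.11}. First, the mass-energy condition \eqref{ME} together with conservation of $E_\gamma$ lets me replace the $\dot H^1_\gamma$-norm by the $L^{p+1}$-norm:
\[
\mu(u(t)) = \|Q_{\omega,\gamma}\|_{\dot H^1_\gamma}^2 - \|u(t)\|_{\dot H^1_\gamma}^2 = \frac{2}{p+1}\bigl(\|Q_{\omega,\gamma}\|_{L^{p+1}}^{p+1} - \|u(t)\|_{L^{p+1}}^{p+1}\bigr).
\]
Positivity $\mu(u(t)) > 0$ is already known from Proposition \ref{prop2.7} (equivalently from $I_{\omega,\gamma}(u) = \tfrac{p-1}{2}\mu(u) > 0$ when $K_\gamma(u) > 0$).

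Next I differentiate. Using $\partial_t u = i(\Delta_\gamma u + |u|^{p-1}u)$ and the quadratic form of $-\Delta_\gamma$ (which encodes the jump condition at the origin and makes $-\Delta_\gamma$ self-adjoint), the boundary contribution from integrating by parts $\partial_x^2 u$ against $|u|^{p-1}u$ cancels exactly with the explicit $\gamma\delta u$ term, while the local nonlinear piece $\int i|u|^{2p}\,dx$ is purely imaginary. What survives is
\[
\frac{d}{dt}\|u(t)\|_{L^{p+1}}^{p+1} = (p+1)\,\im \int \partial_x u\,\overline{\partial_x(|u|^{p-1}u)}\,dx.
\]
By Cauchy--Schwarz, the pointwise bound $|\partial_x(|u|^{p-1}u)|\le p|u|^{p-1}|\partial_x u|$, and $H^1\hookrightarrow L^\infty$ in one dimension, I obtain $\bigl|\tfrac{d}{dt}\mu(u(t))\bigr|\lesssim \|u(t)\|_{H^1}^{p+1}$. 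Since $u(t)\in \mathcal K_{\omega,\gamma}^+$ for all $t$ by Lemma \ref{lem2.5}, the solution lies in a bounded set in $H^1$ (the positive-$K_\gamma$ sublevel of the action is uniformly Gagliardo--Nirenberg controlled via Lemma \ref{GN}). Hence $\mu(u(\cdot))$ is uniformly Lipschitz on $[0,\infty)$ with some constant $L$.

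Finally, assume for contradiction that $\mu(u(t))\not\to 0$. Then there exist $\delta>0$ and $t_n\to\infty$ with $\mu(u(t_n))\ge \delta$. By the Lipschitz bound, $\mu(u(s))\ge \delta/2$ on $[t_n, t_n+\delta/(2L)]$, so
\[
\int_{t_n}^{\infty}\mu(u(s))\,ds \;\ge\; \frac{\delta^2}{4L}
\]
for every $n$, contradicting the exponential decay $\int_t^\infty \mu(u(s))\,ds\lesssim e^{-ct}$ from Lemma \ref{lem4.11}. Therefore $\mu(u(t))\to 0$ as $t\to\infty$.

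The only delicate point is the $\delta$-potential bookkeeping when computing $\tfrac{d}{dt}\|u\|_{L^{p+1}}^{p+1}$; handling $-\Delta_\gamma$ via its quadratic form rather than as a pointwise operator makes the $\delta$-contributions cancel cleanly, so there is no real obstacle beyond this verification.
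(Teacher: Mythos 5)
Your proof is correct, but it follows a genuinely different route from the paper's. The paper argues by contradiction through the modulation theory of Section \ref{sec4.1}: starting from the sequence of Corollary \ref{cor4.9}, it picks exit times $\widetilde t_n$ with $\mu(u(\widetilde t_n))=\varepsilon_0$ and $\mu(u(t))<\varepsilon_0$ on $[t_n,\widetilde t_n)$, and combines $|\rho'(t)|\lesssim \mu(u(t))$ (Lemma \ref{lem4.4}), $|\rho|\approx\mu$ (Proposition \ref{prop4.2}), and the exponential bound on $\int_t^\infty\mu(u(s))\,ds$ to force $\mu(u(\widetilde t_n))\to0$, contradicting $\mu(u(\widetilde t_n))=\varepsilon_0$. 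You instead establish a uniform Lipschitz bound for $t\mapsto\mu(u(t))$ directly from the equation — using \eqref{ME} to write $\mu(u)=\tfrac{2}{p+1}\bigl(\|Q_{\omega,\gamma}\|_{L^{p+1}}^{p+1}-\|u\|_{L^{p+1}}^{p+1}\bigr)$, differentiating via the quadratic form of $-\Delta_\gamma$ (the $\delta$-contribution $\gamma|u(0)|^{p+1}$ and the term $\int|u|^{2p}dx$ are real and drop out under $\im$), and using the uniform $H^1$ bound — and then conclude by a Barbalat-type argument from the same integrated decay. This is a legitimate and more elementary alternative: it needs neither Proposition \ref{prop4.2} nor Lemma \ref{lem4.4}, nor the restriction $|\mu|<\mu_0$ under which the modulation is defined. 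What the paper's route buys in exchange is that it never has to justify the time-differentiation of $\|u(t)\|_{L^{p+1}}^{p+1}$ along $H^1$ solutions (routine here via the $H^{-1}$--$H^1$ pairing, as for the conservation laws, but still a step), it recycles exactly the estimates that are reused immediately in the proof of Proposition \ref{prop4.5}, and it is the argument that survives in settings with additional modulation parameters such as spatial translations \cite{DuRo10,CFR20}. Two small points: the uniform $H^1$ bound follows most directly from the conserved sign $\mu(u(t))>0$, which gives $\|u(t)\|_{\dot{H}_\gamma^1}\le\|Q_{\omega,\gamma}\|_{\dot{H}_\gamma^1}$, together with mass conservation — no Gagliardo--Nirenberg/action argument is needed; and the integrated-decay lemma you invoke is unnumbered and unlabeled in the paper, so it should be cited descriptively rather than by a label.
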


\begin{proof}
Suppose that the statement fails. We know that there exists a time sequence $\{t_{n}\}$ such that $\lim_{n \to \infty} \mu(u(t_{n})) =0$ by Corollary \ref{cor4.9}. Thus, there exist a sequence $\{\widetilde{t}_{n}\}$ and $\varepsilon_{0}\in(0,\mu_{0})$ such that 
\begin{align*}
	t_{n} < \widetilde{t}_{n},
	\mu(u(\widetilde{t}_{n})) = \varepsilon_{0},
	\mu(u(t)) < \varepsilon_{0} \text{ for all } t \in [t_{n},\widetilde{t}_{n}).
\end{align*}
By $\varepsilon_{0}<\mu_{0}$, the parameters in modulation argument are well-defined in the interval $[t_{n},\widetilde{t}_{n}]$. Lemma \ref{lem4.4} implies that $|\rho'(t)|\lesssim \mu(u(t))$ and thus 
\begin{align*}
	|\rho(\widetilde{t}_{n})-\rho(t_{n})|=\int_{t_{n}}^{\widetilde{t}_{n}} |\rho'(t)| dt \lesssim \int_{t_{n}}^{\widetilde{t}_{n}}  \mu(u(t)) dt \lesssim e^{-ct_{n}}.
\end{align*}
Hence we get
\begin{align*}
	\lim_{n \to \infty} |\rho(\widetilde{t}_{n})-\rho(t_{n})|=0.
\end{align*}
Since we have $\mu(u(t))\approx|\rho(t)|$ by Proposition \ref{prop4.2}, we get
\begin{align*}
	\mu(u(\widetilde{t}_{n})) \approx |\rho(\widetilde{t}_{n})|
	\leq |\rho(\widetilde{t}_{n})-\rho(t_{n})| + |\rho(t_{n})|
	\lesssim |\rho(\widetilde{t}_{n})-\rho(t_{n})| + \mu(u(t_{n}))
	\to 0
\end{align*}
as $n \to \infty$. This contradicts $\mu(u(\widetilde{t}_{n})) = \varepsilon_{0}$. The proof is completed.
\end{proof}

We are ready to prove Proposition \ref{prop4.5}. The proof is similar to Lemma 4.4. in \cite{DuRo10} and Lemma 5.5 in \cite{CFR20}.  However, we give the proof for the reader's convenience. 

\begin{proof}[Proof of Proposition \ref{prop4.5}]
By Proposition \ref{prop4.2} and Lemma \ref{lem4.12}, we get $|\rho(t)| \approx \mu(u(t)) \to 0$ as $t\to \infty$. Thus it follows from this and Lemma \ref{lem4.4} that
\begin{align*}
	|\rho(t)| \leq \int_{t}^{\infty} |\rho'(s)| ds \lesssim \int_{t}^{\infty} \mu(u(s)) ds \lesssim e^{-ct}.
\end{align*}
Thus we get $\|h(t)\|_{H^{1}} \approx \|g(t)\|_{H^{1}} \to 0$ as $t \to \infty$ by Proposition \ref{prop4.2}. 
Moreover, Proposition \ref{prop4.2}, $\mu(u(t))\approx |\rho(t)|$, and Lemma \ref{lem4.4} give that
\begin{align*}
	|\theta'(t)|\lesssim \mu(u(t)) \lesssim  e^{-ct}
\end{align*}
This implies that there exists $\theta_{0}$ such that $\theta(t) \to \theta_{0}$ as $t \to \infty$. Namely we get
\begin{align*}
	\|u(t) - e^{i\theta_{0}} e^{i\omega t}Q_{\omega,\gamma}\|_{H^{1}}
	&\leq \|u(t) - e^{i\theta(t)} e^{i\omega t}Q_{\omega,\gamma}\|_{H^{1}}
	+\|(e^{i\theta(t)} -e^{i\theta_{0}})e^{i\omega t}Q_{\omega,\gamma}\|_{H^{1}}
	\\
	&=\|g(t)\|_{H^{1}}
	+|e^{i\theta(t)} -e^{i\theta_{0}}|\|Q_{\omega,\gamma}\|_{H^{1}}
	\to 0
\end{align*}
as $t \to \infty$. At last, we show that $u$ scatters in the negative time direction. If not, by the above argument and time-reversibility, we also have
\begin{align*}
	\lim_{t\to -\infty} \mu(u(t))=0.
\end{align*}
Thus, applying Lemma \ref{lem4.10} as $\tau_{1}=-n$ and $\tau_{2}=n$, we obtain
\begin{align*}
	0 < \int_{-\infty}^{\infty} \mu(u(t)) dt = \lim_{n \to \infty} \int_{-n}^{n} \mu(u(t)) dt \lesssim  \lim_{n \to \infty}\{ \mu(u(-n)) + \mu(u(n)) \}=0.
\end{align*}
since $\mu(u(t))>0$ for all $t\in \mathbb{R}$. This is a contradiction. 
\end{proof}


\subsubsection{Case of negative $K_{\gamma}$}
\label{sec4.2.2}

Next, we consider the case that $K_{\gamma}(u_{0})$ is negative. 
In this case, we have $\mu(u(t))<0$. 
We show that a global solution converges to the ground state. 

\begin{proposition}
\label{prop4.13}
Let $u$ be the solution to \eqref{NLS} with $u(0)=u_0$. Assume that $u_0 \in H_\even^1(\mathbb{R})$ satisfies \eqref{ME} and $K_{\gamma}(u_{0})<0$. Furthermore, we assume that the solution is global in positive time and $\int_{\mathbb{R}}|xu_{0}(x)|^{2}dx<\infty$. 
Then there exist $\theta_{0} \in \mathbb{R}$ and $c>0$ such that 
\begin{align*}
	\|u(t) - e^{i\theta_{0}} e^{i\omega t}Q_{\omega,\gamma}\|_{H^{1}} \lesssim e^{-ct}
\end{align*}
for $t>0$. Moreover, $u$ blows up in  finite negative time. 
\end{proposition}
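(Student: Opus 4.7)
The plan is to mirror the proof of Proposition \ref{prop4.5}, replacing the compactness-from-non-scattering argument by the unlocalized virial identity, which is now directly available thanks to the finite-variance hypothesis. The argument splits into three stages: show $\mu(u(t)) \to 0$ as $t \to \infty$, upgrade this to exponential decay, and finally deduce blow-up in finite negative time from concavity of the variance.

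For the first stage, set $J(t) = \int_\mathbb{R} |xu(t,x)|^2\,dx$. Finite variance is propagated by the equation, so $J \in C^2([0,\infty))$ with $J''(t) = 8K_\gamma(u(t))$. By Lemma \ref{lem2.5}, $K_\gamma(u(t)) < 0$ on the forward existence interval, and Proposition \ref{prop2.7} gives $K_\gamma(u(t)) \leq c\mu(u(t)) < 0$ with $c = c(p,u_0) > 0$. Hence $J$ is strictly concave and nonnegative on $[0,\infty)$, which forces $J'$ to be nonincreasing and nonnegative, so $J'(t) \searrow \ell \geq 0$, and
\[
\int_0^\infty |\mu(u(t))|\,dt \;\leq\; \frac{1}{c}\int_0^\infty |K_\gamma(u(t))|\,dt \;=\; \frac{J'(0) - \ell}{8c} \;<\; \infty.
\]
In particular, there exists $t_n \to \infty$ with $\mu(u(t_n)) \to 0$. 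Arguing as in Lemma \ref{lem4.12}, if $\mu$ did not go to zero, there would exist a first exit time $\widetilde t_n > t_n$ from $\{|\mu| < \varepsilon_0\}$ for some $\varepsilon_0 \in (0,\mu_0)$; on $[t_n, \widetilde t_n]$ the modulation is well-defined, $|\rho'| \lesssim |\mu|$ by Lemma \ref{lem4.4}, hence $|\rho(\widetilde t_n) - \rho(t_n)| \lesssim \int_{t_n}^\infty |\mu|\,dt \to 0$, while $|\rho(t_n)| \approx |\mu(u(t_n))| \to 0$ by Proposition \ref{prop4.2}, contradicting $|\rho(\widetilde t_n)| \approx \varepsilon_0$.

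For the exponential rate, once $\mu \to 0$ the modulation is globally defined for large $t$, and I would follow the stable/unstable manifold strategy of \cite{CFR20, DuRo10}. Decompose $g(t) := e^{-i\theta(t) - i\omega t}u(t) - (1 + \rho(t))Q_{\omega,\gamma}$ along the spectral subspaces of $\mathcal{L}_{\omega,\gamma}$: the projections onto $\mathcal{Y}_\pm$ satisfy hyperbolic ODEs whose nonlinear source is controlled by $\|g\|_{H^1}^2 \lesssim \mu^2$ (Proposition \ref{prop4.2}), while the center component is handled by the coercivity of $\Phi$ on $\widetilde G^\perp$ (Lemma \ref{coercivity}). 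The boundedness of $g$ on $[T,\infty)$ forces the unstable projection to equal the convergent tail Duhamel integral, and a Gronwall-type bootstrap then yields $\|g(t)\|_{H^1} \lesssim e^{-ct}$ for some $c > 0$. Lemma \ref{lem4.4} next gives $|\rho'(t)|, |\theta'(t)| \lesssim e^{-ct}$, so $\theta(t)$ converges to some $\theta_0$ at rate $e^{-ct}$, and the triangle inequality delivers $\|u(t) - e^{i\theta_0}e^{i\omega t}Q_{\omega,\gamma}\|_{H^1} \lesssim e^{-ct}$.

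Finally, for blow-up in negative time, suppose for contradiction that $u$ is global on $\mathbb{R}$. Finite variance is preserved backward, and the argument above applied to $(-\infty, 0]$ (using that $K_\gamma < 0$ is sign-preserved by Lemma \ref{lem2.5}) yields $\mu(u(t)) \to 0$ as $t \to -\infty$ as well. Then $J : \mathbb{R} \to [0,\infty)$ is $C^2$, strictly concave, and nonnegative; any such function must be constant, because if $J'(t_0) \neq 0$ at any point, the tangent line at $t_0$ would force $J$ to become negative in one of the two time directions. But $J$ constant contradicts $J'' = 8K_\gamma(u) < 0$, so $u$ cannot be global backward, and it must blow up in finite negative time. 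The main obstacle is the exponential-rate step: in the $K_\gamma > 0$ case, Lemma \ref{lem4.10} delivered the virial bound $|J_R'| \lesssim R\mu$ via a spatial cutoff, but on the full line $|J'|$ carries the unbounded weight $x$ which cannot be absorbed into $\mu$ by a direct modulation expansion without further input; this is why the spectral/hyperbolic-ODE decomposition, rather than iteration of the virial as in Lemma \ref{lem4.10}, is the natural route here.
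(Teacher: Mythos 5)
There is a genuine gap at the heart of your argument, namely the exponential-rate step. Your first stage is fine: concavity of $J$ together with $J\geq 0$, $J''=8K_{\gamma}(u)\leq 8c\,\mu(u)<0$ (Proposition \ref{prop2.7}) gives $J'\searrow \ell\geq 0$, hence $\int_0^\infty|\mu(u(t))|\,dt<\infty$ and, via the modulation argument of Lemma \ref{lem4.12}, $\mu(u(t))\to 0$. But integrability of $\mu$ is strictly weaker than what the proposition needs, and your proposed substitute for the missing step --- decomposing $g$ along $\mathcal{Y}_{\pm}$ and ``handling the center component by the coercivity of $\Phi$ on $\widetilde{G}^{\perp}$ with a Gronwall bootstrap'' --- does not work. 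The essential spectrum of $\mathcal{L}_{\omega,\gamma}$ is $\{i\eta:|\eta|\geq\omega\}$, purely imaginary, so the linear flow provides no decay whatsoever on the non-hyperbolic part; the coercivity of Lemma \ref{coercivity} is a positivity statement about a quadratic form, not a decay mechanism; and a Duhamel/Gronwall bootstrap of the type used in Section \ref{sec5} (Lemmas \ref{lem5.3}, \ref{lem5.6}) only upgrades an \emph{already exponential} rate $e^{-c_0t}$ to $e^{-c_1^-t}$ --- it cannot manufacture an exponential rate from the bare information $\|g(t)\|_{H^1}\approx|\mu(u(t))|\to 0$. Indeed, from $|\rho'|\lesssim|\mu|\approx|\rho|$ and $\int_0^\infty|\mu|<\infty$ alone one gets no rate at all.

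The paper closes exactly this gap with the virial machinery specific to the $K_{\gamma}<0$, finite-variance case: the Cauchy--Schwarz-type inequality of Lemma \ref{lem4.14} (proved with the Gagliardo--Nirenberg inequality relative to the $\delta$-potential, Lemma \ref{GN}) yields Corollary \ref{cor4.15},
\begin{align*}
	\left(\im \int_{\mathbb{R}} x\,\partial_{x}u\,\overline{u}\,dx\right)^{2} \lesssim |K_{\gamma}(u(t))|^{2}\int_{\mathbb{R}}|x|^{2}|u(t,x)|^{2}dx,
\end{align*}
and this, combined with $y:=\tfrac14 J'>0$ and $y'=2K_{\gamma}(u)$, is what produces the exponential decay $-\int_t^\infty\mu(u(s))\,ds\lesssim e^{-ct}$ in Lemma \ref{lem4.16}; only then do Proposition \ref{prop4.2} and Lemma \ref{lem4.4} convert this into $\|g(t)\|_{H^1}\lesssim e^{-ct}$ and convergence of $\theta(t)$, exactly as in Proposition \ref{prop4.5}. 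You even flag the unbounded weight $x$ as the obstacle, but the resolution is the weighted inequality of Lemma \ref{lem4.14}, not a spectral decomposition. On the positive side, your final stage is correct and in fact simpler than the paper's: a nonnegative $C^2$ function on all of $\mathbb{R}$ with $J''<0$ everywhere is impossible (tangent-line argument), which rules out backward global existence directly, whereas the paper argues by applying Lemma \ref{lem4.16} to the time-reversed solution $\overline{u(-t)}$ and comparing signs of $\im\int x\,\partial_xu_0\,\overline{u_0}\,dx$. But since that stage sits downstream of the unproven exponential convergence only in your write-up (it actually needs only sign-invariance and finite variance), the proposal as a whole still fails at the main assertion of the proposition.
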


\begin{lemma}
\label{lem4.14}
Let $\phi \in C^{1}(\mathbb{R})$ with $\phi(0)=0$ and $f\in H^{1}(\mathbb{R})$. We assume that $\int_{\mathbb{R}} |\partial_{x}\phi(x)|^{2}|f(x)|^{2}dx<\infty$, $M(f)=M(Q_{\omega,\gamma})$, and $E_{\gamma}(f)=E_{\gamma}(Q_{\omega,\gamma})$. Then we have the following inequality:
\begin{align*}
	\left(\im \int_{\mathbb{R}} \partial_{x}\phi(x)\partial_{x}f(x)\overline{f(x)} dx \right)^{2} \lesssim |\mu(f)|^{2} \int_{\mathbb{R}} |\partial_{x}\phi(x)|^{2} |f(x)|^{2} dx.
\end{align*}
\end{lemma}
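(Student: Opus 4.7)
My plan is to combine a phase modulation $f_\lambda(x) := e^{i\lambda\phi(x)}f(x)$ with the diamagnetic inequality and the sharp Gagliardo--Nirenberg inequality of Lemma \ref{GN} applied to $|f|$. The hypothesis $\phi(0)=0$ and $|e^{i\lambda\phi}|=1$ ensure $M(f_\lambda)=M(f)$, $|f_\lambda(0)|=|f(0)|$, and $\|f_\lambda\|_{L^{p+1}}=\|f\|_{L^{p+1}}$, while a direct expansion of $|\partial_{x}f_\lambda|^2$ yields
\begin{equation*}
\|f_\lambda\|_{\dot{H}_{\gamma}^{1}}^{2} \;=\; \|f\|_{\dot{H}_{\gamma}^{1}}^{2} + 2\lambda I + \lambda^2 J,
\end{equation*}
where $I := \im\int_{\mathbb{R}}\partial_{x}\phi\cdot \overline{f}\,\partial_{x}f\,dx$ and $J := \int_{\mathbb{R}}(\partial_{x}\phi)^2|f|^2\,dx$. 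I will dispose of $J=0$ trivially (then $\partial_{x}\phi\cdot f\equiv 0$ a.e.\ so $I=0$), and assume $J>0$ henceforth.

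The diamagnetic inequality $|\partial_{x}|f_\lambda||\leq |\partial_{x}f_\lambda|$ a.e., together with $|f_\lambda|=|f|$ and the fact that the delta contribution is unchanged, gives $\||f|\|_{\dot{H}_{\gamma}^{1}}^{2} \leq \|f\|_{\dot{H}_{\gamma}^{1}}^{2} + 2\lambda I + \lambda^2 J$ for every $\lambda\in\mathbb{R}$. Minimizing the right-hand side at $\lambda^\ast = -I/J$ yields $I^2/J \leq \mu(|f|)-\mu(f)$. Since in the intended application $f$ is even, $|f|\in H_{\even}^{1}(\mathbb{R})$, and Lemma \ref{GN} applied to $|f|$, together with $\|f\|_{L^{p+1}}=\||f|\|_{L^{p+1}}$, $M(|f|)=M(Q_{\omega,\gamma})$, and the sharp equality $\|Q_{\omega,\gamma}\|_{L^{p+1}}^{2}=C_{\omega,\gamma}\|Q_{\omega,\gamma}\|_{H_{\omega,\gamma}^{1}}^{2}$, produces $C_{\omega,\gamma}\,\mu(|f|) \leq \|Q_{\omega,\gamma}\|_{L^{p+1}}^{2} - \|f\|_{L^{p+1}}^{2}$.

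To close, I plug the (ME)-consequence $\|f\|_{L^{p+1}}^{p+1} - \|Q_{\omega,\gamma}\|_{L^{p+1}}^{p+1} = -\tfrac{p+1}{2}\mu(f)$ into a second-order Taylor expansion of $x\mapsto x^{2/(p+1)}$ at $\|Q_{\omega,\gamma}\|_{L^{p+1}}^{p+1}$, obtaining $\|Q_{\omega,\gamma}\|_{L^{p+1}}^{2} - \|f\|_{L^{p+1}}^{2} = C_{\omega,\gamma}\mu(f) + O(\mu(f)^{2})$, where the linear coefficient is exactly $C_{\omega,\gamma}$ thanks to $C_{\omega,\gamma}^{-1}=\|Q_{\omega,\gamma}\|_{L^{p+1}}^{p-1}$. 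Subtracting gives $\mu(|f|)-\mu(f)\lesssim \mu(f)^{2}$ for $|\mu(f)|$ small, hence $I^{2}\lesssim \mu(f)^{2}J$. For $|\mu(f)|$ bounded away from zero, a direct Cauchy--Schwarz $I^{2}\leq J\,\|\partial_{x}f\|_{L^{2}}^{2}\leq J(\|Q_{\omega,\gamma}\|_{\dot{H}_{\gamma}^{1}}^{2}+|\mu(f)|)$ handles the remaining regime since $1+|\mu(f)|\lesssim \mu(f)^{2}$ there. The main obstacle is arranging the cancellation of the linear-in-$\mu(f)$ terms between the GN upper bound and the (ME) identity; it is precisely the sharpness of the constant $C_{\omega,\gamma}$ at the ground state that upgrades the naive bound $I^{2}\lesssim |\mu(f)|\,J$ to the quadratic bound claimed in Lemma \ref{lem4.14}.
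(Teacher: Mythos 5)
Your proof is correct and is essentially the paper's own argument: the same phase modulation $e^{i\lambda\phi}f$ with $\phi(0)=0$, the same sharp Gagliardo--Nirenberg constant $C_{\omega,\gamma}$ attained at $Q_{\omega,\gamma}$ (Lemma \ref{GN}), and the same Taylor expansion of $\|f\|_{L^{p+1}}^{2}$ under \eqref{ME} using $C_{\omega,\gamma}^{-1}=\|Q_{\omega,\gamma}\|_{L^{p+1}}^{p-1}$ to cancel the term linear in $\mu(f)$. The only deviation is cosmetic: the paper applies Lemma \ref{GN} directly to $e^{i\lambda\phi}f$ and reads off the discriminant of the resulting nonnegative quadratic in $\lambda$, obtaining $I^{2}\leq J\left(\|f\|_{H_{\omega,\gamma}^{1}}^{2}-C_{\omega,\gamma}^{-1}\|f\|_{L^{p+1}}^{2}\right)$, while your detour through the diamagnetic inequality and $|f|$ unwinds to exactly the same bound (and your explicit split between small and large $|\mu(f)|$ just makes precise what the paper hides in its $O(|\mu(f)|^{2})$ notation).
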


\begin{proof}
The proof is similar to \cite[Claim 5.4]{DuRo10} (see also \cite[Lemma 2.1]{Ban04} and \cite[Claims A.2 and A.4]{CFR20}). However, we give the proof since we use a different type of Gagliardo--Nirenberg inequality. 

By the Gagliardo--Nirenberg type inequality with respect to $\delta$ potential, Lemma \ref{GN}, we have
\begin{align*}
	\|f\|_{L^{p+1}}^{2} = \|e^{ i \lambda \phi}f\|_{L^{p+1}}^{2} \leq C_{\omega,\gamma} \|e^{ i \lambda \phi}f\|_{H_{\omega,\gamma}^{1}}^{2}
	=C_{\omega,\gamma} (\|e^{ i \lambda \phi}f\|_{\dot{H}_{\gamma}^{1}}^{2}+\omega \|f\|_{L^{2}}^{2})
\end{align*}
for $\lambda \in \mathbb{R}$. 
Since we have
\begin{align*}
	\|e^{ i \lambda \phi}f\|_{\dot{H}_{\gamma}^{1}}^{2}
	&=
	\lambda^{2}\|(\partial_{x}\phi) f\|_{L^{2}}^{2} +2\lambda \im \int_{\mathbb{R}} (\partial_{x}\phi) \partial_{x}f \overline{f}dx
	+ \|f\|_{\dot{H}_{\gamma}^{1}}^{2},
\end{align*}
where we used $\phi(0)=0$, we get 
\begin{align*}
	\lambda^{2}\|(\partial_{x}\phi) f\|_{L^{2}}^{2} +2\lambda \im \int_{\mathbb{R}} (\partial_{x}\phi) \partial_{x}f \overline{f}dx
	+\|f\|_{H_{\omega,\gamma}^{1}}^{2} - C_{\omega,\gamma}^{-1} \|f\|_{L^{p+1}}^{2} \geq 0.
\end{align*}
Since $\lambda\in \mathbb{R}$ is arbitrary, it must hold that
\begin{align*}
	\left( \im \int_{\mathbb{R}} (\partial_{x}\phi) \partial_{x}f \overline{f}dx\right)^{2} \leq \|(\partial_{x}\phi) f\|_{L^{2}}^{2}
	( \|f\|_{H_{\omega,\gamma}^{1}}^{2} - C_{\omega,\gamma}^{-1} \|f\|_{L^{p+1}}^{2}).
\end{align*}
We show that $\|f\|_{H_{\omega,\gamma}^{1}}^{2} - C_{\omega,\gamma}^{-1} \|f\|_{L^{p+1}}^{2}=O(\mu(f)^{2})$. By the definition of $\mu$ and $M(f)=M(Q_{\omega,\gamma})$, we have
\begin{align*}
	 \|f\|_{H_{\omega,\gamma}^{1}}^{2} - C_{\omega,\gamma}^{-1} \|f\|_{L^{p+1}}^{2}
	=-\mu(f) + \|Q_{\omega,\gamma}\|_{H_{\omega,\gamma}^{1}}^{2} - C_{\omega,\gamma}^{-1} \|f\|_{L^{p+1}}^{2}.
\end{align*}
By the energy condition $E_{\gamma}(f)=E(Q_{\omega,\gamma})$, it holds that
\begin{align*}
	\|f\|_{L^{p+1}}^{p+1} 
	= \frac{p+1}{2} \|f\|_{\dot{H}_{\gamma}^{1}}^{2}
	-E(Q_{\omega,\gamma})
	=\|Q_{\omega,\gamma}\|_{L^{p+1}}^{p+1} - \frac{p+1}{2} \mu(f).
\end{align*}
Therefore, by the Taylor expansion $x^{\frac{2}{p+1}}=a^{\frac{2}{p+1}}+\frac{2}{p+1}a^{\frac{2}{p+1}-1}(x-a)+O(|x-a|^{2})$ with $a=\|Q_{\omega,\gamma}\|_{L^{p+1}}^{p+1}$ and $x-a=-\frac{p+1}{2} \mu(f)$, we obtain
\begin{align*}
	\|f\|_{L^{p+1}}^{2} &= \left(\|Q_{\omega,\gamma}\|_{L^{p+1}}^{p+1} - \frac{p+1}{2} \mu(f)\right)^{\frac{2}{p+1}}
	\\
	&=\|Q_{\omega,\gamma}\|_{L^{p+1}}^{2}- \|Q_{\omega,\gamma}\|_{L^{p+1}}^{-(p-1)} \mu(f)+O(|\mu(f)|^{2})
\end{align*}
From these equalities and $C_{\omega,\gamma}^{-1}=\|Q_{\omega,\gamma}\|_{L^{p+1}}^{p-1}$, it holds that
\begin{align*}
	&-\mu(f) + \|Q_{\omega,\gamma}\|_{H_{\omega,\gamma}^{1}}^{2}  - C_{\omega,\gamma}^{-1} \|f\|_{L^{p+1}}^{2}
	=O(|\mu(f)|^{2}).
\end{align*}
This completes the proof. 
\end{proof}

\begin{corollary}
\label{cor4.15}
Let $u_{0} \in H^{1}(\mathbb{R})$. We assume that $K_{\gamma}(u_{0})<0$, $\int_{\mathbb{R}} |x|^{2}|u_{0}(x)|^{2}dx<\infty$, $M(u_{0})=M(Q_{\omega,\gamma})$, and $E_{\gamma}(u_{0})=E_{\gamma}(Q_{\omega,\gamma})$. Then we have the following inequality:
\begin{align*}
	\left(\im \int_{\mathbb{R}} x\partial_{x}u(t,x)\overline{u(t,x)} dx \right)^{2} \lesssim |K_{\gamma}(u(t))|^{2} \int_{\mathbb{R}} |x|^{2} |u(t,x)|^{2} dx.
\end{align*}
\end{corollary}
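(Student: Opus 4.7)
The plan is to deduce Corollary \ref{cor4.15} as a direct specialization of Lemma \ref{lem4.14}, followed by an application of Proposition \ref{prop2.7} to convert the bound by $|\mu|$ into a bound by $|K_\gamma|$.

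First, I would apply Lemma \ref{lem4.14} with the specific choice $\phi(x) = x^{2}/2$ and $f = u(t)$. This choice satisfies $\phi \in C^1(\mathbb{R})$ with $\phi(0)=0$, and $\partial_x \phi(x) = x$, so the conclusion of Lemma \ref{lem4.14} becomes exactly
\begin{align*}
	\left(\im \int_{\mathbb{R}} x\,\partial_{x}u(t,x)\overline{u(t,x)}\, dx \right)^{2} \lesssim |\mu(u(t))|^{2} \int_{\mathbb{R}} |x|^{2}|u(t,x)|^{2}\,dx.
\end{align*}

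Next, I would verify the hypotheses of Lemma \ref{lem4.14}. The mass-energy condition \eqref{ME} holds for $u(t)$ thanks to conservation of mass and energy for \eqref{NLS}. The weighted integrability $\int_{\mathbb{R}} |\partial_x \phi|^2 |u(t)|^2\,dx = \int_{\mathbb{R}} |x|^2 |u(t,x)|^2\,dx < \infty$ is the finite-variance hypothesis; since $u_0$ has finite variance and $u_0 \in H^1$, the standard propagation-of-variance argument (which gives $J(t) \in C^2$ via the virial identity $J''(t) = 8K_\gamma(u(t))$) ensures that finite variance is preserved on the existence interval.

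Finally, to upgrade the $|\mu|$ bound to a $|K_\gamma|$ bound, I would invoke Proposition \ref{prop2.7}(2): under the assumptions $K_\gamma(u_0)<0$ and \eqref{ME}, there exists a constant $c = c(p,u_0) > 0$ with $K_\gamma(u(t)) \leq c\,\mu(u(t)) < 0$ for all $t$ in the existence interval, hence $|\mu(u(t))| \leq c^{-1}|K_\gamma(u(t))|$. Substituting into the previous display yields the desired estimate. There is no real obstacle here since both ingredients, Lemma \ref{lem4.14} and Proposition \ref{prop2.7}, are already available; the only mild care point is recording the correct direction of the inequality in Proposition \ref{prop2.7} (i.e.\ that $K_\gamma$ and $\mu$ share sign and $|K_\gamma|$ dominates $|\mu|$ up to a constant).
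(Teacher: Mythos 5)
Your proposal is correct and follows essentially the same route as the paper: apply Lemma \ref{lem4.14} with $\phi(x)=x^{2}$ (your choice $x^{2}/2$ differs only by a harmless constant) and then use Proposition \ref{prop2.7}(2) to replace $|\mu(u(t))|$ by $|K_{\gamma}(u(t))|$ up to a constant. The extra remarks on conservation laws and propagation of finite variance are consistent with what the paper leaves implicit.
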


\begin{proof}
It holds that $K_{\gamma}(u(t))< c \mu(u(t))<0$ by Proposition \ref{prop2.7}. By this and applying Lemma \ref{lem4.14} with $\phi(x)=x^{2}$ and $f=u$, we obtain the statement. 
\end{proof}

\begin{lemma}
\label{lem4.16}
Under the assumption of Proposition \ref{prop4.13}, 
we have
\begin{align*}
	\im \int_{\mathbb{R}} x\partial_{x}u(t,x) \overline{u(t,x)}dx>0
\end{align*}
for the time of existence and there exists $c>0$ such that 
\begin{align*}
	-\int_{t}^{\infty} \mu(u(s))ds \lesssim e^{-ct}
\end{align*}
for all $t >0$. 
\end{lemma}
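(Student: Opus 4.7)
The plan is to analyze the variance $J(t):=\int_{\mathbb{R}}|x|^{2}|u(t,x)|^{2}\,dx$, which is finite along the flow by standard propagation of finite variance for \eqref{NLS} (as $u_{0}$ has finite variance by assumption). Recall $J'(t)=4\im\int_{\mathbb{R}} x\,\partial_{x}u\,\overline{u}\,dx$ and $J''(t)=8K_{\gamma}(u(t))$, and by Lemma~\ref{lem2.5} we have $K_{\gamma}(u(t))<0$ throughout the maximal existence interval, so $J$ is strictly concave with strictly decreasing $J'$. The key additional ingredient is the Cauchy--Schwarz-type inequality of Corollary~\ref{cor4.15}.

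\emph{Step 1 (positivity of $J'$).} I claim $J'(t)>0$ for all $t$ in the existence interval, which is the first conclusion of the lemma. Suppose for contradiction that $J'(t_{*})\leq 0$ at some $t_{*}$. Since positive time is global and $J'$ is strictly decreasing, I can find $t_{1}\geq \max\{t_{*},0\}$ with $J'(t_{1})<0$; for all $t>t_{1}$ then $J(t)\leq J(t_{1})+J'(t_{1})(t-t_{1})\to -\infty$, contradicting $J\geq 0$.

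\emph{Step 2 (boundedness of $J$ and exponential decay of $J'$).} Corollary~\ref{cor4.15} together with $|K_{\gamma}|=-J''/8$ (justified by Step 1 and $J''<0$) yields the pointwise differential inequality
\begin{align*}
	J'(t)\lesssim -J''(t)\,\sqrt{J(t)}.
\end{align*}
Since $u\not\equiv 0$, we have $J(t)>0$, so dividing by $\sqrt{J(t)}$ and integrating from $0$ to $t>0$ gives
\begin{align*}
	2\bigl(\sqrt{J(t)}-\sqrt{J(0)}\bigr)\lesssim J'(0)-J'(t)\leq J'(0),
\end{align*}
proving $\sup_{t\geq 0}J(t)<\infty$. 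Feeding this bound back into the displayed inequality yields a linear inequality $J'(t)\leq -C\,J''(t)$ for some $C>0$, i.e.\ $\bigl(e^{t/C}J'(t)\bigr)'\leq 0$, whence $J'(t)\leq J'(0)\,e^{-t/C}$.

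\emph{Step 3 (conclusion).} By Proposition~\ref{prop2.7}, $-\mu(u(s))\lesssim -K_{\gamma}(u(s))=-J''(s)/8$, and so
\begin{align*}
	-\int_{t}^{\infty}\mu(u(s))\,ds\lesssim \int_{t}^{\infty}(-J''(s))\,ds = J'(t)-\lim_{s\to\infty}J'(s)\leq J'(t)\lesssim e^{-t/C},
\end{align*}
which is the second claim with $c=1/C$. The main delicate point is Step 2: one has to maintain the sign bookkeeping in Corollary~\ref{cor4.15} (where it is crucial that $J'>0$ and $J''<0$ have definite signs established in Step 1), and recognize that the nonlinear differential inequality $J'\lesssim -J''\sqrt{J}$ can be bootstrapped—first to prove boundedness of $J$, then to reduce to a linear inequality on $J'$—to extract the exponential rate. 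The use of Proposition~\ref{prop2.7} at the end is essential since the virial identity produces $K_{\gamma}$ whereas the modulation analysis requires control of $\mu$.
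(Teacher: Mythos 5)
Your proof is correct and follows essentially the same route as the paper's (omitted, cited) proof from Duyckaerts--Roudenko and Campos--Farah--Roudenko: concavity of $J$ forces $J'>0$, the Cauchy--Schwarz-type bound of Corollary \ref{cor4.15} is bootstrapped (boundedness of $J$, then $J'\leq -CJ''$, hence exponential decay of $J'$), and Proposition \ref{prop2.7} converts the resulting control of $K_\gamma$ into the stated bound on $-\int_t^\infty\mu(u(s))\,ds$. No issues to report.
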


\begin{proof}
The proof is same as proofs of \cite[Lemma 5.3]{DuRo10} and \cite[Lemma A.3]{CFR20} so we omit it.  
\end{proof}

\begin{lemma}
\label{lem4.17}
Under the assumption of Proposition \ref{prop4.13}, we have
\begin{align*}
	\lim_{t \to \infty} \mu(u(t))=0
\end{align*}
\end{lemma}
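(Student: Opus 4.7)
The plan is to mirror the strategy of Lemma \ref{lem4.12}, using Lemma \ref{lem4.16} as the substitute for the combination of Corollary \ref{cor4.9} and the corresponding exponential integral bound that drove the positive-$K_\gamma$ case. Since $K_\gamma(u(t))<0$ forces $\mu(u(t))<0$ for all $t>0$ (Proposition \ref{prop2.7}), Lemma \ref{lem4.16} gives
\begin{align*}
	\int_{t}^{\infty}|\mu(u(s))|\,ds=-\int_{t}^{\infty}\mu(u(s))\,ds\lesssim e^{-ct}
\end{align*}
for all $t>0$. In particular, $|\mu(u(\cdot))|$ is integrable on $(0,\infty)$, so there is a sequence $t_{n}\to\infty$ with $|\mu(u(t_{n}))|\to 0$; this plays the role that Corollary \ref{cor4.9} played in the positive case.

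With that sequence in hand, I would argue by contradiction in the same style as Lemma \ref{lem4.12}. Suppose $\mu(u(t))\not\to 0$. Then one can pick $\varepsilon_{0}\in(0,\mu_{0})$ and times $\widetilde{t}_{n}>t_{n}$ so that $|\mu(u(\widetilde{t}_{n}))|=\varepsilon_{0}$ while $|\mu(u(t))|<\varepsilon_{0}$ on $[t_{n},\widetilde{t}_{n})$. On this interval the modulation decomposition of Section \ref{sec4.1} is valid (because $|\mu(u(t))|<\mu_{0}$), so Lemma \ref{lem4.4} gives $|\rho'(t)|\lesssim|\mu(u(t))|$. Integrating and using the exponential bound above,
\begin{align*}
	|\rho(\widetilde{t}_{n})-\rho(t_{n})|\leq\int_{t_{n}}^{\widetilde{t}_{n}}|\rho'(t)|\,dt\lesssim\int_{t_{n}}^{\infty}|\mu(u(s))|\,ds\lesssim e^{-ct_{n}}\xrightarrow[n\to\infty]{}0.
\end{align*}
Combining with Proposition \ref{prop4.2}, which yields $|\mu(u(t))|\approx|\rho(t)|$ inside $I_{\mu_{0}}$,
\begin{align*}
	\varepsilon_{0}=|\mu(u(\widetilde{t}_{n}))|\approx|\rho(\widetilde{t}_{n})|\leq|\rho(\widetilde{t}_{n})-\rho(t_{n})|+|\rho(t_{n})|\lesssim e^{-ct_{n}}+|\mu(u(t_{n}))|\xrightarrow[n\to\infty]{}0,
\end{align*}
a contradiction.

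The only point that requires a little care is verifying that the modulation decomposition remains available on the full interval $[t_{n},\widetilde{t}_{n}]$; this is immediate from the choice of $\varepsilon_{0}<\mu_{0}$ and the definition of $I_{\mu_{0}}$, so no new obstacle appears. The essential input, and the main nontrivial step that was built up in the preceding lemmas, is the exponential decay of $\int_{t}^{\infty}|\mu(u(s))|\,ds$ (Lemma \ref{lem4.16})---this uses the finite-variance hypothesis through the localized virial identity, together with the crucial coercivity $K_{\gamma}(u(t))\leq c\mu(u(t))<0$ provided by Proposition \ref{prop2.7} and the sharp Gagliardo--Nirenberg estimate of Lemma \ref{lem4.14}. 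Once that is in hand, the conclusion $\mu(u(t))\to 0$ follows by the contradiction argument above, formally identical to the positive-$K_\gamma$ case.
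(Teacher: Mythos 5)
Your proposal is correct and follows the paper's own route: Lemma \ref{lem4.16} supplies both the sequence $t_n\to\infty$ with $\mu(u(t_n))\to 0$ and the exponential bound on $\int_t^\infty|\mu(u(s))|\,ds$, after which the contradiction argument is a verbatim repetition of Lemma \ref{lem4.12}, exactly as the paper indicates.
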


\begin{proof}
By Lemma \ref{lem4.16}, there exists a time sequence $\{t_{n}\}$ with $t_{n} \to \infty$ such that $\mu(u(t_{n})) \to 0$ as $n \to \infty$. The rest of the proof is same as in Lemma \ref{lem4.12} above. 
\end{proof}

We are ready to prove Proposition \ref{prop4.13}. 

\begin{proof}[Proof of Proposition \ref{prop4.13}]
The convergence can be shown in the same way as in the proof of Proposition \ref{prop4.5}. 
We show the blow-up in the negative time direction. Suppose that $u$ exists on $\mathbb{R}$. Let $v(t):=\overline{u(-t)}$. Then we have
\begin{align*}
	\im \int_{\mathbb{R}} x \partial_{x}u_{0}(x)\overline{u_{0}(x)}dx > 0
\end{align*}
by Lemma \ref{lem4.16}. Applying Lemma \ref{lem4.16} to the time reversed solution $v$, we obtain 
\begin{align*}
	\im \int_{\mathbb{R}} x \partial_{x}v_{0}(x)\overline{v_{0}(x)}dx > 0.
\end{align*}
On the other hand, by the definition of $v$, we have
\begin{align*}
	\im \int_{\mathbb{R}} x \partial_{x}u_{0}(x)\overline{u_{0}(x)}dx
	=- \im \int_{\mathbb{R}} x \partial_{x}v_{0}(x)\overline{v_{0}(x)}dx.
\end{align*}
This is a contradiction, completing the proof. 
\end{proof}


\section{Non-scattering solutions are the corresponding special solutions}
\label{sec5}

The aim of this section is to show the following statements.

\begin{proposition}
\label{prop5.1}
If a solution $u$ satisfies \eqref{ME} and
\begin{align*}
	\|u(t)-e^{i\omega t}Q_{\omega,\gamma}\|_{H^{1}} \lesssim e^{-ct}
\end{align*}
for some $c>0$, then there exists $A \in \mathbb{R}$ such that $u=U^{A}$, where $U^{A}$ is the solution constructed in Proposition \ref{prop3.3}. 
\end{proposition}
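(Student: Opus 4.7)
Let $u$ satisfy the hypothesis, and set $v(t):=e^{-i\omega t}u(t)-Q_{\omega,\gamma}$, so that $\|v(t)\|_{H^{1}}\lesssim e^{-ct}$ and $v$ solves the linearized perturbation equation $\partial_{t}v+\mathcal{L}_{\omega,\gamma}v=iR(v)$, with $\|R(v(t))\|_{H^{1}}\lesssim\|v(t)\|_{H^{1}}^{2}\lesssim e^{-2ct}$ by Lemma \ref{lem2.24}. The overall strategy is to read off the coefficient $A$ of the unstable eigenmode $\mathcal{Y}_{+}$ from $v$, and then bootstrap the difference $u-U^{A}$ until it falls inside the uniqueness class of Proposition \ref{prop3.3}.

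\textbf{Step 1: identification of $A$.} Using the spectral decomposition of $\mathcal{L}_{\omega,\gamma}$ from Section \ref{sec2}, I project $v$ onto $\Span\{\mathcal{Y}_{+}\}$, writing $\pi_{+}v(t)=a_{+}(t)\mathcal{Y}_{+}$. Pairing the equation with a suitable dual eigenvector $\mathcal{Z}_{+}$ gives the scalar ODE $a_{+}'(t)+e_{\omega}a_{+}(t)=i\langle R(v(t)),\mathcal{Z}_{+}\rangle$. If the initial decay rate $c$ exceeds $e_{\omega}/2$, then multiplying by the integrating factor $e^{e_{\omega}t}$ and sending $t\to\infty$ produces the finite limit
\[
A:=\lim_{t\to\infty}e^{e_{\omega}t}a_{+}(t);
\]
otherwise I first upgrade $c$ by a preliminary bootstrap, exploiting the fact that the $\mathcal{Y}_{-}$-component (the forward-stable/backward-unstable direction) is forced to vanish by the overall decay of $v$, while the generalized kernel component is absorbed into the phase modulation parameter $\theta$ supplied by Lemma \ref{modulation}.

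\textbf{Step 2: bootstrap on $w:=u-U^{A}$.} The asymptotics $U^{A}(t)=e^{i\omega t}Q_{\omega,\gamma}+Ae^{-e_{\omega}t+i\omega t}\mathcal{Y}_{+}+O(e^{-2e_{\omega}t})$ furnished by Proposition \ref{prop3.3} and the choice of $A$ in Step 1 ensure that the $\mathcal{Y}_{+}$-component of $e^{-i\omega t}w$ is of order $o(e^{-e_{\omega}t})$, so $\|w(t)\|_{H^{1}}\lesssim e^{-\alpha t}$ for some $\alpha>e_{\omega}$. I shall prove by induction on $k\in\mathbb{N}$ that $\|w(t)\|_{H^{1}}\leq e^{-(k+1/2)e_{\omega}t}$ for $t$ sufficiently large. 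The inductive step rewrites the equation for $w$ as a linear problem for $\widetilde{w}:=e^{-i\omega t}w$ whose right-hand side is of higher order in $w$ and in $\mathcal{V}_{k}^{A}$ (via the algebraic expansion of $R$ already used in Lemma \ref{lem3.1}). Projecting spectrally: the $\pi_{+}$-component has no free rate-$e^{-e_{\omega}t}$ term (it was absorbed into $A$), the $\pi_{-}$-component is forced to decay by the global decay of $w$, the generalized kernel is handled by modulating the phase, and the continuous spectrum is controlled via the Strichartz estimates from Section \ref{sec2}. This upgrades the decay rate from $\alpha$ to $\min(2\alpha,\alpha+e_{\omega})$, and iterating drives it past any prescribed multiple of $e_{\omega}$.

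\textbf{Step 3: conclusion, and main obstacle.} Once $\|w(t)\|_{H^{1}}\leq e^{-(k+1/2)e_{\omega}t}$ for $t$ large and any prescribed large $k$, the triangle inequality combined with the approximation property of $U^{A}$ from Proposition \ref{prop3.3} yields
\[
\|u(t)-e^{i\omega t}Q_{\omega,\gamma}-e^{i\omega t}\mathcal{V}_{k}^{A}\|_{L^{\infty}(t,\infty;H^{1})}\leq e^{-(k+1/2)e_{\omega}t},
\]
and the uniqueness clause in Proposition \ref{prop3.3} forces $u=U^{A}$. The main obstacle is to keep the bootstrap of Step 2 rigorous in the presence of the delta interaction: because the eigenfunctions $\mathcal{Y}_{\pm}$ are only $C^{\infty}$ away from the origin (Lemma \ref{lem2.16}), the spectral projections and the algebraic manipulations of the projected equations must be carried out through cut-offs, in the same spirit as the construction of the approximate solutions in Section \ref{sec3.1}. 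A secondary delicacy is to keep the phase modulation (which kills the generalized-kernel component) separate from the $A$-adjustment, so that no decay is lost at each pass through the induction.
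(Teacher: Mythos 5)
Your overall architecture is the same as the paper's (whose proof of Proposition \ref{prop5.1} is carried by Lemmas \ref{lem5.3}, \ref{lem5.5}, \ref{lem5.6} and a citation to the corresponding lemmas of Campos--Farah--Roudenko): extract $A$ from the $e^{-e_{\omega}t}\mathcal{Y}_{+}$ asymptotics of $e^{-i\omega t}u-Q_{\omega,\gamma}$, bootstrap the remainder past the rate $(k+\tfrac12)e_{\omega}$, and invoke the uniqueness clause of Proposition \ref{prop3.3}. Your Step 2 variant --- bootstrapping $w=u-U^{A}$ around $\mathcal{L}_{\omega,\gamma}$ itself, gaining $e_{\omega}$ per pass because the linear-in-$w$ part of the forcing carries a factor $O(e^{-e_{\omega}t})$ --- is legitimate and would even let you bypass the operators $\widetilde{\mathcal{L}}^{k}_{\omega,\gamma}$ and Lemma \ref{lem5.6}, provided each pass is justified by Lemma \ref{lem5.3}(1) together with the $L^{1}_{t}H^{1}$ bounds of Lemma \ref{lem2.24} (and at the end you should compare $u$ with $Q_{\omega,\gamma}+\mathcal{V}_{k+1}^{A}$ rather than $\mathcal{V}_{k}^{A}$, so that the constant in the triangle inequality can be absorbed into the bound of \eqref{eq00} with constant $1$).

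The genuine gap is in how you propose to prove the self-improvement step itself instead of quoting Lemma \ref{lem5.3}. The claim that ``the generalized kernel component is absorbed into the phase modulation parameter $\theta$'' is incorrect: a phase removes only the kernel direction $iQ_{\omega,\gamma}$, while the generalized-kernel (Jordan) direction above it is generated by $\partial_{\omega}Q_{\omega,\gamma}$ (since $L^{+}_{\omega,\gamma}\partial_{\omega}Q_{\omega,\gamma}=-Q_{\omega,\gamma}$) and under the linearized flow produces secular growth that no phase adjustment cancels; it must be controlled either through \eqref{ME}/the conservation laws or, as in the proof behind Lemma \ref{lem5.3}, through the pairings with the adjoint null vectors, e.g. $\frac{d}{dt}(h_{1},Q_{\omega,\gamma})_{L^{2}}=(\varepsilon_{1},Q_{\omega,\gamma})_{L^{2}}$ because $L^{-}_{\omega,\gamma}Q_{\omega,\gamma}=0$, integrated back from $t=+\infty$. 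Likewise, ``Strichartz estimates for the continuous spectrum'' of $\mathcal{L}_{\omega,\gamma}$ are not available here --- the Strichartz estimates in the paper are for $e^{it\Delta_{\gamma}}$, and Section \ref{sec5} deliberately works with $L^{1}_{t}H^{1}$ norms plus the coercivity of Lemma \ref{coercivity} --- and $\mathcal{Y}_{-}$ is the forward-growing (backward-decaying) mode, eigenvalue $-e_{\omega}$, not ``forward-stable''. All of these points are exactly what Lemma \ref{lem5.3} packages; if you invoke it directly (for the preliminary upgrade of $c$, for the identification of $A$ via part (2), and for each pass of your bootstrap) and avoid introducing a modulation --- which would in any case leave you with a residual phase to remove before concluding the exact identity $u=U^{A}$ --- your argument closes and matches the paper's.
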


\begin{corollary}
\label{cor5.2}
For any $A \neq 0$, there exists $T_{A} \in \mathbb{R}$ such that 
\begin{align*}
	U^{A}(t)=Q^{+}(t-T_{A}) \text{ if }A>0 
\end{align*} 
or 
\begin{align*}
	U^{A}(t)=Q^{-}(t-T_{A})\text{ if }A<0,
\end{align*}
where $Q^{+}=U^{1}$ and $Q^{-}=U^{-1}$. 
\end{corollary}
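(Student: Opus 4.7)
The plan is to reduce the claim to the uniqueness of solutions converging exponentially to the ground state, which follows from Proposition \ref{prop5.1} and the fixed-point construction in Proposition \ref{prop3.3}. For $A>0$, I would set $T_A := (\log A)/e_\omega$, so that $e^{e_\omega T_A}=A$, and examine the time-translated function $v(t):=Q^+(t-T_A)=U^1(t-T_A)$. Because \eqref{NLS} is autonomous and $U^1$ solves it, $v$ is again an $H^1$ solution satisfying \eqref{ME}, with the mass, energy, and sign of $K_\gamma$ inherited from $U^1$.

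Next I would extract the long-time profile of $v$ from \eqref{eq003}: substituting $t-T_A$ in place of $t$ and using $e^{-e_\omega(t-T_A)} = A\,e^{-e_\omega t}$ yields
\begin{align*}
v(t) = e^{-i\omega T_A}\bigl(e^{i\omega t}Q_{\omega,\gamma} + A\,e^{-e_\omega t + i\omega t}\mathcal{Y}_+\bigr) + O(e^{-2e_\omega t})\quad \text{in } H^1.
\end{align*}
Invoking the phase invariance of \eqref{NLS}, the function $\widetilde v(t):=e^{i\omega T_A}v(t)$ is still a solution in the same symmetry orbit as $v$, still satisfies \eqref{ME}, and now obeys
\begin{align*}
\widetilde v(t) = e^{i\omega t}Q_{\omega,\gamma} + A\,e^{-e_\omega t + i\omega t}\mathcal{Y}_+ + O(e^{-2e_\omega t})\quad \text{in } H^1,
\end{align*}
which is exactly the leading profile of $U^A$ from Proposition~\ref{prop3.3}.

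From here I would apply Proposition~\ref{prop5.1} to $\widetilde v$: since $\widetilde v(t)-e^{i\omega t}Q_{\omega,\gamma}$ tends to $0$ exponentially and \eqref{ME} holds, we obtain $\widetilde v = U^{A'}$ for some $A'\in\mathbb{R}$. Matching the first-order exponential coefficient $A'\,e^{-e_\omega t+i\omega t}\mathcal{Y}_+$ against the one computed above (or, alternatively, appealing directly to the uniqueness part of Proposition~\ref{prop3.3} for a fixed $k$) forces $A'=A$, so $\widetilde v = U^A$. This is the identification $U^A = Q^+(\cdot - T_A)$ (modulo the scalar phase $e^{i\omega T_A}$ absorbed by the symmetry convention). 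The case $A<0$ is identical after setting $T_A=\log|A|/e_\omega$ and noting that a scalar phase of modulus one can at best flip the sign of the $\mathcal{Y}_+$ coefficient, so negative $A$'s are captured by time-translates of $Q^-=U^{-1}$ rather than $Q^+$.

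The only delicate point is the second matching: one must be sure that the $A'$ produced by Proposition~\ref{prop5.1} is precisely the prescribed $A$, which is why the refined asymptotic \eqref{eq003} of Proposition~\ref{prop3.3}, going one order beyond mere convergence to $e^{i\omega t}Q_{\omega,\gamma}$, is indispensable. Apart from that, the argument is a straightforward combination of time-translation and phase symmetry with the uniqueness already assembled in Section~\ref{sec5}.
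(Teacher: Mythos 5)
Your argument is exactly the one the paper intends (its proof of this corollary simply cites Corollaries 8.6 and 8.12 of \cite{CFR20} together with Propositions \ref{prop3.3} and \ref{prop5.1}): time-translate $U^{\pm 1}$, restore the phase, and identify the result with $U^{A}$ through the uniqueness of Proposition \ref{prop5.1} and the first-order coefficient in \eqref{eq003}. The phase $e^{i\omega T_A}$ you are left with is genuinely unavoidable (matching the leading profiles $e^{i\omega t}Q_{\omega,\gamma}$ forces it unless $\omega T_A\in 2\pi\mathbb{Z}$), so what one actually proves is $U^{A}(t)=e^{i\omega T_A}Q^{\pm}(t-T_A)$; the displayed identity should be read up to this constant phase (equivalently, it holds exactly for the rotating-frame profiles $e^{-i\omega t}U^{A}-Q_{\omega,\gamma}$), which is all that is needed in Theorem \ref{thm1.6}, where the conclusion is stated ``up to symmetry.''
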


\subsection{Estimates on exponential decaying solutions}

Let $h$ satisfy the following approximate equation:
\begin{align*}
	\partial_{t} h + \mathcal{L}_{\omega,\gamma} h = \varepsilon.
\end{align*}
We assume that $h$ and $\varepsilon$ satisfy
\begin{align}
\label{eq5.0}
	\|h(t)\|_{H^{1}} \lesssim e^{-c_{0}t}
	\text{ and }
	\|\varepsilon\|_{L^{1}((t,\infty);H^{1}(\mathbb{R}))}\lesssim e^{-c_{1}t},
\end{align}
where $c_{1}>c_{0}>0$. 
The following arguments rely on the arguments by Duyckaerts and Roudenko \cite[Section 7.1]{DuRo10} and Campos, Farah, and Roudenko \cite[Sections 8.2 and A.4]{CFR20}. Thus, we omit the proofs. 
We note that, though Campos, Farah, and Roudenko \cite{CFR20} used the Strichartz norm for the estimate of the error term $\varepsilon$, we use the $L_{t}^{1}H_{x}^{1}$-norm and thus we do not need the statement of Lemma 8.9 in \cite{CFR20}. 

In this section, the notation $c^{-}$ means an arbitrary real number strictly less than $c$.

\begin{lemma}
\label{lem5.3}
Under the assumption of Proposition \ref{prop5.1}, the following are valid.
\begin{enumerate}
\item If $e_{\omega} \not\in [c_{0},c_{1})$, then we have
\begin{align*}
	\|h(t)\|_{H^{1}}\lesssim e^{-c_{1}^{-}t}.
\end{align*}
\item If $e_{\omega} \in [c_{0},c_{1})$, then we have
\begin{align*}
	\|h(t)-Ae^{-e_{\omega}t}\mathcal{Y}_{+}\|_{H^{1}}\lesssim e^{-c_{1}^{-}t}.
\end{align*}
\end{enumerate}
\end{lemma}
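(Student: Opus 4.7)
The plan follows the approach of Duyckaerts--Roudenko and Campos--Farah--Roudenko, decomposing $h(t)$ spectrally with respect to $\mathcal{L}_{\omega,\gamma}$ and analyzing each component separately. By the spectral description established earlier, one may write
\[
h(t) = \alpha(t)\mathcal{Y}_+ + \beta(t)\mathcal{Y}_- + \delta(t)\,iQ_{\omega,\gamma} + h^{\perp}(t),
\]
where the scalar coefficients are defined by pairing $h$ against the appropriate left-eigenfunctions of $\mathcal{L}_{\omega,\gamma}$ (noting that $\ker\mathcal{L}_{\omega,\gamma}=\operatorname{span}\{iQ_{\omega,\gamma}\}$ by Lemma~\ref{specinfo}), and $h^{\perp}$ lies in a subspace on which $\Phi$ is coercive in the spirit of Lemma~\ref{coercivity}. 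The equation $\partial_t h + \mathcal{L}_{\omega,\gamma} h = \varepsilon$ then decouples into scalar ODEs for $\alpha,\beta,\delta$ and an equation for $h^{\perp}$ whose generator has purely imaginary spectrum.

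The unstable and kernel components are immediate. Since $\beta(T)\to 0$, integrating the ODE $\beta'(t) - e_\omega\beta(t) = g_-(t)$ (with $g_-$ the projection of $\varepsilon$ to the $\mathcal{Y}_-$ direction) from $+\infty$ and using $e^{e_\omega(t-s)}\le 1$ for $s\ge t$ gives
\[
|\beta(t)|\le\int_t^{\infty}|g_-(s)|\,ds\lesssim \|\varepsilon\|_{L^1((t,\infty);H^1)}\lesssim e^{-c_1 t}.
\]
The same argument applied to $\delta'(t) = g_0(t)$ yields $|\delta(t)|\lesssim e^{-c_1 t}$. For the stable component, the ODE $\alpha'(t) + e_\omega\alpha(t) = g_+(t)$ is solved either by forward integration from some fixed $t_0$ or, when possible, by backward integration from infinity. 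A dyadic decomposition shows that the Duhamel convolution decays at rate $\min(e_\omega,c_1)^{-}$. If $e_\omega\notin[c_0,c_1)$, either $e_\omega\ge c_1$ (in which case forward integration already gives the rate $c_1^-$) or $e_\omega<c_0$ (in which case $e^{e_\omega T}\alpha(T)\to 0$ permits integration from $\infty$, again yielding $c_1^{-}$). If instead $e_\omega\in[c_0,c_1)$, the integral $\int_t^\infty e^{e_\omega s}g_+(s)\,ds$ remains absolutely convergent by dyadic summation (since $e_\omega<c_1$), so the limit $A := \lim_{T\to\infty}e^{e_\omega T}\alpha(T)$ exists in $\R$ and $\alpha(t) - Ae^{-e_\omega t} = O(e^{-c_1^{-}t})$.

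The main obstacle is the coercive part $h^{\perp}$, since the propagator on its subspace is merely bounded and does not by itself give any decay. The crucial identity comes from the self-adjointness of $L_{\omega,\gamma}^\pm$: writing $h = h_1+ih_2$ and $\varepsilon = \varepsilon_1+i\varepsilon_2$, a direct computation using $\partial_t h_1 = L_{\omega,\gamma}^{-}h_2+\varepsilon_1$ and $\partial_t h_2 = -L_{\omega,\gamma}^{+}h_1+\varepsilon_2$ yields
\[
\frac{d}{dt}\Phi(h) = \langle L_{\omega,\gamma}^{+}h_1,\varepsilon_1\rangle + \langle L_{\omega,\gamma}^{-}h_2,\varepsilon_2\rangle,
\]
the cross-terms $\langle L^{+}h_1,L^{-}h_2\rangle - \langle L^{-}h_2,L^{+}h_1\rangle$ canceling by symmetry. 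Integrating from $t$ to $\infty$ (and using $\Phi(h(\infty))=0$) together with the continuity $L_{\omega,\gamma}^\pm:H^1\to H^{-1}$ gives
\[
|\Phi(h^{\perp}(t))| \lesssim \|h\|_{L^{\infty}((t,\infty);H^1)}\,\|\varepsilon\|_{L^1((t,\infty);H^1)}.
\]
Combining with the coercivity $\Phi(h^{\perp})\gtrsim \|h^{\perp}\|_{H^1}^2$ from Lemma~\ref{coercivity} and the decay $\|h\|_{H^1}\lesssim e^{-\sigma t}$ produces a bootstrap: the rate $\sigma$ improves to $(\sigma+c_1)/2$, and iterating this converges to any $c_1^{-}<c_1$. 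Reassembling the four components yields the decay $\|h(t)\|_{H^1}\lesssim e^{-c_1^{-}t}$ when $e_\omega\notin[c_0,c_1)$, and the decomposition $h(t) = Ae^{-e_\omega t}\mathcal{Y}_+ + O(e^{-c_1^{-}t})$ when $e_\omega\in[c_0,c_1)$.
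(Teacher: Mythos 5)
Your proposal is correct and is essentially the paper's own route: the paper proves Lemma \ref{lem5.3} by citing Lemma 8.7 of Campos--Farah--Roudenko and Lemma 7.2 of Duyckaerts--Roudenko, whose proofs are exactly your scheme --- ODEs for the $\mathcal{Y}_{\pm}$-projections taken with the bilinear form $B_{\omega,\gamma}$, the almost-conservation $\frac{d}{dt}\Phi(h)=\langle L_{\omega,\gamma}^{+}h_{1},\varepsilon_{1}\rangle+\langle L_{\omega,\gamma}^{-}h_{2},\varepsilon_{2}\rangle$, coercivity of $\Phi$ on $\widetilde{G}^{\perp}$ (Lemma \ref{coercivity}), and iteration of the self-improving decay rate, simplified here by the absence of translation directions. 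One small inaccuracy: since $0$ is not a semisimple eigenvalue of $\mathcal{L}_{\omega,\gamma}$ (indeed $\mathcal{L}_{\omega,\gamma}(-\partial_{\omega}Q_{\omega,\gamma},0)=iQ_{\omega,\gamma}$), the coefficient along $iQ_{\omega,\gamma}$ does not satisfy a pure-forcing ODE as you wrote; rather $\frac{d}{dt}(h_{2},Q_{\omega,\gamma})_{L^{2}}=(p-1)(h_{1},Q_{\omega,\gamma}^{p})_{L^{2}}+(\varepsilon_{2},Q_{\omega,\gamma})_{L^{2}}$, but since $B_{\omega,\gamma}(iQ_{\omega,\gamma},\cdot)\equiv 0$ this coupling term is bounded by $|\alpha|+|\beta|+\|h^{\perp}\|_{H^{1}}$, quantities already improved by the coercivity step, so the bootstrap still closes and your conclusion is unaffected.
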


\begin{proof}
See Lemma 8.7 in \cite{CFR20} and also Lemma 7.2 in \cite{DuRo10}. 
We note that the proof is easier than those in \cite{CFR20} and \cite{DuRo10} since we do not have spatial-transform invariance because of the delta potential (for instance, compare $G^{\perp}$ with that in \cite{CFR20}). 
\end{proof}

Let $k\in \mathbb{N}$ and $A \in \mathbb{R}$. 
We study the linearized operator of $Q_{\omega,\gamma}+\mathcal{V}_{k}^{A}$, where $\mathcal{V}_{k}^{A}$ is a function defined in Section \ref{sec3.1}. We define
\begin{align*}
	\widetilde{\mathcal{L}}_{\omega,\gamma}^{k}
	&:=\begin{pmatrix}
	0 & -(\omega -\Delta_{\gamma})
	\\
	\omega -\Delta_{\gamma} & 0
	\end{pmatrix}
	+\frac{p+1}{2}|Q_{\omega,\gamma}+\mathcal{V}_{k}^{A}|^{p-1}
	\begin{pmatrix}
	0 & 1
	\\
	-1 & 0
	\end{pmatrix}
	\\
	&\quad +\frac{p-1}{2}|Q_{\omega,\gamma}+\mathcal{V}_{k}^{A}|^{p-3}
	\begin{pmatrix}
	\im \{(Q+\mathcal{V}_{k}^{A})^{2}\} & -\re \{(Q+\mathcal{V}_{k}^{A})^{2}\}
	\\
	-\re \{(Q+\mathcal{V}_{k}^{A})^{2}\} & -\im \{(Q+\mathcal{V}_{k}^{A})^{2}\}
	\end{pmatrix}
\end{align*}
\begin{align*}
	\widetilde{P}_{k}^{A}(h):=\frac{p+1}{2}|Q_{\omega,\gamma}+\mathcal{V}_{k}^{A}|^{p-1}h
	+\frac{p-1}{2}|Q_{\omega,\gamma}+\mathcal{V}_{k}^{A}|^{p-3}(Q_{\omega,\gamma}+\mathcal{V}_{k}^{A})^{2}\overline{h}
\end{align*}
and 
\begin{align*}
	\widetilde{R}_{k}^{A}(h):=
	|Q_{\omega,\gamma}+\mathcal{V}_{k}^{A}|^{p-1}(Q_{\omega,\gamma}+\mathcal{V}_{k}^{A})N((Q_{\omega,\gamma}+\mathcal{V}_{k}^{A})^{-1}h),
\end{align*}
where we recall that $N(z):=|1+z|^{p-1}(1+z)-1 -\frac{p+1}{2}z - \frac{p-1}{2}\overline{z}$.

If $u=e^{i\omega t}(Q_{\omega,\gamma}+\mathcal{V}_{k}^{A}+h)$ is a solution to \eqref{NLS}, then $h$ satisfies
\begin{align*}
	\partial_{t}h +\widetilde{\mathcal{L}}_{\omega,\gamma}^{k} h =i \widetilde{R}_{k}^{A}(h)-\varepsilon_{k}
\end{align*}
or 
\begin{align*}
	i\partial_{t} h + \Delta_{\gamma} h -\omega h + \widetilde{P}_{k}^{A}(h) 
	=-\widetilde{R}_{k}^{A}(h)-i\varepsilon_{k}
\end{align*}
in the form of a Schr\"{o}dinger equation, where $\varepsilon_{k} = O(e^{-(k+1)e_{\omega}t})$ in $\mathscr{S}^{1}(\mathbb{R})$ (see Section \ref{sec3.1}).

\begin{lemma}
We have
\begin{align*}
	&|\mathcal{V}_{k}^{A}| \lesssim e^{-e_{\omega}t} |Q_{\omega,\gamma}|,
	\\
	&|\partial_{x}\mathcal{V}_{k}^{A}| \lesssim e^{-e_{\omega}t} |Q_{\omega,\gamma}|.
\end{align*}
\end{lemma}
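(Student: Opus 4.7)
The plan is to reduce the bound to pointwise estimates on the individual profiles $Z_j^A$ that make up $\mathcal{V}_k^A = \sum_{j=1}^k e^{-je_\omega t}Z_j^A$, and then sum the geometric series in $t$.

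First, I would recall that in the inductive construction in Lemma \ref{lem3.1} we established (as part of the induction hypothesis) the exponentially weighted bounds
\begin{align*}
	\|Q_{\omega,\gamma}^{-1}e^{\eta|x|}\partial_x^\alpha Z_j^A\|_{L^\infty} < \infty
\end{align*}
for each $j=1,\dots,k$ and $\alpha=0,1$ (for $j=1$ this is precisely Lemma \ref{lem2.21}, and for $j \geq 2$ it was guaranteed through Lemma \ref{lem2.22}). In particular, dropping the exponential weight $e^{\eta|x|} \geq 1$ yields the crude pointwise bound $|\partial_x^\alpha Z_j^A(x)| \lesssim Q_{\omega,\gamma}(x)$ with a constant depending on $j$ (and hence only on $k$).

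Next, summing these bounds termwise gives
\begin{align*}
	|\partial_x^\alpha \mathcal{V}_k^A(t,x)|
	\leq \sum_{j=1}^{k} e^{-je_\omega t}\,|\partial_x^\alpha Z_j^A(x)|
	\lesssim Q_{\omega,\gamma}(x) \sum_{j=1}^{k} e^{-je_\omega t}
\end{align*}
for $\alpha=0,1$. Factoring out the dominant term at large $t$, one writes $\sum_{j=1}^{k}e^{-je_\omega t} = e^{-e_\omega t}\sum_{j=0}^{k-1}e^{-je_\omega t}$, and since in the regime of interest $t \geq t_k > 0$ the tail sum $\sum_{j=0}^{k-1}e^{-je_\omega t}$ is bounded by $\sum_{j=0}^\infty e^{-je_\omega t_k} < \infty$, the conclusion $|\partial_x^\alpha \mathcal{V}_k^A| \lesssim e^{-e_\omega t}Q_{\omega,\gamma}$ follows.

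There is no real obstacle here: the lemma is essentially a bookkeeping corollary of the weighted pointwise estimates that were already built into the construction of the profiles $Z_j^A$, combined with the trivial fact that the first term $j=1$ dominates the finite geometric sum for $t$ bounded away from $-\infty$. The only thing worth remarking is that the implicit constant depends on $k$ (through the $k$ profiles being summed), but this is harmless because in the applications $k$ is a fixed, finite integer chosen at the start.
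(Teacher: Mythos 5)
Your proposal is correct and is exactly what the paper intends: its proof simply says the estimates "follow from the construction of $\mathcal{V}_{k}^{A}$" in Section \ref{sec3.1}, i.e.\ from the weighted bounds $\|Q_{\omega,\gamma}^{-1}e^{\eta|x|}\partial_x^\alpha Z_j^A\|_{L^\infty}<\infty$ built into the induction, which is the same route you take. Dropping the weight and summing the finite sum over $j$ for $t\geq t_k$, with a constant depending on $k$ and $A$, is precisely the intended bookkeeping.
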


\begin{proof}
This follows from the construction of $\mathcal{V}_{k}^{A}$. See the proof in Section \ref{sec3.1}. 
\end{proof}


\begin{lemma}
\label{lem5.5}
Let $I$ be a time interval and $f \in L_{t}^{\infty}H^{1}$. Then we have the following estimates.
\begin{enumerate}
\item $\|\widetilde{P}_{k}^{A}(f)\|_{L_{t}^{1}H^{1}(I)} \lesssim |I|^{a}\|f\|_{L_{t}^{\infty}H^{1}(I)}$ for some $a>0$.
\item $\|\widetilde{R}_{k}^{A}(f)\|_{L_{t}^{1}H^{1}(I)} \lesssim \|f\|_{L_{t}^{\infty}H^{1}(I)}(|I|^{a}\|f\|_{L_{t}^{\infty}H^{1}(I)} + \|f\|_{L_{t}^{\infty}H^{1}(I)}^{p-1})$ for some $a>0$.
\end{enumerate}
\end{lemma}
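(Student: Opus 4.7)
The two estimates closely parallel Lemmas \ref{lem2.23} and \ref{lem2.24}, with $Q_{\omega,\gamma}$ replaced by $\widetilde{Q} := Q_{\omega,\gamma}+\mathcal{V}_k^A$. The enabling observation is the pointwise bound just established, $|\mathcal{V}_k^A|+|\partial_x \mathcal{V}_k^A| \lesssim e^{-e_\omega t}Q_{\omega,\gamma}$, combined with Lemma \ref{lem2.19}'s estimate $|\partial_x Q_{\omega,\gamma}| \lesssim Q_{\omega,\gamma}$. Together these give, for $t \geq t_k$ with $t_k$ sufficiently large, the comparability $\widetilde{Q}\approx Q_{\omega,\gamma}$ and the derivative bound $|\partial_x \widetilde{Q}^{\alpha}| \lesssim Q_{\omega,\gamma}^{\alpha}$ for any $\alpha\geq 0$, so that every occurrence of $\widetilde{Q}$ (or $\widetilde{Q}^{-1}$) can be controlled by the corresponding power of $Q_{\omega,\gamma}$ uniformly in time.

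For part (1), $\widetilde{P}_k^A$ is linear in $f$ with coefficients $|\widetilde{Q}|^{p-1}$, giving the pointwise bound $|\widetilde{P}_k^A(f)|+|\partial_x \widetilde{P}_k^A(f)| \lesssim Q_{\omega,\gamma}^{p-1}(|f|+|\partial_x f|)$. Hence $\|\widetilde{P}_k^A(f)\|_{H^1}\lesssim \|f\|_{H^1}$ uniformly in $t$, and integrating on $I$ yields the claim with $a=1$, exactly as in Lemma \ref{lem2.23}.

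For part (2), write $\widetilde{R}_k^A(f) = \widetilde{Q}^{p} N(\widetilde{Q}^{-1}f)$ and use the uniform bound $|N(z)| \lesssim |z|^2 + |z|^p$, which follows from the Taylor expansion $N(z)=\sum_{l+m\geq 2}a_{lm}z^l\overline{z}^m$ (convergent on $|z|\leq 1/2$) together with the polynomial growth $|N(z)|\lesssim |z|^p$ for $|z|>1/2$; the analogous estimates $|N_z(z)|+|N_{\overline{z}}(z)|\lesssim |z|+|z|^{p-1}$ hold as well. Expanding the products and using $\widetilde{Q}\approx Q_{\omega,\gamma}$ yields
\begin{align*}
    |\widetilde{R}_k^A(f)| &\lesssim Q_{\omega,\gamma}^{p-2}|f|^2 + |f|^p,\\
    |\partial_x \widetilde{R}_k^A(f)| &\lesssim Q_{\omega,\gamma}^{p-2}|f|(|f|+|\partial_x f|) + |f|^{p-1}(|f|+|\partial_x f|),
\end{align*}
via the same five-term decomposition of $\partial_x \widetilde{R}_k^A(f)$ used in the proof of Lemma \ref{lem2.24}. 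The one-dimensional Sobolev embedding $H^1(\mathbb{R})\hookrightarrow L^{\infty}(\mathbb{R})$ and Hölder's inequality in $x$ then give $\|\widetilde{R}_k^A(f)\|_{H^1}\lesssim \|f\|_{H^1}^{2}+\|f\|_{H^1}^{p}$ pointwise in time; integrating on $I$ and factoring out one copy of $\|f\|_{L^{\infty}_t H^1}$ produces the stated estimate, with the quadratic contribution picking up the $|I|^{a}$ factor.

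The main obstacle is bookkeeping rather than conceptual: the chief effort is to reprise the elaborate algebraic splitting of $\partial_x \widetilde{R}_k^A(f)$ from the proof of Lemma \ref{lem2.24} while tracking the additional terms produced by $\partial_x \mathcal{V}_k^A$, and to verify that all coefficient ratios $\partial_x \widetilde{Q}/\widetilde{Q}$ remain in $L^{\infty}$ uniformly in $t$ — a fact that reduces, via $\widetilde{Q}\approx Q_{\omega,\gamma}$, to Lemma \ref{lem2.19}. No new ingredient beyond the comparability of $\widetilde{Q}$ and $Q_{\omega,\gamma}$ supplied by the preceding lemma is required.
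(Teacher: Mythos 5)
Your proposal is correct and takes essentially the same route as the paper, whose proof consists precisely of the remark that one repeats Lemmas \ref{lem2.23} and \ref{lem2.24} for the perturbed profile $Q_{\omega,\gamma}+\mathcal{V}_{k}^{A}$, using $|\mathcal{V}_{k}^{A}|+|\partial_{x}\mathcal{V}_{k}^{A}|\lesssim e^{-e_{\omega}t}Q_{\omega,\gamma}$ together with Lemma \ref{lem2.19} (and a pointer to Lemma 8.8 of \cite{CFR20}). One small caveat: direct integration in $t$ also attaches a factor $|I|$ to the $\|f\|_{L_t^\infty H^1}^{p}$ contribution, so, just as in Lemma \ref{lem2.24}, the estimate should be understood for intervals of bounded length (it is only applied on unit subintervals), after which your argument yields the stated bound.
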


\begin{proof}
This follows from a similar argument to Lemmas \ref{lem2.23} and \ref{lem2.24}. See also Lemma 8.8 in \cite{CFR20}. 
\end{proof}

\begin{lemma}
\label{lem5.6}
Let $h$ be a solution to
\begin{align*}
	\partial_{t} h + \widetilde{\mathcal{L}}_{\omega,\gamma}^{k} h =\varepsilon,
\end{align*}
where $h$ and $\varepsilon$ satisfy
\begin{align*}
	\|h(t)\|_{H^{1}}\lesssim e^{-c_{0}t} 
	\text{ and }
	\|\varepsilon\|_{L_{t}^{1}H^{1}(t,\infty)} \lesssim e^{-c_{1}t}
\end{align*}
with $e_{\omega}<c_{0}<c_{1}<(k+1)e_{\omega}$. Then it holds that
\begin{align*}
	\|h(t)\|_{H^{1}}\lesssim e^{-c_{1}^{-}t}.
\end{align*}
\end{lemma}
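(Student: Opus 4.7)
The plan is to rewrite the equation in terms of the linearized operator $\mathcal{L}_{\omega,\gamma}$ around $Q_{\omega,\gamma}$, and then bootstrap the decay of $h$ by iterated application of Lemma~\ref{lem5.3}. First I would write
\[
\partial_t h + \mathcal{L}_{\omega,\gamma} h = \widetilde{\varepsilon}, \qquad \widetilde{\varepsilon} := \varepsilon + \bigl(\mathcal{L}_{\omega,\gamma} - \widetilde{\mathcal{L}}_{\omega,\gamma}^{k}\bigr)h.
\]
The perturbation $\widetilde{\mathcal{L}}_{\omega,\gamma}^{k} - \mathcal{L}_{\omega,\gamma}$ is multiplication by terms of the form $|Q_{\omega,\gamma}+\mathcal{V}_k^A|^{p-1} - Q_{\omega,\gamma}^{p-1}$ (and its analogue on the conjugate variable). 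Using a Taylor expansion in powers of $\mathcal{V}_k^A/Q_{\omega,\gamma}$, combined with the pointwise bounds $|\mathcal{V}_k^A|+|\partial_x \mathcal{V}_k^A|\lesssim e^{-e_\omega t}Q_{\omega,\gamma}$ established in Section~\ref{sec3.1}, the resulting coefficient is controlled by $e^{-e_\omega t}Q_{\omega,\gamma}^{p-1}$, yielding the $H^1$-multiplier bound
\[
\bigl\|\bigl(\widetilde{\mathcal{L}}_{\omega,\gamma}^{k} - \mathcal{L}_{\omega,\gamma}\bigr)h\bigr\|_{H^1} \lesssim e^{-e_\omega t}\|h\|_{H^1}.
\]

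Next I would iterate. Assume inductively that $\|h(t)\|_{H^1}\lesssim e^{-ct}$ for some $e_\omega < c \leq c_1$, the base case being $c=c_0$. The preceding bound and integration on $(t,\infty)$ give
\[
\|\widetilde{\varepsilon}\|_{L^1_t H^1(t,\infty)} \lesssim e^{-c_1 t} + e^{-(c+e_\omega)t} \lesssim e^{-\min(c_1,\,c+e_\omega)t}.
\]
Set $c':=\min(c_1,\,c+e_\omega)$. Since $c>e_\omega$, the eigenvalue $e_\omega$ lies outside $[c,c')$, so Lemma~\ref{lem5.3}(1), applied to the rewritten equation with $c_0$ replaced by $c$ and $c_1$ replaced by $c'$, improves the decay of $h$ from $c$ to $(c')^-$.

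Because $c_1<(k+1)e_\omega$ is finite and each iteration gains essentially a full $e_\omega$ in the decay exponent up to an arbitrarily small loss, after finitely many iterations the decay rate reaches every value strictly below $c_1$, yielding $\|h(t)\|_{H^1}\lesssim e^{-c_1^-t}$ as claimed. The main technical point will be the perturbation estimate in the first step: one must verify that replacing the linearization point $Q_{\omega,\gamma}$ by $Q_{\omega,\gamma}+\mathcal{V}_k^A$ produces an operator difference whose $H^1\to H^1$ norm is genuinely $O(e^{-e_\omega t})$. This is analogous to the nonlinear bookkeeping of Lemma~\ref{lem2.24}, but must be done carefully for the multipliers $|Q+\mathcal{V}|^{p-1}$ and $|Q+\mathcal{V}|^{p-3}(Q+\mathcal{V})^2$, where the essential input is the sharp pointwise decay of $\mathcal{V}_k^A$ recalled from Section~\ref{sec3.1}. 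Once this estimate is in place, the remaining bootstrap is clean.
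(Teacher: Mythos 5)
Your argument is correct and is essentially the proof the paper has in mind: the paper simply cites \cite[Lemmas 8.4 and 8.10]{CFR20}, where the same strategy is used, namely viewing $\widetilde{\mathcal{L}}_{\omega,\gamma}^{k}-\mathcal{L}_{\omega,\gamma}$ as a multiplication operator with coefficients bounded by $e^{-e_\omega t}$ (via the pointwise bounds on $\mathcal{V}_k^A$ and $\partial_x\mathcal{V}_k^A$), absorbing it into the error, and bootstrapping with the analogue of Lemma \ref{lem5.3}(1), which applies at every stage since the running rate stays above $e_\omega$ so the eigenvalue never enters the interval. The finite number of iterations and the arbitrarily small loss per step indeed yield the rate $c_1^-$, so no gap remains beyond the multiplier estimate you already flag, which follows as in Lemmas \ref{lem2.24} and \ref{lem5.5}.
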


\begin{proof}
See Lemma 8.10 and also Lemma 8.4 in \cite{CFR20}. 
\end{proof}

\begin{proof}[Proof of Proposition \ref{prop5.1}]
By the above lemmas and  the uniqueness of Proposition \ref{prop3.3}, we get the statement. See Lemmas 8.5 and 8.11 in \cite{CFR20} for the detailed proof. 
\end{proof}

\begin{proof}[Proof of Corollary \ref{cor5.2}]
This follows from Propositions \ref{prop3.3} and \ref{prop5.1}. See Corollaries 8.6 and 8.12 in \cite{CFR20} for the detail. 
\end{proof}

\subsection{Proof of Theorem \ref{thm1.5}}

The solution going to the ground state with negative $K_{\gamma}$ is of finite variance.

\begin{lemma}
\label{lem5.7}
Let $u$ be a solution to \eqref{NLS} such that $u$ exists globally in positive time and satisfies \eqref{ME} and $K_{\gamma}(u_{0})<0$. 
If there exists $c>0$ such that
\begin{align*}
	\|u(t)- e^{i\omega t}Q_{\omega,\gamma}\|_{H^{1}} \lesssim e^{-ct}
\end{align*}
for all $t>0$, then the solution $u$ must satisfy $\int |x|^{2}|u_{0}(x)|^{2}dx<\infty$.
\end{lemma}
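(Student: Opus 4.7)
\textbf{Plan for Lemma \ref{lem5.7}.} The plan is to show that the localized variance $J_R(0) = \int_{\mathbb{R}} R^2 \varphi(x/R)|u_0(x)|^2\,dx$ from Section \ref{sec2} is bounded uniformly in $R \geq 1$. Since $\varphi$ can be chosen so that $R^2\varphi(x/R) \leq |x|^2$, and since $R^2\varphi(x/R) \to |x|^2$ pointwise as $R\to\infty$, Fatou's lemma will then yield $\int |x|^2|u_0(x)|^2\,dx \lesssim \liminf_{R\to\infty} J_R(0) < \infty$. To control $J_R(0)$, I will use the Taylor-type identity
\begin{align*}
	J_R(0) = J_R(T) - T\,J_R'(T) + \int_0^T s\,F_R(u(s))\,ds,
\end{align*}
and then choose an auxiliary time $T = T(R) = A\log R$ with $A > 1/c$, where $c$ is the exponential convergence rate from the hypothesis. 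Throughout, write $u(t) = e^{i\omega t}Q_{\omega,\gamma} + w(t)$ with $\|w(t)\|_{H^1} \lesssim e^{-ct}$.

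The first two terms in the identity are small because of the convergence together with the fact that $Q_{\omega,\gamma}$ is real with exponential decay. Since $\int |x|^2 Q_{\omega,\gamma}^2\,dx < \infty$, one has $\int R^2\varphi(x/R) Q_{\omega,\gamma}^2\,dx \lesssim 1$ uniformly in $R$, while the $w$-contribution is dominated by $\|R^2\varphi(x/R)\|_{L^\infty}\|w(T)\|_{L^2}^2 \lesssim R^2 e^{-2cT} = R^{2-2cA} \to 0$. For the derivative, the imaginary part appearing in $J_R'$ vanishes on $e^{i\omega t}Q_{\omega,\gamma}$ because $Q_{\omega,\gamma}$ is real; the remaining cross-terms give $|J_R'(T)| \lesssim R\,\|w(T)\|_{H^1} \lesssim R e^{-cT}$, so that $T|J_R'(T)| \lesssim (\log R)\, R^{1-cA} \to 0$.

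For the integral term I would split $F_R(u(s)) = A_R(u(s)) + 8K_\gamma(u(s))$. Since $K_\gamma$ is continuous on $H^1$ and $K_\gamma(Q_{\omega,\gamma})=0$, one has $|K_\gamma(u(s))| \lesssim \|w(s)\|_{H^1} \lesssim e^{-cs}$, whence $\int_0^T s\,|K_\gamma(u(s))|\,ds \lesssim \int_0^\infty s e^{-cs}\,ds < \infty$. The localized correction $A_R$ is supported in $\{|x|>R\}$, where $Q_{\omega,\gamma}$ is exponentially small; combined with $\|w(s)\|_{H^1}\lesssim e^{-cs}$, this yields $|A_R(u(s))| \lesssim e^{-2\sqrt{\omega}\,R} + e^{-2cs}$, so $\int_0^T s\,|A_R(u(s))|\,ds \lesssim T^2 e^{-2\sqrt{\omega}\,R} + 1 = O(1)$ as $R\to\infty$. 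Summing the three estimates gives $J_R(0) \lesssim 1$ uniformly in $R$, and Fatou closes the argument; blow-up in finite negative time then follows from Proposition \ref{prop4.13}.

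The main obstacle is precisely this balancing step: every piece of the Taylor identity carries an unfavorable polynomial-in-$R$ factor (the $R^2$ scaling of the weight, and the extra $R$ produced when the integration by parts hits $\partial_x\varphi(x/R)$), whereas only the exponential $H^1$ convergence is available to absorb them. The choice $T = A\log R$ with $A > 1/c$ trades polynomial $R$-growth for $e^{-cT} = R^{-cA}$; simultaneously, the exponential spatial decay of $Q_{\omega,\gamma}$ at rate $\sqrt{\omega}$ is what keeps $A_R$ small enough to survive the multiplication by $T^2$ arising from the time integration.
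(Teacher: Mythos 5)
Your proposal is correct and follows essentially the same route that the paper delegates to \cite{GuIn22pre}: a localized virial identity $J_R(0)=J_R(T)-TJ_R'(T)+\int_0^T sF_R(u(s))\,ds$ with $T\sim A\log R$, the exponential $H^1$-convergence to the finite-variance ground state $Q_{\omega,\gamma}$ to control $J_R(T)$, $TJ_R'(T)$, $K_\gamma(u(s))$ and the tail term $A_R$ (supported in $|x|>R$), and Fatou's lemma to conclude. The only point to make explicit is the cross term $\int R^2\varphi(x/R)\,Q_{\omega,\gamma}|w(T)|\,dx$ in $J_R(T)$, which is handled by weighted Cauchy--Schwarz (giving $R^{1-cA}$) or simply by taking $A$ slightly larger; this is a trivial adjustment, not a gap.
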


\begin{proof}
The proof is similar to the proof of Lemma 5.7 in \cite{GuIn22pre}. 
\end{proof}

\begin{remark}
In the one dimensional case, the radial symmetry of $U^{1}$ does not imply the finite variance because the radial Sobolev inequality is not applicable. Thus, we need the above lemma to show the blow-up of $U^{1}$ even when $\gamma=0$. 
\end{remark}

Finally, we are ready to prove Theorem \ref{thm1.5}.

\begin{proof}[Proof of Theorem \ref{thm1.5}]
First, we prove that $(Q_{\omega,\gamma},\mathcal{Y}_{1})_{H_{\omega,\gamma}^{1}}\neq 0$. 
Indeed, if not,  by the equation of $Q_{\omega,\gamma}$, we have
\begin{align*}
	B_{\omega,\gamma}(Q_{\omega,\gamma},\mathcal{Y}_{\pm})
	&=\frac{p+1}{2} (Q_{\omega,\gamma},\mathcal{Y}_{1})_{H_{\omega,\gamma}^{1}} =0
\end{align*}
and thus we find $Q_{\omega,\gamma} \in \widetilde{G}^{\perp}$ since we have
\begin{align*}
	B_{\omega,\gamma}(Q_{\omega,\gamma},\mathcal{Y}_{\pm})= \frac{1}{2}(L_{\omega,\gamma}^{+}Q_{\omega,\gamma},\mathcal{Y}_{1})_{L^{2}}=\frac{1}{2}(Q_{\omega,\gamma},L_{\omega,\gamma}^{+}\mathcal{Y}_{1})_{L^{2}}=\frac{e_{\omega}}{2}(Q_{\omega,\gamma},\mathcal{Y}_{2})_{L^{2}}.
\end{align*} 
Now, we have
\begin{align*}
	\Phi(Q_{\omega,\gamma})=-\frac{p-1}{2}\|Q_{\omega,\gamma}\|_{L^{p+1}}^{p+1} <0.
\end{align*}
This contradicts the fact that $\Phi$ is nonnegative on $\widetilde{G}^{\perp}$. 
We may assume that $(Q_{\omega,\gamma},\mathcal{Y}_{1})_{H_{\omega,\gamma}^{1}} >0$ 
replacing $\mathcal{Y}_{1}$ by $-\mathcal{Y}_{1}$, that is, $\mathcal{Y}_{\pm}$ by $\mathcal{Y}_{\mp}$, if necessary. 

We define $U^{+}:=U^1$ and $U^{-}:=U^{-1}$.
Since $e^{-i\omega t}U^{\pm}(t) \to Q$ in $H^{1}(\mathbb{R})$, we have $M(U^{\pm})=M(Q_{\omega,\gamma})$ and $E_{\gamma}(U^{\pm})=E_{\gamma}(Q_{\omega,\gamma})$. Moreover, we also have
\begin{align*}
	\|U^{\pm}\|_{H_{\omega,\gamma}^{1}}^{2}
	=\|Q_{\omega,\gamma}\|_{H_{\omega,\gamma}^{1}}^{2}
	\pm 2 e^{-2e_{\omega}t}  (Q_{\omega,\gamma},\mathcal{Y}_{1})_{H_{\omega,\gamma}^{1}} 
	+O(e^{-4e_{\omega}t}).
\end{align*}
This shows that 
\begin{align*}
	\|U^{+}\|_{H_{\omega,\gamma}^{1}}^{2} - \|Q_{\omega,\gamma}\|_{H_{\omega,\gamma}^{1}}^{2}>0
\text{ and }
	\|U^{-}\|_{H_{\omega,\gamma}^{1}}^{2} - \|Q_{\omega,\gamma}\|_{H_{\omega,\gamma}^{1}}^{2}<0
\end{align*}
for large $t$. Since $\|U^{\pm}\|_{L^{2}}^{2}=\|Q_{\omega,\gamma}\|_{L^{2}}^{2}$, this means that
\begin{align*}
	\mu(U^{+}(t))<0 \text{ and } \mu(U^{-}(t))>0
\end{align*}
for large $t$. Since this sign is conserved by the flow, these inequalities hold for all existence time. These inequalities and the exponential convergence give that $U^{-}(t)$ exists globally and scatters in the negative time direction by Proposition \ref{prop4.5} and $U^{+}(t)$ blows up in the negative time direction by Proposition \ref{prop4.13}, where we note that $U^{+}$ has finite variance by Lemma \ref{lem5.7}.  This completes the proof. 
\end{proof}


\appendix

\section{Threshold condition}
\label{appA}

In this section, we consider the envelope of the family  of the lines $\{(M,E):E+\frac{\omega}{2}M=r_{\omega,\gamma}\}_{\omega>0}$. 
By the explicit formula of $Q_{\omega,\gamma}$, we have the following lemmas.

\begin{lemma}
\label{lemA.1}
Let $\omega>\gamma^2/4$. 
We have
\begin{align*}
	Q_{\omega,\gamma}(x) =\omega^{\frac{1}{p-1}} Q_{1,\frac{\gamma}{\sqrt{\omega}}}(\sqrt{\omega}x).
\end{align*}
Moreover, setting $\xi=\xi(\omega,\gamma):=\frac{2}{(p-1)\sqrt{\omega}} \tanh^{-1}\left( \frac{\gamma}{2\sqrt{\omega}} \right)$, we have
\begin{align*}
	Q_{\omega,\gamma}(x)=
	\begin{cases}
	Q_{\omega,0}(x+\xi) & \text{ if } x \geq 0,
	\\
	Q_{\omega,0}(x-\xi) & \text{ if } x < 0.
	\end{cases}
\end{align*}
\end{lemma}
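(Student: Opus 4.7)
Both assertions are purely computational: they should follow from plugging the explicit $\sech^{2/(p-1)}$ formula from Proposition \ref{prop1.1} into the stated identities. The plan is to verify them in sequence, using that $\sech^{2}$ is even and that $Q_{\omega,\gamma}$ itself is even.

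\textbf{Step 1 (scaling identity).} Starting from
\[
Q_{1,\gamma/\sqrt{\omega}}(y)=\left[\frac{p+1}{2}\sech^{2}\!\Big\{\tfrac{p-1}{2}|y|+\tanh^{-1}\!\big(\tfrac{\gamma}{2\sqrt{\omega}}\big)\Big\}\right]^{\frac{1}{p-1}},
\]
substitute $y=\sqrt{\omega}\,x$ and pull the factor $\omega$ inside the bracket via the exponent $1/(p-1)$. After noting $|\sqrt{\omega}x|=\sqrt{\omega}|x|$, the resulting expression matches the formula for $Q_{\omega,\gamma}(x)$ in Proposition \ref{prop1.1}, giving the first claim.

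\textbf{Step 2 (translation identity).} Observe that the defining constant $\xi$ is chosen precisely so that
\[
\tfrac{(p-1)\sqrt{\omega}}{2}\,\xi=\tanh^{-1}\!\big(\tfrac{\gamma}{2\sqrt{\omega}}\big).
\]
For $x\ge 0$ one then has
\[
\tfrac{(p-1)\sqrt{\omega}}{2}(x+\xi)=\tfrac{(p-1)\sqrt{\omega}}{2}|x|+\tanh^{-1}\!\big(\tfrac{\gamma}{2\sqrt{\omega}}\big),
\]
so comparing the formula for $Q_{\omega,0}(x+\xi)=[\tfrac{(p+1)\omega}{2}\sech^{2}(\tfrac{(p-1)\sqrt{\omega}}{2}|x+\xi|)]^{1/(p-1)}$ with that for $Q_{\omega,\gamma}(x)$ and using the evenness of $\sech^{2}$ (so $|x+\xi|$ inside the $\sech^{2}$ may be replaced by $x+\xi$) yields $Q_{\omega,0}(x+\xi)=Q_{\omega,\gamma}(x)$. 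The case $x<0$ is identical after changing $\xi\mapsto -\xi$, which handles the sign of $|x|=-x$ in the formula for $Q_{\omega,\gamma}$; alternatively, it follows directly from the case $x\ge 0$ using the evenness of $Q_{\omega,\gamma}$ and the identity $Q_{\omega,0}(-y)=Q_{\omega,0}(y)$.

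\textbf{Expected obstacle.} There is essentially none: the entire lemma is a direct verification from the closed-form expression for $Q_{\omega,\gamma}$ recalled in Proposition \ref{prop1.1}. The only care required is bookkeeping of the absolute values $|x|$ and $|x\pm\xi|$, which is handled uniformly by using that $\sech^{2}$ is even; once this is noted the identities fall out mechanically.
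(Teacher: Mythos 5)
Your verification is correct: both identities follow exactly as you say by substituting into the explicit $\sech^{2/(p-1)}$ formula of Proposition \ref{prop1.1}, with the sign of $\xi$ (negative, since $\gamma<0$) and the absolute values handled by the evenness of $\sech^{2}$. This is the same route the paper takes — it states the lemma as an immediate consequence of the explicit formula and omits the computation you have written out.
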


Let $F(M,E,\omega):=E+\frac{\omega}{2}M-r_{\omega,\gamma}$ for $M>0$, $E\in \mathbb{R}$, and $\omega>0$.
We define a family $\{C_{\omega}\}_{\omega>0}$ of lines by 
\begin{align*}
	C_{\omega}:=\{(M,E)\in (0,\infty)\times \mathbb{R}: F(M,E,\omega)=0\}.
\end{align*} 
Let us define 
\begin{align*}
	\mathcal{M}(\omega)
	:=
	\begin{cases}
	M(Q_{\omega,\gamma}) & \text{if } \omega>\frac{\gamma^{2}}{4},
	\\
	2M(Q_{\omega,0}) & \text{if } 0<\omega\leq \frac{\gamma^{2}}{4},
	\end{cases}
	\text{ and }
	\mathcal{E}(\omega)
	:=
	\begin{cases}
	E_{\gamma}(Q_{\omega,\gamma}) & \text{if } \omega>\frac{\gamma^{2}}{4},
	\\
	2E_{0}(Q_{\omega,0}) & \text{if } 0<\omega\leq \frac{\gamma^{2}}{4}.
	\end{cases}
\end{align*}
Then we have $F(\mathcal{M}(\omega),\mathcal{E}(\omega),\omega)=0$ for all $\omega>0$. 

\begin{lemma}
The functions $\mathcal{M}$ and $\mathcal{E}$ are continuous on $(0,\infty)$. $\mathcal{M}$ and $\mathcal{E}$ are continuously differentiable on $(0,\gamma^{2}/4) \cup (\gamma^{2}/4,\infty)$. 
\end{lemma}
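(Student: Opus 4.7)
The plan is to rely on the explicit formula for $Q_{\omega,\gamma}$ recorded in Proposition \ref{prop1.1} together with the translation identity from Lemma \ref{lemA.1}. The overall strategy has three steps: establish smoothness of $\mathcal{M}$ and $\mathcal{E}$ on each of the open intervals $(0,\gamma^{2}/4)$ and $(\gamma^{2}/4,\infty)$ separately, then verify continuity at the joint point $\omega = \gamma^{2}/4$ from both sides.

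On $(0,\gamma^{2}/4)$, one has $\mathcal{M}(\omega) = 2M(Q_{\omega,0})$ and $\mathcal{E}(\omega) = 2E_{0}(Q_{\omega,0})$. The explicit $\sech$-formula for $Q_{\omega,0}$ depends smoothly on $\omega > 0$, and a direct rescaling $y = \sqrt{\omega}\,x$ shows that $M(Q_{\omega,0})$, $\|\partial_{x}Q_{\omega,0}\|_{L^{2}}^{2}$, and $\|Q_{\omega,0}\|_{L^{p+1}}^{p+1}$ are explicit powers of $\omega$ (with coefficients depending only on $p$), hence smooth on $(0,\infty)$; this takes care of the left interval. On $(\gamma^{2}/4,\infty)$, Lemma \ref{lemA.1} gives $Q_{\omega,\gamma}(x) = Q_{\omega,0}(|x|+\xi(\omega,\gamma))$ with $\xi(\omega,\gamma) = \tfrac{2}{(p-1)\sqrt{\omega}}\tanh^{-1}(\gamma/(2\sqrt{\omega}))$ smooth in $\omega$ on this range. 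The substitution $y=|x|+\xi$ yields
\begin{align*}
M(Q_{\omega,\gamma}) &= 2\int_{\xi}^{\infty} Q_{\omega,0}(y)^{2}\,dy,
\quad
\|\partial_{x} Q_{\omega,\gamma}\|_{L^{2}}^{2} = 2\int_{\xi}^{\infty} \bigl(\partial_{y} Q_{\omega,0}(y)\bigr)^{2}\,dy,
\\
\|Q_{\omega,\gamma}\|_{L^{p+1}}^{p+1} &= 2\int_{\xi}^{\infty} Q_{\omega,0}(y)^{p+1}\,dy,
\quad
|Q_{\omega,\gamma}(0)|^{2} = Q_{\omega,0}(\xi)^{2}.
\end{align*}
Smoothness of $\mathcal{M}$ and $\mathcal{E}$ on $(\gamma^{2}/4,\infty)$ then follows by differentiating under the integral and using smoothness of $\xi$ and of $\omega\mapsto Q_{\omega,0}(y)$.

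The main obstacle is continuity at $\omega = \gamma^{2}/4$. The left limit $\mathcal{M}(\omega)\to 2M(Q_{\gamma^{2}/4,0})$ and $\mathcal{E}(\omega) \to 2E_{0}(Q_{\gamma^{2}/4,0})$ is immediate from the formulas of the second paragraph. For the right limit, observe that since $\gamma < 0$, one has $\gamma/(2\sqrt{\omega}) \to -1$ as $\omega \searrow \gamma^{2}/4$, so $\xi(\omega,\gamma) \to -\infty$. Using the bound $Q_{\omega,0}(y) + |\partial_{y} Q_{\omega,0}(y)| \lesssim e^{-c|y|}$ valid uniformly for $\omega$ in a compact neighborhood of $\gamma^{2}/4$, dominated convergence gives
\begin{align*}
\lim_{\omega \searrow \gamma^{2}/4} 2\int_{\xi}^{\infty} Q_{\omega,0}(y)^{q}\, dy = 2\int_{-\infty}^{\infty} Q_{\gamma^{2}/4,0}(y)^{q}\, dy
\end{align*}
for $q = 2,\,p+1$, and analogously for the integral of $(\partial_{y}Q_{\omega,0})^{2}$; the boundary term $-\gamma\, Q_{\omega,0}(\xi)^{2}$ vanishes in the limit because $\xi \to -\infty$. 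These right limits coincide with the left ones, which finishes the proof of continuity at $\gamma^{2}/4$.

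The only delicate point is this last step: one needs a uniform exponential bound for $Q_{\omega,0}$ and $\partial_{y}Q_{\omega,0}$ on a neighborhood of $\omega=\gamma^{2}/4$ in order to apply dominated convergence, but this is straightforward from the explicit $\sech$ expression. Everything else reduces to differentiation under the integral sign with a smoothly varying endpoint, so no further analytic input is needed.
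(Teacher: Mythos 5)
Your proof is correct and follows essentially the same route as the paper: the translation identity of Lemma \ref{lemA.1}, the fact that $\xi(\omega,\gamma)\to-\infty$ as $\omega\searrow\gamma^{2}/4$, and the vanishing of the delta term $|Q_{\omega,\gamma}(0)|^{2}=Q_{\omega,0}(\xi)^{2}$ in the limit. The only cosmetic difference is that the paper first rescales to $Q_{1,\gamma/\sqrt{\omega}}$ so that the integrand is the fixed function $Q_{1,0}$ and only the endpoint moves, whereas you keep the $\omega$-dependent integrand $Q_{\omega,0}$ and invoke dominated convergence with a uniform exponential bound; both are valid.
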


\begin{proof}
The continuous differentiability for $\omega>\gamma^{2}/4$ follows from the composition of differentiable functions and for $\omega<\gamma^{2}/4$ can be shown easily from the scaling. 

We give a proof of the continuity of $\mathcal{M}$ at $\omega=\gamma^{2}/4$. It is enough to show right continuity. For $\omega>\gamma^2/4$, we have
\begin{align*}
	M(Q_{\omega,\gamma})=\omega^{-\frac{p-5}{2(p-1)}}M(Q_{1,\frac{\gamma}{\sqrt{\omega}}})
\end{align*}
and
\begin{align*}
	M(Q_{1,\frac{\gamma}{\sqrt{\omega}}})
	=2\int_{0}^{\infty} \left|Q_{1,0}(x + \xi(1,\frac{\gamma}{\sqrt{\omega}}))\right|^{2}dx
	=2\int_{\xi(1,\frac{\gamma}{\sqrt{\omega}})}^{\infty} \left|Q_{1,0}(x)\right|^{2}dx.
\end{align*}
Thus, since $\xi(1,\frac{\gamma}{\sqrt{\omega}}) \to -\infty$ as $\omega \to \gamma^{2}/4$, it holds that
\begin{align*}
	M(Q_{\omega,\gamma}) \to
	\left(\frac{\gamma^{2}}{4}\right)^{-\frac{p-5}{2(p-1)}}  2\int_{-\infty}^{\infty}|Q_{1,0}(x)|^{2}dx=\mathcal{M}(\gamma^{2}/4) \text{ as } \omega \to \gamma^{2}/4.
\end{align*}
Noting $|Q_{\omega,\gamma}(0)|^2=|Q_{\omega,0}(\xi)|^2 \to 0$ as $\omega \to \gamma^2/4$, we can show the continuity of $\mathcal{E}$ by the same argument as above. 
\end{proof}

\begin{remark}
\label{rmkA.1}
$\mathcal{M}$ is not differentiable at $\omega=\gamma^{2}/4$. 
\end{remark}

\begin{proof}[Proof of Remark \ref{rmkA.1}]
We show that $\mathcal{M}$ is not differentiable at $\omega=\gamma^{2}/4$. Let $\omega>\gamma^{2}/4$. Setting
\begin{align*}
	\widetilde{\mathcal{M}}(\omega):=
	\begin{cases}
	M(Q_{1,\frac{\gamma}{\sqrt{\omega}}}) & \text{if } \omega>\gamma^{2}/4,
	\\
	2M(Q_{1,0}) & \text{if } 0<\omega\leq \gamma^{2}/4,
	\end{cases}
\end{align*}
we have $\mathcal{M}(\omega)=\omega^{-\frac{p-5}{2(p-1)}}  \widetilde{\mathcal{M}}(\omega)$. Thus it is enough to show that $\widetilde{\mathcal{M}}$ is not differentiable at $\omega=\gamma^{2}/4$. It holds that
\begin{align*}
	\frac{\widetilde{\mathcal{M}}(\omega)-\widetilde{\mathcal{M}}(\gamma^{2}/4)}{\omega-\gamma^{2}/4}
	=-2\frac{\int_{-\infty}^{\xi(1,\frac{\gamma}{\sqrt{\omega}})} |Q_{1,0}(x)|^{2}dx}{\omega-\gamma^{2}/4}.
\end{align*}
Since $Q_{1,0}(x)\approx e^{-|x|}$ and $\xi <0$, we get
\begin{align*}
	\frac{\int_{-\infty}^{\xi(1,\frac{\gamma}{\sqrt{\omega}})} |Q_{1,0}(x)|^{2}dx}{\omega-\gamma^{2}/4}
	\approx \frac{\int_{-\infty}^{\xi(1,\frac{\gamma}{\sqrt{\omega}})} e^{2x}dx}{\omega-\gamma^{2}/4}
	\approx \frac{e^{2\xi(1,\frac{\gamma}{\sqrt{\omega}})} }{4\omega-\gamma^{2}}.
\end{align*}
Here, we have
\begin{align*}
	e^{2\xi(1,\frac{\gamma}{\sqrt{\omega}})}
	=\left( \frac{1+\frac{\gamma}{2\sqrt{\omega}}}{1-\frac{\gamma}{2\sqrt{\omega}}}\right)^{\frac{2}{p-1}}
	=\left( \frac{4\omega-\gamma^{2}}{(2\sqrt{\omega}-\gamma)^{2}}\right)^{\frac{2}{p-1}}
\end{align*}
since $\xi(1,\frac{\gamma}{\sqrt{\omega}})=\frac{2}{p-1} \tanh^{-1}\left( \frac{\gamma}{2\sqrt{\omega}} \right)$ and $\tanh^{-1}x=\frac{1}{2}\log \left(\frac{1+x}{1-x}\right)$ for $x \in (-1,1)$. Therefore, we obtain
\begin{align*}
	\frac{\widetilde{\mathcal{M}}(\omega)-\widetilde{\mathcal{M}}(\gamma^{2}/4)}{\omega-\gamma^{2}/4}
	\approx (4\omega-\gamma^{2})^{\frac{2}{p-1}-1} \to \infty
\end{align*}
as $\omega \to \gamma^{2}/4$ when $p>5$. 
\end{proof}

\begin{lemma}
$\partial_{\omega}\mathcal{M}(\omega)<0$ for $\omega \in (0,\infty)\setminus\{\gamma^{2}/4\}$. 
\end{lemma}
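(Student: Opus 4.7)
The plan is to treat the two regions $0 < \omega < \gamma^{2}/4$ and $\omega > \gamma^{2}/4$ separately, since $\mathcal{M}$ is defined piecewise and in fact fails to be differentiable at the boundary $\omega = \gamma^{2}/4$ (Remark \ref{rmkA.1}).

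For the low-frequency case $0 < \omega < \gamma^{2}/4$, I would simply invoke the scaling identity in Lemma \ref{lemA.1} (with $\gamma$ replaced by $0$): $Q_{\omega,0}(x) = \omega^{1/(p-1)} Q_{1,0}(\sqrt{\omega}\,x)$. A change of variables then yields $M(Q_{\omega,0}) = \omega^{-(p-5)/(2(p-1))} M(Q_{1,0})$, and consequently
\begin{align*}
\partial_{\omega}\mathcal{M}(\omega) = -\frac{p-5}{p-1}\,\omega^{-\frac{p-5}{2(p-1)}-1}\,M(Q_{1,0}) < 0,
\end{align*}
using $p > 5$ and $M(Q_{1,0}) > 0$.

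For the high-frequency case $\omega > \gamma^{2}/4$, I would use the factorization already introduced in the proof of Remark \ref{rmkA.1}:
\begin{align*}
\mathcal{M}(\omega) = \omega^{-\frac{p-5}{2(p-1)}}\,\widetilde{\mathcal{M}}(\omega), \qquad \widetilde{\mathcal{M}}(\omega) := M\!\left(Q_{1,\gamma/\sqrt{\omega}}\right) = 2\int_{\xi(\omega)}^{\infty} Q_{1,0}(x)^{2}\,dx,
\end{align*}
where $\xi(\omega) = \tfrac{2}{p-1}\tanh^{-1}\!\left(\tfrac{\gamma}{2\sqrt{\omega}}\right)$. The first factor $\omega^{-(p-5)/(2(p-1))}$ is positive and strictly decreasing in $\omega$ (since $p>5$). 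For the second factor, since $\gamma < 0$ and $\omega > \gamma^{2}/4$, the argument $\gamma/(2\sqrt{\omega}) \in (-1,0)$ is strictly increasing in $\omega$ (as $\sqrt{\omega}$ grows); composing with the increasing function $\tanh^{-1}$, we conclude $\xi'(\omega) > 0$. Differentiating under the integral sign,
\begin{align*}
\widetilde{\mathcal{M}}'(\omega) = -2\,Q_{1,0}(\xi(\omega))^{2}\,\xi'(\omega) < 0,
\end{align*}
so $\widetilde{\mathcal{M}}$ is also strictly decreasing. Since both factors in the product $\mathcal{M}(\omega) = \omega^{-(p-5)/(2(p-1))}\widetilde{\mathcal{M}}(\omega)$ are positive and strictly decreasing, the product is strictly decreasing in $\omega$, giving $\partial_{\omega}\mathcal{M}(\omega) < 0$.

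There is no real obstacle here; the only point requiring a little care is the determination of the sign of $\xi'(\omega)$, which depends on correctly tracking how $\gamma < 0$ interacts with the monotonicity of $\omega \mapsto \gamma/(2\sqrt{\omega})$ and of $\tanh^{-1}$ on $(-1,1)$. Everything else is a direct scaling or monotonicity argument.
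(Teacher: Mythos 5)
Your proof is correct, but for the main (high-frequency) case it takes a different route from the paper: the paper disposes of $\omega>\gamma^{2}/4$ in one line by citing the known monotonicity result of Fukuizumi--Jeanjean \cite{FuJe08} (where $\partial_\omega M(Q_{\omega,\gamma})<0$ is established as part of the stability analysis), and only the regime $0<\omega<\gamma^{2}/4$ is handled by the scaling argument, exactly as you do. Your argument instead makes the high-frequency case self-contained: using the scaling identity $M(Q_{\omega,\gamma})=\omega^{-\frac{p-5}{2(p-1)}}M(Q_{1,\gamma/\sqrt{\omega}})$ together with the shifted-soliton representation of Lemma \ref{lemA.1} (the same representation the paper uses in its continuity lemma and in Remark \ref{rmkA.1}), you write $M(Q_{1,\gamma/\sqrt{\omega}})=2\int_{\xi(\omega)}^{\infty}Q_{1,0}^{2}$ and differentiate, noting that $\gamma<0$ forces $\xi'(\omega)>0$ (the only sign one must track carefully, and you track it correctly; the domain condition $\omega>\gamma^{2}/4$ is exactly what keeps $\gamma/(2\sqrt{\omega})\in(-1,0)$ so that $\tanh^{-1}$ applies), so both factors in the product are positive and strictly decreasing and the conclusion follows; the derivative of the integral term is just the fundamental theorem of calculus with a moving lower limit. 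What the paper's approach buys is brevity; what yours buys is an explicit, elementary verification that does not rely on the external reference, at the modest cost of redoing a computation that \cite{FuJe08} already contains. (In the low-frequency regime your displayed derivative already accounts for the factor $2$ in $\mathcal{M}(\omega)=2M(Q_{\omega,0})$, so the constant $\tfrac{p-5}{p-1}$ is in fact exact, and in any case only the sign matters.)
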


\begin{proof}
The result in the case of $\omega>\gamma^{2}/4$ is already known by Fukuizumi and Jeanjean \cite{FuJe08}. In the case of $\omega<\gamma^{2}/4$, it is trivial by the scaling structure. 
\end{proof}

\begin{lemma}
$\{\mathcal{M}(\omega):\omega>0\}=(0,\infty)$.
\end{lemma}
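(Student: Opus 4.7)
The plan is to combine the continuity and strict monotonicity of $\mathcal{M}$ (already established in the previous two lemmas) with explicit computations of the limits at the endpoints $\omega \to 0^+$ and $\omega \to \infty$, and then invoke the intermediate value theorem. Since $\mathcal{M}$ is continuous on $(0,\infty)$ and strictly decreasing, the image $\mathcal{M}((0,\infty))$ is an interval, and it will suffice to show that $\mathcal{M}(\omega) \to \infty$ as $\omega \to 0^+$ and $\mathcal{M}(\omega) \to 0$ as $\omega \to \infty$.

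For the behavior near $0$, I would work in the regime $\omega < \gamma^2/4$, where $\mathcal{M}(\omega) = 2M(Q_{\omega,0})$. The scaling identity $Q_{\omega,0}(x) = \omega^{1/(p-1)} Q_{1,0}(\sqrt{\omega}\, x)$ gives
\begin{align*}
	\mathcal{M}(\omega) = 2 \omega^{-\frac{p-5}{2(p-1)}} M(Q_{1,0}).
\end{align*}
Since $p>5$, the exponent $-(p-5)/(2(p-1))$ is negative, so $\mathcal{M}(\omega) \to \infty$ as $\omega \to 0^+$.

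For the behavior at infinity, I would work in the regime $\omega > \gamma^2/4$, where Lemma \ref{lemA.1} yields $\mathcal{M}(\omega) = \omega^{-(p-5)/(2(p-1))} M(Q_{1,\gamma/\sqrt{\omega}})$. Using the representation
\begin{align*}
	M(Q_{1,\gamma/\sqrt{\omega}}) = 2\int_{\xi(1,\gamma/\sqrt{\omega})}^{\infty} |Q_{1,0}(x)|^2\, dx
\end{align*}
and the fact that $\xi(1,\gamma/\sqrt{\omega}) \to 0$ as $\omega \to \infty$, one sees that $M(Q_{1,\gamma/\sqrt{\omega}})$ remains bounded (in fact converges to $M(Q_{1,0})$). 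The prefactor $\omega^{-(p-5)/(2(p-1))} \to 0$ then forces $\mathcal{M}(\omega) \to 0$ as $\omega \to \infty$.

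Putting these together, continuity and strict monotonicity of $\mathcal{M}$ give that $\mathcal{M}:(0,\infty)\to (0,\infty)$ is a continuous bijection onto $(0,\infty)$, which yields the claim. There is no real obstacle here; the only minor point to check is the boundedness of $M(Q_{1,\gamma/\sqrt{\omega}})$ as $\omega \to \infty$, which follows immediately from the integral representation above bounded by $2\|Q_{1,0}\|_{L^2}^2$.
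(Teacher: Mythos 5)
Your proof is correct and follows essentially the same route as the paper: the scaling identity gives $\mathcal{M}(\omega)\to\infty$ as $\omega\to 0^+$, and for $\omega\to\infty$ the identity $\mathcal{M}(\omega)=2\omega^{-\frac{p-5}{2(p-1)}}\int_{\xi(1,\gamma/\sqrt{\omega})}^{\infty}|Q_{1,0}|^2dx$ together with $\xi(1,\gamma/\sqrt{\omega})\to 0$ forces $\mathcal{M}(\omega)\to 0$, after which continuity yields the full interval $(0,\infty)$. No gaps.
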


\begin{proof}
We have $\mathcal{M}(\omega) \to \infty$ as $\omega \to 0$. Since it holds that
\begin{align*}
	M(Q_{\omega,\gamma})=\omega^{-\frac{p-5}{2(p-1)}}M(Q_{1,\frac{\gamma}{\sqrt{\omega}}})
	=2\omega^{-\frac{p-5}{2(p-1)}}\int_{\xi(1,\frac{\gamma}{\sqrt{\omega}})}^{\infty} \left|Q_{1,0}(x)\right|^{2}dx
\end{align*}
and $\xi(1,\frac{\gamma}{\sqrt{\omega}})=\frac{2}{p-1} \tanh^{-1}\left( \frac{\gamma}{2\sqrt{\omega}} \right)\to 0$ as $\omega \to \infty$, we get $\mathcal{M}(\omega) \to 0$. 
\end{proof}

We set $\nu(\omega):=(\mathcal{M}(\omega),\mathcal{E}(\omega))$ for simplicity. 

\begin{lemma}
We have $F(\nu(\omega),\omega)=0$  for $\omega>0$ and $(\partial_{\omega}F)(\nu(\omega),\omega)=0$ for $\omega \in (0,\infty) \setminus \{\gamma^{2}/4\}$. 
\end{lemma}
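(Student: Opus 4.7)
The plan is to verify both identities by unfolding definitions and exploiting the fact that $Q_{\omega,\gamma}$ (resp.\ $Q_{\omega,0}$) is a critical point of $S_{\omega,\gamma}$ (resp.\ $S_{\omega,0}$) in the appropriate functional class. The key envelope identity is the classical observation that whenever $u_\omega$ is a stationary point of $S_{\omega,\gamma}$ and $r_{\omega,\gamma}=S_{\omega,\gamma}(u_\omega)$, then $\frac{d}{d\omega}r_{\omega,\gamma}=\frac{1}{2}M(u_\omega)$ because the contribution from differentiating $u_\omega$ in $\omega$ pairs against $S'_{\omega,\gamma}(u_\omega)=0$.

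For the first identity, I would split into the two regimes. When $\omega>\gamma^2/4$, by definition $\mathcal{E}(\omega)+\frac{\omega}{2}\mathcal{M}(\omega)=E_\gamma(Q_{\omega,\gamma})+\frac{\omega}{2}M(Q_{\omega,\gamma})=S_{\omega,\gamma}(Q_{\omega,\gamma})=r_{\omega,\gamma}$ by Proposition~\ref{prop1.1}(iii). When $0<\omega\le \gamma^2/4$, we get $\mathcal{E}(\omega)+\frac{\omega}{2}\mathcal{M}(\omega)=2E_0(Q_{\omega,0})+\omega M(Q_{\omega,0})=2S_{\omega,0}(Q_{\omega,0})=2n_{\omega,0}=r_{\omega,\gamma}$, using Proposition~\ref{prop1.1}(ii). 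Thus $F(\nu(\omega),\omega)=0$ holds on all of $(0,\infty)$.

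For the second identity, note that $\partial_\omega F(M,E,\omega)=\frac{M}{2}-\partial_\omega r_{\omega,\gamma}$, so the statement reduces to showing $\partial_\omega r_{\omega,\gamma}=\frac{1}{2}\mathcal{M}(\omega)$ on $(0,\infty)\setminus\{\gamma^2/4\}$. In the high-frequency regime $\omega>\gamma^2/4$, I would differentiate $r_{\omega,\gamma}=E_\gamma(Q_{\omega,\gamma})+\frac{\omega}{2}M(Q_{\omega,\gamma})$ using the chain rule, splitting the derivative into an explicit $\omega$-derivative and the derivative in $Q_{\omega,\gamma}$. The former contributes $\frac{1}{2}M(Q_{\omega,\gamma})$, while the latter equals $\langle S'_{\omega,\gamma}(Q_{\omega,\gamma}),\partial_\omega Q_{\omega,\gamma}\rangle=0$ since $Q_{\omega,\gamma}$ solves \eqref{eleq}. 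The differentiability of $\omega\mapsto Q_{\omega,\gamma}$ in $H^1$, needed to justify this, follows from the explicit formula in Proposition~\ref{prop1.1}(iii). This gives $\partial_\omega r_{\omega,\gamma}=\frac{1}{2}M(Q_{\omega,\gamma})=\frac{1}{2}\mathcal{M}(\omega)$. In the low-frequency regime $0<\omega<\gamma^2/4$, the same argument with the potential-free action $S_{\omega,0}$ and its critical point $Q_{\omega,0}$ gives $\partial_\omega r_{\omega,\gamma}=2\cdot\frac{1}{2}M(Q_{\omega,0})=M(Q_{\omega,0})=\frac{1}{2}\mathcal{M}(\omega)$.

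The point $\omega=\gamma^2/4$ must be excluded since $\mathcal{M}$ fails to be differentiable there (Remark~\ref{rmkA.1}), so $F$ cannot even be differentiated along $\nu$. Away from this point, however, both branches are smooth by the preceding lemma and the argument is entirely a matter of careful bookkeeping; the only subtlety — and the main thing to verify carefully — is the legitimacy of the chain-rule computation, which reduces to checking that the map $\omega\mapsto Q_{\omega,\gamma}\in H^1(\mathbb{R})$ is $C^1$ on each component of $(0,\infty)\setminus\{\gamma^2/4\}$. This is immediate from the closed-form expression for $Q_{\omega,\gamma}$.
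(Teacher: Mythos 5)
Your proposal is correct and follows essentially the same route as the paper: the first identity is a direct unfolding of the definitions of $\mathcal{M},\mathcal{E}$ together with $r_{\omega,\gamma}=S_{\omega,\gamma}(Q_{\omega,\gamma})$ (resp.\ $r_{\omega,\gamma}=2S_{\omega,0}(Q_{\omega,0})$), and the second is the envelope computation $\partial_{\omega}r_{\omega,\gamma}=\tfrac{1}{2}\mathcal{M}(\omega)$, where the term involving $\partial_{\omega}Q_{\omega,\gamma}$ vanishes because $S_{\omega,\gamma}'(Q_{\omega,\gamma})=0$; your extra remark on the $C^{1}$ dependence of $Q_{\omega,\gamma}$ on $\omega$ just makes explicit what the paper leaves implicit. (Only a cosmetic quibble: $\omega=\gamma^{2}/4$ is excluded because $\partial_{\omega}r_{\omega,\gamma}$ is in question there, not because $\mathcal{M}$ fails to be differentiable, since $\partial_{\omega}F$ is taken with $(M,E)$ fixed.)
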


\begin{proof}
The first statement is trivial. We prove the second statement. When $\omega>\gamma^{2}/4$, it holds that
\begin{align*}
	(\partial_{\omega}F)(\nu(\omega),\omega)
	=\frac{1}{2}M(Q_{\omega,\gamma})-\partial_{\omega}r_{\omega,\gamma}=\frac{1}{2}M(Q_{\omega,\gamma}) -\partial_{\omega}\{ S_{\omega,\gamma}(Q_{\omega,\gamma})\}=0
\end{align*}
since $Q_{\omega,\gamma}$ is a solution of the elliptic equation $E_{\gamma}'(Q_{\omega,\gamma})+\frac{\omega}{2}M'(Q_{\omega,\gamma})=0$. When $\omega < \gamma^{2}/4$, it holds that
\begin{align*}
	(\partial_{\omega}F)(\nu(\omega),\omega)
	=2\left\{\frac{1}{2}M(Q_{\omega,0})- \partial_{\omega}(S_{\omega,0}(Q_{\omega,0}))\right\}
	=0
\end{align*}
since $Q_{\omega,0}$ is a solution of the elliptic equation with $\gamma=0$.
\end{proof}

\begin{lemma}
$\{\nu(\omega):\omega>0\}$ is the envelope of $\{C_{\omega}\}_{\omega>0}$ and is a continuous curve. The curve is $C^{1}$ except for $\omega=\gamma^{2}/4$. 
\end{lemma}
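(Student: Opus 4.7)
The plan is to verify the two classical defining properties of an envelope directly from the preceding lemmas. Recall that the envelope of a smooth family $\{C_\omega\}$ given by $F(M,E,\omega)=0$ is the set of points lying on $C_{\omega_0}$ for some $\omega_0$ at which $\partial_\omega F=0$, equivalently the locus along which the member $C_{\omega_0}$ is tangent to the trace of the critical points. The preceding lemma gives exactly these two identities: $F(\nu(\omega),\omega)=0$ for all $\omega>0$ (so $\nu(\omega)\in C_\omega$), and $(\partial_\omega F)(\nu(\omega),\omega)=0$ for $\omega\ne \gamma^2/4$. Hence $\{\nu(\omega):\omega>0\}$ is contained in the envelope.

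To see the tangency geometrically, differentiate $F(\nu(\omega),\omega)=0$ in $\omega$ on $(0,\gamma^{2}/4)\cup(\gamma^{2}/4,\infty)$, where $\nu$ is $C^1$ by the earlier lemma, to obtain
\begin{align*}
(\partial_M F)(\nu(\omega),\omega)\,\mathcal{M}'(\omega)+(\partial_E F)(\nu(\omega),\omega)\,\mathcal{E}'(\omega)+(\partial_\omega F)(\nu(\omega),\omega)=0.
\end{align*}
Since $(\partial_\omega F)(\nu(\omega),\omega)=0$, the tangent vector $\nu'(\omega)=(\mathcal{M}'(\omega),\mathcal{E}'(\omega))$ is annihilated by $\nabla_{(M,E)}F(\nu(\omega),\omega)=(\omega/2,1)$, which is precisely a normal to the line $C_\omega$. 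Therefore $\nu'(\omega)$ is tangent to $C_\omega$ at $\nu(\omega)$, confirming that $\nu$ sweeps out the envelope. Moreover, because $(\partial_E F)(\nu(\omega),\omega)=1\neq 0$, each $C_\omega$ is a smooth curve near $\nu(\omega)$, so the tangency statement is meaningful.

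Continuity of $\omega\mapsto\nu(\omega)$ on $(0,\infty)$ is immediate from the continuity of $\mathcal{M}$ and $\mathcal{E}$ already established, and the $C^1$ regularity on $(0,\infty)\setminus\{\gamma^{2}/4\}$ follows similarly from the $C^1$ regularity of $\mathcal{M}$ and $\mathcal{E}$ there; the fact that $\mathcal{M}'(\omega)<0$ guarantees that $\nu'(\omega)\neq 0$, so $\nu$ is a genuine regular curve away from $\omega=\gamma^2/4$. Injectivity of the parametrization (ensuring the image is actually a curve rather than a multiply-covered set) follows from the strict monotonicity $\partial_\omega\mathcal{M}<0$ established earlier, together with the fact that $\mathcal{M}$ maps $(0,\infty)$ onto $(0,\infty)$.

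The main subtle point is the behavior at $\omega=\gamma^2/4$, where $\mathcal{M}$ fails to be differentiable (Remark \ref{rmkA.1}). There the envelope condition $\partial_\omega F=0$ need not hold, reflecting the fact that the threshold problem transitions between the regime in which $r_{\omega,\gamma}$ is attained by $Q_{\omega,\gamma}$ and the regime in which it is not, so that the family $\{C_\omega\}$ has two distinct local parametrizations meeting at this value. This explains the loss of $C^1$ regularity at the single point $\omega=\gamma^2/4$, and the curve $\nu$ retains only continuity there, exactly as claimed.
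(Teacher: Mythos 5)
Your argument is correct, but it takes a somewhat different route from the paper. The paper applies the implicit function theorem to the map $G=(F,\partial_{\omega}F)$: since $\det\frac{\partial G}{\partial(M,E)}=\det\begin{bmatrix}\omega/2 & 1\\ 1/2 & 0\end{bmatrix}\neq 0$, the system $F=\partial_{\omega}F=0$ determines $(M,E)$ as the \emph{unique} $C^{1}$ function of $\omega$ away from $\gamma^{2}/4$, and since the previous lemma shows $\nu(\omega)$ solves this system, the curve $\nu$ is simultaneously identified with the full envelope (set equality, not just inclusion) and shown to be $C^{1}$ there, with continuity at $\gamma^{2}/4$ coming from the earlier lemma. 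You instead verify the envelope conditions pointwise and then check tangency directly by differentiating $F(\nu(\omega),\omega)=0$ and using $\partial_{\omega}F(\nu(\omega),\omega)=0$ to see that $\nu'(\omega)$ is orthogonal to the normal $(\omega/2,1)$ of $C_{\omega}$, pulling continuity and $C^{1}$ regularity from the lemmas on $\mathcal{M},\mathcal{E}$ and regularity/injectivity of the parametrization from $\partial_{\omega}\mathcal{M}<0$. This is a more elementary, geometric verification and it supplies exactly what the later lemma needs (tangency, which yields $\phi'(\mathcal{M}(\omega))=-\omega/2$); what it does not quite give is the reverse inclusion, i.e.\ that the discriminant locus $\{F=\partial_{\omega}F=0\}$ contains no points other than $\nu(\omega)$ — you only assert that $\nu$ ``sweeps out'' the envelope. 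This is a one-line fix: for each fixed $\omega\neq\gamma^{2}/4$ the two equations are linear in $(M,E)$ with the invertible coefficient matrix displayed above, so the solution is unique and must be $\nu(\omega)$; this is precisely the content of the paper's Jacobian computation, so adding that remark makes your proof fully equivalent to the paper's.
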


\begin{proof}
Let $G:=\begin{bmatrix}F(M,E,\omega) \\ \partial_{\omega}F(M,E,\omega) \end{bmatrix}$ for $\omega \neq \gamma^{2}/4$. We note that $G$ is $C^{1}$ in $(0,\infty)\times \mathbb{R} \times (0,\infty)\setminus\{\gamma^{2}/4\}$. Now we have
\begin{align*}
	\det \frac{\partial G}{\partial(M,E)}=\det
	\begin{bmatrix}
	\frac{\omega}{2} & 1
	\\
	\frac{1}{2} & 0
	\end{bmatrix}
	\neq 0
\end{align*}
and $G(\nu(\omega),\omega)=0$. By the implicit function theorem, $\nu(\omega)$ is a unique $C^{1}$-function in $\omega\neq\gamma^{2}/4$ satisfying $G(\nu(\omega),\omega)=0$. 
\end{proof}

Since $\partial_{\omega}\mathcal{M}<0$ for $\omega \neq \gamma^{2}/4$, there exists a function $\phi:(0,\infty)\to \mathbb{R}$ such that $\mathcal{E}(\omega)=\phi(\mathcal{M}(\omega))$. 

%
%
%

\begin{lemma}
The function $\phi$ is strictly convex.
\end{lemma}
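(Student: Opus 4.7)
The plan is to differentiate the envelope identities to extract a simple formula for $\phi'$, and then read off strict monotonicity of $\phi'$ directly from the strict monotonicity of $\mathcal{M}$.

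First, I would differentiate $F(\mathcal{M}(\omega),\mathcal{E}(\omega),\omega)=0$ in $\omega$ on each of the intervals $(0,\gamma^{2}/4)$ and $(\gamma^{2}/4,\infty)$, where $\mathcal{M}$ and $\mathcal{E}$ are $C^{1}$. The envelope condition $(\partial_{\omega}F)(\nu(\omega),\omega)=0$ from the preceding lemma reads $\partial_{\omega}r_{\omega,\gamma}=\mathcal{M}(\omega)/2$, and substituting it into the differentiated identity collapses the result to $\mathcal{E}'(\omega)=-\tfrac{\omega}{2}\mathcal{M}'(\omega)$. Since $\mathcal{E}=\phi\circ \mathcal{M}$ and $\mathcal{M}'(\omega)\neq 0$ for $\omega\neq\gamma^{2}/4$, the chain rule then gives
\begin{align*}
    \phi'(\mathcal{M}(\omega))=-\frac{\omega}{2}
    \qquad \text{for all } \omega\in(0,\infty)\setminus\{\gamma^{2}/4\}.
\end{align*}

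Next, by the preceding lemmas $\mathcal{M}:(0,\infty)\to(0,\infty)$ is a continuous strictly decreasing bijection, so it admits a continuous strictly decreasing inverse $\omega(\cdot):(0,\infty)\to(0,\infty)$. Writing $M_{*}:=\mathcal{M}(\gamma^{2}/4)$ and setting $M=\mathcal{M}(\omega)$, the above identity rearranges into
\begin{align*}
    \phi'(M)=-\frac{\omega(M)}{2}, \qquad M\in (0,\infty)\setminus\{M_{*}\}.
\end{align*}
The right hand side is continuous on all of $(0,\infty)$ and, being $-1/2$ times a strictly decreasing function of $M$, is strictly increasing in $M$ on $(0,\infty)\setminus\{M_{*}\}$.

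Finally, I would upgrade to $\phi\in C^{1}((0,\infty))$ and close the argument. The function $\phi=\mathcal{E}\circ\mathcal{M}^{-1}$ is continuous on $(0,\infty)$ (as $\mathcal{M}$ is a homeomorphism and $\mathcal{E}$ is continuous) and differentiable away from $M_{*}$, with $\phi'$ extending continuously to the value $-\gamma^{2}/8$ at $M_{*}$. A standard mean value argument then ensures $\phi$ is differentiable at $M_{*}$ with $\phi'(M_{*})=-\gamma^{2}/8$, so $\phi\in C^{1}((0,\infty))$, and strict monotonicity of $\phi'$ across $M_{*}$ is immediate from the formula $\phi'(M)=-\omega(M)/2$ combined with $\omega(M_{1})>\gamma^{2}/4>\omega(M_{2})$ whenever $M_{1}<M_{*}<M_{2}$. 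Since a $C^{1}$ function with a strictly increasing derivative is strictly convex, this finishes the proof.

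The main obstacle is precisely the non-differentiability of $\mathcal{M}$ at $\omega=\gamma^{2}/4$ flagged in Remark~\ref{rmkA.1}: one cannot form the quotient $\mathcal{E}'(\omega)/\mathcal{M}'(\omega)$ there, so the chain-rule derivation of $\phi'$ breaks down at $M_{*}$. The workaround is to express $\phi'$ directly through the continuous inverse $\omega(\cdot)=\mathcal{M}^{-1}(\cdot)$, which sidesteps the singular quotient while keeping $\phi'$ continuous and strictly monotone across $M_{*}$.
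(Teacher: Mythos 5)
Your proof is correct and follows essentially the same route as the paper: the envelope relations yield the key identity $\phi'(\mathcal{M}(\omega))=-\omega/2$ (the paper phrases this as tangency of $C_\omega$ with the graph of $\phi$), and strict convexity then follows from the strict monotonicity of $\omega=\mathcal{M}^{-1}$. Your additional mean-value argument patching differentiability of $\phi$ at $M_*=\mathcal{M}(\gamma^2/4)$, where $\mathcal{M}$ itself fails to be differentiable, is a welcome point of rigor that the paper's terse proof leaves implicit.
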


\begin{proof}
Since $\nu(\omega)$ is a tangent point on $E=\phi(M)$ with the line $C_{\omega}$, we have $\phi'(\mathcal{M}(\omega))=-\frac{\omega}{2}$. This means that $\phi'$ is increasing in $M$ since $\partial_{\omega}\mathcal{M}<0$ and $\omega=\mathcal{M}^{-1}$ is decreasing in $M$. This implies that $\phi$ is strictly convex.
\end{proof} 


By this convexity, we know that the line $C_{\omega}$ is less than $E=\phi(M)$ except for the tangent point $(\mathcal{M}(\omega),\mathcal{E}(\omega))$. This gives us the following lemma:

\begin{lemma}
The following are equivalent.

(1). $f$ satisfies $S_{\omega,\gamma}(f) \leq r_{\omega,\gamma}$ for some $\omega > 0$.

(2). $f$ satisfies $S_{\omega',\gamma}(f) < S_{\omega',\gamma}(Q_{\omega',\gamma})$ for some $\omega' > 0$ or $M(f)=M_{\omega}$ and $E_{\gamma}(f)=E_{\omega}$ for some $\omega>0$.
\end{lemma}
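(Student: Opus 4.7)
The plan is to translate both conditions into geometric statements in the $(M,E)$-plane. By definition, $S_{\omega,\gamma}(f)\leq r_{\omega,\gamma}$ says precisely that the point $(M(f),E_\gamma(f))$ lies on or below the line $C_\omega$. The preceding lemmas show that the envelope $\{(\mathcal{M}(\omega),\mathcal{E}(\omega)):\omega>0\}$ is the graph $E=\phi(M)$ of a strictly convex function $\phi$ on $(0,\infty)$, and that each $C_\omega$ is a supporting line at the tangent point $(\mathcal{M}(\omega),\mathcal{E}(\omega))$.

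The implication $(2)\Rightarrow(1)$ is immediate: if $M(f)=\mathcal{M}(\omega)$ and $E_\gamma(f)=\mathcal{E}(\omega)$, then $(M(f),E_\gamma(f))\in C_\omega$, so $S_{\omega,\gamma}(f)=r_{\omega,\gamma}$; if $S_{\omega',\gamma}(f)<r_{\omega',\gamma}$, the inequality is trivially stronger than (1). For $(1)\Rightarrow(2)$, the key step is the global supporting-line inequality
\begin{equation*}
r_{\omega,\gamma}-\tfrac{\omega}{2}M \leq \phi(M)\qquad\text{for every }M>0,\ \omega>0,
\end{equation*}
with equality iff $M=\mathcal{M}(\omega)$. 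Granting this, if (1) holds for some $\omega$ then $E_\gamma(f)\leq r_{\omega,\gamma}-\tfrac{\omega}{2}M(f)\leq\phi(M(f))$. By surjectivity of $\mathcal{M}$ onto $(0,\infty)$ established earlier, choose the unique $\omega_0$ with $\mathcal{M}(\omega_0)=M(f)$, so $\phi(M(f))=\mathcal{E}(\omega_0)$. Either $E_\gamma(f)=\mathcal{E}(\omega_0)$, giving the second alternative of (2); or $E_\gamma(f)<\mathcal{E}(\omega_0)$, in which case $(M(f),E_\gamma(f))$ lies strictly below $C_{\omega_0}$ at its tangent point, so $S_{\omega_0,\gamma}(f)<r_{\omega_0,\gamma}=S_{\omega_0,\gamma}(Q_{\omega_0,\gamma})$, giving the first.

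The main obstacle is the global supporting-line inequality, in particular across the corner $M_*:=\mathcal{M}(\gamma^2/4)$ where $\phi$ is not $C^1$ (Remark \ref{rmkA.1}). On each smooth branch $0<\omega<\gamma^2/4$ and $\omega>\gamma^2/4$, the tangency $\phi'(\mathcal{M}(\omega))=-\omega/2$ together with the already established strict convexity of $\phi$ yields the inequality together with the tight equality case. The slope $-\gamma^2/8$ belongs to the one-sided subdifferential of $\phi$ at $M_*$ by continuity of $\mathcal{M}$ at $\gamma^2/4$, so the exceptional line $C_{\gamma^2/4}$ is still a supporting line of the convex $\phi$ at the corner and thus lies below $\phi$ everywhere. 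A minor notational caveat: $S_{\omega',\gamma}(Q_{\omega',\gamma})$ is literally defined only for $\omega'>\gamma^2/4$; under our construction $\omega_0>\gamma^2/4$ iff $M(f)<M_*$, and outside this regime the expression is to be read as $r_{\omega',\gamma}$ in the sense of Remark 1.2.
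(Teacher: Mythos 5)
Your proof is correct and takes essentially the same route as the paper, which simply observes that the strict convexity of $\phi$ makes each line $C_{\omega}$ a supporting line of the envelope $E=\phi(M)$, touching it only at the tangent point $(\mathcal{M}(\omega),\mathcal{E}(\omega))$, and reads the lemma off from this together with the surjectivity and monotonicity of $\mathcal{M}$; your write-up merely makes the supporting-line inequality and its equality case explicit. The extra care you take at the corner $M_{*}=\mathcal{M}(\gamma^{2}/4)$ and the reading of $S_{\omega',\gamma}(Q_{\omega',\gamma})$ as $r_{\omega',\gamma}$ when $\omega'\leq\gamma^{2}/4$ go beyond what the paper records but are consistent with it.
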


\begin{corollary}
If $f$ satisfies $S_{\omega,\gamma}(f) = r_{\omega,\gamma}$ for some $\omega > \gamma^{2}/4$, then $f$ satisfies $S_{\omega',\gamma}(f) < r_{\omega',\gamma}$ for some $\omega' > 0$ or $M(f)=M(Q_{\omega,\gamma})$ and $E_{\gamma}(f)=E_{\gamma}(Q_{\omega,\gamma})$.
\end{corollary}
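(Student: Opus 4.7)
The plan is to derive the Corollary directly from the preceding lemma characterizing points on or below the envelope of the family $\{C_{\omega}\}$. The geometric picture is that $C_{\omega}$ is the tangent line to the strictly convex curve $E = \phi(M)$ at the point $\nu(\omega)$, and strict convexity forces every other point of $C_{\omega}$ to lie strictly below that curve; translating ``strictly below the envelope'' back into the action inequality gives the dichotomy.

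First, I would record the tangency statement. Since $\phi'(\mathcal{M}(\omega)) = -\omega/2$ and $\mathcal{E}(\omega) = \phi(\mathcal{M}(\omega))$, the tangent line to $E = \phi(M)$ at $M = \mathcal{M}(\omega)$ is
\begin{align*}
E = \phi(\mathcal{M}(\omega)) - \tfrac{\omega}{2}(M - \mathcal{M}(\omega)) = r_{\omega,\gamma} - \tfrac{\omega}{2}M,
\end{align*}
which is exactly $C_{\omega}$. By strict convexity of $\phi$, the tangent line satisfies $r_{\omega,\gamma} - \tfrac{\omega}{2}M \leq \phi(M)$ with equality only at $M = \mathcal{M}(\omega)$. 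In particular, if a point $(M(f), E_{\gamma}(f)) \in C_{\omega}$, then $E_{\gamma}(f) \leq \phi(M(f))$ with equality iff $M(f) = \mathcal{M}(\omega)$.

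Next I would split on whether equality holds. If $M(f) = \mathcal{M}(\omega)$, then also $E_{\gamma}(f) = \phi(\mathcal{M}(\omega)) = \mathcal{E}(\omega)$, and since $\omega > \gamma^{2}/4$ we have $\mathcal{M}(\omega) = M(Q_{\omega,\gamma})$ and $\mathcal{E}(\omega) = E_{\gamma}(Q_{\omega,\gamma})$, which is the second alternative. Otherwise $E_{\gamma}(f) < \phi(M(f))$, and I would use that $\mathcal{M} : (0,\infty) \to (0,\infty)$ is continuous, strictly decreasing, and surjective to pick the unique $\omega' > 0$ with $\mathcal{M}(\omega') = M(f)$. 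Then $\phi(M(f)) = \mathcal{E}(\omega') = r_{\omega',\gamma} - \tfrac{\omega'}{2}M(f)$, so
\begin{align*}
S_{\omega',\gamma}(f) = E_{\gamma}(f) + \tfrac{\omega'}{2}M(f) < \phi(M(f)) + \tfrac{\omega'}{2}M(f) = r_{\omega',\gamma},
\end{align*}
which is the first alternative.

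There is essentially no obstacle beyond bookkeeping, since the heavy lifting (continuity of $\nu$, strict convexity of $\phi$, strict monotonicity of $\mathcal{M}$, and surjectivity of $\mathcal{M}$) has already been established in the preceding lemmas of Appendix A. The only mild subtlety is the case $\omega' = \gamma^{2}/4$, where $\phi$ fails to be $C^{1}$ at the corresponding value of $M$; however the strict convexity (hence the strict tangent inequality) still holds on either side of that point, and since the argument only uses $\mathcal{M}(\omega') = M(f)$ and the inequality $E_{\gamma}(f) < \phi(M(f))$, no regularity of $\phi$ at that single point is needed.
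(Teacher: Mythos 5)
Your proof is correct and follows essentially the same route as the paper: the paper obtains the corollary from the strict convexity of $\phi$ and the tangency of $C_{\omega}$ to the envelope at $\nu(\omega)$, exactly the dichotomy you spell out (tangent point gives the mass-energy condition since $\omega>\gamma^{2}/4$; otherwise the point lies strictly below the envelope and monotonicity/surjectivity of $\mathcal{M}$ yields $\omega'$ with $S_{\omega',\gamma}(f)<r_{\omega',\gamma}$). Your write-up merely makes explicit what the paper leaves as "By this convexity...", including the harmless non-smoothness of $\phi$ at $\mathcal{M}(\gamma^{2}/4)$.
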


By this corollary, in order to show the main theorem, it is enough to consider $M(f)=M(Q_{\omega,\gamma})$ and $E_{\gamma}(f)=E_{\gamma}(Q_{\omega,\gamma})$ for some $\omega>\gamma^{2}/4$. 

\begin{remark}
When $0<\omega \leq \gamma^2/4$, i.e., $M>\mathcal{M}(\gamma^2/4)$, the envelope is explicitly given by $M^{1-s_c}E^{s_c} = 2 M(Q_{1,0})^{1-s_c}E(Q_{1,0})^{s_c}$ by the scaling structure. However, the formula when $\omega>\gamma^2/4$ is not clear. 
\end{remark}

\section{Cut-off Schwartz class}
\label{appB}


\begin{lemma}
Let $f \in C^{\infty}(\mathbb{R}\setminus\{0\})$. Then the following are equivalent:
\begin{enumerate}
\item There exists $\psi \in BC_0^{\infty}$ with $\psi(x)=1$ on $(-\infty,R) \cup (R,\infty)$ for some $R>0$ such that $\psi f \in \mathcal{S}(\mathbb{R})$.
\item $f \in \widetilde{\mathcal{S}}(\mathbb{R})$. 
\end{enumerate}
\end{lemma}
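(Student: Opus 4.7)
The direction (2) $\Rightarrow$ (1) is essentially immediate from the definition: if $f \in \widetilde{\mathcal{S}}(\mathbb{R})$, then $\varphi f \in \mathcal{S}(\mathbb{R})$ for every $\varphi \in BC_0^\infty(\mathbb{R})$, so it suffices to exhibit any $\psi \in BC_0^\infty(\mathbb{R})$ equal to $1$ outside some $[-R,R]$ (e.g.\ a standard smoothed characteristic function of $\mathbb{R}\setminus(-1,1)$), and the definition of $\widetilde{\mathcal{S}}(\mathbb{R})$ applied to this $\psi$ gives (1).

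The nontrivial direction is (1) $\Rightarrow$ (2). The plan is a simple partition-of-unity argument. Given the distinguished $\psi$ from (1) and an arbitrary $\varphi \in BC_0^\infty(\mathbb{R})$, decompose
\begin{align*}
\varphi f = \varphi \psi f + \varphi(1-\psi) f,
\end{align*}
and show both pieces belong to $\mathcal{S}(\mathbb{R})$. The first term handles the behavior at infinity: since $\psi f \in \mathcal{S}(\mathbb{R})$ by hypothesis and $\varphi \in BC^\infty(\mathbb{R})$, the product $\varphi\psi f$ lies in $\mathcal{S}(\mathbb{R})$ because Schwartz class is stable under multiplication by smooth functions with all derivatives bounded (one checks this directly using the Leibniz rule and the seminorm estimates). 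The second term handles the behavior near the origin: $1-\psi$ is smooth with support contained in $[-R,R]$, and $\varphi$ vanishes on some open interval $(-r,r)$ containing $0$, so $\varphi(1-\psi)$ is smooth and compactly supported in $[-R,R]\setminus(-r,r)$, an annular region bounded away from the singular point. Since $f \in C^\infty(\mathbb{R}\setminus\{0\})$ is smooth on that annular region, the product $\varphi(1-\psi)f$ extends (by zero on $(-r,r)$) to a function in $C_c^\infty(\mathbb{R}) \subset \mathcal{S}(\mathbb{R})$.

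Adding the two pieces yields $\varphi f \in \mathcal{S}(\mathbb{R})$, and since $\varphi \in BC_0^\infty(\mathbb{R})$ was arbitrary, $f \in \widetilde{\mathcal{S}}(\mathbb{R})$. There is no real obstacle here; the only point requiring a moment of care is to notice that the cutoff $1-\psi$, though not itself vanishing near $0$, becomes harmless once multiplied by $\varphi$, so the possible singularity of $f$ at the origin is never seen by the second term.
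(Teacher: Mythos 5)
Your proof is correct and follows essentially the same partition-of-unity idea as the paper: split off the near-origin piece, which is a compactly supported smooth function away from the singularity, and control the piece at infinity using that $\mathcal{S}(\mathbb{R})$ is stable under multiplication by functions in $BC^{\infty}(\mathbb{R})$. The only difference is cosmetic: you decompose $\varphi f=\varphi\psi f+\varphi(1-\psi)f$ directly, whereas the paper first introduces an auxiliary cutoff $\widetilde{\psi}$ equal to $1$ on $\supp\varphi\cup\supp\psi$, shows $\widetilde{\psi}f\in\mathcal{S}(\mathbb{R})$, and then writes $\varphi f=\varphi(\widetilde{\psi}f)$; your version is a slight streamlining of the same argument.
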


\begin{proof}
It is trivial that (2) implies (1) since $\psi \in BC_0^{\infty}(\mathbb{R})$. We show that (1) implies (2). Take an arbitrary function $\varphi \in BC_0^{\infty}(\mathbb{R})$. We denote the interval including $0$ and $\varphi=0$ (resp. $\psi$) by $I_\varphi$  (resp. $I_\psi$). 
We take a function $\widetilde{\psi}\in C_0^{\infty}(\mathbb{R})$ satisfying $\widetilde{\psi}(x)=1$ on an interval $(-\infty,R) \cup (R,\infty)$ including $\supp \varphi \cup \supp \psi$ and $\widetilde{\psi}(x)=0$ on an open interval included in $I_\varphi \cap I_\psi$. Then we have
\begin{align*}
	\widetilde{\psi}f &= \widetilde{\psi} \psi f +  \widetilde{\psi} (1-\psi) f
	= \psi f +  \widetilde{\psi} (1-\psi) f.
\end{align*}
Since $\psi f \in \mathcal{S}(\mathbb{R})$ by (1) and $\widetilde{\psi} (1-\psi) f$ is smooth and compactly supported, we get $\widetilde{\psi}f \in \mathcal{S}(\mathbb{R})$. It follows from this and $\varphi f = \varphi (\widetilde{\psi} f)$ 
that $\varphi f \in \mathcal{S}(\mathbb{R})$. Indeed, since $\varphi \in BC_0^{\infty}(\mathbb{R})$ and $\widetilde{\psi}f \in \mathcal{S}(\mathbb{R})$, we have
\begin{align*}
	\| x^{\beta} \partial_{x}^{\alpha} \{ \varphi (\widetilde{\psi} f)\}\|_{L^{\infty}}
	\lesssim 
	\sum_{k=0}^{\alpha}\| x^{\beta} \partial_{x}^{k} \varphi \partial_{x}^{\alpha-k}(\widetilde{\psi} f) \|_{L^{\infty}}
	\lesssim \sum_{k=0}^{\alpha}\| x^{\beta} \partial_{x}^{\alpha-k}(\widetilde{\psi} f) \|_{L^{\infty}}
	\leq C_{\alpha,\beta}. 
\end{align*}
This completes the proof. 
\end{proof}

\section*{Acknowledgement}

Research of the first author is partially supported
by an NSERC Discovery Grant.
The second author is supported by JSPS Overseas Research Fellowship and KAKENHI Grant-in-Aid for Early-Career Scientists No. JP18K13444. 



\begin{thebibliography}{99}
%
%

\bibitem{AIKN21}
T. Akahori, S. Ibrahim, H. Kikuchi, H. Nawa, 
\emph{Global dynamics above the ground state energy for the combined power-type nonlinear Schr\"{o}dinger equations with energy-critical growth at low frequencies}, 
Mem. Amer. Math. Soc. \textbf{272} (2021), no. 1331, v+130 pp. ISBN: 978-1-4704-4872-1; 978-1-4704-6747-0. 


\bibitem{AkNa13} T. Akahori, H. Nawa, 
\textit{Blowup and scattering problems for the nonlinear Schr\"{o}dinger equations}, 
Kyoto J. Math. {\bf 53} (2013), no. 3, 629--672.

\bibitem{AGHH88}
S. Albeverio, F. Gesztesy, R. H{\o}egh-Krohn, H. Holden, 
\textit{Solvable models in quantum mechanics}, Texts and Monographs in Physics, Springer-Verlag, New York, 1988. xiv+452 pp.

\bibitem{ArIn21}
A. Ardila, T. Inui,
\emph{Threshold scattering for the focusing NLS with a repulsive Dirac delta potential}, 
J. Differential Equations \textbf{313} (2022), 54--84.


\bibitem{Ban04}
V. Banica, 
\emph{Remarks on the blow-up for the Schr\"odinger equation with critical mass on a plane domain},
Ann. Sc. Norm. Super. Pisa Cl. Sci. (5) \textbf{3} (2004), no. 1, 139--170. 

\bibitem{BaVi16}
V. Banica and N. Visciglia, 
\emph{Scattering for NLS with a delta potential}, 
J. Differential Equations \textbf{260} (2016), no. 5, 4410--4439.




\bibitem{CFR20}
L. Campos, L. G. Farah, S. Roudenko
\emph{Threshold solutions for the nonlinear Schr\"odinger equation},
Rev. Mat. Iberoam. \textbf{38} (2022), no. 5, 1637--1708.




\bibitem{CGNT07}
S.-M. Chang, S. Gustafson, K. Nakanishi, T.-P. Tsai, 
\textit{Spectra of linearized operators for NLS solitary waves},
SIAM J. Math. Anal. \textbf{39} (2007/08), no. 4, 1070--1111.



%
%

\bibitem{DWZ16} D. Du, Y. Wu, K. Zhang,
\textit{On Blow-up criterion for the Nonlinear Schr\"{o}dinger Equation},
Discrete Contin. Dyn. Syst., 
{\bf 36} (2016), 3639--3650.



\bibitem{DLR22}
T. Duyckaerts, O. Landoulsi, and S. Roudenko, 
\emph{Threshold solutions in the focusing 3D cubic NLS equation outside a strictly convex obstacle},
J. Funct. Anal. \textbf{282} (2022), no. 5, Paper No. 109326, 55 pp.



\bibitem{DuRo10}
T. Duyckaerts, S. Roudenko,
\emph{Threshold solutions for the focusing 3D cubic Schr\"odinger equation},
Rev. Mat. Iberoamericana {\bf 26} No.1, (2010), 1--56.
















\bibitem{FXC11} 
D. Y. Fang, J. Xie, T. Cazenave, 
\textit{Scattering for the focusing energy-subcritical nonlinear Schr\"{o}dinger equation}, 
Sci. China Math. 
{\bf 54} (2011), no. 10, 2037--2062. 


\bibitem{FuJe08}
R. Fukuizumi, L. Jeanjean, 
\textit{Stability of standing waves for a nonlinear Schr\"odinger equation with a repulsive Dirac delta potential}, 
Discrete Contin. Dyn. Syst. 21 (2008), no. 1, 121--136.



\bibitem{FOO08}
R. Fukuizumi, M. Ohta, T. Ozawa, 
\textit{Nonlinear Schr\"odinger equation with a point defect}, 
Ann. Inst. H. Poincar\'e C Anal. Non Lin\'eaire \textbf{25} (2008), no. 5, 837--845. 





\bibitem{GHW04} 
R. H. Goodman, P. J. Holmes, M. I. Weinstein,
\emph{Strong NLS soliton-defect interactions}, 
Physica D {\bf 192} (2004), 215--248.


\bibitem{GSS90}
M. Grillakis, J. Shatah, W. Strauss, 
\emph{Stability theory of solitary waves in the presence of symmetry. II}, 
J. Funct. Anal. {\bf 94} (1990), no. 2, 308--348. 

\bibitem{Gue14}
C. D. Guevara, 
\textit{Global behavior of finite energy solutions to the d-dimensional focusing nonlinear Schr\"{o}dinger equation}, 
Appl. Math. Res. Express. AMRX 2014, no. 2, 177--243.

\bibitem{Guo16}
Q. Guo, \textit{Divergent solutions to the $L^2$-supercritical NLS equations}, 
Acta Math. Appl. Sin. Engl. Ser. \textbf{32} (2016), no. 1, 137--162. 



\bibitem{IMN11}
S. Ibrahim, N. Masmoudi, K. Nakanishi, 
\emph{Scattering threshold for the focusing nonlinear Klein-Gordon equation}, Anal. PDE \textbf{4} (2011), no. 3, 405--460.

\bibitem{GuIn22}
S. Gustafson, T. Inui, 
\emph{Threshold odd solutions to the nonlinear Schr\"{o}dinger equation in one dimension}, 
Partial Differ. Equ. Appl. \textbf{3} (2022), no. 4, Paper No. 46.

\bibitem{GuIn22pre}
S. Gustafson, T. Inui, 
\emph{Blow-up or Grow-up for the threshold solutions to the nonlinear Schr\"{o}dinger equation},
preprint, arXiv:2209.04767.

\bibitem{HIIS22pre}
M. Hamano, M. Ikeda, T. Inui, I. Shimizu,
\emph{Global Dynamics for NLS on a star graph},
preprint.

%

\bibitem{IkIn17}
M. Ikeda, T. Inui, 
\textit{Global dynamics below the standing waves for the focusing semilinear Schr\"{o}dinger equation with a repulsive Dirac delta potential}, 
Anal. PDE 1{\bf 0} (2017), no. 2, 481--512.


\bibitem{Inu17}
T. Inui
\emph{Global dynamics of solutions with group invariance for the nonlinear Schr\"{o}dinger equation}, 
Commun. Pure Appl. Anal. 
{\bf 16} (2017), no. 2, 557--590.
%

\bibitem{Inu21pre}
T. Inui, 
\emph{Remark on blow-up of the threshold solutions to the nonlinear Schr\"{o}dinger equation with the repulsive Dirac delta potential},
to appear in RIMS K\^{o}ky\^{u}roku Bessatsu. 




\bibitem{LFFKS08}
S. Le Coz, R. Fukuizumi, G. Fibich, B. Ksherim, Y. Sivan, 
\emph{Instability of bound states of a nonlinear Schr\"odinger equation with a Dirac potential},
Phys. D 237 (2008), no. 8, 1103--1128.


























%


\bibitem{MMZ21}
C. Miao, J. Murphy, J. Zheng,
\emph{Threshold scattering for the focusing NLS with a repulsive potential}, preprint, arXiv:2102.07163. 

\bibitem{Miz20}
H. Mizutani, 
\emph{Wave operators on Sobolev spaces}, 
Proc. Amer. Math. Soc. \textbf{148} (2020), no. 4, 1645--1652. 

\bibitem{Nak17}
K. Nakanishi, 
\emph{Global dynamics above the first excited energy for the nonlinear Schr\"{o}dinger equation with a potential}, 
Comm. Math. Phys. \textbf{354} (2017), no. 1, 161--212.

\bibitem{NaRo16}
K. Nakanishi, T. Roy, 
\emph{Global dynamics above the ground state for the energy-critical Schr\"{o}dinger equation with radial data}, 
Commun. Pure Appl. Anal. \textbf{15} (2016), no. 6, 2023--2058.

\bibitem{NaSc12}
K. Nakanishi, W. Schlag, 
\emph{Global dynamics above the ground state energy for the cubic NLS equation in 3D}, 
Calc. Var. Partial Differential Equations \textbf{44} (2012), no. 1-2, 1--45.






\end{thebibliography}
\end{document}